\newtheorem{observation}{Remark}[section]
\newtheorem{lemma}[observation]{Lemma}  
\newtheorem{theorem}[observation]{Theorem}
\newtheorem{definition}[observation]{Definition}
\newtheorem{example}[observation]{Example}
\newtheorem{remark}[observation]{Remark}
\newtheorem{proposition}[observation]{Proposition} 
\newtheorem{corollary}[observation]{Corollary}
\newdimen\w@dth
\def\rarr{\rightarrow}
\def\Rarr{\Rightarrow}
\def\op{\oplus}
\def\ot{\otimes}
\newcommand{\os}{\oslash}
\def\c{\,\colon}
\def\cv{\curlyvee}
\newcommand{\bzero}{{\bf 0}}
\newcommand{\bone}{{\bf 1}}
\def\bX{\mbox{${\mathcal X}$}}
\def\bX{\mbox{${\mathbb X}$}}
\def\bbX{\mbox{$\scriptscriptstyle{\mathbb X}$}}
\def\bB{\mbox{${\mathbb B}$}}
\def\bD{\mbox{${\mathbb D}$}}
\def\bY{\mbox{${\mathbb Y}$}}
\def\SZ{\mbox{${\mathrm{SZ}}$}}
\def\mix{\mathrm{mix}}
\def\CLDC{\mbox{$\mathbf{FCLDC}$}}
\begin{document}
\allowdisplaybreaks

\title{Cartesian Linearly Distributive Categories: Revisited}
\author{Rose Kudzman-Blais and Jean-Simon Pacaud Lemay}

\maketitle

\begin{abstract} 
Linearly distributive categories (LDC) were introduced by Cockett and Seely to provide alternative categorical semantics for multiplicative linear logic. In contrast to Barr’s $\ast$-autonomous categories, LDCs take multiplicative conjunction and disjunction as primitive notions. Thus, a LDC is a category with two monoidal products that interact via linear distributors. A cartesian linearly distributive category (CLDC) is a LDC whose two monoidal products coincide with categorical products and coproducts. Initially, it was believed that CLDCs and distributive categories would coincide, but this was later found not to be the case. Consequently, the study of CLDCs was not pursued further at the time. With recent developments for and applications of LDCs, there has been renewed interest in CLDCs. This paper revisits CLDCs, demonstrating strong structural properties they all satisfy and investigating two key classes of examples: posetal distributive categories and semi-additive categories. Additionally, a previously assumed class of CLDCs, the Kleisli categories of exception monads of distributive categories, is re-examined and it is showed that they are not, in fact, CLDCs.
\end{abstract}

\noindent \small \textbf{Acknowledgments.} The authors would first like to thank Richard Garner (who humbly declined to be a coauthor) for the many helpful, insightful, and productive discussions. The authors would also like to thank the Australian Category Theory group at Macquarie University for hosting Kudzman-Blais' visit to work with Lemay on this research. 

\tableofcontents

\section{Introduction}

Most categories of interest, whether from logic, theoretical computer science, algebra, or category theory more broadly, exhibit multiple monoidal products worth investigating. Within monoidal category theory, a significant body of research explores the distributivity of monoidal structures, that is, how two monoidal products interact. In general, this interaction is mediated by a natural transformation, known as a distributivity map, which may or may not be an isomorphism. Frequently, one or both of the monoidal structures is cartesian or cocartesian, in other words induced from finite products or coproducts respectively. 
 
Perhaps the most well-known definition in this area is that of a distributive category \cite{Cockett_1993}, a category with finite products and coproducts, whose products $\times$ distribute over its coproducts $+$ by an isomorphism:
\begin{align}\label{distributive:iso}
(A\times C)+(B\times C) \xrightarrow{\cong}(A+B)\times C
\end{align}
This concept was generalized to distributive monoidal categories and extended further with the introduction of rig categories \cite{Laplaza_1972}, also known as bimonoidal categories  \cite{Johnson_Yau_2024}, which are categories equipped with two arbitrary monoidal structures whose distributivity is mediated by monomorphisms. They are named as such because they categorify rigs, modeling the distributivity of multiplication over sums in a ri(n)g. 

Another important notion of a category with two monoidal structures is that of a duoidal category  \cite{Aguiar_Mahajan_2010}, which arises canonically when a monoidal category has either finite products or coproducts. For a duoidal category, distributivity is a natural transformation given by an interchange map:  
\begin{align}\label{distributive:duiodal} (A\star B)\diamond(C\star D)\rarr(A\diamond C)\star(B\diamond D) 
\end{align}
The theory of duoidal categories emerged during the study of braided monoidal categories, where the interchange law generalizes the canonical flip induced by braidings \cite{Joyal_Street_1993}. 

The development of linearly distributive categories (LDC) fits naturally within this research program. LDCs were first introduced by Cockett and Seely in \cite{Cockett_Seely_1992} as categorical semantics for multiplicative linear logic (MLL), as developed by Girard \cite{Girard_1987}. In this framework, the multiplicative conjunction $\ot$ and disjunction $\parr$ are taken as primitive notions, along with their interactions via linear distributors:
\begin{align}\label{distributive:LDC}
A\ot (B\parr C)\rarr (A\ot B)\parr C 
\end{align}
This contrasts with $\ast$-autonomous categories, introduced by Barr \cite{Barr_1979}, which provide the categorical semantics for MLL with negation \cite{Seely_1989}, where multiplicative conjunction $\otimes$ and linear negation $\ast$ are taken as primitive, with the multiplicative disjunction $\parr$ defined via de Morgan duality. Consequently, linear distributivity is implicit, following from the duality between $\otimes$ and $\parr$. However, linear distributivity is a fundamental concept, as it precisely captures the interaction between two binary operations necessary to model the cut rule. The significance of explicitly considering linear distributivity is evident in the extensive and continuously growing literature on LDCs. 

Initially, LDCs were referred to as weakly distributive categories, as linear distributivity (\ref{distributive:LDC}) was thought to be a weakening of the classical distributivity of products over coproducts found in distributive categories (\ref{distributive:iso}). It was believed that specializing to cartesian linearly distributive categories (CLDC), which are LDCs where $\otimes = \times$ is a product and $\parr = +$ is a coproduct, would recover distributive categories. However, this turned out to be false: linear distributivity and classical distributivity are, in some sense, orthogonal notions \cite{Cockett_Seely_1997_LDC}. Following this realization, while research on LDCs has remained central to categorical linear logic, the study of CLDCs was not pursued further at that time. 

With recent developments for and applications of LDCs, there has also been renewed interest in CLDCs. CLDCs remain an interesting structure worth exploring and deserving to be properly studied. As such, the main objective of this paper is to revisit the theory of CLDCs and investigate various classes of examples. This work follows the first named author's efforts to better understand the structure of CLDCs by developing a linearly distributive version of the Fox theorem \cite{Kudzman-Blais_2025}. 

By investigating CLDCs more closely, we uncovered structural properties that had not been previously explored, revealing distinctive and nuanced behavior within CLDCs. One such key observation is that in a CLDC, its terminal object is always preinitial and, dually, its initial object is always subterminal. This implies that every CLDC is, in fact, a \emph{mix} LDC. Furthermore, these mix maps can be used to show that, remarkably, in a CLDC, an object is preinitial if and only if it is subterminal. 

In our investigation of examples of CLDCs, two key classes emerged: posetal distributive categories and semi-additive categories. Indeed, it appears to often be the case that a CLDC must fall into one of these two categories via collapse theorems. There are already two well-known collapses of LDCs to posetal categories: the first being Joyal's paradox, and the second stemming from the orthogonality between linear and classical distributivity. In addition to these, we discovered two new collapse results: a CLDC must be semi-additive if it either has \emph{invertible} linear distributors or if it is \emph{isomix}. At first glance, these collapses seem to constrain the landscape of possible CLDCs. In an effort to develop new examples, we introduce a Grothendieck construction that generates new CLDCs, some of which are neither posetal nor semi-additive categories.

On the other hand, we also reconsider the Kleisli category of the exception monad of a distributive category, which was thought to be a CLDC according to \cite{Cockett_Seely_1997_LDC}. However, thanks to our better understanding of CLDCs, it soon became apparent that something was amiss and that Kleisli categories of exception monads for distributive categories are not, in fact, examples of CLDCs. Nevertheless, they are still LDCs, where $\parr$ is still the coproduct, but where $\otimes$ is the product on objects but defined differently on maps. As such, it follows that classical distributive restriction categories \cite{Cockett_Lemay_2024} are also LDCs (but not CLDCs). 

The outline of this paper is as follows. In Section \ref{sec:preliminaries}, we carefully review the necessary background theory on LDCs and the morphisms between them. Section \ref{sec:CLDC} revisits CLDCs, where we explore fundamental properties shared by all CLDCs and also develop the appropriate notions of morphisms between CLDCs. In Section \ref{sec:BDL}, the first significant class of CLDC examples is investigated: posetal distributive categories. Section \ref{sec:S-Add} examines the second key class of CLDCs: semi-additive categories. Two new collapse theorems are presented, along with a construction that maps a CLDC to a semi-additive category. Section \ref{sec:Kleisli} revisits the Kleisli category of the exception monad of a distributive category and explains why it is not a CLDC, despite still being a well-defined LDC. Finally, Section \ref{sec:Examples} discusses two simple constructions that provide further examples of CLDCs: products and fibrations. 

{\bf Conventions:} Arbitrary categories will be denoted as $\bX$ or $\bY$. In an arbitrary category $\bX$, objects are denoted by capital letters ($A$, $B$, $C$, etc.), while the maps are denoted by lowercase letters ($f$, $g$, $h$, etc.). Identity maps are denoted by $1_A$, while composition is denoted by $;$ and written in diagrammatic order, that is, the composition of $f: A \to B$ and $g: B \to C$ is $f;g: A \to C$ which first does $f$ then $g$. For an arbitrary (symmetric) monoidal category, we denote it simply as a triple $(\bX, \os, I)$ where $\bX$ is the underlying category, $\os$ is the monoidal product, and $I$ is the monoidal unit, and with $\os$-associator ${\alpha_\os}_{A,B,C}\c (A\os B)\os C\xrightarrow{\cong} A\os (B\os C)$, right $\os$-unitor ${u_\os^R}_{A}\c A \xrightarrow{\cong}  A\os I$, and left $\os$-unitor ${u_\os^L}_{A}\c A \xrightarrow{\cong} I\os A$ (and $\os$-symmetry ${{\sigma_\os}_{A, B}\c A\os B  \xrightarrow{\cong}  B\os A}$). 
 
\section{Background: Linearly Distributive Categories}\label{sec:preliminaries}

Before introducing CLDCs, we first need to provide some important background on LDCs. Beyond their introduction in \cite{Cockett_Seely_1997_LDC}, the theory of LDCs was further developed in a subsequent series of articles by Cockett and Seely, sometimes alongside co-authors Blute and Trimble \cite{Blute_Cockett_Seely_Trimble_1996, Cockett_Seely_1997, Cockett_Seely_1999}. We refer the reader to those papers for an in-depth introduction to LDCs. 

\subsection{Linearly Distributive Categories}

Our notation for LDCs follows more closely the notation introduced by Girard in \cite{Girard_1987}, in contrast to the notation used by Cockett and Seely in \cite{Cockett_Seely_1992}. We have chosen to do so as to not conflict with other standard categorical notation. As such, multiplicative conjunction is represented by $\ot$, known as tensor, with unit $\bone$, and multiplicative disjunction is $\parr$, known as par, with unit $\Bot$. 

\begin{definition}\cite[Sec 2.1]{Cockett_Seely_1997_LDC}\label{def:LDC}
A {\bf linearly distributive category} (LDC) is a category $\bX$ equipped with two monoidal structures: 
\begin{enumerate}[label=(\roman*)]
\item A {\bf tensor} monoidal structure $(\bX, \ot, \bone)$; 
\[ {\alpha_\ot}_{A,B,C}\c (A\ot B)\ot C\rarr A\ot (B\ot C) \quad  {u_\ot^R}_{A}\c A\rarr A\ot \bone \quad {u_\ot^L}_{A}\c A\rarr \bone\ot A \]
\item A {\bf par} monoidal structure $(\bX, \parr, \Bot)$;
\[{\alpha_\parr}_{A,B,C}\c A\parr(B\parr C)\rarr (A\parr B)\parr C \quad {u_\parr^R}_{A}\c A\parr\Bot\rarr A \quad {u_\parr^L}_{A}\c\Bot\parr A\rarr A\] 
\end{enumerate}
which also comes equipped with natural transformations: 
\begin{align*}
\delta^R_{A,B,C}\c (A\parr B)\ot C\rarr A\parr (B\ot C) && \delta^L_{A,B,C}\c A\ot(B\parr C)\rarr (A\ot B)\parr C,
\end{align*}
where $\delta^L$ and $\delta^R$ are called the left and right {\bf linear distributors} respectively, such that:
\begin{enumerate}[label=(\roman*)]	
\item The linear distributors are compatible with the unitors in the sense that the following equalities hold:
\begin{align}\label{cc:unit_lineardist}
\begin{split}
	&{u_\ot^L}_{A\parr B}; \delta^L_{\bone, A, B} = {u_\ot^L}_{A}\parr 1_{B}\\
	&{u_\ot^R}_{A\parr B}; \delta^R_{A, B,\bone} = 1_{A}\parr {u_\ot^R}_{B}\\
    & \delta^R_{\Bot, A, B};{u_\parr^L}_{A\ot B} = {u_\parr^L}_{A}\ot 1_{B} \\
    &\delta^L_{A, B, \Bot};{u_\parr^R}_{A\ot B} = 1_{A}\ot {u_\parr^R}_{B}
\end{split}
\end{align}
\item The linear distributors are compatible with the associators in the sense that the following equalities hold:
\begin{align}\label{cc:assoc_lineardist}
\begin{split}
	&\delta^L_{A\ot B, C, D};({\alpha_\ot}_{A,B,C}\parr 1_{D}) = {\alpha_\ot}_{A,B,C\parr D}; (1_{A}\ot \delta^L_{B,C,D}); \delta^L_{A,B\ot C, D} \\
	&({\alpha_\parr}_{A,B,C}\ot 1_{D});\delta^R_{A\parr B, C, D} =\delta^R_{A,B\parr C, D}; (1_{A}\parr \delta^R_{B,C,D});{\alpha_\parr}_{A,B,C\ot D}\\
    &\delta^L_{A, B, C\parr D};{\alpha_\parr}_{A\ot B, C, D} =  (1_{A}\ot {\alpha_\parr}_{B,C,D}); \delta^L_{A,B\parr C, D};(\delta^L_{A,B,C}\parr 1_{D})\\
	&{\alpha_\ot}_{A\parr B, C, D};\delta^R_{A, B, C\ot D} = (\delta^R_{A,B,C}\ot 1_{D}); \delta^R_{A,B\ot C, D}; (1_{A}\parr {\alpha_\ot}_{B,C,D})
\end{split}
\end{align}
\item The left and right linear distributors are compatible with each other in the sense the following equalities hold: 
\begin{align}\label{cc:leftright_lineardist}
\begin{split}
	& \delta^L_{A\parr B, C, D}; (\delta^R_{A,B,C}\parr 1_{D}) = \delta^R_{A,B,C\parr D}; (1_{A}\parr \delta^L_{B,C,D}); {\alpha_\parr}_{A,B\ot C, D} \\
	&(\delta^L_{A,B,C}\ot 1_{D});\delta^R_{A\ot B, C, D} = {\alpha_\ot}_{A, B\parr C, D}; (1_{A}\ot \delta^R_{B,C,D}); \delta^L_{A,B,C\ot D}
\end{split} 
\end{align}
    \end{enumerate}	
\end{definition}

It may be useful to draw out the commutative diagrams for some of the axioms. For example, here are the first equalities of (\ref{cc:unit_lineardist}), (\ref{cc:assoc_lineardist}), and (\ref{cc:leftright_lineardist}) respectively. 
\begin{equation*}
\begin{gathered}\xymatrixrowsep{1.75pc}\xymatrixcolsep{2.75pc}\xymatrix{
A\parr B\ar[r]^-{{u_\ot^L}_{A\parr B}}\ar[rd]_-{{u_\ot^L}_{A}\parr 1_{B}} & \bone\ot(A\parr B)\ar[d]^-{\delta^L_{\bone, A, B}}\\ 
& (\bone\ot A)\parr B 
}
\end{gathered}\end{equation*}
\begin{equation*}
\begin{gathered}\xymatrixcolsep{3pc}\xymatrix{
(A\ot B) \ot (C\parr D)\ar[dd]_-{\delta^L_{A\ot B, C, D}}\ar[r]^-{{\alpha_\ot}_{A,B,C\parr D}}& A\ot (B\ot (C\parr D))\ar[d]^-{1_{A}\ot \delta^L_{B,C,D}} \\
& A\ot ((B\ot C)\parr D)\ar[d]^-{\delta^L_{A,B\ot C, D}} \\
((A\ot B)\ot C) \parr D\ar[r]_-{{\alpha_\ot}_{A,B,C}\parr 1_{D}} & (A\ot (B\ot C))\parr D
}
\end{gathered}\end{equation*}
\begin{equation*}
\begin{gathered}\xymatrix{
& (A\parr B) \ot (C\parr D)\ar[dl]_-{\delta^R_{A,B,C\parr D}}\ar[dr]^-{\delta^L_{A\parr B, C, D}} \\
A\parr (B\ot (C\parr D))\ar[d]_-{1_{A}\parr \delta^L_{B,C,D}} & & ((A\parr B)\ot C) \parr D\ar[d]^-{\delta^R_{A,B,C}\parr 1_{D}} \\
A \parr ((B\ot C)\parr D)\ar[rr]_-{{\alpha_\parr}_{A,B\ot C, D}} & &(A\parr (B\ot C))\parr D
}
\end{gathered}\end{equation*}
We leave drawing out the other axioms as commutative diagrams as an exercise. 

The standard variant of MLL is commutative and this is modeled by considering \emph{symmetric} LDCs. 

\begin{definition}\cite[Sec 3]{Cockett_Seely_1997_LDC}\label{def:SLDC}
A  {\bf symmetric linearly distributive category} (SLDC) is a LDC $\bX$ such that $(\bX, \ot, \bone)$ and $(\bX,\parr,\Bot)$ are symmetric monoidal categories, and the linear distributors are compatible with the symmetries in the sense that the following diagram commutes: 
\begin{equation}\label{cc:lin_dist_braiding}
\begin{gathered}\xymatrix{
(A\parr B) \ot C\ar[r]^-{\delta^R_{A,B,C}} \ar[d]_-{{\sigma_\ot}_{A\parr B, C}} & A\parr (B\ot C) \\
C\ot (A\parr B)\ar[d]_-{1_{C} \ot {\sigma_\parr}_{A,B}} & (B\ot C) \parr A\ar[u]_-{{\sigma_\parr}_{B\ot C, A}}\\
C\ot (B\parr A)\ar[r]_-{\delta^L_{C,B,A}} & (C\ot B)\parr A \ar[u]_-{{\sigma_\ot}_{C,B}\parr 1_{A}} 
}
\end{gathered}\end{equation}
\end{definition}

It is worth mentioning that for SLDCs, we can in fact suppress one of the linear distributors and simply define one from the other. For example, we can define a SLDC simply in terms of the left linear distributor $\delta^L$, drop the coherence conditions involving $\delta^R$ from (\ref{cc:unit_lineardist}) and (\ref{cc:assoc_lineardist}), and replace (\ref{cc:leftright_lineardist}) by two diagrams involving the left linear distributor, associativities and symmetries. This remains a LDC as we define the right linear distributor $\delta^R$ via \eqref{cc:lin_dist_braiding}. 
\subsection{Shifted Tensors}

The simplest example of a LDC is a monoidal category, where the linear distributors are simply the associativity isomorphisms of the monoidal product. 

\begin{example} Every (symmetric) monoidal category $(\bX, \ot, \bone)$ can be viewed as a (symmetric) LDC $(\bX, \ot, \bone, \ot, \bone)$, when taking the tensor and par structures to be equal to the original monoidal structure, i.e. $\ot=\parr$ and $\bone= \Bot$. In this case, the linear distributors are just the $\ot$-associators:
\begin{align*}
\delta^R_{A,B,C} ={\alpha_\ot}_{A,B,C}\c (A\ot B)\ot C  \xrightarrow{\cong}  A\ot (B\ot C) \\
\delta^L_{A,B,C} ={\alpha^{-1}_\ot}_{A,B,C}\c A\ot (B\ot C) \xrightarrow{\cong}  (A\ot B)\ot C
\end{align*}
These LDCs are known as {\bf degenerate}.
\end{example}

Degenerate LDCs are examples of LDCs where the linear distributors are isomorphisms. In fact, LDCs whose linear distributors are isomorphisms are completely characterized by the categorical analogue of \emph{shift monoids}. These are defined via invertible objects in a monoidal category, with which we can define a \emph{shifted tensor}. 

\begin{definition}\cite[Sec 5.2]{Cockett_Seely_1997_LDC} In a monoidal category $(\bX, \ot, \bone)$, an object $\Bot\in\bX$ is said to have a {\bf $\ot$-inverse} if there is an object $\Bot^{-1}$ equipped with two isomorphisms $s^L\c\Bot \ot \Bot^{-1}\rarr\bone$ and $s^R\c \Bot^{-1}\ot\Bot\rarr\bone$ such that the following diagram commutes: 
\begin{equation}\label{cc:shifted_tensor}
\begin{gathered}\xymatrixrowsep{1pc}\xymatrixcolsep{1.75pc}\xymatrix{
(\Bot^{-1}\ot \Bot) \ot \Bot^{-1} \ar[rr]^-{{\alpha_\ot}_{\Bot^{-1},\Bot, \Bot^{-1}}} \ar[d]_-{s^R\ot 1_{\Bot^{-1}}}& &\Bot^{-1}\ot(\Bot\ot\Bot^{-1}) \ar[d]^-{1_{\Bot^{-1}}\ot s^L}\\
\bone\ot\Bot^{-1}\ar[rd]_-{{u^L_\ot}^{-1}_{\Bot^{-1}}} & & \Bot^{-1}\ot \bone \ar[ld]^-{{u^R_\ot}^{-1}_{\Bot^{-1}}}\\
& \Bot^{-1}
}
\end{gathered}\end{equation}
\end{definition}    
    
If $\Bot$ is an object in a monoidal category $(\bX, \ot, \bone)$ with a $\ot$-inverse, we can define a new monoidal structure $\parr$ on $\bX$ where $A\parr B = A\ot (\Bot^{-1}\ot B)$, called the {\bf $\Bot$-shifted tensor}, with monoidal unit is $\Bot$. 
 
\begin{proposition}\cite[Prop 5.3 \& 5.4]{Cockett_Seely_1997_LDC}\label{prop:shift_tensor} Let $(\bX, \ot, \bone)$ be a monoidal category with an object $\Bot\in\bX$ with a $\otimes$-inverse. Then $(\bX, \ot, \bone, \parr,\Bot)$, where $\parr$ is the $\Bot$-shifted tensor, is a LDC, known as a \textbf{shifted-tensor} LDC, with  invertible linear distributors defined as follows: 
\begin{align*}
&\delta^R_{A,B,C} = {\alpha_\ot}_{A, \Bot^{-1}\ot B,C}; (1_A\ot {\alpha_\ot}_{\Bot^{-1}, B, C})\c (A\ot (\Bot^{-1}\ot B))\ot C \xrightarrow{\cong} A\ot (\Bot^{-1}\ot (B\ot C)) \\
&\delta^L_{A,B,C} = {\alpha_\ot^{-1}}_{A,B,\Bot^{-1}\ot C}\c A\ot (B\ot (\Bot^{-1}\ot C))\xrightarrow{\cong} (A\ot B)\ot (\Bot^{-1}\ot C)
\end{align*}
Conversely, for every LDC with invertible linear distributors, the $\parr$-unit $\Bot$ has a $\otimes$-inverse, $\Bot^{-1} := \bone \parr \bone$, and moreover $\parr$ is naturally equivalent to the $\Bot$-shifted tensor. 
\end{proposition}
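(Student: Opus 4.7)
The plan is to prove both directions separately, constructing the relevant structural maps and then checking coherence by diagram chase.

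For the forward direction, I first verify that the $\Bot$-shifted tensor $A \parr B := A \ot (\Bot^{-1} \ot B)$ defines a monoidal structure with unit $\Bot$. The $\parr$-unitors are built from the inverse data: for example,
\[
{u_\parr^R}_A \c A \parr \Bot \xrightarrow{1_A \ot s^R} A \ot \bone \xrightarrow{({u_\ot^R}_A)^{-1}} A,
\]
and symmetrically for ${u_\parr^L}$ using $s^L$. The $\parr$-associator is a rearrangement by $\ot$-associators that transforms $A \ot (\Bot^{-1} \ot (B \ot (\Bot^{-1} \ot C)))$ into $(A \ot (\Bot^{-1} \ot B)) \ot (\Bot^{-1} \ot C)$. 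Pentagon and triangle for $\parr$ then reduce, after unfolding definitions, to the $\ot$-pentagon, $\ot$-triangle, and a single application of axiom (\ref{cc:shifted_tensor}). The stated formulas for $\delta^L$ and $\delta^R$ are manifestly invertible and natural, being composites of $\ot$-associators, and the LDC axioms (\ref{cc:unit_lineardist})--(\ref{cc:leftright_lineardist}) collapse to instances of the $\ot$-pentagon together with naturality.

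For the converse direction, given an LDC with invertible linear distributors, set $\Bot^{-1} := \bone \parr \bone$ and define
\[
s^L \c \Bot \ot (\bone \parr \bone) \xrightarrow{\delta^L_{\Bot, \bone, \bone}} (\Bot \ot \bone) \parr \bone \xrightarrow{({u_\ot^R}_\Bot)^{-1} \parr 1_\bone} \Bot \parr \bone \xrightarrow{{u_\parr^L}_\bone} \bone,
\]
\[
s^R \c (\bone \parr \bone) \ot \Bot \xrightarrow{\delta^R_{\bone, \bone, \Bot}} \bone \parr (\bone \ot \Bot) \xrightarrow{1_\bone \parr ({u_\ot^L}_\Bot)^{-1}} \bone \parr \Bot \xrightarrow{{u_\parr^R}_\bone} \bone,
\]
each an isomorphism since every constituent is. The coherence axiom (\ref{cc:shifted_tensor}) then follows by expanding $s^L$ and $s^R$ in the defining hexagon and applying the left--right compatibility (\ref{cc:leftright_lineardist}) together with the unit axioms (\ref{cc:unit_lineardist}), after which both legs reduce to the same canonical map. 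To see that $\parr$ is naturally equivalent to the $\Bot$-shifted tensor, exhibit the natural isomorphism
\[
A \ot ((\bone \parr \bone) \ot B) \xrightarrow{1_A \ot \delta^R_{\bone,\bone,B}} A \ot (\bone \parr (\bone \ot B)) \xrightarrow{\delta^L_{A,\bone,\bone \ot B}} (A \ot \bone) \parr (\bone \ot B) \xrightarrow{({u_\ot^R}_A)^{-1} \parr ({u_\ot^L}_B)^{-1}} A \parr B,
\]
and verify it is compatible with the monoidal and linearly distributive structures, again by a diagram chase using (\ref{cc:assoc_lineardist}), (\ref{cc:leftright_lineardist}), and (\ref{cc:unit_lineardist}).

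The main obstacle will be the coherence verifications in the converse direction: establishing (\ref{cc:shifted_tensor}) for $(s^L, s^R)$ and the monoidality of the natural equivalence both require assembling several distinct LDC axioms in a particular order, while also carefully tracking naturality instances for the unitors. Since the distributors are invertible, however, intermediate arrows may be freely pre- and post-composed with their inverses, which keeps these chases tractable though somewhat tedious.
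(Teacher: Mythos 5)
The paper does not prove this proposition: it is quoted from Cockett and Seely's original article (Prop 5.3 \& 5.4 of \cite{Cockett_Seely_1992}), so there is no in-paper argument to compare against. Your proposal reconstructs the standard proof, and it is sound in outline. All the structural maps you write down are correctly typed and are the canonical choices: in the forward direction the $\parr$-unitors, $\parr$-associator and the distributors $\delta^L = \alpha_\ot^{-1}$, $\delta^R = \alpha_\ot;(1\ot\alpha_\ot)$ are right, and the observation that \eqref{cc:assoc_lineardist} and \eqref{cc:leftright_lineardist} reduce to Mac Lane coherence while the $\parr$-triangle is where \eqref{cc:shifted_tensor} enters is exactly the correct accounting (for the third and fourth equations of \eqref{cc:unit_lineardist} you also need the $\ot$-triangle, not just the pentagon and naturality, since the $\parr$-unitors carry $s^L$, $s^R$ — a minor imprecision). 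In the converse direction your $s^L$, $s^R$ are the expected composites, and your natural isomorphism $A\ot((\bone\parr\bone)\ot B)\to A\parr B$ is literally the map $\beta_{A,B}$ that the paper itself constructs later when showing that an LDC with invertible distributors is isomorphic to its shifted-tensor LDC, which is good corroboration that you have the right candidate. The one place where your write-up is genuinely a sketch rather than a proof is the verification of \eqref{cc:shifted_tensor} for the constructed pair $(s^L,s^R)$ and the monoidality of $\beta$: these are the substantive diagram chases of the converse, and you assert rather than execute them. You do, however, name the correct axioms (\eqref{cc:leftright_lineardist} together with \eqref{cc:unit_lineardist}) and correctly observe that invertibility of the distributors keeps the chase manageable, so I see no gap in the plan, only deferred computation.
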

    
In particular, for any monoidal category $(\bX, \ot, \bone)$, the monoidal unit $\bone$ is its own $\ot$-inverse and the induced $\bone$-shifted tensor is (up to natural isomorphism) the starting monoidal product $\ot$.  

\subsection{Mix}

The categorical semantics of many variants of MLL can be expressed using (symmetric) LDCs. Among these variants, an important one is MLL with the MIX rule $A\ot B \dashv A\parr B$, which is equivalent to the nullary MIX rule $\Bot\dashv \bone$ in the presence of the cut rule. As such, this amounts to asking that our LDC has a map from the $\parr$-unit to the $\otimes$-unit. 

\begin{definition}\cite[Def 6.2]{Cockett_Seely_1997}
A LDC is {\bf mix} if there is a map $m\c\Bot\rarr\bone$, called the {\bf nullary mix map}, such that the following diagram commutes: 
\begin{equation}\label{cc:mix_LDC}
\begin{gathered}\xymatrixrowsep{1.75pc}\xymatrixcolsep{2.75pc}\xymatrix{
A\ot B\ar[r]^-{1_A\ot {u^L_\parr}_B^{-1}}\ar[d]_-{{u^R_\parr}_A^{-1}\ot 1_B}& A\ot (\Bot \parr B)\ar[r]^-{1_A\ot(m\parr 1_B)} & A\ot (\bone \parr B)\ar[d]^-{\delta^L_{A, \bone, B}} \\
(A\parr \Bot)\ot B\ar[d]_-{(1_A\parr m)\ot 1_B} & & (A\ot \bone)\parr B \ar[d]^-{{u^R_\ot}_A^{-1}\parr 1_B} \\
(A\parr \bone)\ot B \ar[r]_-{\delta^R_{A,\bone,B}} & A\parr (\bone \ot B)\ar[r]_-{1_A\parr {u^L_\ot}_B^{-1}} & A\parr B
}
\end{gathered}\end{equation}
The induced natural transformation $\mix_{A,B}\c A\ot B\rarr A\parr B$, defined by the equivalent composites above, is called the {\bf mix map}. 
\end{definition}

The mix map is of course natural, as well as compatible with the associators and unitors (and symmetry) in the obvious way. For example, here is the compatibility between the mix map and the associators, which we will need later on. 

\begin{lemma}\label{lem:mix_maps_associators}
In a mix LDC \bX, the following diagrams commute:
\begin{equation}\begin{gathered}\label{cc:mix_maps_associators}
\xymatrixrowsep{1.75pc}\xymatrixcolsep{2pc}\xymatrix{
A\ot (B\ot C)\ar[r]^-{{\alpha_\ot}^{-1}_{A,B,C}}\ar[d]_-{1_A\ot \mix_{B,C}} & (A\ot B)\ot C\ar[d]_-{\mix_{A\ot B, C}} & (A\ot B)\ot C\ar[r]^-{{\alpha_\ot}_{A,B,C}}\ar[d]^-{\mix_{A,B}\ot 1_C} & A\ot (B\ot C) \ar[d]^-{\mix_{A,B\ot C}}\\
A\ot (B\parr C)\ar[r]^-{\delta^L_{A,B,C}}\ar[d]_-{\mix_{A,B\parr C}} & (A\ot B)\parr C\ar[d]_{\mix_{A,B}\parr 1_C} & (A\parr B)\ot C\ar[r]^-{\delta^R_{A,B,C}}\ar[d]^-{\mix_{A\parr B, C}} & A\parr (B\ot C)\ar[d]^-{1_A\parr \mix_{B, C}}\\
A\parr (B\parr C)\ar[r]_{{\alpha_\parr}_{A,B,C}} & (A\parr B)\parr C & (A\parr B)\parr C\ar[r]_-{{\alpha_\parr}^{-1}_{A,B,C}} & A\parr (B\parr C)
}
\end{gathered}\end{equation}
\end{lemma}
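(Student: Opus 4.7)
The plan is to prove each of the two outer rectangles in (\ref{cc:mix_maps_associators}) by splitting them into their two natural sub-squares and unfolding each mix map via one of the defining composites in diagram (\ref{cc:mix_LDC}). Recall that $\mix_{X,Y}$ admits two equivalent expressions: a \emph{top-path} decomposition built from $\delta^L_{X, \bone, Y}$, the unitor inverses $(u^L_\parr)^{-1}_Y$ and $(u^R_\otimes)^{-1}_X$, and $m \parr 1_Y$; and a dual \emph{bottom-path} decomposition built from $\delta^R_{X, \bone, Y}$. For the left-hand rectangle, whose middle row features $\delta^L$, I would expand every $\mix$ using the top-path decomposition. For the right-hand rectangle, whose middle row features $\delta^R$, I would instead use the bottom-path. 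Matching the decomposition of $\mix$ to the distributor appearing in the target square avoids having to invoke the equality of the two paths in (\ref{cc:mix_LDC}) during the chase.

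With this unfolding in place, the top square of the left rectangle, which asserts $\alpha_\otimes^{-1}; \mix_{A \otimes B, C} = (1_A \otimes \mix_{B,C}); \delta^L_{A,B,C}$, reduces to the first axiom of (\ref{cc:assoc_lineardist}) specialized with middle argument $\bone$,
\[
\delta^L_{A \otimes B, \bone, C}; ({\alpha_\otimes}_{A,B,\bone} \parr 1_C) = {\alpha_\otimes}_{A,B,\bone \parr C}; (1_A \otimes \delta^L_{B, \bone, C}); \delta^L_{A, B \otimes \bone, C},
\]
together with naturality squares for $\delta^L$, $\alpha_\otimes$, and the unitor inverses, a naturality square for $m \parr 1_C$, and a triangle coherence to discharge the residual $\bone$ factor in $\delta^L_{A, B \otimes \bone, C}$. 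The bottom square of the left rectangle is handled dually and reduces to the fourth axiom of (\ref{cc:assoc_lineardist}), the $\bone$ now appearing in the first slot from the expansion of $\mix_{A, B \parr C}$. The two squares of the right-hand rectangle are strictly analogous, replacing the first and fourth axioms of (\ref{cc:assoc_lineardist}) by the third and second respectively, and every $\delta^L$ by its right counterpart $\delta^R$.

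The main obstacle is organizational rather than conceptual: once $\mix$ is unfolded, each square produces a diagram with many cells, and one must carefully partition them into naturality squares for the structure maps, instances of standard monoidal coherence for $(\otimes, \bone)$ and $(\parr, \Bot)$, and genuine appeals to the LDC axioms (\ref{cc:assoc_lineardist}) and (\ref{cc:unit_lineardist}). With the matching choice of defining composite for each $\mix$, the essential use of an LDC axiom appears exactly once per square, and everything else is a mechanical chase.
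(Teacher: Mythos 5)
Your proof is correct: the paper states Lemma \ref{lem:mix_maps_associators} without proof, treating the compatibility as standard, and the argument you outline — unfolding each mix map via the defining composite of \eqref{cc:mix_LDC} that matches the distributor appearing in the square at hand, then reducing to the corresponding associativity axiom of \eqref{cc:assoc_lineardist} instantiated at $\bone$, plus naturality of the structure maps and the monoidal coherence triangles — is exactly the chase that verifies it. I checked that each of the four squares reduces to the axiom you name (first, fourth, third, and second of \eqref{cc:assoc_lineardist}, respectively), with the unit coherences discharging the residual $\bone$ factors as you describe, and no appeal to the equality of the two composites in \eqref{cc:mix_LDC} is needed once the matching decomposition is chosen.
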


The following is a useful lemma which greatly simplifies checking if a LDC is mix. 

\begin{lemma}\cite[Lem 6.2]{Cockett_Seely_1997}\label{lem:prove_mix1}
A LDC is mix if and only if there is a map $m\c\Bot\rarr\bone$ for which \eqref{cc:mix_LDC} holds for any one of the following cases: (1) $A= B=\bone$, (2) $A= B= \Bot$, (3) $A= \Bot$ and $B=\bone$, or (4) $A=\bone$ and $B=\Bot$. 
\end{lemma}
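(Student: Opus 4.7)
The $(\Rightarrow)$ direction is immediate, since being mix requires \eqref{cc:mix_LDC} to commute for every $A, B$. The substantive content is the converse. Write $\mu^L_{A,B}\c A \ot B \to A \parr B$ for the top-then-right composite in \eqref{cc:mix_LDC}, and $\mu^R_{A,B}\c A \ot B \to A \parr B$ for the left-then-bottom composite. Each is built from unitors, linear distributors, and the fixed map $m\c \Bot \to \bone$, and is manifestly natural in both $A$ and $B$ by naturality of its constituents. Hence $\mu^L$ and $\mu^R$ are natural transformations $- \ot - \Rightarrow - \parr -$ of bifunctors $\bX \times \bX \to \bX$, and the lemma reduces to the following assertion: if $\mu^L_{A_0, B_0} = \mu^R_{A_0, B_0}$ for one pair $(A_0, B_0) \in \{(\bone, \bone), (\Bot, \Bot), (\Bot, \bone), (\bone, \Bot)\}$, then $\mu^L = \mu^R$.

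My plan is to establish this implication by rewriting both $\mu^L_{A,B}$ and $\mu^R_{A,B}$ into forms in which the single occurrence of $m$ is isolated inside a subcomposite depending only on $\bone$ and $\Bot$, with the dependence on $A$ and $B$ carried entirely by structural morphisms on the outside. Concretely, naturality of $\delta^L$ in its middle argument lets one push $1_A \ot (m \parr 1_B)$ past $\delta^L_{A, \bone, B}$, exchanging it for $\delta^L_{A, \Bot, B}$ followed by $(1_A \ot m) \parr 1_B$; dually, naturality of $\delta^R$ sweeps $m$ through on the other side. Combined with naturality of the unitors, these moves factor both $\mu^L_{A,B}$ and $\mu^R_{A,B}$ through morphisms of the shape (structural iso on $A,B$); (morphism built from $m$ and units only); (structural iso on $A,B$). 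The unit compatibilities \eqref{cc:unit_lineardist} then allow the middle block to be identified with $\mu^L_{\bone,\bone}$ (respectively $\mu^R_{\bone,\bone}$) conjugated by further unitors, so that equality at $(\bone, \bone)$ forces equality at $(A, B)$.

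Case (1) uses the first two equations of \eqref{cc:unit_lineardist}; cases (2), (3), (4) follow the same template but invoke the last two equations (the $\Bot$-compatibilities of $\delta^L$ and $\delta^R$) to perform the analogous reduction with one or both of the inner units replaced by $\Bot$. I expect the main obstacle to be purely bookkeeping: each individual step is forced by naturality or a single LDC coherence axiom, but the overall chase involves a sizeable commutative diagram that must be assembled carefully. No new conceptual input beyond the axioms of Definition~\ref{def:LDC} should be needed.
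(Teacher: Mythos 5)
First, a caveat: the paper does not prove this lemma itself---it is imported verbatim from Cockett and Seely \cite[Lem 6.2]{Cockett_Seely_1997}---so there is no in-paper proof to compare against. Your overall strategy (factor $\mu^L_{A,B}$ and $\mu^R_{A,B}$ so that the single occurrence of $m$ is isolated in a block depending only on the units, then propagate equality of the blocks to equality everywhere) is the right one and is essentially how the cited proof proceeds. The gap is that your outline stops exactly at the step that carries the content. After the naturality moves you describe, $m$ sits inside $(1_A\ot m)\parr 1_B$ (for $\mu^L$) and $1_A\parr(m\ot 1_B)$ (for $\mu^R$), which still depend on $A$ and $B$; reaching a genuinely unit-only block requires a further expansion, e.g.\ rewriting $(1_A\ot m);{u^R_\ot}_A^{-1}$ as $({u^R_\parr}_A^{-1}\ot 1_\Bot);\delta^R_{A,\Bot,\Bot};\bigl(1_A\parr((1_\Bot\ot m);{u^R_\ot}_\Bot^{-1})\bigr);{u^R_\parr}_A$ via \eqref{cc:unit_lineardist}. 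More importantly, the two resulting factorizations traverse $\delta^L$ and $\delta^R$ in opposite orders (both passing through $(A\parr\Bot)\ot(\Bot\parr B)$), and aligning them so that they differ \emph{only} in the middle block requires the left--right compatibility \eqref{cc:leftright_lineardist} (or its $\ot$-flavoured twin) together with the $\parr$-triangle identity ${\alpha_\parr}^{-1}_{A,\Bot,B};(1_A\parr{u^L_\parr}_B)={u^R_\parr}_A\parr 1_B$. Your plan invokes only naturality and \eqref{cc:unit_lineardist}; without \eqref{cc:leftright_lineardist} the factorizations of $\mu^L$ and $\mu^R$ cannot be brought to a common outer shape, and the argument stalls.

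A second, smaller issue is the claim that the middle block ``can be identified with $\mu^{L/R}_{\bone,\bone}$ conjugated by unitors'' uniformly across the four cases. For case (1) the blocks are ${u^L_\parr}_\bone^{-1};(m\parr 1_\bone)$ and ${u^R_\parr}_\bone^{-1};(1_\bone\parr m)$, and indeed $\mu^{L/R}_{\bone,\bone}$ is this block precomposed with the (invertible) unitor ${u^R_\ot}_\bone^{-1}$, so the special case does recover the block. But for case (2) the natural blocks live at the other unit: they are the two maps $\Bot\ot\Bot\rarr\Bot$, namely $(1_\Bot\ot m);{u^R_\ot}_\Bot^{-1}$ and $(m\ot 1_\Bot);{u^L_\ot}_\Bot^{-1}$, recovered from $\mu^{L/R}_{\Bot,\Bot}$ by \emph{post}composing with ${u^R_\parr}_\Bot={u^L_\parr}_\Bot$ and using that the two $\parr$-unitors coincide at $\Bot$. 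So the four cases require genuinely different (if parallel) chases, not the literal ``same template with the last two unit equations swapped in.'' None of this is fatal---the strategy does go through---but as written the proposal asserts rather than establishes the one reduction on which the lemma turns.
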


There is a stronger version of the nullary MIX rule, which often holds in categorical models, that is $\Bot \dashv\vdash \bone$. In this case, we have that $\Bot\cong\bone$. 

\begin{definition}\cite[Def 6.5]{Cockett_Seely_1997}
A LDC is {\bf isomix} if it is mix and the nullary mix map ${m\c\Bot\rarr\bone}$ is an isomorphism.
\end{definition}

In fact, we do not need to check \eqref{cc:mix_LDC} if the two monoidal units are isomorphic.

\begin{lemma}\cite[Lem 6.6]{Cockett_Seely_1997}\label{lem:isomix}
A LDC for which there is an isomorphism $\bone\cong\Bot$ is isomix.
\end{lemma}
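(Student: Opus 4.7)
The plan is to build a nullary mix map out of the given isomorphism and then invoke Lemma \ref{lem:prove_mix1} to reduce the coherence check to a single unit-heavy case. Concretely, fix an isomorphism $\phi \c \bone \xrightarrow{\cong} \Bot$ and set $m := \phi^{-1} \c \Bot \xrightarrow{\cong} \bone$. Because $m$ is already an isomorphism, the only thing left to verify is that this $m$ satisfies the mix coherence diagram \eqref{cc:mix_LDC}; once that is done, the LDC is isomix by definition.

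By Lemma \ref{lem:prove_mix1} it suffices to check \eqref{cc:mix_LDC} in a single case, and the most economical choice is $A = B = \bone$. In this case every object appearing in the diagram is built solely from $\bone$ and $\Bot$, and the only non-unitor/associator/symmetry map involved is $m$ itself. Moreover, the two special distributors $\delta^L_{\bone,\bone,\bone}$ and $\delta^R_{\bone,\bone,\bone}$ that appear can be rewritten purely in terms of unitors: substituting $A = \bone$ in the first equation of \eqref{cc:unit_lineardist} gives $\delta^L_{\bone,\bone,\bone} = ({u_\ot^L}_{\bone \parr \bone})^{-1} ; ({u_\ot^L}_{\bone} \parr 1_{\bone})$, and substituting $B = \bone$ in the second gives a dual description of $\delta^R_{\bone,\bone,\bone}$. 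After these substitutions, both the left and right composites of \eqref{cc:mix_LDC} collapse into composites of $m$ with unit isomorphisms only.

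To finish, I would show that both sides of the collapsed diagram equal the same canonical composite $\bone \ot \bone \to \bone \parr \bone$ from $\bone \ot \bone$ to $\bone \parr \bone$: namely, strip off the tensor unit using ${u_\ot^L}_\bone^{-1}$ (or ${u_\ot^R}_\bone^{-1}$), apply $m^{-1} = \phi$ to land in $\Bot$, and then reintroduce the par unit via ${u_\parr^L}_\bone^{-1}$ (or ${u_\parr^R}_\bone^{-1}$). The two triangle-style identities for unitors in a monoidal category together with \eqref{cc:unit_lineardist} force both paths to agree; this is a straightforward but slightly tedious diagram chase, essentially just a coherence argument since everything in sight is an isomorphism between objects of the form $\bone^{\otimes k} \parr \bone^{\otimes l}$.

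The main obstacle is the bookkeeping in this last step: one has to juggle both a left and a right unitor on each side of \eqref{cc:mix_LDC}, together with the two different linear distributors, and rewrite everything consistently via \eqref{cc:unit_lineardist}. There is no conceptual difficulty, but some care is needed to avoid sign/side errors when pushing the isomorphism $m$ past the unitors.
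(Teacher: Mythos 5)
The paper states this lemma only with a citation to Cockett--Seely and gives no proof of its own, so your argument has to stand on its own merits. Its skeleton is the right one and matches the standard proof: set $m = \phi^{-1}$, invoke Lemma \ref{lem:prove_mix1} to reduce \eqref{cc:mix_LDC} to the single case $A = B = \bone$, and note that this is the one case where the distributors appearing there, $\delta^L_{\bone,\bone,\bone}$ and $\delta^R_{\bone,\bone,\bone}$, are instances of the first two equations of \eqref{cc:unit_lineardist} and can be replaced by unitors. After that substitution, using ${u^L_\ot}_\bone = {u^R_\ot}_\bone$ and naturality of the tensor unitors, the two legs of \eqref{cc:mix_LDC} reduce to ${u_\ot^L}^{-1}_{\bone};{u^L_\parr}^{-1}_{\bone};(m \parr 1_{\bone})$ and ${u_\ot^R}^{-1}_{\bone};{u^R_\parr}^{-1}_{\bone};(1_{\bone} \parr m)$, so the whole lemma comes down to the single identity ${u^L_\parr}^{-1}_{\bone};(m \parr 1_{\bone}) = {u^R_\parr}^{-1}_{\bone};(1_{\bone} \parr m)$.

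The gap is in how you dispatch that last identity. You describe it as ``essentially just a coherence argument since everything in sight is an isomorphism,'' but $m$ is not part of the monoidal data, so no coherence theorem covers diagrams containing it; for an arbitrary map $m\c\Bot\rarr\bone$ this identity \emph{is} the nontrivial content of the mix condition at $\bone,\bone$, and it fails in general (otherwise every LDC admitting any map $\Bot\rarr\bone$ would be mix). It holds here only because $m$ is invertible: naturality of the par unitors gives ${u^L_\parr}^{-1}_{\bone};(m\parr 1_{\bone}) = m^{-1};{u^L_\parr}^{-1}_{\Bot};(m\parr m)$ and ${u^R_\parr}^{-1}_{\bone};(1_{\bone}\parr m) = m^{-1};{u^R_\parr}^{-1}_{\Bot};(m\parr m)$, and then ${u^L_\parr}_{\Bot} = {u^R_\parr}_{\Bot}$ closes the argument. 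This conjugation down to $\Bot\parr\Bot$ is exactly where the hypothesis $\bone\cong\Bot$ does its work, and your write-up passes over it. Relatedly, the ``canonical composite'' you propose (strip the tensor unit, apply $\phi$ to land in $\Bot$, then apply ${u_\parr^L}^{-1}_{\bone}$) does not typecheck as a map into $\bone\parr\bone$; the correct common normal form of both legs is ${u_\ot^L}^{-1}_{\bone};m^{-1};{u^L_\parr}^{-1}_{\Bot};(m\parr m)$. With that step made explicit, your proof is complete.
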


It is important to stress that in an isomix LDC, even if $\bone\cong\Bot$, the induced mix map $\mix_{A,B}\c A\ot B\rarr A\parr B$ is not necessarily an isomorphism. 

\begin{definition}\cite[Sec 2.3]{Cockett_Comfort_Srinivasan_2021}\label{def:compact_LDC}
A LDC is {\bf compact} if it is isomix and the mix maps $\mix_{A,B}\c A\ot B\rarr A\parr B$ are isomorphisms. 
\end{definition}

Of course, every degenerate LDC is a compact LDC, where the mix maps are identities. Moreover, by Lem \ref{lem:mix_maps_associators}, the linear distributors of a compact LDC are essentially the associators (modulo the mix maps) and, in particular, are invertible. Thus we can further equivalently characterize compact LDCs as the isomix LDCs with invertible distributors.  

\subsection{Complements and Negation}

The most important class of LDCs are arguably \emph{$\ast$-autonomous categories}. Unlike LDCs, $\ast$-autonomous categories make the multiplicative conjunction and linear negation primitive, then define the multiplicative disjunctions by de Morgan duality.

\begin{example} Briefly recall that a $\ast$-autonomous category can be defined as a symmetric monoidal category $(\bX, \ot, \bone)$ equipped with a full and faithful contravariant functor $\ast$ such that there are natural isomorphisms $\bX(A\ot B, C^\ast)\cong\bX(A, (B\ot C)^\ast)$. Then every $\ast$-autonomous category is a SLDC, with the tensor structure given by the original monoidal structure and the par structure given by de Morgan duality, $A \parr B = (B^\ast \ot A^\ast)^\ast$ and $\Bot = \bone^\ast$. Some notable examples of $\ast$-autonomous categories include the category of finite-dimensional vector spaces, the category of sup-lattices, the category of coherence spaces and the category of finiteness spaces. 
\end{example}

In fact $\ast$-autonomous categories are precisely the SLDCs with \emph{negation}. However, before reviewing negation, we take a slight detour and talk about complemented objects (which we will also need for the results in Sec \ref{sec:prein-subterm}). 

\begin{definition}\cite[Def A.5]{Cockett_Seely_1999}
In a LDC, a \textbf{complementation pair} is a tuple $(A,A^c, \gamma, \tau)$ consisting of an object $A$, called the \textbf{left complement}, an object $A^c$, called the \textbf{right complement}, and maps $\gamma\c A\ot A^c \rarr \Bot$, called the \textbf{evaluation}, and $\tau\c \bone\rarr A^c\parr A$, called the \textbf{coevaluation}, such that the following diagrams commute: 
\begin{equation}\label{cc:complement_LDC}
\begin{gathered}\xymatrixrowsep{1.75pc}\xymatrixcolsep{2.5pc}\xymatrix{
A\ar[d]_-{{u^R_\ot}_A}\ar[r]^-{1_A} & A & A^c\ar[d]_-{{u^L_\ot}_A^c}\ar[r]^-{1_{A^c}} & A^c\\
A\ot \bone\ar[d]_-{1_A\ot \tau} & \Bot\ot A\ar[u]_-{{u^L_\parr}_A} & \bone\ot A^c\ar[d]_-{\tau\ot 1_{A^c}}  & A^c\parr \Bot\ar[u]_-{{u^R_\parr}_{A^c}} \\
A\ot (A^c\parr A)\ar[r]_-{\delta^L_{A,A^c,A}} & (A\ot A^c)\parr A\ar[u]_-{\gamma\parr 1_A} & (A^c\parr A)\ot A^c\ar[r]_-{\delta^R_{A^c,A,A^c}} & A^c\parr (A\ot A^c)\ar[u]_-{1_{A^c}\parr \gamma}
}
\end{gathered}\end{equation}
\end{definition}

In any LDC, the $\otimes$-unit and the $\parr$-unit always form a complementation pair. 

\begin{proposition}\cite[Prop A.7]{Cockett_Seely_1999}\label{prop:bot_top_complements}
In a LDC, $(\Bot, \bone, {u^R_\ot}_\Bot^{-1}\c\Bot\ot \bone\rarr\Bot,{u^R_\parr}^{-1}_\bone\c \bone \rarr\bone \parr \Bot)$ is a complementation pair. 
\end{proposition}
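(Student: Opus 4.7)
The plan is to verify the two snake identities of diagram (\ref{cc:complement_LDC}) specialized to $A = \Bot$, $A^c = \bone$, $\gamma = {u^R_\ot}_\Bot^{-1}$, and $\tau = {u^R_\parr}_\bone^{-1}$. The two identities are dual and should be treated separately, since they rely on different halves of the compatibility axioms in (\ref{cc:unit_lineardist}).

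For the first identity, the strategy is to apply the third equation of (\ref{cc:unit_lineardist}), namely $\delta^L_{A,B,\Bot}; {u^R_\parr}_{A \ot B} = 1_A \ot {u^R_\parr}_B$, specialized to $A = \Bot$ and $B = \bone$. To bring the ambient composite into a form where this axiom directly applies, I would first invoke naturality of ${u^R_\parr}$ along the isomorphism ${u^R_\ot}_\Bot^{-1} \c \Bot \ot \bone \to \Bot$ so as to commute it past the term $({u^R_\ot}_\Bot^{-1} \parr 1_\Bot)$ that appears later in the composite, and then replace the terminal ${u^L_\parr}_\Bot$ by ${u^R_\parr}_\Bot$ using the standard monoidal coherence $u^L_{\parr \Bot} = u^R_{\parr \Bot}$ that holds on the monoidal unit. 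After these rewrites, the compatibility axiom collapses the middle of the composite to $1_\Bot \ot {u^R_\parr}_\bone$, which cancels against the inverse unitor $1_\Bot \ot {u^R_\parr}_\bone^{-1}$ already present, leaving only ${u^R_\ot}_\Bot; {u^R_\ot}_\Bot^{-1} = 1_\Bot$.

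For the second identity, a symmetric argument should go through, using the second equation of (\ref{cc:unit_lineardist}), ${u^R_\ot}_{A \parr B}; \delta^R_{A,B,\bone} = 1_A \parr {u^R_\ot}_B$, together with naturality of ${u^R_\ot}$ and the dual coherence $u^L_{\ot \bone} = u^R_{\ot \bone}$. Notably, no axiom involving associators, nor either of the left-right compatibility axioms (\ref{cc:leftright_lineardist}), is required: both snake identities depend only on the unit compatibilities of (\ref{cc:unit_lineardist}) together with standard monoidal unitor coherences.

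The main obstacle, if any, is purely bookkeeping: carefully tracking which unitor is which and ensuring that the naturality squares align correctly. There is no conceptual subtlety, but the chain of equalities is delicate enough that a clean write-up benefits from pasting together a few small commutative squares rather than trying to rewrite the entire composite in one step.
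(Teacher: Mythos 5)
Your argument is correct. The paper itself gives no proof of this proposition, deferring entirely to the citation of Cockett--Seely; your verification supplies exactly the expected argument. Both snake identities check out as you describe: for the first, naturality of $u^R_\parr$ applied to ${u^R_\ot}_\Bot^{-1}$ together with the unit coherence ${u^L_\parr}_\Bot = {u^R_\parr}_\Bot$ reduces the tail of the composite to ${u^R_\parr}_{\Bot\ot\bone};{u^R_\ot}_\Bot^{-1}$, after which the third equation of \eqref{cc:unit_lineardist} collapses $\delta^L_{\Bot,\bone,\Bot};{u^R_\parr}_{\Bot\ot\bone}$ to $1_\Bot\ot{u^R_\parr}_\bone$ and everything cancels; the second identity is the precise dual via the second equation of \eqref{cc:unit_lineardist} and ${u^L_\ot}_\bone={u^R_\ot}_\bone$. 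Your observation that only the unit compatibilities and standard unitor coherences are needed is also accurate.
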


As one might expect, complementation pairs induce adjunctions.

\begin{lemma}\cite[Lem 4.4]{Cockett_Seely_1997_LDC}\label{lem:complement_pairs_bijection}
Let $(A,A^c, \gamma, \tau)$ be a complementation pair in a LDC \bX. Then there is an adjunction $A\ot - \dashv A^c \parr -$, in other words there is a natural isomorphism between the hom-sets $\bX(A\ot B, C)\cong \bX(B, A^c\parr C)$.
\end{lemma}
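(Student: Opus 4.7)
The plan is to exhibit the adjunction $A \ot - \dashv A^c \parr -$ by constructing the natural bijection $\bX(A \ot B, C) \cong \bX(B, A^c \parr C)$ directly from the evaluation $\gamma$ and coevaluation $\tau$, and then verifying that the two assignments are mutually inverse using the complementation axioms \eqref{cc:complement_LDC}.

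Concretely, for $f \c A \ot B \rarr C$, I would set
\[ f^\sharp \,:=\, {u_\ot^L}_B \,;\, (\tau \ot 1_B) \,;\, \delta^R_{A^c, A, B} \,;\, (1_{A^c} \parr f) \,\c\, B \rarr A^c \parr C, \]
and for $g \c B \rarr A^c \parr C$, I would set
\[ g^\flat \,:=\, (1_A \ot g) \,;\, \delta^L_{A, A^c, C} \,;\, (\gamma \parr 1_C) \,;\, {u_\parr^L}_C \,\c\, A \ot B \rarr C. \]
Naturality of both assignments in $B$ and in $C$ is immediate from naturality of the linear distributors and unitors together with bifunctoriality of $\ot$ and $\parr$.

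The substantive content is the verification that $(-)^\sharp$ and $(-)^\flat$ are mutually inverse. For $(f^\sharp)^\flat = f$, I would expand the composite and use naturality of $\delta^L$ together with bifunctoriality of $\parr$ and naturality of ${u_\parr^L}$ to slide $f$ all the way to the right, expressing the composite in the form $\Psi \,;\, f$ for some endomap $\Psi \c A \ot B \rarr A \ot B$. Using the second equality of \eqref{cc:leftright_lineardist}, the subcomposite $(1_A \ot \delta^R_{A^c, A, B}) \,;\, \delta^L_{A, A^c, A \ot B}$ rewrites as ${\alpha_\ot}^{-1} \,;\, (\delta^L_{A, A^c, A} \ot 1_B) \,;\, \delta^R_{A \ot A^c, A, B}$; then naturality of $\delta^R$ in its first argument combined with the fourth equality of \eqref{cc:unit_lineardist} allow the trailing $1_B$ to be extracted as a tensor factor, leaving $\Psi = \Phi \ot 1_B$, where $\Phi \c A \rarr A$ is precisely the composite appearing in the left complementation triangle of \eqref{cc:complement_LDC}. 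Thus $\Phi = 1_A$ and $\Psi = 1_{A \ot B}$, so $(f^\sharp)^\flat = f$. The dual identity $(g^\flat)^\sharp = g$ is handled symmetrically, using the first equality of \eqref{cc:leftright_lineardist} and the right complementation triangle.

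The hard part will be this last diagram chase: it requires carefully threading the left-right distributor compatibility with the associator triangle of the underlying monoidal category and the unit coherences \eqref{cc:unit_lineardist}, in order to reduce a composite involving $\tau$, $\gamma$, and both distributors to an instance of a complementation triangle whiskered by $1_B$ (resp.\ by $1_C$).
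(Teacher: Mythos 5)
Your proposal is correct: the transposes $f^\sharp$ and $g^\flat$ are the standard ones, and your reduction of each round-trip to a complementation triangle whiskered by an identity --- via the second (resp.\ first) equality of \eqref{cc:leftright_lineardist}, naturality of the distributors, the unit coherences \eqref{cc:unit_lineardist}, and the monoidal triangle identity --- goes through exactly as you sketch. The paper itself only cites this result from Cockett and Seely without reproducing the proof, and your argument is precisely the expected one.
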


Viewed through the lens of linear logic, given a complementation pair $(A,A^c, \gamma, \tau)$, we may see $A^c$ as essentially the ``right negation'' of $A$, and dually that $A$ is the ``left negation'' of $A^c$. Thus we say that a LDC has negation if there is a coherent way of associating to every object both a left negation and a right negation. In standard commutative MLL modeled by a SLDC, we require that the left negation and right negation be the same. 

\begin{definition}\cite[Def 4.1, 4.3]{Cockett_Seely_1997_LDC}\label{def:SLDC_negation}
A SLDC has {\bf negation} if there is an object function $(-)^\perp$, together with the following parametrized family of maps 
\begin{align*}
&\gamma_A^R\c A\ot A^\perp \rarr \Bot & \tau^R_A\c \bone\rarr A\parr A^\perp
\end{align*}
which additionally induce the following families,
\[ \gamma_A^L = A^\perp \ot A \xrightarrow{{\sigma_\ot}_{A^\perp, A}} A\ot A^\perp \xrightarrow{\gamma^R_A} \Bot \qquad\qquad \tau^L_A = \bone \xrightarrow{\tau^R_A} A\parr A^\perp \xrightarrow{{\sigma_\parr}_{A,A^\perp}} A^\perp\parr A\]
such that $(A, A^\perp, \gamma^R_A, \tau^L_A)$ and $(A^\perp, A, \gamma^L_A, \tau^R_A)$ form complementation pairs. 
\end{definition}

\begin{theorem}\cite[Thm 4.5]{Cockett_Seely_1997_LDC}
The notions of SLDCs with negation and $\ast$-autonomous categories coincide.
\end{theorem}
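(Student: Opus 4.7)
The plan is to establish both directions of the equivalence by exploiting the adjunction induced by complementation pairs (Lemma \ref{lem:complement_pairs_bijection}), which is the key bridge between the two presentations. On one side, the multiplicative conjunction, disjunction, and negation of the LDC reassemble into the $\ast$-autonomous data; on the other, the de Morgan dual of $\otimes$ along $\ast$ produces a par structure with linear distributors.

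For the forward direction, I would start with a SLDC with negation $\bX$ and take its underlying symmetric monoidal category $(\bX, \otimes, \bone)$ as the candidate $\ast$-autonomous category. The object assignment $(-)^\perp$ is already given, so I would first promote it to a contravariant functor by sending $f\c A \rarr B$ to the mate $f^\perp\c B^\perp \rarr A^\perp$ obtained from the complementation pair adjunction $A \otimes - \dashv A^\perp \parr -$. Functoriality is a direct check using naturality of the adjunction isomorphism. Fullness and faithfulness of $(-)^\perp$ follow because the second complementation pair $(A^\perp, A, \gamma^L_A, \tau^R_A)$ exhibits $A$ as a complement of $A^\perp$, and uniqueness of complements up to canonical iso yields $(A^\perp)^\perp \cong A$ naturally, so that $(-)^\perp$ is an involutive equivalence $\bX \simeq \bX^{op}$. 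Finally, the characteristic $\ast$-autonomous hom-isomorphism $\bX(A \otimes B, C^\perp) \cong \bX(A, (B \otimes C)^\perp)$ is obtained by applying the complementation pair adjunction twice, computing both sides as $\bX(A \otimes B \otimes C, \Bot)$ (using $\bone \parr -\cong - $ and $- \parr \Bot \cong -$ where needed).

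For the backward direction, given a $\ast$-autonomous category $(\bX, \otimes, \bone, \ast)$, I would define $A \parr B := (B^\ast \otimes A^\ast)^\ast$ with unit $\Bot := \bone^\ast$, which is a standard construction yielding a symmetric monoidal structure by transporting the coherence data of $\otimes$ along $\ast$. The linear distributors $\delta^L, \delta^R$ are defined as the unique maps obtained by unfolding the hom-isomorphism $\bX(A \otimes B, C^\ast) \cong \bX(A, (B \otimes C)^\ast)$ in the appropriate arguments; their compatibility with units, associators, and the symmetry reduces, via Yoneda, to the coherence of the symmetric monoidal structure of $\otimes$. The negation is set to $(-)^\perp := (-)^\ast$, with evaluation $\gamma^R_A\c A \otimes A^\ast \rarr \bone^\ast$ and coevaluation $\tau^R_A\c \bone \rarr A \parr A^\ast$ obtained as the counit and unit of the adjunction $-\otimes A^\ast \dashv (- \otimes A)^\ast$; the triangle identities for the adjunction are precisely the complementation pair equations \eqref{cc:complement_LDC}.

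The main obstacle is not any single conceptual leap but the sheer bookkeeping of coherence verifications in the backward direction, in particular the four linear distributor axioms \eqref{cc:assoc_lineardist} and the two compatibility axioms \eqref{cc:leftright_lineardist}, as well as checking that in the forward direction the given par structure $\parr$ actually agrees with the one induced by de Morgan duality $(B^\perp \otimes A^\perp)^\perp$ (which follows from uniqueness of the right adjoint to $-\otimes A^\perp$, but must be shown to be compatible with associators and units). One should also verify that the two constructions are mutually inverse up to the appropriate notion of equivalence of structures, which is largely automatic from the uniqueness properties invoked above.
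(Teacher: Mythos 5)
The paper states this theorem as an imported result, citing Cockett and Seely, and gives no proof of its own, so there is no internal argument to compare against. Your sketch follows the same route as the original: the complementation-pair adjunction $A \otimes - \dashv A^\perp \parr -$ (Lemma \ref{lem:complement_pairs_bijection}) plus involutivity $(A^\perp)^\perp \cong A$ gives the $\ast$-autonomous hom-isomorphism by computing both sides as $\bX(A\otimes B\otimes C, \Bot)$, and conversely de Morgan duality and the induced closed structure give the par, the linear distributors, and the complementation pairs as unit/counit of the duality adjunction. The outline is sound; as you correctly identify, the substance of the original proof lies in the coherence verifications, which your proposal defers but locates accurately.
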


\subsection{Linear Functors}

We conclude this background section by reviewing the appropriate notion of morphisms between LDCs. The definition was determined in order to develop the appropriate version of exponential modalities for LDCs. We do not review the full definition here, as we will not necessarily need to know all the axioms for the story of this paper, and instead invite the curious reader to see \cite{Cockett_Seely_1999} for full details. 

\begin{definition}\cite[Def 1]{Cockett_Seely_1999} A {\bf (bilax) linear functor} $F = (F_\ot, F_\parr)\c\bX\rarr\bY$ between LDCs consists of:
\begin{enumerate}[label=(\roman*)]
\item A (lax) $\ot$-monoidal functor $(F_\ot, m_\bone, m_\ot)\c(\bX,\ot,\bone)\rarr(\bY,\ot,\bone)$, with underlying functor ${F_\ot: \bX \to \bY}$, unit isomorphism (resp. map) $m_\bone\c\bone\rarr F_\ot(\bone)$, and natural monoidal isomorphism (resp. transformation) \\${{m_\ot}_{A, B}\c F_\ot(A)\ot F_\ot(B)\rarr F_\ot(A\ot B)}$; 
\item A (colax) $\parr$-monoidal functor $(F_\parr, n_\Bot, n_\parr)\c(\bX,\parr,\Bot)\rarr(\bY,\parr,\Bot)$, with underlying functor $F_\parr: \bX \to \bY$, unit isomorphism (resp. map) $n_\Bot\c F_\parr(\Bot)\rarr\Bot$, and natural comonoidal isomorphism (resp. transformation) ${n_\parr}_{A,B}\c F_\parr(A\parr B)\rarr F_\parr(A)\parr F_\parr(B)$;
\end{enumerate}
equipped with four natural transformations, known as the {\bf linear strengths}, 
        \begin{align*}
      {v_\ot^R}_{A,B} \c F_\ot(A\parr B)\rarr F_\parr(A)\parr F_\ot(B) &&  {v_\ot^L}_{A,B} \c F_\ot(A\parr B)\rarr F_\ot(A)\parr F_\parr(B)\\
 {v_\parr^R}_{A,B} \c F_\ot(A)\ot F_\parr(B)\rarr F_\parr(A\ot B) &&  {v_\parr^L}_{A,B} \c F_\parr(A)\ot F_\ot(B)\rarr F_\parr(A\ot B)
        \end{align*}
such that various coherence conditions detailed in \cite{Cockett_Seely_1999} hold. 
\end{definition}

We shall also be particularly interested in restricting our attention to linear functors whose component functors are in fact equal. These were first defined in \cite{Blute_Panangaden_Slanov_2012} under the name degenerate linear functors. They were renamed {\em Frobenius} linear functors in \cite{Cockett_Comfort_Srinivasan_2021} as they generalize Frobenius monoidal functors. We shall use this latter terminology.

\begin{definition}\cite[Def 3.1]{Cockett_Comfort_Srinivasan_2021}
A {\bf (bilax) Frobenius} linear functor $F=(F_\ot,F_\parr)\c \bX\rarr\bY$ is a (bilax) linear functor such that $F_\ot = F_\parr$, ${\nu^R_\ot}_{A,B} = {\nu^L_\ot}_{A,B} = {n_\parr}_{A,B}$, and ${\nu^R_\parr}_{A,B} = {\nu^L_\parr}_{A,B} = {m_\ot}_{A,B}$.
\end{definition}

Given the degeneracy, we can give an alternative characterization of Frobenius linear functors.

\begin{proposition}\cite[Lem 3.2]{Cockett_Comfort_Srinivasan_2021} For LDCs $\bX$ and $\bY$, the following notions coincide:
\begin{enumerate}[label=(\roman*)]
    \item A (bilax) Frobenius linear functor $\bX\rarr\bY$;
    \item A functor $F: \bX \to \bY$ which is (lax) $\ot$-monoidal $(F, m_\bone, m_\ot)\c(\bX,\ot,\bone)\rarr(\bY,\ot,\bone)$ and (colax) $\parr$-monoidal $(F, n_\Bot, n_\parr)\c(\bX,\parr,\Bot)\rarr(\bY,\parr,\Bot)$ such that the following diagrams commute: 
    \begin{equation}\label{cc:Frobenius_linear_functor}
\begin{gathered}\xymatrixrowsep{1.75pc}\xymatrixcolsep{1.25pc}\xymatrix@L=0.3pc{
F(A)\ot F(B\parr C)\ar[r]^-{{m_\ot}_{A,B\parr C}}\ar[d]_-{1_{F(A)}\ot {n_\parr}_{B,C}} & F(A\ot (B\parr C))\ar[r]^-{F(\delta^L_{A,B,C})} & F((A\ot B)\parr C)\ar[d]^-{{n_\parr}_{A\ot B, C}} \\
F(A) \ot (F(B)\parr F(C))\ar[r]_-{\delta^L_{F(A), F(B), F(C)}} & (F(A) \ot F(B))\parr F(C)\ar[r]_-{{m_\ot}_{A,B}\parr 1_{F(C)}} & F(A\ot B)\parr F(C) \\
F(A\parr B)\ot F(C)\ar[r]^-{{m_\ot}_{A\parr B, C}}\ar[d]_-{{n_\parr}_{A,B}\ot 1_{F(C)}} & F((A\parr B)\ot C)\ar[r]^-{F(\delta^R_{A, B, C})} & F(A\parr (B\ot C))\ar[d]^-{{n_\parr}_{A, B\ot C}} \\
(F(A)\parr F(B))\ot F(C)\ar[r]_-{\delta^R_{F(A), F(B), F(C)}} & F(A)\parr (F(B)\ot F(C))\ar[r]_-{1_{F(A)}\parr {m_\ot}_{B, C}} & F(A) \parr F(B\ot C) 
}
\end{gathered}\end{equation}
\end{enumerate}
\end{proposition}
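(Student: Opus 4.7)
The plan is to establish the equivalence by showing that, under the identifications $F_\ot = F_\parr = F$, $\nu^R_\ot = \nu^L_\ot = n_\parr$, and $\nu^R_\parr = \nu^L_\parr = m_\ot$ forced by the definition of a Frobenius linear functor, the axiom list of a bilax linear functor collapses precisely to the axioms of a lax $\ot$-monoidal functor, a colax $\parr$-monoidal functor, and the two diagrams in \eqref{cc:Frobenius_linear_functor}. The forward direction (i)$\Rightarrow$(ii) is then essentially unpacking the definition: a Frobenius linear functor already carries the required lax $\ot$-monoidal and colax $\parr$-monoidal structures, and the two diagrams in \eqref{cc:Frobenius_linear_functor} appear as instances of the standard linear-strength/linear-distributor coherence axioms of a bilax linear functor, once we substitute $\nu^L_\ot = n_\parr$ and $\nu^R_\parr = m_\ot$ (respectively $\nu^R_\ot = n_\parr$ and $\nu^L_\parr = m_\ot$) into them.

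For the reverse direction (ii)$\Rightarrow$(i), I would define the prospective Frobenius linear functor by setting $F_\ot := F =: F_\parr$, taking the unit maps $m_\bone$ and $n_\Bot$ and the natural transformations $m_\ot$ and $n_\parr$ from the given data, and defining the four linear strengths by the prescribed equalities. I would then go through the axioms listed in \cite{Cockett_Seely_1999} and sort them into three groups: (a) those involving only the $\ot$-monoidal structure (unit, associativity), automatically satisfied because $(F, m_\bone, m_\ot)$ is lax $\ot$-monoidal; (b) those involving only the $\parr$-monoidal structure, automatically satisfied because $(F, n_\Bot, n_\parr)$ is colax $\parr$-monoidal; (c) the genuine linear-distributor coherences mixing $\ot$ and $\parr$, which under the Frobenius identifications reduce exactly to the two squares of \eqref{cc:Frobenius_linear_functor} (together with their left/right symmetric counterparts, which coincide by the identifications).

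The main obstacle is purely bookkeeping: the bilax linear functor definition in \cite{Cockett_Seely_1999} involves numerous coherence conditions relating the four linear strengths to each other, to the monoidal units, to the associators, and to the linear distributors. The key observation is that once $\nu^L_\ot = \nu^R_\ot$ and $\nu^L_\parr = \nu^R_\parr$, the axioms coming in ``left/right'' pairs collapse to single conditions, and the axioms expressing compatibility of the strengths with the monoidal unitors become consequences of the lax/colax unit conditions on $m_\bone$ and $n_\Bot$. What remains after these reductions are precisely the distributor coherences displayed in \eqref{cc:Frobenius_linear_functor}. I would therefore not reproduce the full case analysis but appeal to the verification in \cite{Cockett_Comfort_Srinivasan_2021}, merely highlighting the two distributor squares as the essential content, and noting that naturality of $m_\ot$ and $n_\parr$ together with these two squares suffices to recover every remaining linear strength axiom.
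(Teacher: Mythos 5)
The paper offers no proof of this proposition at all --- it is imported verbatim as \cite[Lem 3.2]{Cockett_Comfort_Srinivasan_2021} --- and your outline (collapse the bilax linear functor axioms under the Frobenius identifications $F_\ot = F_\parr$, $\nu_\ot = n_\parr$, $\nu_\parr = m_\ot$, observe that the left/right pairs coincide and that only the two distributor squares of \eqref{cc:Frobenius_linear_functor} carry residual content, deferring the remaining bookkeeping to the cited source) is exactly the argument given there. Your proposal is correct and takes essentially the same route as the paper's (cited) proof.
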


So from now on we will denote Frobenius linear functors simply as $F\c\bX \to \bY$. If the LDCs are mix, the definition of Frobenius linear functors can be slightly extended to guarantee that they preserve the mix maps.

\begin{definition}\cite[Def 3.4]{Cockett_Comfort_Srinivasan_2021} For mix LDCs $\bX$ and $\bY$, a (bilax) Frobenius linear functor $F\c \bX\rarr\bY$ is {\em mix} if the following diagram commutes:
\begin{equation}\label{cc:mix_Frobenius_functor}
\begin{gathered}\xymatrixrowsep{1.75pc}\xymatrixcolsep{1.75pc}\xymatrix{
F(\Bot)\ar[r]^-{n_\Bot}\ar[d]_-{F(m)} & \Bot\ar[d]^-{m} \\
F(\bone) & \bone\ar[l]^-{m_\bone}
}
\end{gathered}\end{equation}
\end{definition}

\begin{proposition}\cite[Lem 3.5]{Cockett_Comfort_Srinivasan_2021}
A mix (bilax) Frobenius linear functor $F\c \bX\rarr\bY$ preserves the mix maps, that is, following diagram commutes: 
\begin{equation}
\begin{gathered}\xymatrixrowsep{1.75pc}\xymatrixcolsep{2.75pc}\xymatrix{
F(A)\ot F(B)\ar[r]^-{\mix_{F(A), F(B)}}\ar[d]_-{{m_\ot}_{A,B}} & F(A)\parr F(B) \\
F(A\ot B)\ar[r]_-{F(\mix_{A,B})} & F(A\parr B)\ar[u]_-{{n_\parr}_{A,B}}
}
\end{gathered}\end{equation}
\end{proposition}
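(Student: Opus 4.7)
The plan is to unfold both mix maps using \eqref{cc:mix_LDC} and then transform $m_\otimes; F(\mix_{A,B}); n_\parr$ into $\mix_{F(A), F(B)}$ by a diagram chase through the coherence laws of bilax Frobenius linear functors. First, I would expand $F(\mix_{A,B})$ along the top-right path of \eqref{cc:mix_LDC}. Using naturality of $m_\otimes$, I slide it rightward past the two $F(1 \otimes -)$ factors, each becoming $1 \otimes F(-)$; dually, naturality of $n_\parr$ slides it leftward past $F({u_\otimes^R}^{-1} \parr 1)$, which becomes $F({u_\otimes^R}^{-1}) \parr 1$. The remaining central block $m_\otimes; F(\delta^L); n_\parr$ is then rewritten as $(1 \otimes n_\parr); \delta^L; (m_\otimes \parr 1)$ via the first square of the Frobenius axiom \eqref{cc:Frobenius_linear_functor}.

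At this point, the composite has the same shape as $\mix_{F(A), F(B)}$ unfolded via the same path of \eqref{cc:mix_LDC}, with two remaining discrepancies: the leftmost block reads $F({u_\parr^L}^{-1}); F(m \parr 1); n_\parr$ rather than the desired ${u_\parr^L}^{-1}; (m \parr 1)$ (the latter $m$ being the nullary mix map of $\bY$), and the rightmost block reads $m_\otimes; F({u_\otimes^R}^{-1})$ rather than ${u_\otimes^R}^{-1}$. These are reconciled by two sub-identities. The first is $F({u_\parr^L}^{-1}); F(m \parr 1); n_\parr = {u_\parr^L}^{-1}; (m \parr 1); (m_\bone \parr 1)$, which follows from naturality of $n_\parr$, the $\parr$-comonoidal coherence of $F$ with ${u_\parr^L}$ (which yields $F({u_\parr^L}^{-1}); n_\parr; (n_\Bot \parr 1) = {u_\parr^L}^{-1}$), and the mix-compatibility axiom \eqref{cc:mix_Frobenius_functor} (which factors $F$ of $\bX$'s nullary mix map as $n_\Bot; m; m_\bone$). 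The second is $(1 \otimes m_\bone); m_\otimes; F({u_\otimes^R}^{-1}) = {u_\otimes^R}^{-1}$, which follows directly from the $\otimes$-monoidal coherence of $F$ with ${u_\otimes^R}$.

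Finally, naturality of $\delta^L$ in its middle argument migrates the leftover $1 \otimes (m_\bone \parr 1)$ from the first sub-identity through $\delta^L$, becoming $(1 \otimes m_\bone) \parr 1$ on the other side, where it merges with $(m_\otimes \parr 1); (F({u_\otimes^R}^{-1}) \parr 1)$ and collapses via the second sub-identity to ${u_\otimes^R}^{-1} \parr 1$. The result is exactly $\mix_{F(A), F(B)}$. The main subtlety is that $m_\bone$ and $n_\Bot$ are not assumed invertible in the bilax setting, so one cannot simply cancel them; the mix-compatibility axiom \eqref{cc:mix_Frobenius_functor} is precisely what is needed to insert an $m_\bone$ at the leftmost end of the composite, which can then be absorbed at the rightmost end by the $\otimes$-unitor coherence.
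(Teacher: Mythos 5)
Your argument is correct: expanding $F(\mix_{A,B})$ along the top path of \eqref{cc:mix_LDC}, sliding $m_\ot$ and $n_\parr$ by naturality into the central block, applying the first Frobenius square of \eqref{cc:Frobenius_linear_functor} at $B=\bone$, and then absorbing the two unit-comparison discrepancies via the colax $\parr$-unitor coherence together with \eqref{cc:mix_Frobenius_functor} on one side and the lax $\ot$-unitor coherence on the other (with naturality of $\delta^L$ in its middle argument transporting $m_\bone$ across) does land exactly on $\mix_{F(A),F(B)}$; your closing remark about why \eqref{cc:mix_Frobenius_functor} is indispensable in the bilax setting is exactly the right observation. Note that the paper itself gives no proof of this proposition --- it is imported from Cockett--Comfort--Srinivasan --- so your chase is a self-contained verification rather than a variant of an argument in the text.
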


With the introduction of linear functors, we can be more precise about the relationship between the different examples and classes of LDCs previously introduced. The following results are implicit in the work of Cockett and Seely, but have yet to explicitly appear in the literature, so we record them here.

\begin{proposition}
Let $(\bX, \ot, \bone, \parr,\Bot)$ be a compact LDC. Then the $\ot$-monoidal structure is isomorphic to the $\parr$-monoidal structure, $(\bX, \ot,\bone)\cong(\bX, \parr, \Bot)$. Moreover, it is isomorphic (as a LDC) to the degenerate LDC induced by the $\ot$-monoidal structure and similarly induced by the $\parr$-monoidal structure, $(\bX, \ot, \bone, \parr,\Bot)\cong (\bX, \ot, \bone, \ot,\bone)\cong (\bX, \parr, \Bot, \parr,\Bot)$.
\end{proposition}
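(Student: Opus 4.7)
The plan is to exhibit all three isomorphisms using the identity functor on $\bX$ equipped with structure maps built out of the nullary mix map $m\c\Bot\rarr\bone$ and the mix natural transformation $\mix_{A,B}\c A\ot B\rarr A\parr B$. By definition of compact, both $m$ and every $\mix_{A,B}$ are isomorphisms, so any functor built from them using these components as structure maps will automatically be an equivalence/isomorphism once we verify the required coherence diagrams. The key coherence facts are exactly those recorded in Lemma \ref{lem:mix_maps_associators}, together with the defining equation \eqref{cc:mix_LDC} of the mix map and the unit/symmetry compatibilities of $\mix$.

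First I would establish the monoidal isomorphism $(\bX,\ot,\bone)\cong(\bX,\parr,\Bot)$. Take $F=1_\bX$ with strong monoidal structure maps $m_\bone^{-1}\c\Bot\rarr\bone$ and $\mix_{A,B}^{-1}\c A\parr B\rarr A\ot B$ (viewed as a functor from $(\bX,\parr,\Bot)$ to $(\bX,\ot,\bone)$). Naturality of $\mix$ gives naturality of the structure map, while the hexagonal associator axiom for a monoidal functor is precisely the left-hand diagram of \eqref{cc:mix_maps_associators} (after inversion of $\mix$ and $\delta^L$, both of which are isomorphisms in the compact setting). The two unitor diagrams reduce to the unitor compatibilities already implicit in \eqref{cc:mix_LDC}, specialized using $m$.

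Next, for the LDC-isomorphism $(\bX,\ot,\bone,\parr,\Bot)\cong(\bX,\ot,\bone,\ot,\bone)$, I would again take the identity functor and endow it with Frobenius linear structure: $m_\ot=1$, $m_\bone=1$ for the tensor side, and $n_\parr:=\mix^{-1}$, $n_\Bot:=m$ for the par side (the target's par is $\ot$ and target par-unit is $\bone$, so these have the correct sources and targets). Since $\mix^{-1}$ and $m$ are isomorphisms, checking the Frobenius linear functor conditions \eqref{cc:Frobenius_linear_functor} amounts to checking that $\mix$ intertwines $\delta^L$ with $\alpha_\ot$ and $\delta^R$ with $\alpha_\ot$, which are precisely the two squares in \eqref{cc:mix_maps_associators} once one recalls that in the degenerate LDC on $(\bX,\ot,\bone)$ the linear distributors are $\alpha_\ot$ and $\alpha_\ot^{-1}$. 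The mix compatibility \eqref{cc:mix_Frobenius_functor} is trivial here since $n_\Bot=m$, $m_\bone=1$, and the target LDC has $m=1_\bone$ as its nullary mix. The symmetric argument with the roles of $\ot$ and $\parr$ swapped gives the isomorphism with the degenerate LDC on $(\bX,\parr,\Bot)$; here one uses $\mix$ (not its inverse) as the $m_\ot$ component and $m^{-1}$ as $m_\bone$.

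The main obstacle is not conceptual but bookkeeping: one must choose the directions of the structure maps carefully so that sources and targets match, and then recognize each coherence diagram as one already known to commute. The nontrivial input is Lemma \ref{lem:mix_maps_associators}, which supplies all the mix--associator--distributor compatibilities; once these are in hand the proof is essentially automatic, since invertibility of $\mix$ and $m$ upgrades every coherent transformation to an isomorphism and therefore every structural functor above to an isomorphism of LDCs.
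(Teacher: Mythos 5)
Your proposal is correct and follows essentially the same route as the paper: the identity functor equipped with $m^{\pm 1}$ and $\mix^{\pm 1}$ as (co)monoidal structure maps, with all coherence conditions discharged by Lemma \ref{lem:mix_maps_associators} (the Frobenius conditions \eqref{cc:Frobenius_linear_functor} becoming the squares of \eqref{cc:mix_maps_associators}). The only quibble is notational: a few of your structure maps are typed in the wrong direction (e.g.\ the unit map of a lax monoidal functor $(\bX,\parr,\Bot)\rarr(\bX,\ot,\bone)$ should be $m^{-1}\c\bone\rarr\Bot$), but as you note this is bookkeeping that invertibility renders harmless.
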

\begin{proof}
Given a compact LDC $(\bX, \ot, \bone, \parr,\Bot)$, the identity functor $1_{\bbX}\c \bX\rarr\bX$ becomes a monoidal functor in four ways. Firstly and secondly, we have the identity monoidal functors $(1_{\bbX}, 1_\bone, 1_{A\ot B})\c (\bX, \ot, \bone)\rarr (\bX, \ot, \bone)$ and $(1_{\bbX}, 1_\Bot, 1_{A\parr B})\c (\bX, \parr, \Bot)\rarr (\bX, \parr, \Bot)$. Thirdly and fourthly, $(1_{\bbX}, m, \mix^{-1}_{A,B})\c(\bX, \ot, \bone)\rarr(\bX, \parr,\Bot)$ and $(1_{\bbX}, m^{-1}, \mix_{A,B})\c(\bX, \parr,\Bot)\rarr(\bX, \ot, \bone)$ are also monoidal functors. These latter two are inverses which mediate the isomorphism $(\bX, \ot,\bone)\cong(\bX, \parr, \Bot)$. 

Furthermore, pairing the first and last monoidal functor structures determines a Frobenius linear functor \\$(1_{\bbX}, 1_\bone,1_{A\ot B}, m^{-1}, \mix_{A,B})\c (\bX, \ot, \bone, \ot, \bone)\rarr (\bX, \ot, \bone, \parr,\Bot)$. Indeed, the Frobenius linear functor coherence diagrams \eqref{cc:Frobenius_linear_functor} simply become the top squares of \eqref{cc:mix_maps_associators}, which commute in any mix LDC. Similarly, pairing the first and third structures determines its inverse $(1_{\bbX}, 1_\bone,1_{A\ot B}, m, \mix^{-1}_{A,B})\c (\bX, \ot, \bone, \parr,\Bot)\rarr (\bX, \ot, \bone, \ot, \bone)$, mediating the isomorphism $(\bX, \ot, \bone, \parr,\Bot)\cong (\bX, \ot, \bone, \ot,\bone)$. Pairing the second monoidal functor with the third and then the fourth provides the isomorphism $(\bX, \ot, \bone, \parr,\Bot)\cong (\bX, \parr, \Bot, \parr,\Bot)$.
\end{proof}

\begin{proposition} Let $(\bX, \ot, \bone, \parr, \Bot)$ be a LDC with invertible linear distributors. Then we have that the $\parr$-monoidal structure is isomorphic to the $\Bot$-shifted tensor structure, $(\bX, \parr, \Bot)\cong (\bX, \cdot\ot((\bone\parr\bone)\ot\cdot), \Bot)$. Moreover, it is isomorphic (as a LDC) to the $\Bot$-shifted LDC, $(\bX, \ot, \bone, \parr, \Bot)\cong (\bX,\ot, \bone, \cdot\ot((\bone\parr\bone)\ot\cdot), \Bot)$.
\end{proposition}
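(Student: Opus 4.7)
The plan is to parallel the proof of the preceding proposition. Let $\parr'$ denote the $\Bot$-shifted tensor, so that $A \parr' B := A \ot (\Bot^{-1} \ot B)$ with $\Bot^{-1} = \bone \parr \bone$ and monoidal unit $\Bot$. By Proposition \ref{prop:shift_tensor}, this endows $\bX$ with a second par structure, and $(\bX, \ot, \bone, \parr', \Bot)$ is the shifted-tensor LDC, whose linear distributors $\delta^{L'}, \delta^{R'}$ are built solely from the $\ot$-associators and are automatically invertible.

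First, I would extract from the second half of Proposition \ref{prop:shift_tensor} the natural equivalence between $\parr$ and $\parr'$; concretely, a natural isomorphism $\theta_{A,B}\c A \parr' B \xrightarrow{\cong} A \parr B$, which can be given explicitly by the composite
\begin{align*}
A \ot ((\bone \parr \bone) \ot B) &\xrightarrow{1_A \ot \delta^R_{\bone, \bone, B}} A \ot (\bone \parr (\bone \ot B)) \xrightarrow{\delta^L_{A, \bone, \bone \ot B}} (A \ot \bone) \parr (\bone \ot B) \\
&\xrightarrow{{u_\ot^R}^{-1}_A \parr {u_\ot^L}^{-1}_B} A \parr B,
\end{align*}
which is a composite of invertible maps and hence invertible. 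Together with $1_\Bot\c \Bot \to \Bot$ as the unit comparison, this already exhibits $(\bX, \parr, \Bot) \cong (\bX, \parr', \Bot)$ as (symmetric) monoidal categories, proving the first claim.

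Next, I would upgrade this to an isomorphism of LDCs by assembling the tuple
\[(1_{\bbX}, 1_\bone, 1_{A \ot B}, 1_\Bot, \theta^{-1}_{A,B})\c (\bX, \ot, \bone, \parr, \Bot) \to (\bX, \ot, \bone, \parr', \Bot)\]
and verifying it forms a Frobenius linear functor. Since the $\ot$-monoidal data is entirely identity, the two Frobenius coherence squares of \eqref{cc:Frobenius_linear_functor} collapse to the assertions that $\theta^{-1}$ intertwines the linear distributors, namely
\begin{align*}
\delta^L_{A,B,C};\ \theta^{-1}_{A \ot B, C} &= (1_A \ot \theta^{-1}_{B,C});\ \delta^{L'}_{A,B,C},
\end{align*}
together with the analogous equation involving $\delta^R$ and $\delta^{R'}$.

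The main obstacle is this final diagram chase. I expect both checks to follow by expanding $\theta^{-1}$, $\delta^{L'}$, and $\delta^{R'}$ via the formulas provided in Proposition \ref{prop:shift_tensor}, and then repeatedly invoking the coherence axioms \eqref{cc:unit_lineardist}--\eqref{cc:leftright_lineardist} for the linear distributors of $\parr$, together with naturality and the pentagon/triangle identities of the underlying monoidal structures. The left-right compatibility axiom \eqref{cc:leftright_lineardist} is the one I anticipate doing most of the heavy lifting, since it is precisely what allows one to push $\delta^L$ past $\delta^R$ in the nested expression defining $\theta^{-1}$. The inverse Frobenius linear functor is constructed symmetrically from $\theta$, yielding the desired LDC isomorphism $(\bX, \ot, \bone, \parr, \Bot) \cong (\bX, \ot, \bone, \parr', \Bot)$.
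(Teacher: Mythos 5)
Your proposal matches the paper's proof essentially verbatim: your $\theta_{A,B}$ is exactly the paper's comparison isomorphism $\beta_{A,B}$, and you assemble the same Frobenius linear functor $(1_{\bbX}, 1_\bone, 1_{A\ot B}, 1_\Bot, \theta^{-1}_{A,B})$ with its inverse, reducing the remaining work to the same diagram chase for \eqref{cc:Frobenius_linear_functor} that the paper likewise leaves as a straightforward verification. The argument is correct and takes the same route.
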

\begin{proof}
By Prop \ref{prop:shift_tensor}, $\Bot$ has a $\ot$-inverse given by $\bone\parr\bone$ and $\cdot\ot((\bone\parr\bone)\ot\cdot)$ is a well-defined monoidal product with unit $\Bot$. Now, define  $\beta_{A,B}$ to be the following natural isomorphism
\[ A\ot ((\bone\parr\bone)\ot B) \xrightarrow{1_A\ot \delta^R_{\bone, \bone, B}} A\ot (\bone\parr(\bone\ot B)) \xrightarrow{\delta^L_{A, \bone, \bone\ot B}} (A\ot \bone)\parr (\bone\ot B)\xrightarrow{{u^R_\ot}^{-1}_A\parr {u^L_\ot}^{-1}_B} A\parr B \]
Then, $(1_{\bbX}, 1_\Bot, \beta_{A,B})\c (\bX, \parr,\Bot)\rarr (\bX, \cdot\ot((\bone\parr\bone)\ot\cdot), \Bot)$ is a monoidal functor with inverse $(1_{\bbX}, 1_\Bot, \beta^{-1}_{A,B})$, mediating the desired isomorphism between monoidal categories. 

Pairing the identity $\ot$-monoidal functor with the above structure determines a Frobenius linear functor \\$(1_{\bbX},  1_\Bot, 1_{A\ot B}, 1_\Bot, \beta^{-1}_{A,B})\c(\bX, \ot, \bone, \parr, \Bot)\rarr (\bX,\ot, \bone, \cdot\ot((\bone\parr\bone)\ot\cdot), \Bot)$. The coherence conditions \eqref{cc:Frobenius_linear_functor} follow from a straightforward diagram chase. Similarly, it has an inverse  $(1_{\bbX},  1_\Bot, 1_{A\ot B}, 1_\Bot, \beta_{A,B})\c (\bX,\ot, \bone, \cdot\ot((\bone\parr\bone)\ot\cdot), \Bot)\rarr (\bX, \ot, \bone, \parr, \Bot)$, providing the desired isomorphism of LDCs.
\end{proof}

\section{Cartesian Linearly Distributive Categories}\label{sec:CLDC}

In this section, we discuss CLDCs, which are of course the principal notion of study in this paper. After reviewing their definition, we will revisit how a CLDC is in fact mix, and provide our first novel observations about CLDCs, notably their duoidal structure and that they have a preinitial-subterminal coincidence. 

By a {\bf cartesian category} we mean a category with finite products, where we denote the binary product by $\times$ and the terminal object by $\top$. Projections maps are denoted as ${\pi^0_{A,B}: A \times B \to A}$ and $\pi^1_{A,B}: A \times B \to B$, with pairing operation $\langle -, - \rangle$, and we denote the unique map to the terminal object by $t_A: A \to \top$. Dually, a {\bf cocartesian category} is a category with finite coproducts, where we denote the binary coproduct by $+$ and the initial object by $\bot$. Injections maps are denoted as ${\iota^0_{A,B}: A \to A + B}$ and $\iota^1_{A,B}: B \to A + B$, with pairing operation $[ -, - ]$, and we denote the unique map from the initial object by $b_A: \bot \to A$. A (co)cartesian category \bX\ canonically becomes a symmetric monoidal category $(\bX, \times, \top)$ (resp. $(\bX, +, \bot)$). We say that a symmetric monoidal category is (co)cartesian if its monoidal product is a (co)product and its monoidal unit is a (resp. initial) terminal object.

\subsection{Cartesian Linearly Distributive Categories}

We can now introduce the main definition of interest: 

\begin{definition}\cite[Sec 2]{Cockett_Seely_1997_LDC}
A {\bf cartesian linearly distributive category} (CLDC) \bX\ is a SLDC whose tensor monoidal structure is cartesian and whose par monoidal structure is cocartesian. 
\end{definition}

So for a CLDC, its tensor is a product $\times$, its tensor unit is a terminal object $\top$, its par is a coproduct $+$, its par unit is an initial object $\bot$, and its linear distributors are of type:
	\begin{align*}
	 \delta^R_{A,B,C}\c (A + B)\times C\rarr A + (B\times C) && \delta^L_{A,B,C}\c A\times (B+ C)\rarr (A \times B) + C.
	\end{align*}
While it is a straightforward and natural concept to consider, we shall see that this gives rise to a highly peculiar form of distributivity, leading to surprisingly strong properties and results. We refrain from giving examples in this section, as that will be a key topic discussed in Sec \ref{sec:BDL}, \ref{sec:S-Add} and  \ref{sec:Examples}. We begin our investigation of CLDCs with a useful result concerning the interaction between the linear distributors and certain projections and injections.

\begin{lemma}\label{lem:delta_pi_iota}
In a CLDC, the following equations hold: 

\begin{align}\label{eqn:linear_dist_projection_injection}
\begin{split}
    &\delta^L_{A,B,C}; (\pi^1_{A,B} + 1_C) = \pi^1_{A, B+C} \qquad\qquad (1_A\times \iota^0_{B,C}); \delta^L_{A,B,C} = \iota^0_{A\times B, C} \\
    &\delta^R_{A,B,C}; (1_A+\pi^0_{B,C}) = \pi^0_{A+B, C} \qquad\qquad (\iota^1_{A,B}\times 1_{C}); \delta^R_{A,B,C} = \iota^1_{A, B\times C}
\end{split}
\end{align}
\end{lemma}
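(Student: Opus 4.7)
All four equations admit a uniform proof that combines the unit coherence axioms \eqref{cc:unit_lineardist} for the linear distributors with naturality of the distributors in an appropriate argument, applied to a unique arrow into the terminal object $\top = \bone$ or out of the initial object $\bot = \Bot$. The equations come in two dual pairs: the first two concern $\delta^L$ and collapse to the second projection and first injection respectively, while the last two concern $\delta^R$ and collapse to the first projection and second injection. I will sketch the argument for the first equation and indicate how the other three follow by the same recipe.

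The essential observation is that in a CLDC the cartesian unitor ${u_\ot^L}_A = \langle t_A, 1_A\rangle \colon A \to \top \times A$ has the second projection $\pi^1_{\top,A}$ as its inverse, so by naturality of projections one has $\pi^1_{A,X} = (t_A \times 1_X);{u_\ot^L}_X^{-1}$. Dually, $\pi^0_{X,C} = (1_X \times t_C);{u_\ot^R}_X^{-1}$, and the cocartesian unitors give ${u_\parr^R}_A^{-1} = \iota^0_{A,\bot}$ and ${u_\parr^L}_A^{-1} = \iota^1_{\bot,A}$, so by naturality of injections one has $\iota^0_{B,C} = {u_\parr^R}_B^{-1};(1_B + b_C)$ and $\iota^1_{A,B} = {u_\parr^L}_B^{-1};(b_A + 1_B)$. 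I would record these identities up front.

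For the first equation, I would first specialize the first unit axiom of \eqref{cc:unit_lineardist} to a CLDC, rearranging it to $\delta^L_{\top,B,C} = {u_\ot^L}_{B+C}^{-1};({u_\ot^L}_B + 1_C)$. Next, naturality of $\delta^L$ in its first argument applied to $t_A \colon A \to \top$ gives
\[
(t_A \times 1_{B+C});\delta^L_{\top,B,C} \;=\; \delta^L_{A,B,C};((t_A \times 1_B) + 1_C).
\]
Substituting the specialized formula for $\delta^L_{\top,B,C}$ on the left and composing both sides on the right with $({u_\ot^L}_B^{-1} + 1_C)$, the left-hand side telescopes to $(t_A \times 1_{B+C});{u_\ot^L}_{B+C}^{-1} = \pi^1_{A,B+C}$, while the right-hand side becomes $\delta^L_{A,B,C};(\pi^1_{A,B} + 1_C)$, yielding the desired equality. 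The second equation runs along the same lines using the third unit axiom of \eqref{cc:unit_lineardist} (which gives $\delta^L_{A,B,\bot} = (1_A \times {u_\parr^R}_B);{u_\parr^R}_{A\times B}^{-1}$) and naturality of $\delta^L$ in its third argument along $b_C \colon \bot \to C$. The third and fourth equations are entirely dual, using the second and fourth unit axioms of \eqref{cc:unit_lineardist} together with naturality of $\delta^R$ in its third and first arguments, respectively.

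There is no real obstacle to this proof; the only thing that requires a bit of care is bookkeeping the isomorphism cancellations between unitors and their inverses. I expect the cleanest presentation will in fact prove just one case in detail and invoke the obvious duality (tensor/par, left/right) for the remaining three.
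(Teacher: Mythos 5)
Your proof is correct and follows essentially the same route as the paper's: both rewrite $\pi^1_{A,B}$ as $(t_A\times 1_B);{u^L_\times}^{-1}_B$ (and dually for the other projections/injections), apply naturality of the linear distributor along the unique map $t_A\c A\rarr\top$ (resp. $b_C\c\bot\rarr C$), and finish with the corresponding unit coherence axiom of \eqref{cc:unit_lineardist}. The paper likewise proves only the first equation in detail and dispatches the remaining three by duality.
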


\begin{proof} The first equation follows from the fact that in a cartesian category, $\pi^1 =  (t \times 1); {u_\times^L}^{-1}$, and the unit coherence of the linear distributor \eqref{cc:unit_lineardist}: 
\begin{gather*}
    \delta^L_{A,B,C}; (\pi^1_{A,B} + 1_C) = \delta^L_{A,B,C}; ((t_A\times 1_B)+ 1_C); ({u^L_\times}^{-1}_B + 1_C) \\
  =  (t_A\times 1_{B+C}); \delta^L_{\top, B, C}; ({u^L_\times}^{-1}_B + 1_C) =  (t_A\times 1_{B+C}); {u_\times^L}^{-1}_{B+C} =  \pi^1_{A. B+C}
\end{gather*}
The other equations are proved in similar (dual) fashion. 
\end{proof}

The appropriate definition of morphism between CLDCs is one that preserves products and coproducts, which from a LDC perspective simply amounts to asking for linear functors (since asking that a functor preserves (co)products is equivalent to asking it be monoidal). We will also be particularly interested in restricting our attention to Frobenius cartesian linear functors.

\begin{definition}
A {\bf (Frobenius) cartesian linear functor} between CLDCs is a (Frobenius) linear functor between the CLDCs.  
\end{definition}

We denote the category of CLDCs and (Frobenius) cartesian linear functors between them by $\mathbf{CLDC}$ (resp. \CLDC). 

\subsection{Subterminal Initial and Preinitial Terminal}\label{sec:prein-subterm}

Recall that in a category, an object $A$ is {\bf preinitial} if there is at most one map from $A$ to another object, and dually {\bf subterminal object} if there is at most one map from another object to $A$. It turns out that in a CLDC, the initial object is always \emph{subterminal} and the terminal object is always \emph{preinitial}. This is quite a strong property which highlights the uniqueness of CLDCs and also helps quickly check if a category is not a CLDC. Here are first some useful equivalent characterizations of being subterminal (resp. preinitial) in (co)cartesian categories. 

\begin{lemma}\cite[Lem 3.2]{Cockett_1993}\label{lem:preinitial/subterminal} In a cartesian category, the following are equivalent: 
\begin{enumerate}[label=(\roman*)]
    \item $A$ is subterminal,
    \item $\Delta_A = \langle 1_A, 1_A\rangle\c A\rarr A\times A$ is an isomorphism,
    \item $\pi^0_{A,A} = \pi^1_{A,A}\c A\times A \rarr A$.
\end{enumerate}
Dually, in a cocartesian category, the following are equivalent:
\begin{enumerate}[label=(\roman*)]
    \item $A$ is preinitial,
    \item $\nabla_A = [1_A, 1_A]\c A+A\rarr A$ is an isomorphism,
    \item $\iota^0_{A,A} = \iota^1_{A,A}\c A\rarr A+A$.
\end{enumerate}
\end{lemma}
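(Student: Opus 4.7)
The plan is to prove the equivalence for the cartesian case by establishing the cycle (i)$\Rightarrow$(iii)$\Rightarrow$(ii)$\Rightarrow$(i), and then observe that the cocartesian case follows by categorical duality since a cocartesian category is simply a cartesian category in the opposite category, and preinitial/initial/coproduct are the opposite-category notions of subterminal/terminal/product.

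For (i)$\Rightarrow$(iii), I would simply note that $\pi^0_{A,A}$ and $\pi^1_{A,A}$ are both parallel maps $A\times A\rarr A$, so if $A$ is subterminal they must agree. For (iii)$\Rightarrow$(ii), assuming $\pi^0_{A,A}=\pi^1_{A,A}$, I would show that $\pi^0_{A,A}$ is a two-sided inverse to $\Delta_A$. One direction, $\Delta_A;\pi^0_{A,A}=1_A$, holds by definition of the diagonal. For the other direction, I would compute $(\pi^0_{A,A};\Delta_A);\pi^i_{A,A}$ for $i=0,1$: in both cases this reduces to $\pi^0_{A,A}$ (using the defining equations of $\Delta_A$ and the assumption $\pi^0=\pi^1$ for the $i=1$ case), so by the universal property of the product, $\pi^0_{A,A};\Delta_A=1_{A\times A}$.

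For (ii)$\Rightarrow$(i), assuming $\Delta_A$ is invertible, I would first argue that the inverse must equal both projections by the computation
\[\pi^0_{A,A} = \Delta_A^{-1};\Delta_A;\pi^0_{A,A} = \Delta_A^{-1};1_A = \Delta_A^{-1},\]
and symmetrically $\pi^1_{A,A}=\Delta_A^{-1}$; hence $\pi^0_{A,A}=\pi^1_{A,A}$. Then for any parallel pair $f,g\c B\rarr A$, the pairing $\langle f,g\rangle$ witnesses $f=\langle f,g\rangle;\pi^0_{A,A}=\langle f,g\rangle;\pi^1_{A,A}=g$, so $A$ is subterminal.

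There is no real obstacle here; the only mildly non-routine step is recognizing in (iii)$\Rightarrow$(ii) that $\pi^0_{A,A}$ is the correct candidate for the inverse of the diagonal, and that checking the inverse identity on the product side just requires post-composing with the two projections. Once the cartesian equivalence is established, the cocartesian statement is obtained formally by dualizing the entire argument, replacing products, projections, pairings, and diagonals by coproducts, injections, copairings, and codiagonals respectively.
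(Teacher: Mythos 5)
Your proof is correct, and since the paper cites this lemma from Cockett's 1993 paper without reproducing a proof, there is nothing to diverge from: your argument (the cycle (i)$\Rightarrow$(iii)$\Rightarrow$(ii)$\Rightarrow$(i) with $\pi^0_{A,A}$ as the inverse of $\Delta_A$, followed by formal dualization) is the standard one. All steps check out, including the use of the universal property of the product to verify $\pi^0_{A,A};\Delta_A = 1_{A\times A}$.
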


\begin{proposition}\label{prop:bot_subterminal_top_preinitial} In a CLDC, the initial object $\bot$ is subterminal and the terminal object $\top$ is preinitial. 
\end{proposition}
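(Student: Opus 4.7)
The plan is to exploit the canonical complementation pair between the units $(\Bot,\bone)$ that exists in every LDC, which in a CLDC specializes to a complementation pair between $\bot$ and $\top$. The point is that this pair alone produces an adjunction $\bot\times-\dashv\top+-$, and the preservation of (co)limits by adjoints then immediately forces $\bot\times\bot$ to be initial and $\top+\top$ to be terminal.

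More precisely, I would first invoke Proposition \ref{prop:bot_top_complements}, which exhibits $(\Bot,\bone,{u^R_\parr}^{-1}_\bone,{u^R_\ot}_\Bot^{-1})$ as a complementation pair in any LDC; in the CLDC setting this becomes a complementation pair with left complement $\bot$ and right complement $\top$. Next, by Lemma \ref{lem:complement_pairs_bijection}, this complementation pair yields an adjunction $\bot\times-\dashv\top+-$ on the underlying category.

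From here the argument is short and self-dual. As a left adjoint, $\bot\times-$ preserves colimits, so it sends the initial object $\bot$ to an initial object; thus $\bot\times\bot$ is itself initial, and the two parallel maps $\pi^0_{\bot,\bot},\pi^1_{\bot,\bot}\c\bot\times\bot\rarr\bot$ must coincide, whence Lemma \ref{lem:preinitial/subterminal} implies that $\bot$ is subterminal. Dually, the right adjoint $\top+-$ preserves limits, so $\top+\top$ is terminal, forcing $\iota^0_{\top,\top}=\iota^1_{\top,\top}\c\top\rarr\top+\top$; Lemma \ref{lem:preinitial/subterminal} (in its cocartesian form) then yields that $\top$ is preinitial.

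No serious obstacle arises: the crux is simply noticing that the complementation-pair machinery from Section \ref{sec:preliminaries} already delivers the adjunction we need, after which the preservation of (co)limits by adjoints closes the argument essentially for free.
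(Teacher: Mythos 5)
Your proof is correct and follows essentially the same route as the paper: both arguments go through the complementation pair of the units (Proposition \ref{prop:bot_top_complements}), the induced adjunction $\bot\times-\dashv\top+-$ (Lemma \ref{lem:complement_pairs_bijection}), and the characterization in Lemma \ref{lem:preinitial/subterminal}. The only cosmetic difference is that you invoke preservation of (co)limits by adjoints to conclude $\bot\times\bot$ is initial and $\top+\top$ is terminal, whereas the paper just reads off the two relevant hom-set bijections directly; the content is the same.
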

\begin{proof}
First recall that by Prop \ref{prop:bot_top_complements}, $\bot$ and $\top$ form a complementation pair. Then by Lem \ref{lem:complement_pairs_bijection}, $\bX(\bot\times\bot, \bot)\cong \bX(\bot, \top+\bot)$. As $\bot$ is the initial object, there is a unique map of type $\bot \to \top+\bot$, and therefore there is unique map of type $\bot\times\bot \to \bot$. As such, the projections $\pi^0_{\bot,\bot}$ and $ \pi^1_{\bot,\bot}$ must be equal. Then by Lem \ref{lem:preinitial/subterminal}, $\bot$ is subterminal. Similarly, by Lem \ref{lem:complement_pairs_bijection}, we also have that $\bX(\top, \top+\top)\cong \bX(\bot\times\top,\top)$. As $\top$ is the terminal object, there is a unique map of type $\bot\times\top \to \top$, and thus there is a unique map of type $\top \to \top+\top$. So the injections $\iota^0_{\top,\top}$ and $\iota^1_{\top,\top}$ must be equal. So by Lem \ref{lem:preinitial/subterminal} again, $\top$ is preinitial.
\end{proof}

Since $\bot$ is subterminal and $\top$ is preinitial, we can ask for which objects $A$ is there a map $A \to \bot$ or $\top \to A$. This is precisely the case when $A\cong A\times \bot$ or $A\cong A+\top$ respectively. 

\begin{lemma}\label{lem:A_iso_A_times_bot} Let $A$ be an object in a CLDC. Then: 
\begin{enumerate}[label=(\roman*)]
    \item There exists a map $A\rarr\bot$ if and only if $\pi^0_{A,\bot}\c A\times \bot\rarr A$ is an isomorphism;
    \item There exists a map $\top\rarr A$ if and only if $\iota^0_{A,\top}\c A \rarr A + \top$ is an isomorphism.
\end{enumerate}
\end{lemma}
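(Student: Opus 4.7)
The plan is to exploit Proposition \ref{prop:bot_subterminal_top_preinitial}, which tells us $\bot$ is subterminal and $\top$ is preinitial in any CLDC. Both parts are dual, so I will sketch (i) and indicate how (ii) mirrors it.

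For (i), the easy direction is $(\Leftarrow)$: if $\pi^0_{A,\bot}$ is invertible, then the composite ${(\pi^0_{A,\bot})^{-1}};\pi^1_{A,\bot}\c A\rarr \bot$ produces the required map to $\bot$. For the forward direction $(\Rightarrow)$, given $f\c A\rarr\bot$, the natural candidate for an inverse to $\pi^0_{A,\bot}$ is the pairing $\langle 1_A, f\rangle\c A\rarr A\times\bot$. One side of the equation, $\langle 1_A, f\rangle;\pi^0_{A,\bot} = 1_A$, is immediate from the universal property of the product. For the other side, I want
\[
\pi^0_{A,\bot};\langle 1_A, f\rangle \;=\; \langle \pi^0_{A,\bot},\, \pi^0_{A,\bot};f\rangle \;=\; \langle \pi^0_{A,\bot}, \pi^1_{A,\bot}\rangle \;=\; 1_{A\times \bot}.
\]
The middle equality is the crucial step: I need $\pi^0_{A,\bot};f = \pi^1_{A,\bot}$ as maps $A\times\bot\rarr\bot$. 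But both are maps into $\bot$, and Proposition \ref{prop:bot_subterminal_top_preinitial} guarantees $\bot$ is subterminal, so any two such maps are automatically equal.

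Part (ii) is handled by dualizing. Given $g\c\top\rarr A$, set $h = [1_A, g]\c A+\top\rarr A$ and show it is inverse to $\iota^0_{A,\top}$. The identity $\iota^0_{A,\top};h = 1_A$ is by the universal property of the coproduct, while $h;\iota^0_{A,\top} = 1_{A+\top}$ reduces (via $1_{A+\top} = [\iota^0_{A,\top}, \iota^1_{A,\top}]$) to checking $g;\iota^0_{A,\top} = \iota^1_{A,\top}\c \top\rarr A+\top$. Both are maps out of $\top$, and since $\top$ is preinitial by Proposition \ref{prop:bot_subterminal_top_preinitial}, they must agree. The converse direction again just extracts a map $\top\rarr A$ as $\iota^1_{A,\top};(\iota^0_{A,\top})^{-1}$.

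There is no real obstacle here beyond identifying the right inverse map in each direction; the entire content of the argument is the observation from Proposition \ref{prop:bot_subterminal_top_preinitial} that $\bot$ and $\top$ are subterminal and preinitial respectively, which immediately collapses the potentially problematic equation of parallel maps into $\bot$ (resp. out of $\top$) to a triviality.
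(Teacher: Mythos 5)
Your proposal is correct and follows essentially the same route as the paper: both construct the candidate inverse $\langle 1_A, a\rangle$ (resp.\ $[1_A,g]$) and reduce the nontrivial composite to the equality of two parallel maps into $\bot$ (resp.\ out of $\top$), which collapses by Proposition \ref{prop:bot_subterminal_top_preinitial}. The only difference is cosmetic: you spell out the dual case (ii), which the paper leaves as ``similar arguments.''
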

\begin{proof} We prove $(i)$. For the $\Rightarrow$ direction, suppose that $\pi^0_{A,\bot}\c A\times \bot\rarr A$ is an isomorphism, so $A\cong A\times \bot$. Then we can construct the map $A\cong A\times \bot \xrightarrow{\pi^1_{A,\bot}} \bot$ as desired. For the $\Leftarrow$ direction, suppose there exists a map $a\c A\rarr\bot$. Then consider the map  $\langle 1_A, a\rangle\c A\rarr A\times\bot$. By definition, we have that $\langle 1_A, a\rangle; \pi^0_{A,\bot} = 1_A$. On the other hand, we obviously have that $\pi^0_{A,\bot}; \langle 1_A, a\rangle; \pi^0_{A,\bot} = \pi^0_{A,\bot}$, while we can also compute that: 
\[ \pi^0_{A,\bot}; \langle 1_A, a\rangle; \pi^1_{A,\bot} = \pi^0_{A,\bot}; a = \pi^1_{A,\bot}\]
where the last equality holds since there is a unique map $A\times\bot\rarr\bot$ as $\bot$ is subterminal. Then by the universal property of the product we get that $\pi^0_{A,\bot}; \langle 1_A, a\rangle = 1_{A\times \bot}$. Thus $\pi^0_{A,\bot}$ is an isomorphism with inverse $\langle 1_A, a\rangle$. By similar arguments, one can also show $(ii)$. 
\end{proof}

\subsection{Mix Structure}

In a CLDC, there is a unique map from the initial object to the terminal object, using the universal property of either object. In fact, this unique map is a nullary mix map, meaning that a CLDC is mix. This fact was first proved by Cockett and Seely in \cite{Cockett_Seely_1997}. However the proof they give is somewhat involved as it works with the \emph{nucleus} (the subcategory of complemented objects) and then applies Joyal's paradox (which we review in Sec \ref{sec:posets}) to conclude that a CLDC must be mix. Here we give an alternative streamlined version of their proof, utilizing the same core ideas and making use of the fact that the initial object is subterminal.

\begin{proposition}\cite[Rem 5.4]{Cockett_Seely_1997}
A CLDC is mix with $m = b_\top = t_\bot \c \bot\rarr\top$. 
\end{proposition}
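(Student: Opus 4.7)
The plan is to exhibit $m := b_\top = t_\bot \c \bot \rarr \top$ as a nullary mix map. This map is well-defined uniquely, either as the unique morphism out of the initial object $\bot$ or as the unique morphism into the terminal object $\top$, so no arbitrary choice is being made. The remaining task is then to verify the mix coherence diagram \eqref{cc:mix_LDC} for this $m$. Rather than checking it for arbitrary $A$ and $B$, I would invoke Lemma \ref{lem:prove_mix1} to reduce the burden to any one of the four special cases, which is the main simplification over the original proof.

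I would pick case (2), so $A = B = \Bot = \bot$. By Proposition \ref{prop:bot_subterminal_top_preinitial}, the initial object $\bot$ is subterminal, and Lemma \ref{lem:preinitial/subterminal} then guarantees that the diagonal $\Delta_\bot \c \bot \rarr \bot \times \bot$ is an isomorphism. Consequently $\bot \times \bot \cong \bot$ is itself initial, so there is a unique morphism $\bot \times \bot \rarr \bot + \bot$. In particular, the two composites appearing on the two sides of \eqref{cc:mix_LDC} must coincide automatically, which is exactly what is required. Applying Lemma \ref{lem:prove_mix1} then yields that the CLDC is mix with $m$ as the nullary mix map.

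There is no real obstacle here, provided one spots this reduction. A direct check of \eqref{cc:mix_LDC} for arbitrary $A$ and $B$ would require manipulating the linear distributors, projections, and injections via the identities of Lemma \ref{lem:delta_pi_iota}, which would be considerably more tedious. Collapsing the domain of the mix diagram to an initial object via subterminality of $\bot$ is exactly the ``streamlined'' move highlighted in the paragraph preceding the proposition, and it sidesteps any involvement of the nucleus or of Joyal's paradox. As a sanity check, case (3) with $A = \bot$ and $B = \top$ works equally well, since $\bot \times \top \cong \bot$ already from $\top$ being terminal, so the same collapse argument applies.
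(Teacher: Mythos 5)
Your proof is correct, and it reaches the conclusion by a genuinely shorter route than the paper, even though both start from the same reduction. The paper also invokes Lemma \ref{lem:prove_mix1} to reduce to the single instance $A = B = \bot$ of \eqref{cc:mix_LDC}, but then verifies that instance by an explicit diagram chase through the unit coherences \eqref{cc:unit_lineardist} and several naturality squares, using subterminality of $\bot$ only in the localized form $\pi^0_{\bot,\bot} = \pi^1_{\bot,\bot}$ at one node of the diagram. You instead use the equivalent form of subterminality from Lemma \ref{lem:preinitial/subterminal} --- that $\Delta_\bot \c \bot \rarr \bot\times\bot$ is an isomorphism --- to conclude that the common domain $\bot\times\bot$ of the two composites in \eqref{cc:mix_LDC} is itself initial, so the two parallel maps into $\bot + \bot$ agree for free and no chase is needed at all. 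There is no circularity, since Proposition \ref{prop:bot_subterminal_top_preinitial} is established independently of any mix structure (it rests only on Proposition \ref{prop:bot_top_complements} and Lemma \ref{lem:complement_pairs_bijection}). What the paper's longer chase buys is essentially nothing beyond explicitness; your observation that the source object collapses to an initial object is the cleaner argument, and your remark that case (3) works just as well (via $\bot\times\top \cong \bot$ from the unitor) is a valid alternative.
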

\begin{proof}
By Lem \ref{lem:prove_mix1}, it suffices to prove \eqref{cc:mix_LDC} for $\bot\times\bot\rarr\bot+\bot$, which holds by commutativity of the following diagram: 

\centerline{
\xymatrixrowsep{2.75pc}\xymatrixcolsep{2.75pc}\xymatrix{
\bot\times \bot\ar[r]^-{1_\bot\times {u_+}^{-1}_\bot}\ar[d]_-{{u_+}^{-1}_\bot\times 1_\bot}\ar@{=}[rd] & \bot\times (\bot+\bot)\ar[rr]^-{1_\bot\times(m+1_\bot)}\ar[d]_-{1_\bot \times {u_+}_\bot} & \ar@{}[d]_{({\rm nat})} & \bot\times (\top+\bot)\ar[d]^-{\delta^L_{\bot,\top,\bot}}_*!/_8pt/{\scriptscriptstyle{\eqref{cc:unit_lineardist}}}\ar[ld]_-{1_\bot \times {u_+^R}_\top} \\
(\bot+\bot)\times \bot\ar@{}[rd]|{({\rm nat})}\ar[dd]_-{(1_\bot +m)\times 1_\bot}\ar[r]^-{{u_+}_\bot\times 1_\bot} & \bot\times \bot\ar[r]^-{1_\bot\times m}\ar[d]_-{m\times 1_\bot}\ar[rd]^-{\pi^0_{\bot,\bot}}_-{\pi^1_{\bot,\bot}} & \bot\times \top\ar@{}[rd]|{({\rm nat})}\ar[r]_-{{u^R_+}_{\bot\times\top}^{-1}}\ar[d]^-{{u^R_\times}^{-1}_\bot} & (\bot\times\top)+\bot\ar[dd]^-{{u^R_\times}^{-1}_\bot +1_\bot} \\
&\top\times \bot\ar[d]^-{{u^L_+}^{-1}_{\top\times\bot}}_*!/_8pt/{\scriptscriptstyle{\eqref{cc:unit_lineardist}}}\ar[r]_-{{u^L_\times}^{-1}_\bot} & \bot\ar[rd]^-{{u_+}^{-1}_\bot} &\\
(\bot+\top)\times\bot\ar[ur]^-{{u^L_+}_\top\times 1_\bot}\ar[r]_-{\delta^R_{\bot,\top,\bot}} & \bot+(\top\times\bot)\ar[rr]_-{1_\bot + {u_\times^L}_\bot^{-1}} & \ar@{}[u]^{({\rm nat})}& \bot+\bot
}}

\end{proof}

Therefore, for all pairs of objects $A$ and $B$ in a CLDC, there exists a mix map from their product to their coproduct: $\mix_{A,B}\c A\times B \rarr A+B$. 

\begin{lemma}\label{lem:mix_cartesian_Frobenius_linear_functor}
A cartesian Frobenius linear functor is a mix Frobenius linear functor.
\end{lemma}
\begin{proof}
By definition of a cartesian Frobenius linear functor, $n_\bot$ and $m_\top$ are isomorphisms. Then, as $\bot$ is initial (or $\top$ is terminal), we get $m = n_\bot^{-1}; F(m); m_\top^{-1}$, meaning \eqref{cc:mix_Frobenius_functor} holds. 
\end{proof}

\subsection{Preinitial and Subterminal Coincidence}

We now wish to show that in a CLDC, being preinitial is equivalent to being subterminal. To do so, let us first give an equivalent characterization for preinitial or subterminal objects using the mix maps. 

\begin{lemma}\label{lemma:CLDC_preintial_subterminal}
Let $A$ be an object in a CLDC. Then: 
\begin{enumerate}[label=(\roman*)]
    \item $A$ is preinitial if and only if the following equalities hold:
    \begin{align}\label{eq:Delta-mix-nabla}
    \Delta_A; \mix_{A,A}; \nabla_A = 1_A&&  \nabla_{A}; \Delta_A; \mix_{A,A}= 1_{A+ A}
    \end{align}
    \item $A$ is subterminal if and only if the following equations hold.
        \begin{align}
         \Delta_A; \mix_{A,A}; \nabla_A = 1_A && \mix_{A,A};\nabla_{A}; \Delta_A= 1_{A\times A}
            \end{align}
\end{enumerate}
\end{lemma}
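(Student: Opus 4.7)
The plan is to invoke Lemma \ref{lem:preinitial/subterminal} and observe that the two displayed equations in each part say precisely that $\Delta_A;\mix_{A,A}$ provides a two-sided inverse to $\nabla_A$ in part (i), and dually that $\mix_{A,A};\nabla_A$ provides a two-sided inverse to $\Delta_A$ in part (ii). This reduces each biconditional to a short bookkeeping exercise built around the universal properties of $\times$ and $+$.

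For the converse direction of (i), I would simply read the two equations as composites: the first asserts $(\Delta_A;\mix_{A,A});\nabla_A = 1_A$ and the second asserts $\nabla_A;(\Delta_A;\mix_{A,A}) = 1_{A+A}$, so $\Delta_A;\mix_{A,A}$ is a two-sided inverse of $\nabla_A$. Hence $\nabla_A$ is an isomorphism, and Lemma \ref{lem:preinitial/subterminal} gives that $A$ is preinitial.

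For the forward direction of (i), I assume $A$ is preinitial, so that for any object $X$ there is at most one map $A \to X$. The first equation has both sides being endomorphisms of $A$, and hence both equal $1_A$. For the second equation, both sides are endomorphisms of $A+A$, and by the universal property of the coproduct it suffices to check equality after precomposition with $\iota^0_{A,A}$ and $\iota^1_{A,A}$. In each case, the left-hand side simplifies via $\iota^k_{A,A};\nabla_A = 1_A$ to $\Delta_A;\mix_{A,A}$, while the right-hand side is $\iota^k_{A,A}$. By Lemma \ref{lem:preinitial/subterminal}, preinitiality of $A$ forces $\iota^0_{A,A} = \iota^1_{A,A}$, and uniqueness of maps $A \to A+A$ then identifies $\Delta_A;\mix_{A,A}$ with this common injection.

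Part (ii) is proved entirely dually, swapping products and coproducts, $\Delta_A$ with $\nabla_A$, and $\iota^k$ with $\pi^k$: there the universal property of the product reduces the second equation to a statement about maps into $A$, where subterminality (equivalently $\pi^0_{A,A} = \pi^1_{A,A}$) closes the argument. I do not expect any substantive obstacle; the content of the lemma lies in recognizing that Lemma \ref{lem:preinitial/subterminal} repackages (pre)(sub)terminality as invertibility of $\nabla_A$ (resp. $\Delta_A$), and that $\mix_{A,A}$ is precisely the map needed to exhibit the only available candidate inverse within the CLDC structure.
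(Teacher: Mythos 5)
Your proof is correct and follows essentially the same route as the paper: both directions reduce to Lemma \ref{lem:preinitial/subterminal} together with the uniqueness of maps out of a preinitial object (resp.\ into a subterminal one), with the converse direction reading the two equations as exhibiting $\Delta_A;\mix_{A,A}$ (resp.\ $\mix_{A,A};\nabla_A$) as a two-sided inverse of $\nabla_A$ (resp.\ $\Delta_A$). The only cosmetic difference is in verifying the second equation in the forward direction: the paper conjugates by $\nabla_A^{-1}$, whereas you precompose with the coproduct injections and invoke uniqueness of maps $A \to A+A$; both are immediate.
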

\begin{proof} We prove $(i)$. For the $\Rightarrow$ direction, suppose that $A$ is preinitial. Then there is a unique map of type $A \to A$, and thus $\Delta_A; m_{A,A}; \nabla_A = 1_A$. On the other hand, by Lem \ref{lem:preinitial/subterminal}, $\nabla_A^{-1}$ exists, so we compute: 
\[ \nabla_{A}; \Delta_A; \mix_{A,A} = \nabla_{A}; \Delta_A; \mix_{A,A}; \nabla_A; \nabla_A^{-1} = \nabla_A; 1_A; \nabla_A^{-1} = 1_{A+A}\]
So \eqref{eq:Delta-mix-nabla} holds as desired. For the $\Leftarrow$ direction, suppose that \eqref{eq:Delta-mix-nabla} holds. Observe that this says that $\nabla_A$ is an isomorphism with inverse $\Delta_A; \mix_{A,A}\c A\rarr A+A$. So by Lem \ref{lem:preinitial/subterminal}, $A$ is preinitial. By similar arguments, one can also show $(ii)$. 
\end{proof}

Using the above lemma, we can prove the desired equivalence between being preinitial and being subterminal in a CLDC. In fact, we do so by showing that for a preinitial or subterminal object $A$, the mix map $A \times A \to A + A$ must be of a specific form. 

\begin{proposition}\label{prop:CLDC_preintial_subterminal}
Let $A$ be an object in a CLDC. Then the following are equivalent: 
\begin{enumerate}[label=(\roman*)]
    \item $A$ is preinitial;
    \item $A$ is subterminal; 
    \item The following diagram commutes:
\begin{equation}\begin{gathered}\label{eqn:mix_pi_iota}
\xymatrixrowsep{1pc}\xymatrixcolsep{1.5pc}\xymatrix{
& A\ar[rd]^-{\iota^0_{A,A}} & \\
A\times A\ar[ru]^-{\pi^0_{A,A}}\ar[rd]_-{\pi^1_{A,A}}\ar[rr]^-{\mix_{A,A}} && A+A \\
& A\ar[ru]_-{\iota^1_{A,A}} &
}
\end{gathered}\end{equation}
\end{enumerate}
\end{proposition}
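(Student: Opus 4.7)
My plan is to prove the three-way equivalence by establishing $\text{(iii)} \Leftrightarrow \text{(i)}$ directly, and then obtaining $\text{(iii)} \Leftrightarrow \text{(ii)}$ by duality. Concretely, $\bX^{op}$ is again a CLDC (with products and coproducts swapped and the linear distributors reversed), condition (iii) is self-dual under this correspondence (the two triangles simply exchange roles), and \emph{preinitial in $\bX$} coincides with \emph{subterminal in $\bX^{op}$}; so applying $\text{(i)} \Leftrightarrow \text{(iii)}$ to $\bX^{op}$ yields $\text{(ii)} \Leftrightarrow \text{(iii)}$ in $\bX$.

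The direction $\text{(iii)} \Rightarrow \text{(i)}$ is immediate: pre-composing the equalities $\pi^0_{A,A};\iota^0_{A,A} = \mix_{A,A} = \pi^1_{A,A};\iota^1_{A,A}$ with $\Delta_A$ and using $\Delta_A;\pi^i_{A,A} = 1_A$ gives $\iota^0_{A,A} = \iota^1_{A,A}$, whence $A$ is preinitial by Lemma~\ref{lem:preinitial/subterminal}.

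For $\text{(i)} \Rightarrow \text{(iii)}$, assume $A$ preinitial. By Lemma~\ref{lem:preinitial/subterminal}, $\iota^0_{A,A} = \iota^1_{A,A} =: \iota$ and $\nabla_A$ is an isomorphism with $\nabla_A^{-1} = \iota$. Preinitiality additionally forces the par-symmetry ${\sigma_+}_{A,A} = 1_{A+A}$ (both $\iota^i;{\sigma_+}_{A,A}$ equal the common value $\iota^0 = \iota^1$), and by the mix-symmetry compatibility in the SLDC structure, ${\sigma_\times}_{A,A};\mix_{A,A} = \mix_{A,A};{\sigma_+}_{A,A} = \mix_{A,A}$. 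Pre-composing the identity $\nabla_A;\Delta_A;\mix_{A,A} = 1_{A+A}$ from Lemma~\ref{lemma:CLDC_preintial_subterminal}(i) with $\iota$ (and using $\iota;\nabla_A = 1_A$) yields $\Delta_A;\mix_{A,A} = \iota$. Since $\nabla_A$ is iso, the first equality of (iii), $\mix_{A,A} = \pi^0_{A,A};\iota^0_{A,A}$, is equivalent to $\mix_{A,A};\nabla_A = \pi^0_{A,A}$; granted this, the ${\sigma_\times}$-invariance of $\mix_{A,A}$ combined with ${\sigma_\times}_{A,A};\pi^0_{A,A} = \pi^1_{A,A}$ automatically yields the second equality $\mix_{A,A} = \pi^1_{A,A};\iota^1_{A,A}$.

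The main obstacle is therefore to verify the identity $\mix_{A,A};\nabla_A = \pi^0_{A,A}$ for preinitial $A$. I would unfold $\mix_{A,A}$ via its defining composite from~\eqref{cc:mix_LDC}, writing it as $(\iota^0_{A,\top} \times 1_A);\delta^R_{A,\top,A};(1_A + {u^L_\times}_A^{-1})$ (after absorbing ${u^R_+}_A^{-1};(1_A + m)$ into $\iota^0_{A,\top}$ by naturality of $\iota^0$), so that post-composing with $\nabla_A$ gives $(\iota^0_{A,\top} \times 1_A);\delta^R_{A,\top,A};[1_A, {u^L_\times}_A^{-1}]$. A diagram chase applying Lemma~\ref{lem:delta_pi_iota} and the linear distributor unit coherence~\eqref{cc:unit_lineardist}, with preinitiality entering at the key collapse steps, then reduces this composite to $(1_A \times t_A);{u^R_\times}_A^{-1} = \pi^0_{A,A}$.
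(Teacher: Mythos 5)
Your argument is correct in substance but organized rather differently from the paper's. For $(i)\Rightarrow(iii)$ the paper runs two separate diagram chases, one for each of the two composites defining $\mix_{A,A}$, establishing both triangle identities directly (with preinitiality used to identify the two maps $A\rarr \top+A$, resp.\ $A\rarr A+\top$); you instead reduce everything to the single identity $\mix_{A,A};\nabla_A=\pi^0_{A,A}$ via $\nabla_A^{-1}=\iota^0_{A,A}=\iota^1_{A,A}$, and recover the second triangle from ${\sigma_+}_{A,A}=1_{A+A}$ together with the symmetry-compatibility $\sigma_\times;\mix=\mix;\sigma_+$ --- a genuine economy, at the cost of invoking that compatibility, which the paper asserts but does not number. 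Your $(iii)\Rightarrow(i)$ is also shorter than the paper's: precomposing with $\Delta_A$ and citing Lemma~\ref{lem:preinitial/subterminal} avoids the detour through Lemma~\ref{lemma:CLDC_preintial_subterminal}. For $(ii)\Leftrightarrow(iii)$ you appeal to duality via the opposite CLDC rather than redoing the chase with subterminality, as the paper does. This is legitimate --- the paper itself invokes the opposite CLDC (tensor and par exchanged) elsewhere --- and halves the work, but it shifts the burden onto verifying that the opposite of a CLDC is again a CLDC, that its mix map is $(\mix_{A,B})^{op}$, and hence that condition~(iii) is self-dual; you assert these rather than check them.

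One correction to the final computation: the composite you unfold, $(\iota^0_{A,\top}\times 1_A);\delta^R_{A,\top,A};[1_A,{u^L_\times}^{-1}_A]$, reduces --- using $\iota^0_{A,\top}=t_A;\iota^1_{A,\top}$ (preinitiality) and the fourth equation of Lemma~\ref{lem:delta_pi_iota} --- to $(t_A\times 1_A);{u^L_\times}^{-1}_A=\pi^1_{A,A}$, not to $(1_A\times t_A);{u^R_\times}^{-1}_A=\pi^0_{A,A}$ as you claim. To land on $\pi^0_{A,A}$ you should unfold the other defining composite, $(1_A\times\iota^1_{\top,A});\delta^L_{A,\top,A};({u^R_\times}^{-1}_A+1_A)$, and use the second equation of Lemma~\ref{lem:delta_pi_iota}. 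This is only an indexing slip: since your symmetry argument converts either triangle identity into the other, the proof goes through either way.
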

\begin{proof}
We prove $(i) \Leftrightarrow (iii)$. For $(i) \Rightarrow (iii)$, suppose $A$ is preinitial. Then the following diagrams commute: 
\begin{equation*}\begin{gathered}
\xymatrixrowsep{1.75pc}\xymatrixcolsep{3.25pc}\xymatrix{
A\times A\ar[r]^-{1_A\times {u^L_+}_A^{-1}}\ar[dd]_-{\pi^0_{A,A}}\ar[rd]^-{1_A\times t_A} & A\times (\bot+A) \ar@{}[d]|{({\rm preinit})}\ar[r]^-{1_A\times (m+1_A)} & A\times (\top+A)\ar[d]^-{\delta^L_{A,\top,A}}  \\
\ar@{}[r]|{({\rm def})}& A\times\top\ar[ru]^-{1_A\times\iota^0_{\top,A}}\ar[r]_-{\iota^0_{A\times\top,A}}^{\eqref{eqn:linear_dist_projection_injection}}\ar[ld]^-{{u^R_\times}^{-1}_A}\ar@{}[d]|{({\rm nat})} & (A\times\top)+A\ar[d]^-{{u_\times^R}_A^{-1}+1_A} \\ 
A\ar[rr]_-{\iota^0_{A,A}} & & A+A
}
\end{gathered}\end{equation*}
\begin{equation*}\begin{gathered}
\xymatrixrowsep{1.75pc}\xymatrixcolsep{3.25pc}\xymatrix{
A\times A\ar[r]^-{{u^R_+}_A^{-1}\times 1_A}\ar[dd]_-{\pi^1_{A,A}}\ar[rd]^-{t_A\times 1_A} & (A+\bot)\times A \ar@{}[d]|{({\rm preinit})}\ar[r]^-{(1_A+m)\times 1_A}& (A+\top)\times A\ar[d]^-{\delta^R_{A,\top,A}} \\
\ar@{}[r]|{({\rm def})} & \top\times A\ar[ru]^-{\iota^1_{A,\top}\times 1_A}\ar[r]_-{\iota^1_{A,\top\times A}}^{\eqref{eqn:linear_dist_projection_injection}}\ar[ld]^-{{u^L_\times}^{-1}_A}\ar@{}[d]|{({\rm nat})} & A+(\top\times A)\ar[d]^-{{u^R_\times}^{-1}_A+1_A}\\ 
A\ar[rr]_-{\iota^1_{A,A}} & & A+A
}
\end{gathered}\end{equation*}
By definition, the upper composites of the above diagrams are the two equivalent definitions of $\mix_{A,A}$. For $(iii) \Rightarrow (i)$, suppose \eqref{eqn:mix_pi_iota} holds. Then we first compute that: 
\[ \Delta_A; \mix_{A,A}; \nabla_A = \Delta_A; \pi^j_{A,A}; \iota^j_{A,A}; \nabla_A = 1_A; 1_A = 1_A\]
We can also compute that: 
\[ \iota^0_{A,A}; \nabla_A; \Delta_A; \mix_{A,A} =\iota^0_{A,A};  \nabla_A; \Delta_A; \pi^0_{A,A}; \iota^0_{A,A} = 1_A; 1_A; \iota^0_{A,A} = \iota^0_{A,A} \]
\[ \iota^1_{A,A}; \nabla_A; \Delta_A;\mix_{A,A} =\iota^1_{A,A}; \nabla_A;\Delta_A; \pi^1_{A,A}; \iota^1_{A,A} = 1_A; 1_A; \iota^1_{A,A} = \iota^1_{A,A} \] 
Then by the universal property of the coproduct, $\nabla_A; \Delta_A; \mix_{A,A} = 1_{A+A}$. Then by Lem \ref{lemma:CLDC_preintial_subterminal}, $A$ is preinitial. 

We now show $(ii) \Leftrightarrow (iii)$. For $(ii) \Rightarrow (iii)$, suppose that $A$ is subterminal, then the following diagrams commute
\begin{equation*}\begin{gathered}
\xymatrixrowsep{1.75pc}\xymatrixcolsep{3.25pc}\xymatrix{
A\times A\ar[dd]_-{\pi^0_{A,A}}\ar[r]^-{{u^R_+}^{-1}_A\times 1_A}\ar@{}[rdd]|{({\rm nat})} & (A+\bot)\times A\ar[d]_-{\pi^0_{A+\bot,A}}\ar[r]^-{\delta^R_{A,\bot,A}}_{\eqref{eqn:linear_dist_projection_injection}} & A+(\bot\times A)\ar[d]^-{1_A+(m\times 1_A)}\ar[ld]^-{1_A+\pi^0_{\bot,A}}\\
& A+\bot\ar[rd]^-{1_A+b_A}\ar@{}[r]|{({\rm subterm})}\ar@{}[d]|{({\rm def})} & A+(\top\times A)\ar[d]^-{1_A+{u^L_\times}^{-1}_A} \\
A\ar[ru]^-{{u^R_+}_A^{-1}}\ar[rr]_-{\iota^0_{A,A}} & & A+A
}
\end{gathered}\end{equation*}
\begin{equation*}\begin{gathered}
\xymatrixrowsep{1.75pc}\xymatrixcolsep{3.25pc}\xymatrix{
A\times A\ar[dd]_-{\pi^1_{A,A}}\ar[r]^-{1_A\times {u^L_+}^{-1}_A}\ar@{}[rdd]|{({\rm nat})} & A\times (\bot+A)\ar[d]_-{\pi^1_{A+\bot,A}}\ar[r]^-{\delta^L_{A,\bot,A}}_{\eqref{eqn:linear_dist_projection_injection}} & (A\times\bot)+A \ar[d]^-{(1_A\times m)+1_A}\ar[ld]^-{\pi^1_{A,\bot}+1_A}\\
& \bot+A\ar[rd]^-{b_A+1_A}\ar@{}[r]|{({\rm subterm})}\ar@{}[d]|{({\rm def})} & (A\times\top)+A\ar[d]^-{{u^R_\times}^{-1}_A+1_A} \\
A\ar[ru]^-{{u^L_+}_A^{-1}}\ar[rr]_-{\iota^1_{A,A}} & & A+A
}
\end{gathered}\end{equation*}
Observe that the upper composites are two other equivalent definitions of $\mix_{A,A}$ (by naturality of the linear distributors). For $(iii) \Rightarrow (ii)$, suppose that \eqref{eqn:mix_pi_iota} holds. Then above we already showed that $\Delta_A; \mix_{A,A}; \nabla_A  = 1_A$. We can also compute that: 
\[ \mix_{A,A}; \nabla_A; \Delta_A; \pi^0_{A,A} = \pi^0_{A,A};\iota^0_{A,A};\nabla_A; \Delta_A; \pi^0_{A,A} = \pi^0_{A,A}; 1_A; 1_A = \pi^0_{A,A} \]
\[ \mix_{A,A}; \nabla_A; \Delta_A; \pi^1_{A,A} = \pi^1_{A,A};\iota^1_{A,A};\nabla_A; \Delta_A; \pi^1_{A,A} = \pi^1_{A,A}; 1_A; 1_A = \pi^1_{A,A} \]
Then by the universal property of the product, $\mix_{A,A}; \nabla_A; \Delta_A = 1_{A \times A}$. Then by Lem \ref{lemma:CLDC_preintial_subterminal}, $A$ is subterminal. 

Putting all this together we get $(i) \Leftrightarrow (iii) \Leftrightarrow (ii)$. So we conclude that $A$ is preinitial if only if $A$ is subterminal, as desired. 
\end{proof}

\subsection{Duoidal Structure}

As discussed in the introduction, duoidal categories are another important type of category with two monoidal structures with some sort of distributivity. Here we explain how every CLDC is duoidal and study its duoidal structure. For an in-depth introduction to duoidal categories, we invite the reader to see \cite{Aguiar_Mahajan_2010}, where they are referred to as 2-monoidal categories.  

\begin{definition}\cite[Def 6.1]{Aguiar_Mahajan_2010}\label{def:duoidal_cat}
A {\bf duoidal category} $(\bX, \diamond, I,\star, J)$ is category \bX\ with two monoidal structures  $(\bX, \diamond, I)$ and $(\bX, \star, J)$, and equipped with morphisms 
\begin{alignat*}{3}
\Delta_{I}  &\c I\rarr I\star I &\qquad  \nabla_{J}& \c J\diamond J\rarr J  &\qquad  m  & \c I\rarr J
\end{alignat*}
and a natural transformation:
\begin{align*}
 \mu_{A, B, C, D}\c (A\star B)\diamond(C\star D)\rarr(A\diamond C)\star(B\diamond D)
\end{align*}
called the {\bf interchange}, such that various coherence conditions detailed in \cite{Aguiar_Mahajan_2010} hold. 
\end{definition}

Duoidal structures arise canonically whenever monoidal categories have finite products or finite coproducts. Therefore, every category which has both finite products and finite coproducts is a duoidal category, as detailed in \cite[Ex 6.19]{Aguiar_Mahajan_2010}. Explicitly, the structure maps $\Delta_{\bot} \c \bot\rarr \bot\times \bot$, $\nabla_{\top}\c \top+\top\rarr \top$, and $m\c \bot\rarr\top$ are defined respectively as follows: 
\begin{align}\label{eq:duoidal-CLDC-1}
\Delta_{\bot} = b_{\bot\times \bot} = \langle 1_\bot, 1_\bot\rangle && \nabla_{\top} = t_{\top+\top} = [1_\top, 1_\top] &&  m = t_{\bot}=b_{\top}
\end{align}
while the interchange $ \mu_{A,B,C,D} \c (A\times B)+(C\times D)\rarr (A+C)\times (B+D)$ is defined as follows:
\begin{align}\label{eq:duoidal-CLDC-2}
     \mu_{A,B,C,D} &= \langle \pi^0_{A,B} + \pi^0_{C,D}, \pi^1_{A,B} +\pi^1_{C,D}\rangle = [\iota^0_{A,C} \times \iota^0_{B,D}, \iota^1_{A,C} \times \iota^1_{B,D}] 
\end{align}
Therefore, since every category which is cartesian and cocartesian is duoidal, we get that every CLDC is duoidal, and moreover the linear distributors are compatible with the interchange. 

\begin{proposition}\label{prop:CLDC_is_duoidal}
A CLDC is a duoidal category with the duoidal structure given as in \eqref{eq:duoidal-CLDC-1} and \eqref{eq:duoidal-CLDC-2}. Moreover, the linear distributors are compatible with the interchange in the sense that the following equalities hold: 
\begin{align}\label{diag:interchange_lineardist}
\begin{split}
&(1_{X} \times \mu_{A,B,C,D}); {\alpha_\times}^{-1}_{X, A+ C, B+ D} ; (\delta^L_{X,A,C}\times 1_{B+ D})\\
&= \delta^L_{X,A\times B, C\times D} ; ({\alpha_\times}^{-1}_{X, A, B}+ 1_{C\times D}); \mu_{X\times A, B, C, D} \\ \\
&(1_{A\times B}+ \delta^L_{C,D,X}); {\alpha_+}_{A\times B, C\times D, X}; (\mu_{A,B,C,D}+ 1_{X}) \\
&= \mu_{A,B,C,D+ X}; (1_{A+ C}\times {\alpha_+}_{B,D,X}); \delta^L_{A+ C, B+ D, X}\\ \\
&(\mu_{A,B,C,D} \times 1_{X}); {\alpha_\times}_{A+ C, B+ D, X}; (1_{A+ C} \times \delta^R_{B,D,X})\\
&= \delta^R_{A\times B, C\times D, X} ; (1_{A\times B} + {\alpha_\times}_{C, D, X}); \mu_{A, B, C, D\times X} \\ \\ 
&(\delta^R_{X,A,B} + 1_{C\times D}); {\alpha_+}^{-1}_{X, A\times B, C\times D}; (1_{X} + \mu_{A,B,C,D}) \\
&= \mu_{X+ A, B, C, D}; ({\alpha_+}^{-1}_{X, A, C}\times 1_{B+ D}); \delta^R_{X, A+ C, B+ D} \\
\end{split}
\end{align}
\end{proposition}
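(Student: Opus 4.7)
The duoidal structure on a CLDC is the standard one exhibited by any category with both finite products and finite coproducts (see \cite[Ex 6.19]{Aguiar_Mahajan_2010}), so the only substantive work is to verify the four compatibility equations in \eqref{diag:interchange_lineardist}.

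Each equation is proved by the same strategy: the universal property of the outer (co)product in the codomain reduces the equality to a computation after postcomposing with the two projections (or precomposing with the two injections), which then simplifies via naturality of the linear distributors, the explicit description of $\mu$ as a pairing and cotupling, and the projection/injection identities of Lemma \ref{lem:delta_pi_iota}. We illustrate in detail for the first equation; the other three follow by wholly analogous arguments, with equations two and four checked via precomposition with the injections of the outer coproduct in the domain, and the ``right'' equations obtained from the ``left'' ones by conjugating with the symmetries of $\times$ and $+$ and invoking the SLDC coherence \eqref{cc:lin_dist_braiding}.

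For the first equation, both sides are morphisms into $((X\times A)+ C)\times (B+ D)$, so we check equality after postcomposing with each of the two outer projections. Postcomposing with $\pi^0$, naturality of projections and the coherence identity ${\alpha_\times}^{-1}_{X,A+C,B+D};\pi^0 = 1_X\times \pi^0_{A+C,B+D}$ eliminate the associator on the left-hand side; then $\mu_{A,B,C,D};\pi^0 = \pi^0_{A,B}+\pi^0_{C,D}$ together with naturality of $\delta^L$ in its middle argument bring the left-hand composite to $\delta^L_{X,A\times B, C\times D};((1_X\times \pi^0_{A,B}) + \pi^0_{C,D})$. The right-hand side reduces to the same morphism using $\mu_{X\times A, B, C, D};\pi^0 = \pi^0_{X\times A, B}+\pi^0_{C,D}$ together with ${\alpha_\times}^{-1}_{X,A,B};\pi^0_{X\times A, B} = 1_X\times \pi^0_{A,B}$. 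The computation for $\pi^1$ is parallel, using $\mu;\pi^1 = \pi^1_{A,B} + \pi^1_{C,D}$ and ${\alpha_\times}^{-1}_{X,A,B};\pi^1_{X\times A, B} = \pi^1_{X, A\times B};\pi^1_{A,B}$, with the key step being the identity $\delta^L_{X, A\times B, C\times D};(\pi^1_{X, A\times B}+ 1_{C\times D}) = \pi^1_{X, (A\times B) + (C\times D)}$ from Lemma \ref{lem:delta_pi_iota}.

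The main obstacle is purely notational: the diagrams involve several associators and (co)product structure maps, so careful bookkeeping is required. Conceptually, however, every reduction is an instance of naturality, of the universal properties of products and coproducts, or of Lemma \ref{lem:delta_pi_iota}, and no nontrivial coherence beyond these is invoked.
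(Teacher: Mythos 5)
Your proposal is correct and follows essentially the same route as the paper: the duoidal structure is the standard one from \cite[Ex 6.19]{Aguiar_Mahajan_2010}, and the compatibility equations are verified by the universal property of the outer product (postcomposing with projections, using the pairing form of $\mu$, naturality of $\delta^L$, and Lemma \ref{lem:delta_pi_iota}) for the first equation and dually by precomposition with injections for the second, with the remaining two handled analogously. Your alternative of deriving the $\delta^R$ equations from the $\delta^L$ ones via the symmetries and \eqref{cc:lin_dist_braiding} is a valid minor shortcut but does not change the substance of the argument.
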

\begin{proof} As a commutative diagram, the first equality of \eqref{diag:interchange_lineardist} is: 
\begin{equation*}
\xymatrixrowsep{1.75pc}\xymatrixcolsep{4pc}\xymatrix{
X\times ((A\times B)+(C\times D))\ar[r]^-{\delta^L_{X, A\times B, C\times D}}\ar[d]_-{1_X\times \mu_{A,B,C,D}} & (X\times (A\times B))+(C\times D)\ar[d]^-{{\alpha_\times}^{-1}_{X,A,B}+1_{C\times D}} \\
X\times ((A+C)\times (B+D))\ar[d]_-{{\alpha_\times}^{-1}_{X,A+C, B+D}} & ((X\times A)\times B)+(C\times D)\ar[d]^-{\mu_{X\times A, B, C, D}} \\
(X\times (A+C))\times (B+D)\ar[r]_-{\delta^L_{X,A,C}\times 1_{B+D}} & ((X\times A)+C)\times (B+D)
}
\end{equation*}
We leave it as an exercise for the reader to draw out the other commutative diagrams. 

Now the first equality of \eqref{diag:interchange_lineardist} holds by the following computation using the cartesian tensor structure: 
\begin{align*}
    &\delta^L_{X, A\times B, C\times D}; ({\alpha_\times^{-1}}_{X,A,B}+1_{C\times D});\mu_{X\times A, B, C, D}  \\
    &= \langle \delta^L_{X, A\times B, C\times D}; ({\alpha_\times^{-1}}_{X,A,B}+1_{C\times D});(\pi^0_{X\times A,B} + \pi^0_{C,D}), \delta^L_{X, A\times B, C\times D};\\
    &\qquad\qquad({\alpha_\times^{-1}}_{X,A,B}+1_{C\times D}); (\pi^1_{X\times A,B} +\pi^1_{C,D})\rangle \\
    &= \langle \delta^L_{X, A\times B, C\times D}; ((1_X\times \pi^0_{A,B}) + \pi^0_{C,D}), \delta^L_{X, A\times B, C\times D}; (\pi^1_{X, A\times B} + 1_{C\times D}); (\pi^1_{A,B}+\pi^1_{C,D})\rangle \\
    &= \langle (1_X\times (\pi^0_{A,B} + \pi^0_{C,D})); \delta^L_{X,A,C}, \pi^1_{X,(A\times B)+(C\times D)}; (\pi^1_{A,B}+\pi^1_{C,D})\rangle \\
    &= \langle 1_X\times \mu_{A,B,C,D}; \pi^0_{A+C, B+D}); \delta^L_{X,A,C}, \pi^1_{X,(A\times B)+(C\times D)}; \mu_{A,B,C,D}; \pi^1_{A+C, B+D}\rangle \\
    &= \langle 1_X\times \mu_{A,B,C,D}; \pi^0_{A+C, B+D}); \delta^L_{X,A,C}, (1_X\times \mu_{A,B,C,D});\pi^1_{X, (A+C)\times (B+D)}; \pi^1_{A+C, B+D}\rangle \\
    &= (1_X\times \mu_{A,B,C,D}); \langle (1_X\times  \pi^0_{A+C, B+D}); \delta^L_{X,A,C}, \pi^1_{X, (A+C)\times (B+D)}; \pi^1_{A+C, B+D}\rangle \\
    &= (1_X\times \mu_{A,B,C,D}); \langle {\alpha_\times^{-1}}_{X, A+C, B+D}; \pi^0_{X\times (A+C), B+D};\delta^L_{X,A,C} , \\
    &\qquad\qquad {\alpha_\times^{-1}}_{X, A+C, B+D}; \pi^1_{X\times (A+C), B+D}\rangle \\
    &= (1_X\times \mu_{A,B,C,D}); {\alpha_\times^{-1}}_{X, A+C, B+D}; \langle \pi^0_{X\times (A+C), B+D};\delta^L_{X,A,C}, \pi^1_{X\times (A+C), B+D}\rangle \\
    &= (1_X\times \mu_{A,B,C,D}); {\alpha_\times^{-1}}_{X, A+C, B+D}; (\delta^L_{X,A,C} + 1_{B+D})
`\end{align*}
The second equality in \eqref{diag:interchange_lineardist} holds by the cocartesian par structure:
\begin{align*}
    &\mu_{A,B,C,D+X}; (1_{A+C} \times {\alpha_+}_{B,D,X}); \delta^L_{A+C, B+D, X} \\
    &= [(\iota^0_{A,C}\times \iota^0_{B, D+X}); (1_{A+C} \times {\alpha_+}_{B,D,X}); \delta^L_{A+C, B+D, X}, (\iota^1_{A,C}\times \iota^1_{B,D+X}); \\
    &\qquad\qquad (1_{A+C} \times {\alpha_+}_{B,D,X}); \delta^L_{A+C, B+D, X})] \\
    &=[(\iota^0_{A,C}\times \iota^0_{B,D}); (1_{A+C}\times \iota^0_{B+D,X}); \delta^L_{A+C, B+D, X}, (\iota^1_{A,C}\times (\iota^1_{B,D}+ 1_X));\delta^L_{A+C, B+D, X})] \\
    &= [(\iota^0_{A,C}\times \iota^0_{B,D}); \iota^0_{(A+C)\times (B+D), X}, \delta^L_{C,D,X}; ((\iota^1_{A,C}\times \iota^1_{B,D})+1_X)] \\
    &= [\iota^0_{A\times B, C\times D}; \mu_{A,B.C,D}; \iota^0_{(A+C)\times (B+D), X}, \delta^L_{C,D,X}; (\iota^1_{A\times B, C\times D}; \mu_{A,B,C,D} + 1_X)]\\
    &= [\iota^0_{A\times B, C\times D}; \iota^0_{(A\times B)+(C\times D), X}, \delta^L_{C,D,X}; (\iota^1_{A\times B, C\times D}+1_X)]; (\mu_{A,B,C,D}+1_X)\\
     &= [\iota^0_{A\times C, (C\times D)+X}, \delta^L_{C,D,X}; \iota^1_{A\times B, (C\times D)+X}];{\alpha_+}_{A\times B, C\times D, X}; (\mu_{A,B,C,D}+1_X) \\
     &= (1_{A\times B}+\delta^L_{C,D,X}); {\alpha_+}_{A\times B, C\times D, X}; (\mu_{A,B,C,D}+1_X)
\end{align*}
The last two equalities \eqref{diag:interchange_lineardist} hold via similar calculations using the right linear distributor instead. 
\end{proof}

\begin{remark}\label{rem:duoidal}
Given that duoidal structure arises canonically when finite (co)products exist, any LDC with a (co)cartesian (resp. par) tensor structure is a duoidal category. 
\end{remark}

Recall that duoidal categories were first introduced in \cite{Joyal_Street_1993} to generalize the canonical interchange isomorphism in a braided monoidal category. While the duoidal interchange map is not in general an isomorphism, in a CLDC, we can use the mix maps to see that it is essentially the canonical interchange isomorphism for the (co)product: 
\begin{align*}
\tau^\times _{W,X,Y,Z} = \langle \pi^0_{W,X}\times \pi^0_{Y,Z}, \pi^1_{W,X}\times \pi^1_{Y,Z}\rangle 
 \c (W\times X)\times (Y\times Z)\rarr (W\times Y)\times (X\times Z)\\ 
 \tau^+_{W,X,Y,Z} = [\iota^0_{W,Y} + \iota^0_{X, Z}, \iota^1_{W,Y} + \iota^1_{X, Z}]
 \c (W+X)+(Y+Z)\rarr (W+Y)+(X+Z)
\end{align*}

\begin{proposition}
In a CLDC, the following diagrams commute: 
\begin{equation}\label{diag:interchange_canonicalflip_mix}
\begin{gathered}\xymatrixrowsep{1.75pc}\xymatrixcolsep{3pc}\xymatrix{
(A\times B)+(C\times D)\ar[r]^-{\mu_{A,B,C,D}}\ar[d]_-{\mix_{A,B}+\mix_{C,D}} & (A+C)\times (B+D)\ar[d]^-{\mix_{A+C, B+D}}  \\
(A+B)+(C+D)\ar[r]_-{\tau^+_{A, B, C, D}} & (A+C)+(B+D) \\
(A\times B)\times (C\times D)\ar[r]^-{\tau^\times_{A,B,C,D}}\ar[d]_-{\mix_{A\times B, C\times D}} & (A\times C)\times (B\times D)\ar[d]^-{\mix_{A,C}\times \mix_{B\times D}} \\
(A\times B)+(C\times D)\ar[r]_-{\mu_{A,B,C,D}} & (A+ C)\times(B+ D) \\
}
\end{gathered}\end{equation}  
\end{proposition}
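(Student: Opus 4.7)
The plan is to prove each of the two squares by combining the universal property of the appropriate (co)product with the naturality of the mix map. The key observation is that, although the interchange $\mu_{A,B,C,D}$ looks complicated, each of its two expressions—one via the universal property of the coproduct source, one via the universal property of the product target—is perfectly adapted to one of the two squares. Since $\mix$ is a natural transformation between the two bifunctors $\cdot \times \cdot$ and $\cdot + \cdot$, composing it with any product- or coproduct-structural map can be rewritten by naturality. Together with the tautological descriptions of $\tau^+$ and $\tau^\times$ on injections and projections respectively, this reduces everything to routine identities.

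For the top square, I would use the coproduct form $\mu_{A,B,C,D} = [\iota^0_{A,C}\times \iota^0_{B,D},\, \iota^1_{A,C}\times \iota^1_{B,D}]$ and appeal to the universal property of $(A\times B)+(C\times D)$. It then suffices to verify the equation after precomposition with each $\iota^j_{A\times B, C\times D}$ for $j=0,1$. For $j=0$, the left-hand side becomes $(\iota^0_{A,C}\times \iota^0_{B,D});\mix_{A+C,B+D}$, which by naturality of $\mix$ equals $\mix_{A,B};(\iota^0_{A,C}+\iota^0_{B,D})$; the right-hand side becomes $\mix_{A,B};\iota^0_{A+B,C+D};\tau^+_{A,B,C,D}$, and by the very definition of $\tau^+$, the composite $\iota^0_{A+B,C+D};\tau^+_{A,B,C,D}$ equals $\iota^0_{A,C}+\iota^0_{B,D}$. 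The case $j=1$ is symmetric.

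For the bottom square, I would proceed dually, using the product form $\mu_{A,B,C,D} = \langle \pi^0_{A,B}+\pi^0_{C,D},\, \pi^1_{A,B}+\pi^1_{C,D}\rangle$ and the universal property of $(A+C)\times (B+D)$, checking the equation after postcomposition with each projection $\pi^j_{A+C,B+D}$ for $j=0,1$. For $j=0$, the right-hand side becomes $\mix_{A\times B, C\times D};(\pi^0_{A,B}+\pi^0_{C,D})$, which by naturality of $\mix$ equals $(\pi^0_{A,B}\times \pi^0_{C,D});\mix_{A,C}$; the left-hand side becomes $\tau^\times_{A,B,C,D};\pi^0_{A\times C, B\times D};\mix_{A,C}$, and by the definition of $\tau^\times$, $\tau^\times_{A,B,C,D};\pi^0_{A\times C,B\times D} = \pi^0_{A,B}\times \pi^0_{C,D}$. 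The $j=1$ case is again symmetric.

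The argument is essentially a pair of parallel universal-property computations and does not require any manipulation of the linear distributors $\delta^L, \delta^R$; the distributors enter only indirectly, through the fact that $\mix$ is a well-defined natural transformation. The only real subtlety to keep track of is the choice of representation of $\mu$—the coproduct form for the top square and the product form for the bottom—so that each universal-property reduction lands on a naturality square of $\mix$. With those choices made, no diagram chase through the coherence axioms \eqref{cc:unit_lineardist}, \eqref{cc:assoc_lineardist}, or \eqref{cc:leftright_lineardist} is needed.
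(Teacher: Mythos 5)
Your proposal is correct and follows essentially the same route as the paper: the paper proves the first square by precomposing with the coproduct injections, using the copairing form of $\mu$, naturality of $\mix$, and the definition of $\tau^+$, exactly as you do, and then notes the second square is handled similarly (your dual argument via projections and the pairing form of $\mu$). No gaps.
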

\begin{proof}
That the first diagram commutes follows from the naturality of the mix maps and the definitions of the interchanges:
\begin{align*}
    &\mu_{A,B,C,D}; \mix_{A+C, B+D} \\
    &= [(\iota^0_{A,C}\times \iota^0_{B,D});\mix_{A+C, B+D} , (\iota^1_{A,C}\times \iota^1_{B,D});\mix_{A+C, B+D}] \\
    &= [\mix_{A,B};(\iota^0_{A,C}+ \iota^0_{B,D}), \mix_{C,D}; (\iota^0_{A,C}+ \iota^0_{B,D})] \\
    &= [\mix_{A,B};\iota^0_{A+B, C+D}; \tau^+_{A,B,C,D}, \mix_{C,D}; \iota^1_{A+B, C+D}; \tau^+_{A,B,C,D}] \\
    &= [\mix_{A,B};\iota^0_{A+B, C+D}, \mix_{C,D}; \iota^1_{A+B, C+D}]; \tau^+_{A,B,C,D} \\
    &= (\mix_{A,B}+\mix_{C,D}); \tau^+_{A,B,C,D}
\end{align*}
Similarly, we can also show that the second diagram commutes. 
\end{proof}

We conclude this section with the observation that in any category with finite products and finite coproducts, the interchange map is compatible with the symmetries in the sense that the following diagrams commute: 
\begin{equation}\label{diag:interchange_braidings}
\begin{gathered}\xymatrixrowsep{1.75pc}\xymatrixcolsep{3pc}\xymatrix{
(A\times B)+(C\times D)\ar[r]^-{\mu_{A,B,C,D}}\ar[d]_-{{\sigma_\times}_{A,B}+{\sigma_\times}_{C,D}} & (A+C)\times (B+D)\ar[d]^-{{\sigma_\times}_{A+C, B+D}} \\
(B\times A)+(C\times D)\ar[r]_-{\mu_{B,A,D,C}} & (B+D)\times (A+C)\\
(A\times B)+(C\times D)\ar[r]^-{\mu_{A,B,C,D}}\ar[d]_-{{\sigma_+}_{A\times B, C\times D}} & (A+C)\times (B+D)\ar[d]^-{{\sigma_+}_{A,C}\times {\sigma_+}_{B,D}} \\
(C\times D)+(A\times B)\ar[r]_-{\mu_{C,D,A,B}}  & (C+A)\times (D+B)
}
\end{gathered}\end{equation}
which are easily shown using the (co)universal property of the (co)product and the (co)pairing definition of the interchange. 

With these compatibilities and Prop \ref{prop:CLDC_is_duoidal}, it follows that every CLDC is also a symmetric \emph{medial} linearly distributive category (MLDC). MLDCs were introduced by the first named author in order to develop a linearly distributive version of the Fox theorem. For more on MLDCs and how CLDCs can be characterized as special MLDCs, we invite the curious reader to see \cite{Kudzman-Blais_2025}.

\section{Posetal Distributive Categories}\label{sec:BDL}

In this section we discuss our first central class of CLDCs: posetal distributive categories. We also review how if a distributive category is a CLDC, then it must be posetal. Afterwards, we give an adjunction between CLDCs and posetal distributive categories using \emph{semizero} objects. 

\subsection{Posetal and Distributive Categories}

Let us first recall the definition of distributive categories. 

\begin{definition}\cite[Sec 3]{Cockett_1993}\label{def:distributive}
A {\bf distributive category} is a category \bD\ with finite products and finite coproducts such that the product distributes over the coproduct in the sense that the canonical natural transformations
\begin{equation}\label{eq:distributive-cat}
\begin{gathered}
d^L_{A,B,C} = [1_{A}\times \iota^0_{B,C}, 1_{A}\times \iota^1_{B,C}]\c (A\times B)+(A\times C)\rarr A\times(B+C) \\
d^R_{A, B, C} = [\iota^0_{A,B}\times 1_{C}, \iota^1_{A, B}\times 1_{C}]\c(A\times C)+(B\times C)\rarr(A+B)\times C
\end{gathered}
\end{equation}
are isomorphisms. 
\end{definition}
Note of course that if one of the above natural transformations is an isomorphism, then so is the other one via symmetry. 

Now, recall that a \textbf{posetal} category, otherwise known as a thin category, is a category such that for every $A$ and $B$ there is at most one map of type $A \to B$. If a map $A \to B$ exists, then we say that $A \leq B$. We abuse notation and use $A \leq B$ to also represent the map of type $A \to B$, and we write $A = B$ for $A \cong B$, that is, when $A \leq B$ and $B \leq A$.

Therefore, a posetal distributive category is a posetal category with terminal object $\top$, binary products $A\wedge B$ (denoted by a meet in the posetal context), an initial object $\bot$, and binary coproducts $A\vee B$ (denoted by a join in the posetal context), such that the product and coproduct distributive over one another, in the sense that the following equality holds for all objects $A$, $B$, and $C$: 
\begin{align}\label{eq:dist-lattice-eq}
A \wedge (B\vee C) = (A\wedge B)\vee (A\wedge C) && A \vee (B \wedge C) = (A \vee B) \wedge (A\vee C)
\end{align}
Note that a small posetal distributive category is a {\bf bounded distributive lattice} in the usual sense. 

\subsection{Posetal Collapse}\label{sec:posets}

Every posetal distributive category is a CLDC:

\begin{example}\label{prop:BDLC-CLDC} 
A posetal distributive category ${\mathcal L}$ is a CLDC, whose linear distributors are given by: 
\begin{align*}
& \delta^R_{A,B,C} \c (A\vee B)\wedge C = (A\wedge C) \vee (B\wedge C) \leq A\vee (B\wedge C) \\
&\delta^L_{A, B, C} \c  A\wedge (B\vee C) = (A\wedge B)\vee (A\wedge C) \leq (A\wedge B)\vee C \end{align*}
\end{example}

The linear distributor coherence conditions hold trivially as there is at most one map between two objects. In fact, posetal distributive categories correspond precisely to posetal CLDCs. 

\begin{lemma}\label{lem:posetal_CLDC}
A CLDC is posetal if and only if it is a posetal distributive category.
\end{lemma}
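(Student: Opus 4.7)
The proof splits into the two implications, with only one being nontrivial. The $(\Leftarrow)$ direction is immediate: any posetal distributive category is posetal by definition, and Example~\ref{prop:BDLC-CLDC} already exhibits it as a CLDC.

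For the $(\Rightarrow)$ direction, my plan is to take a posetal CLDC $\bX$ and verify the axioms of a bounded distributive lattice. Since $\bX$ is posetal, its product $\times$ and coproduct $+$ present as binary meets $\wedge$ and joins $\vee$, with $\top$ and $\bot$ serving as top and bottom. What remains is the distributive identity~\eqref{eq:dist-lattice-eq}, and this reduces to establishing the single inequality
\[ A \wedge (B \vee C) \leq (A \wedge B) \vee (A \wedge C), \]
since the reverse inequality always holds whenever finite products and coproducts exist, and in any bounded lattice either of the two distributive laws implies the other.

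To produce this inequality, the idea is to use both linear distributors and the $\times$-symmetry in tandem. First, pair the projection $A \wedge (B \vee C) \leq A$ with $\delta^L_{A,B,C} \c A \wedge (B \vee C) \leq (A \wedge B) \vee C$ to obtain $A \wedge (B \vee C) \leq A \wedge \bigl((A \wedge B) \vee C\bigr)$. Commuting the meet by the $\times$-symmetry presents this as $\bigl((A \wedge B) \vee C\bigr) \wedge A$, and then $\delta^R_{A \wedge B, C, A}$ supplies $\bigl((A \wedge B) \vee C\bigr) \wedge A \leq (A \wedge B) \vee (C \wedge A)$, which is identified with $(A \wedge B) \vee (A \wedge C)$ via the $+$-symmetry on the right summand. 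Composing these morphisms delivers the required inequality in the posetal category.

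I do not expect a genuine obstacle here: once the composite is written down, every coherence obligation is discharged vacuously because $\bX$ is posetal, so there is at most one morphism between any two objects. The only conceptual point is noticing that $\delta^L$ and $\delta^R$, used together with a single symmetry, supply exactly the direction of the distributive law that is not already guaranteed by the cartesian and cocartesian structures.
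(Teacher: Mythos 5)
Your proof is correct and follows essentially the same route as the paper: the key step, chaining the pairing with $\delta^L$, the $\times$-symmetry, and $\delta^R$ to get $A \wedge (B \vee C) \leq (A\wedge B)\vee(A\wedge C)$, is exactly the paper's computation. The only minor difference is that the paper verifies the second distributive law by a second explicit (dual) calculation with the linear distributors, whereas you invoke the standard lattice-theoretic fact that either distributive law implies the other, which is a legitimate small shortcut.
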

\begin{proof} We have already seen the $\Leftarrow$ direction in Ex \ref{prop:BDLC-CLDC}. For the $\Rightarrow$ direction, suppose that we have a CLDC which is also posetal. In particular, the linear distributors tell us that:
\[ (A\vee B)\wedge C \leq A\vee (B\wedge C) \qquad A\wedge (B\vee C) \leq (A\wedge B)\vee C\]
The only axiom of a posetal distributive category which is not immediate is distributivity \eqref{eq:dist-lattice-eq}. To show this, first note that $(A \wedge B) \vee (A \wedge C) \leq A \wedge (B \vee C)$ is true whenever we have products and coproducts. In fact, this is the canonical natural transformation from the definition of a distributive category. On the other hand, using the linear distributors we get that: 
\begin{align*}\label{eq:posetal_distributive_CLDC}
   A\wedge (B\vee C) &= (A\wedge A)\wedge (B \vee C) = A\wedge (A\wedge (B\vee C)) \leq A \wedge ((A\wedge B)\vee C) \\
   &= ((A\wedge B)\vee C) \wedge A \leq (A\wedge B)\vee (C\wedge A) = (A\wedge B)\vee (A\wedge C) 
\end{align*}
Therefore it follows that $ A\wedge (B\vee C) = (A\wedge B)\vee (A\wedge C)$. For the other distributive law, again first note that $A \vee (B \wedge C) \leq (A \vee B)\wedge (A \vee C)$ holds whenever we have products and coproducts. Using the linear distributor we also have that: 
\begin{align*}\begin{split}
    (A\vee B)\wedge (A\vee C) &= (A\vee B)\wedge (C\vee A) \leq ((A\vee B)\wedge C)\vee A\leq (A\vee (B\wedge C))\vee A  \\
    &= A\vee (A\vee (B\wedge C)) = (A\vee A)\vee (B\wedge C) = A\vee (B\wedge C)
\end{split}\end{align*}
So $A\vee (B\wedge C) =     (A\vee B)\wedge (A\vee C)$. So we conclude that a posetal CLDC is a posetal distributive category. 
\end{proof}

There are two well-known theorems describing how certain kinds of CLDCs collapse to being posetal: Joyal's Paradox and ``orthogonality of linear distributivity and standard distributivity''. We will take the time to detail these results here, as they are integral to our understanding of CLDCs. 

We begin with Joyal's Paradox, one of the most famous results in categorical logic. Joyal's Paradox states that a small cartesian closed category with involution is a Boolean algebra (otherwise known as a Boolean lattice), in other words is posetal. This means that the naive definition for categorical semantics of classical logic, generalizing the categorical semantics of intuitionistic logic to the classical case, just provides semantics of provability and not proofs. For an in-depth discussion of Joyal's Paradox in its standard form, see \cite[Appendix B]{Girard_1991}. We shall state the result here using the language of CLDCs. 

As before, let us consider a posetal distributive category, but this time add complements: for every object $A$, there is an object $A^\ast$ such that $A \vee A^\ast = \top$ and $A \wedge A^\ast = \bot$. Note that a small complemented posetal distributive category is precisely a complemented bounded distributive lattice, otherwise known as a {\bf Boolean lattice}.

\begin{theorem}[Joyal's Paradox]
A CLDC has negation if and only if it is a complemented posetal distributive category.   
\end{theorem}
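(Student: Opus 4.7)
The plan is to split the biconditional into two directions, with the interesting content lying in the forward direction ($\Rightarrow$). For the easy direction ($\Leftarrow$), suppose ${\mathcal L}$ is a complemented posetal distributive category. By Example \ref{prop:BDLC-CLDC}, ${\mathcal L}$ is a CLDC. The complements then provide negation in a straightforward way: for each $A$, take $A^\perp := A^\ast$, let $\gamma_A^R\c A \wedge A^\ast \to \bot$ be the unique map witnessing $A \wedge A^\ast = \bot$, and let $\tau_A^R\c \top \to A \vee A^\ast$ be the unique map witnessing $\top = A \vee A^\ast$. Because ${\mathcal L}$ is posetal, every coherence diagram in Definition \ref{def:SLDC_negation} commutes automatically, and the induced left versions $\gamma^L_A$ and $\tau^L_A$ are likewise forced.

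For the forward direction, suppose we have a CLDC $\bX$ with negation. The key observation is that the preinitiality of $\top$ from Proposition \ref{prop:bot_subterminal_top_preinitial}, combined with the adjunction produced by a complementation pair (Lemma \ref{lem:complement_pairs_bijection}), is enough to force $\bX$ to be posetal. Explicitly, by negation, for each object $A$ we have a complementation pair $(A, A^\perp, \gamma^R_A, \tau^L_A)$, so Lemma \ref{lem:complement_pairs_bijection} yields a natural bijection $\bX(A \times B, C) \cong \bX(B, A^\perp + C)$. Setting $B = \top$ and using $A \times \top \cong A$, we obtain
\[
\bX(A, C) \;\cong\; \bX(A \times \top, C) \;\cong\; \bX(\top, A^\perp + C).
\]
Since $\top$ is preinitial by Proposition \ref{prop:bot_subterminal_top_preinitial}, the right-hand hom-set has at most one element, and hence so does $\bX(A, C)$ for all $A, C$. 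Thus $\bX$ is posetal.

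By Lemma \ref{lem:posetal_CLDC}, it follows that $\bX$ is a posetal distributive category. It only remains to observe that the negation provides complements: the evaluation $\gamma^R_A\c A \wedge A^\perp \to \bot$ gives $A \wedge A^\perp \leq \bot$, and the reverse inequality is automatic since $\bot$ is initial, so $A \wedge A^\perp = \bot$; dually, the coevaluation $\tau^R_A\c \top \to A \vee A^\perp$ together with the terminality of $\top$ yields $\top = A \vee A^\perp$. Hence $\bX$ is a complemented posetal distributive category.

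The main obstacle is really conceptual rather than technical: one must notice that the subterminal/preinitial coincidence established earlier in the section (Proposition \ref{prop:bot_subterminal_top_preinitial}) interacts with the negation adjunction to collapse all hom-sets. Once this is seen, the proof reduces to a two-line hom-set calculation, in sharp contrast to the more involved traditional presentations of Joyal's paradox via cartesian closed structure and involutions.
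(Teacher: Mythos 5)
Your proof is correct and follows essentially the same route as the paper's: both directions use the identical hom-set chain $\bX(A,C)\cong\bX(A\times\top,C)\cong\bX(\top,A^\perp+C)$ together with the preinitiality of $\top$ (Proposition \ref{prop:bot_subterminal_top_preinitial}) to collapse to a posetal category, then invoke Lemma \ref{lem:posetal_CLDC} and read off the complements from the (co)evaluation maps. The only quibble is a wording one: what you use is Proposition \ref{prop:bot_subterminal_top_preinitial} (that $\top$ is preinitial), not the full preinitial/subterminal coincidence of Proposition \ref{prop:CLDC_preintial_subterminal}.
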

\begin{proof}
For the $\Leftarrow$ direction, a complemented posetal distributive category is by definition a posetal distributive category, and hence a posetal CLDC by Lem \ref{lem:posetal_CLDC}. It is immediate that it has negation by setting $A^\perp = A^\ast$. 

For $\Rightarrow$ direction, consider a  CLDC \bX\ with negation. For every object $A$ we have a complementation pair $(A, A^\perp, \gamma^R_A, \tau^L_A)$. Now for every other object $B$ we have that $\bX(A,B)\cong \bX(A\times \top, B) \cong \bX(\top, A^\perp + B)$. By Prop \ref{prop:bot_subterminal_top_preinitial}, $\top$ is preinitial, meaning that there is at most one map of type $\top \to A^\perp + B$. As such, there is at most one map of type $A \to B$, or in other words, $A$ is preinitial. As this is true for all objects $A$ in \bX, there is at most one morphism between two objects, thus \bX\ is a posetal. Hence by Lem \ref{lem:posetal_CLDC}, \bX\ is a posetal distributive category. Setting $A^\ast = A^\perp$, since we have a complementation pair, we have that  $\top \leq A \vee A^\ast $ and $A \wedge A^\ast \leq \bot$. Now, $A \vee A^\ast \leq \top$ and $\bot\leq A \wedge A^\ast$ as $\top$ is terminal and $\bot$ is initial. Thus, $A \vee A^\ast = \top$ and $A \wedge A^\ast = \bot$. Therefore, \bX\ is a complemented posetal distributive category, as desired. 
\end{proof}
Of course, if we restrict our attention to small CLDCs, we get that: a small CLDC has negation if and only if it is a Boolean lattice. 

We now turn our attention to the other collapse, which involves distributive categories. As previously mentioned, it was initially thought that all distributive categories were CLDCs. However, it later became clear that this is only true if the distributive category is posetal. This now very well-known result was given by Cockett and Seely in the corrected version of ``Weakly Distributive Categories'' \cite{Cockett_Seely_1997_LDC}. We give an alternative proof of this result using a characterization of preinitial objects in a distributive category and our result that the terminal object in a CLDC is preinitial. This gives a nice simple alternative proof of the collapse in question.

\begin{proposition}\cite[Prop 3.3]{Cockett_1993}\label{prop:preintial_distributive}
For an object $A$ in a distributive category, the following are equivalent: 
\begin{enumerate}[label=(\roman*)]
    \item $A$ is preinitial,
    \item There is an object $B$ such that $\iota^0_{A,B}\c A\rarr A+B$ is an isomorphism,
    \item $t_A; \iota^0_{\top,\top} = t_A; \iota^1_{\top,\top} \c A\rarr \top+\top$
\end{enumerate}
\end{proposition}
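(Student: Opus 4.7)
The plan is to establish the equivalences via the cycle $(i) \Rightarrow (ii) \Rightarrow (iii) \Rightarrow (i)$, using Lemma~\ref{lem:preinitial/subterminal} for one half and the distributivity isomorphism $d^L$ for the other.

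For $(i) \Rightarrow (ii)$, I would take $B = A$. By Lemma~\ref{lem:preinitial/subterminal}, preinitiality of $A$ gives both $\iota^0_{A,A} = \iota^1_{A,A}$ and the invertibility of $\nabla_A$. A short universal-property check then exhibits $\nabla_A$ as the inverse of $\iota^0_{A,A}$: the equation $\iota^0_{A,A};\nabla_A = 1_A$ is part of the defining data of $\nabla_A$, while $\nabla_A;\iota^0_{A,A} = 1_{A+A}$ follows by precomposing with each $\iota^j_{A,A}$ and using $\iota^0_{A,A} = \iota^1_{A,A}$.

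For $(iii) \Rightarrow (i)$, distributivity is the key tool. The composite
\[
A+A \xrightarrow{{u_\times^R}_A + {u_\times^R}_A} (A\times\top)+(A\times\top) \xrightarrow{d^L_{A,\top,\top}} A\times(\top+\top)
\]
is an isomorphism. Using ${u_\times^R}_A = \langle 1_A, t_A\rangle$ and the copairing definition of $d^L$, a short calculation shows that this composite sends each injection $\iota^j_{A,A}$ to the pair $\langle 1_A,\, t_A;\iota^j_{\top,\top}\rangle$. Hypothesis (iii) equates the second components of these pairs, hence equates the pairs themselves, hence by invertibility equates $\iota^0_{A,A}$ with $\iota^1_{A,A}$. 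Lemma~\ref{lem:preinitial/subterminal} then yields preinitiality of $A$.

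The main obstacle is $(ii) \Rightarrow (iii)$. Given $\iota^0_{A,B}$ iso, its inverse is forced to have the copairing form $[1_A, g] : A+B \to A$ for some $g : B \to A$, and the equation $[1_A, g];\iota^0_{A,B} = 1_{A+B}$ forces $g;\iota^0_{A,B} = \iota^1_{A,B}$. Composing this equality with the classifying map $c := [t_A;\iota^0_{\top,\top},\, t_B;\iota^1_{\top,\top}] : A+B \to \top+\top$, and using $g;t_A = t_B$ by terminality of $\top$, immediately yields $t_B;\iota^0_{\top,\top} = t_B;\iota^1_{\top,\top}$, so $B$ already satisfies (iii). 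The delicate remaining task is to propagate this equality from $B$ back to $A$; here one leverages the distributive structure together with the $(iii) \Rightarrow (i)$ implication applied to $B$ to conclude that $B$ itself is preinitial and hence that the summand $B$ is essentially trivial under $\iota^0_{A,B}$, thereby yielding (iii) for $A$.
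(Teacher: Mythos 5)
Your implications $(i)\Rightarrow(ii)$ (taking $B=A$) and $(iii)\Rightarrow(i)$ are both correct. The latter is the standard argument: post-compose the two injections $\iota^j_{A,A}$ with the isomorphism $({u_\times^R}_A+{u_\times^R}_A);d^L_{A,\top,\top}$, observe that both land on $\langle 1_A,\, t_A;\iota^j_{\top,\top}\rangle$, and cancel. This is also the only direction the paper actually uses (it gives no proof of the proposition, deferring to Cockett, and its subsequent lemma isolates exactly this step, weakening the hypothesis to $d^L_{A,\top,\top}$ having a right inverse).

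The genuine gap is in $(ii)\Rightarrow(iii)$, and it cannot be closed: that implication is false for the statement as literally printed. In $\mathrm{Set}$ take $B=\emptyset$; then $\iota^0_{A,\emptyset}\c A\rarr A+\emptyset$ is an isomorphism for \emph{every} $A$, so $(ii)$ holds universally while $(i)$ and $(iii)$ fail for any non-empty $A$. Your own computation already signals this: from $[1_A,g];\iota^0_{A,B}=1_{A+B}$ you correctly extract $g;\iota^0_{A,B}=\iota^1_{A,B}$ and deduce $t_B;\iota^0_{\top,\top}=t_B;\iota^1_{\top,\top}$ --- condition $(iii)$ for the \emph{other} summand $B$, not for $A$ --- and knowing that $B$ is preinitial says nothing about $A$ (in the counterexample $B=\emptyset$ is indeed preinitial). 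The ``propagation'' step you defer to is precisely the false part. Condition $(ii)$ should be read with the injection of the other cofactor, i.e.\ ``there is an object $B$ such that $\iota^1_{A,B}\c B\rarr A+B$ is an isomorphism,'' which is Cockett's formulation. With that correction your copairing argument, run symmetrically, finishes immediately: the inverse is $[g,1_B]$ for some $g\c A\rarr B$, whence $g;\iota^1_{A,B}=\iota^0_{A,B}$, and composing with $[t_A;\iota^0_{\top,\top},\,t_B;\iota^1_{\top,\top}]$ gives $t_A;\iota^0_{\top,\top}=g;t_B;\iota^1_{\top,\top}=t_A;\iota^1_{\top,\top}$, i.e.\ $(iii)$ for $A$, with no use of distributivity; and $(i)\Rightarrow(ii)$ still holds with $B=A$ since $\iota^1_{A,A}=\iota^0_{A,A}$ for preinitial $A$.
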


\begin{proposition}\cite[Prop 3.1]{Cockett_Seely_1997_LDC}\label{prop:CLDC_distributive_poset}
A CLDC is a distributive category if and only if it is posetal. 
\end{proposition}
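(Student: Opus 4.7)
The plan is to prove both directions using results already established in the excerpt, with most of the work going into the forward direction.

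For the backward direction ($\Leftarrow$), suppose the CLDC is posetal. Then by Lemma \ref{lem:posetal_CLDC}, it is a posetal distributive category, which is in particular a distributive category in the sense of Definition \ref{def:distributive}, since in a posetal setting the canonical comparisons $d^L$ and $d^R$ of \eqref{eq:distributive-cat} are automatically isomorphisms whenever the distributivity equalities \eqref{eq:dist-lattice-eq} hold.

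For the forward direction ($\Rightarrow$), suppose \bX\ is a CLDC that happens to also be distributive. The strategy is to show that every object $A$ of \bX\ is preinitial; combined with Lemma \ref{lem:posetal_CLDC} this will force \bX\ to be posetal (and hence a posetal distributive category). By Proposition \ref{prop:bot_subterminal_top_preinitial}, the terminal object $\top$ is preinitial in \bX, so by Lemma \ref{lem:preinitial/subterminal} we have the key equality
\[
\iota^0_{\top,\top} = \iota^1_{\top,\top} \c \top \rarr \top + \top.
\]
Now for any object $A$, precomposing both sides with the unique map $t_A \c A \rarr \top$ yields $t_A; \iota^0_{\top,\top} = t_A; \iota^1_{\top,\top}$. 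Since \bX\ is distributive, condition $(iii)$ of Proposition \ref{prop:preintial_distributive} applies, giving that $A$ is preinitial. As $A$ was arbitrary, every hom-set in \bX\ has at most one element, so \bX\ is posetal. Then Lemma \ref{lem:posetal_CLDC} finishes the argument.

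The main subtlety in this proof, if any, is conceptual rather than technical: one needs to realize that the preinitiality of $\top$ (a CLDC-specific fact from Section \ref{sec:prein-subterm}) can be transferred to every object using the distributive structure via Proposition \ref{prop:preintial_distributive}. Once this bridge is in place, no further computation is required, which is why this yields a streamlined alternative to the original proof in \cite{Cockett_Seely_1997_LDC}.
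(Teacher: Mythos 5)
Your proof is correct and follows essentially the same route as the paper: both directions use Lemma \ref{lem:posetal_CLDC}, and the forward direction transfers the preinitiality of $\top$ (Proposition \ref{prop:bot_subterminal_top_preinitial} together with Lemma \ref{lem:preinitial/subterminal}) to an arbitrary object via condition $(iii)$ of Proposition \ref{prop:preintial_distributive}. No gaps.
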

\begin{proof} The $\Leftarrow$ direction is immediate from Lem \ref{lem:posetal_CLDC}: a posetal CLDC is a posetal distributive category, which is of course a distributive category. For the $\Rightarrow$ direction, suppose that a CLDC is also a distributive category. Then for all objects $A\in \bX$, by Lem \ref{lem:preinitial/subterminal} and Prop \ref{prop:bot_subterminal_top_preinitial}, we get that $t_A; \iota^0_{\top,\top} = t_A; \iota^1_{\top,\top}$ as $\iota^0_{\top,\top} = \iota^1_{\top,\top}$  Therefore, by Prop \ref{prop:preintial_distributive}, every object $A$ is preinitial, implying \bX\ is posetal. 
\end{proof}

In fact, we do not require the inverses of all canonical maps $d^L_{A,B,C}$ to prove the CLDC is posetal. This is due to the fact that the proof of $(iii)\Rarr (i)$ of Prop \ref{prop:preintial_distributive} only requires distributivity over $\top+\top$, as remarked by Cockett in \cite{Cockett_1993}. In other words:

\begin{lemma} Let $A$ be an object in a cartesian and cocartesian category. If the map
\[d^L_{A,\top,\top} = [1_A\times \iota^0_{\top,\top}, 1_A\times \iota^1_{\top,\top}]\]
has a right inverse and $t_A; \iota^0_{\top,\top} = t_A; \iota^1_{\top,\top}$, then $A$ is preinitial. 
\end{lemma}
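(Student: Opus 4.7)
The plan is to exploit the right inverse of $d^L_{A,\top,\top}$ to deduce that it is a monomorphism, and then use this to transfer the hypothesis $t_A; \iota^0_{\top,\top} = t_A; \iota^1_{\top,\top}$ to the equality $\iota^0_{A,A} = \iota^1_{A,A}$, whence Lemma \ref{lem:preinitial/subterminal} yields that $A$ is preinitial.

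I would begin by applying the universal property of the product to the hypothesis to obtain the equality of pairings $\langle 1_A, t_A; \iota^0_{\top,\top}\rangle = \langle 1_A, t_A; \iota^1_{\top,\top}\rangle$ as maps $A \to A\times (\top+\top)$. Using the identification $\langle 1_A, t_A\rangle = {u^R_\times}^{-1}_A$ together with the copairing definition $d^L_{A,\top,\top} = [1_A\times\iota^0_{\top,\top}, 1_A\times\iota^1_{\top,\top}]$, each pairing factors as
\[ \langle 1_A, t_A; \iota^k_{\top,\top}\rangle = {u^R_\times}^{-1}_A; (1_A\times\iota^k_{\top,\top}) = {u^R_\times}^{-1}_A; \iota^k_{A\times\top, A\times\top}; d^L_{A,\top,\top} \]
for $k=0,1$, so the two composites on the right coincide.

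Since $d^L_{A,\top,\top}$ has a right inverse (in the diagrammatic sense used throughout the paper), it is a split monomorphism and hence right-cancellable. Cancelling $d^L_{A,\top,\top}$ yields ${u^R_\times}^{-1}_A; \iota^0_{A\times\top, A\times\top} = {u^R_\times}^{-1}_A; \iota^1_{A\times\top, A\times\top}$. Naturality of the coproduct injections along ${u^R_\times}^{-1}_A: A \to A\times\top$ rewrites each side as $\iota^k_{A,A}; ({u^R_\times}^{-1}_A + {u^R_\times}^{-1}_A)$, and since ${u^R_\times}^{-1}_A + {u^R_\times}^{-1}_A$ is an isomorphism, it cancels to give $\iota^0_{A,A} = \iota^1_{A,A}$. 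Lemma \ref{lem:preinitial/subterminal} then concludes that $A$ is preinitial.

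There is essentially no substantive obstacle; the one delicate point is reading \emph{right inverse} in the diagrammatic-composition sense, so that it witnesses $d^L_{A,\top,\top}$ as a split monomorphism rather than as a split epimorphism (in the latter case the right-cancellation step would fail and a different argument would be needed). With that convention fixed, the remainder is a routine manipulation using naturality and the universal property of the product, and no appeal to distributivity over arbitrary coproducts is required, matching the sharpening that Cockett hinted at.
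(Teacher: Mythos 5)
Your proof is correct and follows essentially the same route as the paper's: both use the copairing identity $\iota^k_{A\times\top,A\times\top};d^L_{A,\top,\top}=1_A\times\iota^k_{\top,\top}$ together with the right inverse (read, as you correctly note, in the diagrammatic sense $d^L;r=1$, so that $d^L_{A,\top,\top}$ is a split monomorphism) to transport the hypothesis $t_A;\iota^0_{\top,\top}=t_A;\iota^1_{\top,\top}$ to $\iota^0_{A,A}=\iota^1_{A,A}$ and then invoke Lemma \ref{lem:preinitial/subterminal} --- the paper by an explicit chase that post-composes with ${d^L}^{-1}$ and $\pi^0_{A,\top}+\pi^0_{A,\top}$ and precomposes with $\Delta_A$, you by cancelling the monomorphism and then the isomorphism ${u^R_\times}_A+{u^R_\times}_A$. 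The only quibble is notational: with this paper's convention ${u^R_\times}_A\c A\rarr A\times\top$, the pairing $\langle 1_A,t_A\rangle$ is ${u^R_\times}_A$ itself rather than its inverse.
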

\begin{proof}
Consider the following commuting diagram:
\begin{equation*}
\begin{gathered}\xymatrixrowsep{1.75pc}\xymatrixcolsep{2pc}\xymatrix{
A\times A\ar[rrrr]^-{\pi^0_{A,A}}\ar[dddd]_-{\pi^0_{A,A}}\ar[rrd]^-{1_A\times t_A}\ar[rdd]_-{1_A\times t_A} & & & & A\ar[dddd]^-{\iota^0_{A,A}} \\
& & A\times \top\ar[rru]^-{\pi^0_{A,\top}}\ar[d]_-{1_A\times \iota^0_{\top,\top}}\ar[rd]^-{\iota^0_{A\times \top, A\times\top}} \\
& A\times \top\ar[ldd]_-{\pi^0_{A,\top}}\ar[r]^-{1_A\times \iota^1_{\top,\top}}\ar[rd]_-{\iota^1_{A\times\top, A\times\top}} & A\times (\top+\top)\ar[rd]^-{{d^L_{A,\top,\top}}^{-1}} & (A\times\top)+(A\times \top)\ar@{=}[d]\ar[l]_-{d^L_{A,\top,\top}} \\
& & (A\times\top)+(A\times \top)\ar@{=}[r]\ar[u]_-{d^L_{A,\top,\top}} & (A\times\top)+(A\times \top)\ar[rd]^-{\pi^0_{A,\top}+\pi^0_{A,\top}}\\
A\ar[rrrr]_-{\iota^1_{A,A}} &&&& A+A
}
\end{gathered}\end{equation*}  
This implies $\Delta_A; \pi^0_{A,A}; \iota^0_{A,A} = \Delta_A; \pi^0_{A,A}; \iota^1_{A,A}$ and thus $\iota^0_{A, A} = \iota^1_{A,A}$. $A$ is then preinitial by Lem \ref{lem:preinitial/subterminal}.
\end{proof}

Then, using the above lemma instead of Prop \ref{prop:preintial_distributive}, we get:

\begin{lemma}\label{lem:CLDC_posetal_distributive_top}
A CLDC is posetal if and only if the following canonical natural transformation:
\[ d^L_{A,\top,\top} = [1_A\times \iota^0_{\top,\top}, 1_A\times \iota^1_{\top,\top}]\c (A\times \top)+(A\times \top)\rarr A\times (\top+\top) \]
has a right inverse.
\end{lemma}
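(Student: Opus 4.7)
The plan is to prove both directions, each of which reduces to results already established in the excerpt.

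For the $\Rightarrow$ direction, I would simply observe that by Lemma \ref{lem:posetal_CLDC} a posetal CLDC is a posetal distributive category, which is in particular distributive. In a distributive category, all the canonical maps $d^L_{A,B,C}$ of (\ref{eq:distributive-cat}) are isomorphisms. Specializing to $B = C = \top$, the map $d^L_{A,\top,\top}$ is thus invertible and a fortiori has a right inverse.

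For the $\Leftarrow$ direction, the strategy is to apply the lemma immediately preceding the statement to each object $A$ in the CLDC. Its hypotheses are that $d^L_{A,\top,\top}$ admits a right inverse (which is our standing assumption) and that $t_A;\iota^0_{\top,\top} = t_A;\iota^1_{\top,\top}$. The second condition is automatic in a CLDC: by Proposition \ref{prop:bot_subterminal_top_preinitial}, the terminal object $\top$ is preinitial, so by Lemma \ref{lem:preinitial/subterminal} the two injections $\iota^0_{\top,\top}, \iota^1_{\top,\top}\c \top \to \top + \top$ coincide, and post-composing by $t_A$ on the left preserves this equality. Thus the preceding lemma applies and yields that $A$ is preinitial. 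Since this argument is uniform in $A$, every object of the CLDC is preinitial, which is exactly the condition that between any two objects there is at most one morphism. Hence the CLDC is posetal.

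Neither direction presents a serious obstacle: the forward direction is essentially a pointer to Lemma \ref{lem:posetal_CLDC}, and the backward direction is a one-line application of the previous lemma, with the key conceptual input being that the preinitiality of $\top$ in a CLDC (Proposition \ref{prop:bot_subterminal_top_preinitial}) makes the auxiliary hypothesis of that lemma vacuous. The only thing to double-check when writing the proof is that ``$A$ is preinitial for every $A$'' indeed yields ``$\bX$ is posetal'', which is immediate from the definition of preinitial.
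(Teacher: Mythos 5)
Your proof is correct and matches the paper's intended argument: the paper derives this lemma precisely by combining the immediately preceding lemma with the fact (Proposition \ref{prop:bot_subterminal_top_preinitial} plus Lemma \ref{lem:preinitial/subterminal}) that $\iota^0_{\top,\top} = \iota^1_{\top,\top}$ in any CLDC, while the forward direction is the trivial consequence of Lemma \ref{lem:posetal_CLDC}. (Minor wording point: $t_A;\iota^0_{\top,\top} = t_A;\iota^1_{\top,\top}$ follows by \emph{pre}-composition with $t_A$ in diagrammatic order, not post-composition.)
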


An important remark is that in any CLDC, there is a potential reasonable candidate ${d^L}^\flat: A\times (B+C) \to (A\times B)+(A\times C)$ for an inverse to the canonical natural transformation $d^L_{A,B,C}: (A\times B)+(A\times C) \to A \times (B+C)$, which is defined to be the following composite:
\begin{equation}\label{eq:candidate_distributive_inverse}
\begin{gathered}\xymatrixrowsep{1.75pc}\xymatrixcolsep{2.5pc}\xymatrix{
A\times (B+C)\ar[dd]_-{{d^L}_{A,B,C}^{\flat}} \ar[r]^-{\Delta_A \times 1_{B+C}} & (A\times A)\times (B+C) \ar[r]^-{{\alpha_\times}_{A,A,B+C}} & A\times (A\times (B+C)) \ar[d]^-{1_A\times \delta^L_{A,B,C}} \\
&& A\times ((A\times B)+C)\ar[d]^-{{\sigma_\times}_{A,(A\times B)+C}} \\
(A\times B)+(A\times C) & (A\times B)+(C\times A)\ar[l]^-{1_{A\times B}+{\sigma_\times}_{C,A}}  & ((A\times B)+C)\times A\ar[l]^-{\delta^R_{A\times B, C, A}}
}
\end{gathered}\end{equation}  
By Prop \ref{prop:CLDC_distributive_poset}, we see that this candidate is, in fact, the inverse only when the CLDC is posetal. Cockett and Seely further prove an additional characterization of when said candidate is the inverse: when the initial object is {\bf strict}, which means that all maps to it are isomorphisms (and dually terminal object is {\bf costrict} if all maps from it are isomorphisms). 

\begin{lemma}\cite[Lem 3.2]{Cockett_Seely_1997_LDC} In a CLDC, if the following diagram commutes:  
\begin{equation}\label{eq:strict_initial_lemma}
\begin{gathered}\xymatrixrowsep{1.75pc}\xymatrixcolsep{1.75pc}\xymatrix{
A\times B\ar[r]^-{1_A\times \iota^0_{B,C}}\ar[d]_-{\pi^1_{A,B}} & A\times (B+C)\ar[d]^-{{\sigma_\times}_{A, B+C}}\\
B\ar[rdd]_-{\iota^0_{B,A\times C}} & (B+C)\times A\ar[d]^-{\delta^R_{B,C,A}} \\
& B+(C\times A)\ar[d]^-{1_B+{\sigma_\times}_{C,A}} \\
& B+(A\times C)
}
\end{gathered}\end{equation} 
then the canonical natural transformation $d^L_{A,B,C}$ has a right inverse. 
\end{lemma}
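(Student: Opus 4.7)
The plan is to show that the candidate ${d^L}^\flat_{A,B,C}$ defined in \eqref{eq:candidate_distributive_inverse} serves as the required right inverse to $d^L_{A,B,C}$, i.e., that $d^L_{A,B,C};{d^L}^\flat_{A,B,C} = 1_{(A\times B)+(A\times C)}$. Since $d^L_{A,B,C} = [1_A\times\iota^0_{B,C},\,1_A\times\iota^1_{B,C}]$, the universal property of the coproduct reduces this to the two equations $(1_A\times\iota^j_{B,C});{d^L}^\flat_{A,B,C} = \iota^j_{A\times B,A\times C}$ for $j=0,1$, which I will verify separately.

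For $j=0$, I slide $(1_A\times\iota^0_{B,C})$ through the opening cartesian data of ${d^L}^\flat$ using bifunctoriality of $\times$ together with naturality of $\alpha_\times$; this rewrites the composite as $(\Delta_A\times 1_B);\alpha_{A,A,B};(1_A\times((1_A\times\iota^0_{B,C});\delta^L_{A,B,C}));\sigma_\times;\delta^R_{A\times B,C,A};(1+\sigma_\times)$. Lemma \ref{lem:delta_pi_iota} collapses $(1_A\times\iota^0);\delta^L$ to $\iota^0_{A\times B,C}$, and the resulting block $(1_A\times\iota^0_{A\times B,C});\sigma_\times;\delta^R_{A\times B,C,A};(1+\sigma_\times)$ is exactly the hypothesis diagram instantiated at $A\times B$ in place of $B$, and so equals $\pi^1_{A,A\times B};\iota^0_{A\times B,A\times C}$. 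The leftover prefix $(\Delta_A\times 1_B);\alpha_{A,A,B};\pi^1_{A,A\times B}$ simplifies to $(\Delta_A;\pi^1_{A,A})\times 1_B = 1_{A\times B}$, delivering $\iota^0_{A\times B,A\times C}$.

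The main obstacle will be the $j=1$ case, because Lemma \ref{lem:delta_pi_iota} gives no direct formula for $(1_A\times\iota^1);\delta^L$. To unlock it, I first derive from the hypothesis the auxiliary identity $(1_A\times\iota^1_{B,C});\delta^L_{A,B,C} = \pi^1_{A,C};\iota^1_{A\times B,C}$. The idea is to invert the SLDC symmetry coherence \eqref{cc:lin_dist_braiding} (with $A\leftrightarrow C$) to rewrite $\delta^L_{A,B,C}$ as $(1_A\times\sigma_{+,B,C});\sigma_{\times,A,C+B};\delta^R_{C,B,A};\sigma_{+,C,B\times A};(\sigma_{\times,B,A}+1_C)$. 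Precomposing with $(1_A\times\iota^1_{B,C})$ and using $\iota^1_{B,C};\sigma_{+,B,C} = \iota^0_{C,B}$ turns the opening into $(1_A\times\iota^0_{C,B});\sigma_{\times,A,C+B};\delta^R_{C,B,A}$; naturality of $\sigma_+$ lets me replace the trailing $\sigma_{+,C,B\times A};(\sigma_{\times,B,A}+1_C)$ by $(1_C+\sigma_{\times,B,A});\sigma_{+,C,A\times B}$, making the first four factors an instance of the hypothesis (with $B\leftrightarrow C$). This collapses them to $\pi^1_{A,C};\iota^0_{C,A\times B}$, and the trailing $\sigma_{+,C,A\times B}$ turns $\iota^0_{C,A\times B}$ into $\iota^1_{A\times B,C}$, giving the desired identity.

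With the auxiliary identity in hand, the $j=1$ verification mirrors the $j=0$ case: sliding $(1_A\times\iota^1_{B,C})$ through $\Delta_A\times 1$ and $\alpha_\times$ and applying the auxiliary identity reduces $(1_A\times\iota^1_{B,C});{d^L}^\flat_{A,B,C}$ to $(\Delta_A\times 1_C);\alpha_{A,A,C};(1_A\times\pi^1_{A,C});(1_A\times\iota^1_{A\times B,C});\sigma_\times;\delta^R_{A\times B,C,A};(1+\sigma_\times)$. The final block simplifies cleanly using only Lemma \ref{lem:delta_pi_iota} and the properties of $\sigma_\times$: naturality yields $(1_A\times\iota^1);\sigma_\times = \sigma_\times;(\iota^1\times 1)$, then $(\iota^1\times 1);\delta^R = \iota^1_{A\times B,C\times A}$, and $\iota^1;(1+\sigma_{\times,C,A}) = \sigma_{\times,C,A};\iota^1_{A\times B,A\times C}$ introduces a second $\sigma_\times$ that cancels the first. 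The prefix again simplifies to $1_{A\times C}$ via $\alpha_{A,A,C};(1_A\times\pi^1_{A,C}) = \pi^0_{A,A}\times 1_C$ and $\Delta_A;\pi^0_{A,A} = 1_A$, completing the proof.
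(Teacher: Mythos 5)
Your proposal is correct, and it takes the approach the paper intends: the paper does not spell out a proof but defers to \cite[Lem 3.2]{Cockett_Seely_1997_LDC}, and the accompanying remark makes clear that the content of that lemma is precisely that the candidate \eqref{eq:candidate_distributive_inverse} satisfies $d^L_{A,B,C};{d^L}^\flat_{A,B,C}=1$. Your verification on the two coproduct injections is sound; in particular, the key moves --- recognizing the post-$\delta^L$ block as the hypothesis instantiated at $(A, A\times B, C)$ for the $\iota^0$ leg, and deriving $(1_A\times\iota^1_{B,C});\delta^L_{A,B,C}=\pi^1_{A,C};\iota^1_{A\times B,C}$ from \eqref{cc:lin_dist_braiding} together with the hypothesis at $(A,C,B)$ for the $\iota^1$ leg --- both check out, using only that the hypothesis is assumed naturally in all three variables.
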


\begin{remark} \cite[Lem 3.2]{Cockett_Seely_1997_LDC} includes an extra diagram in its assumptions, but this diagram is simply the second equation of \eqref{eqn:linear_dist_projection_injection}, which holds in any CLDC. Moreover, the lemma states that a CLDC is distributive if and only if the diagrams commute, but only prove that the candidate morphism \eqref{eq:candidate_distributive_inverse} is a right inverse. Nevertheless, the right inverse is all that is needed to prove the collapse. 
\end{remark}

We can now state:
\begin{proposition}\cite[Thm 3.3]{Cockett_Seely_1997_LDC}
A CLDC is posetal if and only if the initial object is strict.
\end{proposition}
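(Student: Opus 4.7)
I would handle the two directions separately. The forward direction is essentially formal, using Lemma \ref{lem:posetal_CLDC} and the definition of a poset, while the backward direction is the substantial one: it reduces via Lemma \ref{lem:CLDC_posetal_distributive_top} and the preceding lemma to verifying a single coherence diagram, whose commutativity follows from a computation that crucially uses strictness.

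For the $(\Rightarrow)$ direction, a posetal CLDC is, by Lemma \ref{lem:posetal_CLDC}, a posetal distributive category. Any map $f \c A \rarr \bot$ witnesses $A \leq \bot$ in the underlying poset; since $\bot$ is the least element, $A \cong \bot$ and $f$ is forced to be an isomorphism, so $\bot$ is strict.

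For the $(\Leftarrow)$ direction, suppose $\bot$ is strict. By Lemma \ref{lem:CLDC_posetal_distributive_top} it suffices to construct a right inverse to $d^L_{A, \top, \top}$, and by the preceding lemma this follows from the commutativity of diagram \eqref{eq:strict_initial_lemma}. The argument below works for arbitrary $B, C$, so the task reduces to verifying, under strictness, that $\pi^1_{A,B}; \iota^0_{B, A \times C}$ equals $(1_A \times \iota^0_{B,C}); {\sigma_\times}_{A, B+C}; \delta^R_{B,C,A}; (1_B + {\sigma_\times}_{C,A})$.

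To prove this, I would first rewrite $\iota^0_{B, C} = {u_+^R}^{-1}_B; (1_B + b_C)$, routing the injection through the initial object. By naturality of the symmetry and of $\delta^R$ in its middle argument, the right-hand side transforms into $(1_A \times {u_+^R}^{-1}_B); {\sigma_\times}_{A, B+\bot}; \delta^R_{B, \bot, A}; (1_B + {\sigma_\times}_{\bot, A}); (1_B + (1_A \times b_C))$. Strictness now enters: $\pi^0_{\bot, A} \c \bot \times A \rarr \bot$ is a map into $\bot$, hence an isomorphism, and combined with $\delta^R_{B, \bot, A}; (1_B + \pi^0_{\bot, A}) = \pi^0_{B + \bot, A}$ from Lemma \ref{lem:delta_pi_iota}, this yields a closed form $\delta^R_{B, \bot, A} = \pi^0_{B+\bot, A}; (1_B + (\pi^0_{\bot, A})^{-1})$. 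Substituting this and simplifying using ${\sigma_\times}; \pi^0 = \pi^1$, naturality of $\pi^1$, and the fact that the resulting inner composite $\bot \rarr A \times C$ must equal $b_{A \times C}$ by initiality of $\bot$, the right-hand side collapses to $\pi^1_{A,B}; {u_+^R}^{-1}_B; (1_B + b_{A \times C}) = \pi^1_{A, B}; \iota^0_{B, A \times C}$, as required. The main obstacle is combinatorial rather than conceptual: identifying the factorization that introduces $\bot$ into the middle slot of $\delta^R$, which is precisely where strictness becomes applicable.
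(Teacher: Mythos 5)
Your proof is correct and follows essentially the same route as the paper: the forward direction is immediate from posetality, and the backward direction reduces via Lemma \ref{lem:CLDC_posetal_distributive_top} and the preceding lemma to verifying diagram \eqref{eq:strict_initial_lemma} under strictness. The only difference is that the paper defers that verification to \cite[Thm 3.3]{Cockett_Seely_1997_LDC}, whereas you carry it out explicitly — and your computation (factoring $\iota^0_{B,C}$ through $\bot$, using strictness to invert $\pi^0_{\bot,A}$ and hence solve for $\delta^R_{B,\bot,A}$ via Lemma \ref{lem:delta_pi_iota}, then collapsing by initiality) checks out.
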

\begin{proof}
For the $\Rightarrow$ direction, a CLDC being posetal implies that the initial object is strict. For the $\Leftarrow$ direction, we can show that if we have a strict initial object, then \eqref{eq:strict_initial_lemma} holds, as detailed in the proof of \cite[Thm 3.3]{Cockett_Seely_1997_LDC}. This means that $d^L_{A,B,C}$ has a right inverse by the above lemma, and so in particular $d^L_{A,\top,\top}$ has a right inverse. Thus by Lem \ref{lem:CLDC_posetal_distributive_top}, we conclude that the CLDC is posetal.
\end{proof}

If we consider the opposite CLDC (the opposite category with the roles of tensor and par exchanged), we get a dual statement about costrict terminal objects. Compiling all these characterizations together gives the following collapse theorem:
\begin{theorem}
For a CLDC \bX, the following are equivalent:
\begin{enumerate}[label=(\roman*)]
    \item \bX\ is a posetal distributive category;
    \item \bX\ is a posetal category;
    \item \bX\ is a distributive category;
    \item \bX\ has a strict initial object;
    \item \bX\ has a costrict terminal object.
\end{enumerate}
\end{theorem}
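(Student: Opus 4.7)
The plan is to assemble the theorem entirely from results already established in this section, so the proof is essentially a bookkeeping exercise chaining previously proved equivalences. The only piece that requires a separate argument beyond pointing at the right lemma is the dual statement about costrict terminal objects.

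First I would dispatch the equivalences involving (i), (ii), (iii), and (iv) by direct citation: (i) $\Leftrightarrow$ (ii) is Lemma \ref{lem:posetal_CLDC}, which says a posetal CLDC is automatically a posetal distributive category and vice versa. Next, (ii) $\Leftrightarrow$ (iii) is exactly Proposition \ref{prop:CLDC_distributive_poset}, which shows that a CLDC is distributive precisely when it is posetal. Then (ii) $\Leftrightarrow$ (iv) is the proposition immediately preceding this theorem, obtained via the candidate inverse \eqref{eq:candidate_distributive_inverse} and Lemma \ref{lem:CLDC_posetal_distributive_top}.

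For (ii) $\Leftrightarrow$ (v), the natural approach is a duality argument. Given a CLDC $(\bX,\ot,\bone,\parr,\Bot)$, the opposite category $\bX^{op}$ carries a CLDC structure where the roles of tensor and par are swapped: the new tensor is $\parr$, the new par is $\ot$, the new tensor unit is $\Bot$, the new par unit is $\bone$, and the new linear distributors are the opposites of the old ones. Under this dualization, products in $\bX$ become coproducts in $\bX^{op}$ and vice versa, so $\top_\bbX$ is the initial object of $\bX^{op}$ while $\bot_\bbX$ is its terminal object. Moreover, $\bX$ is posetal if and only if $\bX^{op}$ is posetal, and the initial object of $\bX^{op}$ is strict if and only if the terminal object of $\bX$ is costrict. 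Applying the already-proved equivalence (ii) $\Leftrightarrow$ (iv) to $\bX^{op}$ then yields (ii) $\Leftrightarrow$ (v).

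The only potentially delicate point is confirming that this opposite construction really does give a SLDC satisfying all the axioms of Definition \ref{def:LDC} with cartesian and cocartesian structures exchanged; but this is a routine verification since all the coherence diagrams in \eqref{cc:unit_lineardist}, \eqref{cc:assoc_lineardist}, and \eqref{cc:leftright_lineardist} are symmetric under exchanging $\ot \leftrightarrow \parr$ and reversing arrows. So I would simply remark that the opposite CLDC exists and invoke it, rather than writing out the verification. Chaining the four equivalences then closes the cycle $(i) \Leftrightarrow (ii) \Leftrightarrow (iii)$ with $(ii) \Leftrightarrow (iv)$ and $(ii) \Leftrightarrow (v)$ hanging off (ii), completing the proof.
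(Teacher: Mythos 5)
Your proposal is correct and follows essentially the same route as the paper, which assembles the theorem by citing Lemma \ref{lem:posetal_CLDC} for (i)$\Leftrightarrow$(ii), Proposition \ref{prop:CLDC_distributive_poset} for (ii)$\Leftrightarrow$(iii), the immediately preceding proposition for (ii)$\Leftrightarrow$(iv), and the opposite CLDC (with the roles of tensor and par exchanged) for the costrict statement (v). Your explicit justification that the LDC axioms are self-dual under reversing arrows and swapping $\ot\leftrightarrow\parr$ is exactly the point the paper leaves implicit, so there is nothing to add.
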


\subsection{Semizero Objects}

We conclude this section by showing that from any CLDC we can construct a posetal distributive category. We do so by considering the subcategory of {\bf semizero} objects. In an arbitrary category, a semizero object is an object that is both preinitial and subterminal (in the same way that a zero object is an object that is both terminal and initial). Of course, recall that in a CLDC, by Prop \ref{prop:CLDC_preintial_subterminal}, being subterminal is equivalent to being preinitial. So we can immediately extend Prop \ref{prop:CLDC_preintial_subterminal} to include semizero objects.

\begin{lemma} 
In a CLDC, an object $A$ is a semizero object if and only if $A$ is preinitial (or equivalently subterminal). 
\end{lemma}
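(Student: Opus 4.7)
The plan is to derive this lemma as an immediate consequence of Proposition \ref{prop:CLDC_preintial_subterminal}, which has already established the equivalence between being preinitial and being subterminal in any CLDC. Since a semizero object is by definition one that is both preinitial and subterminal, the nontrivial content of the lemma is that in a CLDC these two conditions collapse to a single condition, which is exactly what Proposition \ref{prop:CLDC_preintial_subterminal} provides.

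Concretely, for the $(\Rightarrow)$ direction, if $A$ is a semizero object, then by definition $A$ is preinitial (and subterminal), so there is nothing to prove. For the $(\Leftarrow)$ direction, suppose $A$ is preinitial. By Proposition \ref{prop:CLDC_preintial_subterminal}, $A$ is also subterminal, hence $A$ is both preinitial and subterminal, i.e. $A$ is a semizero object. The dual argument works starting from $A$ subterminal.

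There is no real obstacle here: the entire content of the lemma is packaged into the previously established equivalence $(i) \Leftrightarrow (ii)$ of Proposition \ref{prop:CLDC_preintial_subterminal}, and the lemma is essentially a restatement of that equivalence in the language of semizero objects. The proof can therefore be written in a single sentence invoking that proposition.
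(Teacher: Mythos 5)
Your proposal is correct and matches the paper's own (implicit) argument exactly: the paper presents this lemma as an immediate consequence of Proposition \ref{prop:CLDC_preintial_subterminal}, since a semizero object is by definition one that is both preinitial and subterminal, and that proposition shows the two conditions coincide in a CLDC. Nothing further is needed.
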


Now for a category $\bX$, we let $\SZ[\bX]$ be the full subcategory of semizero objects of $\bX$. By the above lemma, for a CLDC this subcategory is equivalent to taking the subcategory of preinitial objects (or equivalently subterminal objects). 

\begin{proposition}
For a CLDC \bX, $\SZ[\bX]$ is a posetal CLDC, so in particular it is a posetal distributive category.
\end{proposition}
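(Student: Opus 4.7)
The plan is to verify that $\SZ[\bX]$ inherits the CLDC structure from $\bX$ and that it is posetal, so that Lemma \ref{lem:posetal_CLDC} finishes the job. The four things to check are: (a) $\top$ and $\bot$ lie in $\SZ[\bX]$; (b) $\SZ[\bX]$ is closed under $\times$ and $+$; (c) the full subcategory $\SZ[\bX]$ inherits the linear distributors and all coherence axioms from $\bX$; and (d) any two objects of $\SZ[\bX]$ admit at most one morphism.

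First I would handle (a): Proposition \ref{prop:bot_subterminal_top_preinitial} tells us that $\top$ is preinitial and $\bot$ is subterminal; applying Proposition \ref{prop:CLDC_preintial_subterminal} then upgrades each of them to a semizero object. For (b), let $A, B \in \SZ[\bX]$. Because $A$ and $B$ are subterminal, the universal property of the product immediately gives that any parallel pair $f, g \c X \rarr A\times B$ must coincide (as $f;\pi^0 = g;\pi^0$ and $f;\pi^1 = g;\pi^1$), so $A\times B$ is subterminal; dually, preinitiality of $A$ and $B$ plus the universal property of the coproduct makes $A+B$ preinitial. Invoking Proposition \ref{prop:CLDC_preintial_subterminal} once more, each of $A\times B$ and $A+B$ is in fact semizero. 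Thus the finite products and coproducts of $\bX$ restrict to finite products and coproducts in the full subcategory $\SZ[\bX]$.

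For (c), since $\SZ[\bX]$ is a \emph{full} subcategory of $\bX$ whose embedding preserves $\times$, $+$, $\top$, and $\bot$ on the nose, the components $\delta^L$ and $\delta^R$ of the linear distributors at objects of $\SZ[\bX]$ are themselves morphisms in $\SZ[\bX]$, and all the coherence equations of Definition \ref{def:LDC} (and the symmetry axiom \eqref{cc:lin_dist_braiding}) hold in $\SZ[\bX]$ because they already hold in $\bX$. Hence $\SZ[\bX]$ is a CLDC in its own right. Step (d) is then immediate: for any $A, B \in \SZ[\bX]$, the fact that $A$ is preinitial in $\bX$ means there is at most one morphism $A \rarr B$ in $\bX$, hence at most one in $\SZ[\bX]$. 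So $\SZ[\bX]$ is a posetal CLDC, and Lemma \ref{lem:posetal_CLDC} identifies it with a posetal distributive category.

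There isn't really a hard step here; the only thing requiring a moment's care is (b), where one must not confuse ``product of preinitial objects'' with ``preinitial product''—the correct route is to argue via subterminality (which behaves well under products) and then convert back to preinitiality using Proposition \ref{prop:CLDC_preintial_subterminal}, a move available precisely because we are inside a CLDC. Once that symmetry is noted, the rest is automatic from the fact that full subcategories closed under the relevant operations inherit the structure.
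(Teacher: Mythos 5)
Your proof is correct and follows essentially the same route as the paper: establish that $\top$ and $\bot$ are semizero via Proposition \ref{prop:bot_subterminal_top_preinitial}, show closure under $\times$ and $+$ by combining the closure of subterminal objects under products (and dually preinitial under coproducts) with the preinitial--subterminal coincidence of Proposition \ref{prop:CLDC_preintial_subterminal}, note that the full subcategory inherits the LDC structure and is posetal, and conclude with Lemma \ref{lem:posetal_CLDC}.
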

\begin{proof} 
Since there is at most one map between semizero objects, for any category $\bX$, we have that $\SZ[\bX]$ is always a posetal category. So suppose that $\bX$ is also a CLDC. To show that $\SZ[\bX]$ is a CLDC, it suffices to show that the (co)product of semizero objects is again semizero, and that both the terminal object and initial object are semizero objects. However, recall from Prop \ref{prop:bot_subterminal_top_preinitial} that $\top$ is preinitial and $\bot$ is subterminal, and therefore it follows that indeed $\top$ and $\bot$ are semizero objects. Next it follows from Lem \ref{lem:preinitial/subterminal}, that in a (co)cartesian category, the (co)product of (resp. preinitial) subterminal objects is again (resp. preinitial) subterminal. Thus from this fact and Prop \ref{prop:CLDC_preintial_subterminal}, it follows that in a CLDC, if $A$ and $B$ are semizero objects, then $A + B$ and $A \times B$ are also semizero objects. Therefore, from here we can conclude that $\SZ[\bX]$ is a sub-CLDC of $\bX$, and hence by Lem \ref{lem:posetal_CLDC}, $\SZ[\bX]$ is a posetal distributive category. 
\end{proof}

We now show that taking the subcategory of semizero objects of a CLDC provides the right adjoint to the inclusion functor of posetal distributive categories into CLDCs. To state this properly, we must first address morphisms between posetal distributive categories. These will correspond to the categorical version of lattice homomorphisms \cite[Sec I.1.]{Johnstone_1982}. So for posetal distributive categories ${\mathcal B}$ and ${\mathcal L}$, a \textbf{posetal distributive functor} is a functor $F\c {\mathcal B} \to {\mathcal L}$ which strictly preserves products, coproducts, the terminal object, and the initial object:
\begin{align*}
 F(A\wedge B) = F(A)\wedge F(B) && F(\top)=\top && F(A\vee B) = F(A)\vee F(B) && F(\bot) = \bot
\end{align*}
It is straightforward to see that when seeing a posetal distributive category as a posetal CLDC, a posetal distributive functor is the same thing as Frobenius cartesian linear functor. So let $\mathbf{PDC}$ denote the category of posetal distributive categories and posetal distributive functors, which can equivalently be described as the category of posetal CLDC and Frobenius cartesian linear functors. 

Now to build our functor from $\CLDC$ to $\mathbf{PDC}$, we first show that, unsurprisingly, cartesian linear functor preserve semizero objects, which then implies that Frobenius cartesian linear functors restrict to the subcategory of semizero objects. 

\begin{lemma}
Cartesian linear functors preserve semizero objects.
\end{lemma}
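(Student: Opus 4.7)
The plan is to exploit the coincidence from Proposition \ref{prop:CLDC_preintial_subterminal}: in any CLDC, an object is preinitial if and only if it is subterminal, so being semizero is equivalent to either condition alone. A cartesian linear functor $F = (F_\ot, F_\parr)\c \bX \rarr \bY$ consists of a lax $\ot$-monoidal functor $F_\ot$ which preserves products (so $m_\ot$ is invertible) and a colax $\parr$-monoidal functor $F_\parr$ which preserves coproducts (so $n_\parr$ is invertible). My approach is to have each component preserve exactly one of the two characterizations from Lemma \ref{lem:preinitial/subterminal}, and then let the coincidence in $\bY$ furnish the other for free.

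More concretely, I would first recall that $A$ being preinitial is equivalent to $\iota^0_{A,A} = \iota^1_{A,A}$ and dually that $A$ being subterminal is equivalent to $\pi^0_{A,A} = \pi^1_{A,A}$. Next, I would observe that, since $F_\parr$ preserves coproducts, its colax structure $n_\parr$ is forced to satisfy
\[F_\parr(\iota^i_{A,A}); {n_\parr}_{A,A} = \iota^i_{F_\parr(A),F_\parr(A)}\]
for $i = 0, 1$, because ${n_\parr}_{A,A}$ is the inverse of the canonical lax comparison $[F_\parr(\iota^0_{A,A}), F_\parr(\iota^1_{A,A})]\c F_\parr(A) + F_\parr(A) \rarr F_\parr(A+A)$. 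Dually, I would obtain ${m_\ot}_{A,A}; F_\ot(\pi^i_{A,A}) = \pi^i_{F_\ot(A), F_\ot(A)}$.

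Given these identities, the argument finishes quickly. Suppose $A$ is semizero in $\bX$. Since $A$ is preinitial, $\iota^0_{A,A} = \iota^1_{A,A}$; applying $F_\parr$ and post-composing with ${n_\parr}_{A,A}$ yields $\iota^0_{F_\parr(A),F_\parr(A)} = \iota^1_{F_\parr(A),F_\parr(A)}$, so $F_\parr(A)$ is preinitial in $\bY$. Dually, since $A$ is subterminal, the same argument applied to $F_\ot$ shows that $F_\ot(A)$ is subterminal in $\bY$. Since $\bY$ is itself a CLDC, Proposition \ref{prop:CLDC_preintial_subterminal} then upgrades each conclusion to full semizero, so both $F_\parr(A)$ and $F_\ot(A)$ are semizero, as desired.

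Because Proposition \ref{prop:CLDC_preintial_subterminal} lets each component of the linear functor handle only half of the semizero condition, no serious obstacle arises. The only step to verify carefully is the elementary coherence fact that a colax comonoidal functor with invertible structure map between cocartesian categories sends coproduct injections to coproduct injections (and dually for the lax $\ot$-monoidal component), which is routine from the universal property of the (co)product.
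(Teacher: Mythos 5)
Your proof is correct and follows essentially the same route as the paper: both arguments use the characterizations of Lemma \ref{lem:preinitial/subterminal} to show that the product-preserving component sends subterminal objects to subterminal objects and the coproduct-preserving component sends preinitial objects to preinitial objects, and then invoke the coincidence of Proposition \ref{prop:CLDC_preintial_subterminal} in the codomain CLDC to upgrade each image to a semizero object. Your version merely spells out more explicitly the (routine) fact that the invertible (co)lax structure maps of a cartesian linear functor identify $F(\iota^i)$ and $F(\pi^i)$ with the canonical injections and projections.
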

\begin{proof} It follows from Lem \ref{lem:preinitial/subterminal} that if a functor $F$ preserves (co)products, and $A$ is a (resp. preinitial) subterminal object, then $F(A)$ is also (resp. preinitial) subterminal. Since a cartesian linear functor $F=(F_\times, F_+)\c \bX\rarr \bY$ consists of a functor $F_\times$ and $F_+$ which preserves products and coproduct respectively, if $A$ is a semizero object in $\bX$, then $F_\times(A)$ and $F_+(A)$ are both semizero objects in CLDC \bY. 
\end{proof}

\begin{corollary}
A Frobenius cartesian linear functor $F\c \bX \to \bY$ restricts to a posetal distributive functor $F\c \SZ[\bX] \rarr  \SZ[\bY]$. 
\end{corollary}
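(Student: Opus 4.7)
The plan is to reduce this corollary to the preceding lemma together with the observation that in a posetal category isomorphism coincides with equality; accordingly there is no genuine obstacle to overcome, and the work lies in unwinding the definitions properly.

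First, I would invoke the preceding lemma directly. A Frobenius cartesian linear functor $F\c\bX\to\bY$ is, by definition, a cartesian linear functor with $F_\ot = F_\parr = F$, so the lemma guarantees that $F$ sends semizero objects of $\bX$ to semizero objects of $\bY$. Consequently, $F$ restricts to a well-defined functor $F\c\SZ[\bX]\to\SZ[\bY]$.

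Second, I would verify that this restriction satisfies the strict preservation clauses of a posetal distributive functor, namely $F(\top)=\top$, $F(\bot)=\bot$, $F(A\times B)=F(A)\times F(B)$, and $F(A+B)=F(A)+F(B)$. As a cartesian linear functor, $F$ comes equipped with the comparison morphisms $m_\bone\c\top\to F(\top)$, $n_\Bot\c F(\bot)\to\bot$, $m_\ot\c F(A)\times F(B)\to F(A\times B)$, and $n_\parr\c F(A+B)\to F(A)+F(B)$, all of which are isomorphisms since ``cartesian'' in this context means that finite products and coproducts are preserved up to canonical iso. By the preceding proposition, $\SZ[\bY]$ is posetal, so any isomorphism in $\SZ[\bY]$ is literally an equality. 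Restricting the above comparison isomorphisms to $\SZ[\bX]\to\SZ[\bY]$ therefore collapses them to the required strict equalities, exhibiting $F$ as a posetal distributive functor.

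The only minor point worth flagging is that the comparison data is indeed available on the restricted functor: this is automatic because the preceding proposition shows that $\SZ[\bX]$ and $\SZ[\bY]$ are sub-CLDCs, so the Frobenius linear structure on $F$ restricts along these inclusions without modification. Once this is noted, the argument is complete.
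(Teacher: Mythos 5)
Your argument is correct and matches what the paper intends: the paper states this corollary without proof as an immediate consequence of the preceding lemma, and your unwinding (restriction is well-defined by the lemma on preservation of semizero objects; the comparison isomorphisms of the cartesian linear structure become equalities because $\SZ[\bY]$ is posetal, where $=$ denotes $\cong$) is exactly the intended justification.
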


As such, we can now define the functor $\SZ[-]\c \CLDC\rarr \mathbf{PDC}$ which maps a CLDC to its subcategory of semizero objects, and a Frobenius cartesian linear functor to its restriction on the subcategory of semizero objects. On the other hand, let $U\c\mathbf{PDC}\rarr \CLDC$ be inclusion functor, or in other words, the forgetful functor. 

\begin{theorem}
$\SZ[-]\c \CLDC\rarr \mathbf{PDC}$ is a right adjoint of $U\c\mathbf{PDC}\rarr \CLDC$. \end{theorem}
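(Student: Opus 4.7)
The plan is to exhibit the adjunction via a natural bijection
\[
\CLDC(U(\mathcal{L}), \bX) \;\cong\; \mathbf{PDC}(\mathcal{L}, \SZ[\bX]),
\]
leveraging the crucial observation that in a posetal category there is at most one map between any two objects, so every object is simultaneously preinitial and subterminal. Consequently, for any posetal distributive category $\mathcal{L}$, every object of $U(\mathcal{L})$ is a semizero object, giving the identification $\SZ[U(\mathcal{L})] = \mathcal{L}$ of posetal CLDCs. In particular, the unit of the intended adjunction will simply be the identity.

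Given a Frobenius cartesian linear functor $F\c U(\mathcal{L}) \to \bX$, the preceding corollary produces a Frobenius cartesian linear functor $\SZ[F]\c \SZ[U(\mathcal{L})] \to \SZ[\bX]$, which via the identification above becomes a posetal distributive functor $\tilde{F}\c \mathcal{L} \to \SZ[\bX]$. Conversely, given a posetal distributive functor $G\c \mathcal{L} \to \SZ[\bX]$, apply $U$ and post-compose with the inclusion $U(\SZ[\bX]) \hookrightarrow \bX$ (which is itself a Frobenius cartesian linear functor) to obtain a map $U(\mathcal{L}) \to \bX$ in $\CLDC$. These two constructions are mutually inverse: starting from $F$, the round-trip recovers $F$ because every object of $U(\mathcal{L})$ is semizero, so $F$ already factors through the inclusion and thus equals $\tilde{F}$ composed with it; starting from $G$, the round-trip recovers $G$ because the inclusion is fully faithful and is the identity on objects of $\SZ[\bX]$.

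Naturality in both variables is routine: for a posetal distributive functor $H\c \mathcal{L}' \to \mathcal{L}$ and a Frobenius cartesian linear functor $K\c \bX \to \bX'$, one verifies $\widetilde{U(H); F} = H; \tilde{F}$ and $\widetilde{F; K} = \tilde{F}; \SZ[K]$, both by the functoriality of the restriction operation that defines $\SZ$ on morphisms. The only subtle step is confirming that $\tilde{F}$ is genuinely a Frobenius cartesian linear functor (not merely a functor), but this follows immediately from the preceding proposition, which shows that $\SZ[\bX]$ is a sub-CLDC of $\bX$ closed under $\times$, $+$, $\top$, and $\bot$: all the monoidal unit isomorphisms and multiplicative transformations of $F$ are already morphisms between semizero objects and thus lie in $\SZ[\bX]$, so the coherence diagrams are inherited. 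No significant technical obstacle arises; once the semizero observation is made, the remainder is straightforward bookkeeping.
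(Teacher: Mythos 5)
Your proposal is correct and follows essentially the same route as the paper: identify $\SZ[U(\mathcal{L})]=\mathcal{L}$ since every object of a posetal category is semizero, obtain the forward map by restricting along $\SZ[-]$ via the preceding corollary, and the backward map by post-composing with the inclusion $\SZ[\bX]\hookrightarrow\bX$. You spell out the mutual-inverse and naturality checks in slightly more detail than the paper does, but the argument is the same.
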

\begin{proof} Let $\bX$ be a CLDC and ${\mathcal L}$ a posetal distributive category. Now note that every object in ${\mathcal L}$ is semizero, as such $\SZ[{\mathcal L}] = {\mathcal L}$. Then given a Frobenius cartesian linear functor $F\c {\mathcal L}\rarr \bX$, this clearly gives a posetal distributive functor $F\c \SZ[{\mathcal L}]\rarr \SZ[\bX]$. On the other hand, given a posetal distributive functor $G \c {\mathcal L}\rarr\SZ[\bX]$, this extends to a Frobenius cartesian linear functor ${\mathcal L} \to \bX$ by post-composing $G$ with the inclusion functor $\SZ[\bX] \hookrightarrow \bX$. It is clear that this gives a natural bijection $\mathbf{PDC}({\mathcal L}, \SZ[\bX]) \cong \CLDC(U({\mathcal L}), \bX)$. 
\end{proof}

\section{Semi-Additive Categories}\label{sec:S-Add}

In this section, we discuss our second main class of CLDC examples: \emph{semi-additive} categories, which turn out to be the compact CLDCs. Recall that a semi-additive category is essentially a category with finite \emph{biproducts}. The main result of this section is a collapse result which says that if a CLDC is isomix if and only if it is a semi-additive category. We also provide a construction of a semi-additive category from a CLDC via the (co)slice category over the initial (resp. terminal) object. 

\subsection{Semi-Additive Categories and Biproducts}

If only to the setup notation and terminology, it may be useful to quickly review semi-additive categories. First recall that in an arbitrary category, a {\bf zero object} is an object $\bzero$, which is both a terminal and initial object. A useful observation is that having a zero object is equivalent to having a terminal object and an initial object that are isomorphic. 

\begin{lemma}
A category has a zero object if and only if it has an initial object $\bot$ and a terminal object $\top$, and the unique map $b_\top = t_\bot \c \bot\rarr\top$ is an isomorphism.
\end{lemma}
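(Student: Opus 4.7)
The plan is to prove both directions essentially by unpacking the universal properties, relying on the standard fact that initial (resp. terminal) objects are unique up to unique isomorphism.

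For the forward direction, suppose $\bzero$ is a zero object. Let $\bot$ be any initial object and $\top$ any terminal object in the category. Since $\bzero$ is initial, there is a unique isomorphism $\bot \xrightarrow{\cong} \bzero$ by universality of initial objects. Similarly, since $\bzero$ is terminal, there is a unique isomorphism $\bzero \xrightarrow{\cong} \top$. Composing these yields an isomorphism $\bot \to \top$, which, by uniqueness of maps into $\top$ (or out of $\bot$), must coincide with $b_\top = t_\bot$. Hence that canonical map is an isomorphism.

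For the backward direction, suppose we are given an initial object $\bot$, a terminal object $\top$, and that the unique map $m := b_\top = t_\bot : \bot \to \top$ is an isomorphism. I claim $\bot$ is then also terminal (equivalently, $\top$ is also initial), which would immediately make it a zero object. For any object $A$, the composite $t_A; m^{-1} : A \to \top \to \bot$ gives existence of a map into $\bot$. For uniqueness, given any $f : A \to \bot$, the composite $f; m : A \to \top$ must coincide with $t_A$ since $\top$ is terminal, and therefore $f = t_A; m^{-1}$. Thus $\bot$ is both initial and terminal, and hence a zero object.

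There is no real obstacle here: the proof is purely formal, amounting to transporting the terminal universal property along the given isomorphism (or dually the initial one). The only thing worth being careful about is not confusing the given named objects $\bot, \top$ with the zero object itself in the forward direction — they need not literally equal the zero object, merely be isomorphic to it, but the canonical map between them is forced by universality to be the stated unique map $b_\top = t_\bot$.
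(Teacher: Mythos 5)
Your proof is correct: both directions are the standard formal argument, transporting universal properties along the unique isomorphisms between initial (resp.\ terminal) objects, and the backward direction correctly shows $\bot$ becomes terminal by composing with $m^{-1}$. The paper states this lemma without proof, treating it as a routine observation, so there is nothing to compare against; your argument fills in exactly the expected details.
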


If a category has a zero object, then it has \textbf{zero morphisms}, which recall are the unique maps between every pair of objects $X$ and $Y$ which factor through the zero object: $0_{X,Y}\c X\xrightarrow{t_X} \bzero \xrightarrow{b_Y} Y$. These zero morphisms are absorbing in the sense that for all maps $f\c X \to Y$, $f; 0_{Y,Y^\prime} = 0_{X,Y^\prime}$ and $ 0_{X^\prime, X}; f = 0_{X^\prime, Y}$. Now if a category with a zero object also has finite products and finite coproducts, then there is a canonical natural transformation $\psi_{X,Y}\c X + Y \to X \times Y$ defined as follows:
\begin{align}\label{eq:psi} \psi_{X,Y} = [\langle 1_X, 0_{X,Y}\rangle, \langle 0_{Y,X}, 1_Y\rangle] = \langle [1_X, 0_{Y,X}], [0_{X,Y}, 1_Y]\rangle \c X+Y\rarr X\times Y
\end{align}

\begin{definition}\cite{Lack_2012}\label{def:semi-additive}
A {\bf semi-additive} category is a category with a zero object $\bzero$, with finite products and finite coproducts, such that $\psi_{X,Y}\c X + Y \to X \times Y$ is a natural isomorphism. 
\end{definition}

As such, we will call $\psi$ the \textbf{semi-additive comparison}. Then a \textbf{semi-additive category} is a category where its semi-additive comparison is an isomorphism. It is worth mentioning that any isomorphisms between binary coproducts and binary products is sufficient for a category to be semi-additive. 

\begin{proposition}\cite[Thm 5]{Lack_2012}\label{prop:semi_add}
If a category has finite products and finite coproducts and a natural isomorphism $A+B \to A\times B$, then it is semi-additive. 
\end{proposition}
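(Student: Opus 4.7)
The plan is to adapt Lack's strategy from \cite{Lack_2012}. Let $\varphi_{A,B}: A+B \xrightarrow{\cong} A \times B$ be the given natural isomorphism. The first step is to exhibit a zero object. Setting $A = \top$ and $B = \bot$, the unit isomorphisms give $\top + \bot \cong \top$ and $\top \times \bot \cong \bot$, so transporting $\varphi_{\top, \bot}$ along these produces a morphism $\top \to \bot$. Composed in either order with the canonical $\bot \to \top$ coming from initiality, the universal properties of $\top$ and $\bot$ force both composites to be identities; hence $\bot \cong \top$ and we obtain a zero object $\bzero$, zero morphisms $0_{X,Y}$, and a well-defined canonical comparison $\psi_{A,B}$ as in \eqref{eq:psi}.

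The second step is to decompose $\varphi$ via naturality. Write $\iota^0_{A,B}; \varphi_{A,B} = \langle f_{A,B}, g_{A,B} \rangle$. Naturality in $B$ applied to an arbitrary $k: B \to B'$ yields $f_{A,B} = f_{A,B'}$ and $g_{A,B'} = g_{A,B}; k$; plugging in $k = 0_{B,B'}$ forces $g_{A,B} = 0_{A,B}$. Naturality in $A$ then shows the common value $f_A := f_{A,B}$ assembles into a natural endomorphism of $1_\bX$. A symmetric analysis applied to $\iota^1_{A,B}; \varphi_{A,B}$ yields a natural endomorphism $e_B$ of $1_\bX$ such that $\varphi_{A,B} = (f_A + e_B); \psi_{A,B}$. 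Dualizing by analyzing $\varphi_{A,B}; \pi^0_{A,B}$ and $\varphi_{A,B}; \pi^1_{A,B}$ produces another pair of natural endomorphisms $f'_A, e'_B$ of $1_\bX$ satisfying $\varphi_{A,B} = \psi_{A,B}; (f'_A \times e'_B)$.

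The final step is to combine these two factorizations with the invertibility of $\varphi$ to produce inverses of $\psi$ on either side. From $\varphi_{A,B} = (f_A + e_B); \psi_{A,B}$, we compute $\varphi_{A,B}^{-1}; (f_A + e_B); \psi_{A,B} = 1_{A+B}$, so $\varphi_{A,B}^{-1}; (f_A + e_B)$ is a left inverse of $\psi_{A,B}$. From $\varphi_{A,B} = \psi_{A,B}; (f'_A \times e'_B)$, we compute $\psi_{A,B}; (f'_A \times e'_B); \varphi_{A,B}^{-1} = 1_{A \times B}$, so $(f'_A \times e'_B); \varphi_{A,B}^{-1}$ is a right inverse of $\psi_{A,B}$. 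A morphism with both a left and a right inverse is an isomorphism, and these inverses necessarily coincide; hence $\psi_{A,B}$ is an isomorphism and the category is semi-additive by Definition \ref{def:semi-additive}.

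The main obstacle is the bookkeeping in the second step: correctly setting up the four naturality constraints (two for each injection/projection analysis, and one each in $A$ and $B$) to simultaneously force the off-diagonal components to vanish and identify the on-diagonal components with natural endomorphisms of $1_\bX$. By contrast, the zero object construction in the first step and the left/right inverse trick in the third step are largely formal. Crucially, this approach sidesteps the question of whether $f_A, e_B, f'_A, e'_B$ are themselves invertible, which is not obvious and would be the natural but harder route.
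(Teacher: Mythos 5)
The paper does not actually prove this proposition---it is imported verbatim as a citation to Lack's \emph{Non-canonical isomorphisms} (Theorem 5)---so there is no in-paper argument to compare against. Your reconstruction is correct and self-contained. The zero-object step is right: $\varphi_{\top,\bot}$ transported along $\top + \bot \cong \top$ and $\top \times \bot \cong \bot$ gives $\top \to \bot$, and initiality/terminality force both composites with $\bot \to \top$ to be identities. The naturality bookkeeping in the second step also checks out: from $\iota^0_{A,B};\varphi_{A,B} = \langle f_{A,B}, g_{A,B}\rangle$, naturality in $B$ with a zero morphism kills $g_{A,B}$ and shows $f_{A,B}$ is independent of $B$, naturality in $A$ makes $f$ an endomorphism of $1_{\bX}$, and the two resulting factorizations $\varphi_{A,B} = (f_A + e_B);\psi_{A,B} = \psi_{A,B};(f'_A \times e'_B)$ are both verified by the (co)universal properties. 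The only error is a cosmetic one in the third step: the two identities are swapped. The composite $\varphi^{-1}_{A,B};(f_A+e_B);\psi_{A,B}$ is an endomorphism of $A \times B$ and equals $1_{A\times B}$ (exhibiting $\psi_{A,B}$ as a split epi), while $\psi_{A,B};(f'_A\times e'_B);\varphi^{-1}_{A,B}$ is an endomorphism of $A+B$ and equals $1_{A+B}$ (exhibiting $\psi_{A,B}$ as a split mono); the conclusion that the two one-sided inverses coincide and $\psi_{A,B}$ is an isomorphism is then standard and does not require $f_A, e_B, f'_A, e'_B$ to be invertible, as you correctly observe.
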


Now in a semi-additive category, since $A + B \cong A \times B$, this implies that $+$ is also a product and $\times$ is also a coproduct, and, in fact, both are a biproduct. Of course, one can define biproducts directly. Recall that in a category with a zero object, a \textbf{biproduct} of $A$ and $B$ is an object $A\op B$ equipped with four maps:  
\[ \pi^0_{A,B}\c A\op B \rarr A \qquad \pi^1_{A,B}\c A\op B\rarr B \qquad \iota^0_{A,B}\c A\rarr A\op B \qquad \iota^1_{A,B}\c B\rarr A\op B\]
such that  $A\op B$ equipped with $\pi^0_{A,B}$ and $\pi^1_{A,B}$ is a product,  $A\op B$ equipped with $\iota^0_{A,B}$ and $\iota^1_{A,B}$ is a coproduct, and the following diagrams commute: 
\begin{equation}
\begin{gathered}\xymatrixcolsep{1.5pc}\xymatrix{
A\ar[r]^-{\iota^0_{A,B}}\ar@{=}[rd] & A\op B\ar[d]^-{\pi^0_{A,B}} & B\ar[r]^-{\iota^1_{A,B}}\ar@{=}[rd] & A\op B\ar[d]^-{\pi^1_{A,B}} & A\ar[r]^-{\iota^0_{A,B}}\ar[rd]_-{0_{A,B}} & A\op B\ar[d]^-{\pi^1_{A,B}} & B\ar[r]^-{\iota^1_{A,B}}\ar[rd]_-{0_{B,A}}& A\op B\ar[d]^-{\pi^0_{A,B}}\\
& A & & B & & B & & A
}
\end{gathered}\end{equation}
See \cite{Karvonen_2020} for more details, where the above definition is known as pointed biproducts. Then a bicartesian category is a category with finite biproducts. Every bicartesian category $\bX$ is a symmetric monoidal category $(\bX, \oplus, \bzero)$. Equivalently, a symmetric monoidal category is bicartesian if its monoidal structure is both cartesian and cocartesian. Of course, every bicartesian category is a semi-additive category where $\top = \bzero = \bot$ and $\times = \oplus = +$, and $\psi_{A,B} = 1_{A \oplus B}$. Conversely, every semi-additive category is a bicartesian category by setting $\bzero = \top$ or $\bzero = \bot$, and $\oplus = \times$ or $\oplus = +$. 

We can also ask what should morphisms of semi-additive categories be. These should be functors that preserves both products and coproducts, but are also compatible with the semi-additive comparison map. 

\begin{definition}\label{def:semi-additive_functor}
A functor $F\c \bX\rarr\bY$ between semi-additive categories is {\bf semi-additive} if 
\begin{enumerate}[label=(\roman*)]
    \item $(F, m_\bzero,m_\times)\c (\bX,\times, \bzero)\rarr (\bY,\times,\bzero)$ is a monoidal functor; 
    \item $(F, n_\bzero, n_+)\c (\bX,+,\bzero)\rarr (\bY,+,\bzero)$ is a monoidal functor
\end{enumerate}
and such that the one of the following equivalent diagrams commutes. 
\begin{equation}\begin{gathered}\label{cc:semi-add_functor}
\xymatrix{
F(A+B)\ar[r]^-{{n_+}_{A,B}}\ar[d]_-{F(\psi_{A,B})} & F(A)+F(B)\ar[d]^-{\psi_{F(A), F(B)}} && F(A)\times F(B)\ar[r]^-{{m_\times}_{A,B}}\ar[d]_-{\psi^{-1}_{F(A), F(B)}} & F(A\times B)\ar[d]^-{F(\psi^{-1}_{A,B})} \\
F(A\times B) & F(A)\times F(B)\ar[l]^-{{m_\times}_{A,B}}&& F(A)+F(B) & F(A+B)\ar[l]^-{{n_+}_{A,B}}
} 
\end{gathered}\end{equation}
\end{definition}

We denote the category of semi-additive categories and semi-additive functors by $\mathbf{SAdd}$. 

\subsection{Semi-Additive Collapse}

Every semi-additive category is a CLDC:

\begin{example}\label{ex:semi-additive-CLDC}
A semi-additive category, with natural isomorphism $\psi_{A, B}\c A+B\cong A\times B$, is a CLDC whose linear distributors are defined as follows: 
\begin{align*}
    \delta^R_{A, B, C} &= \xymatrix{(A+B)\times C \ar[r]^-{\psi_{A,B}\times 1_C} & (A\times B)\times C \ar[r]^-{{\alpha_\times}_{A,B,C}} & A\times (B\times C) \ar[r]^-{\psi^{-1}_{A,B\times C}} & A\times (B+C)}\\
    &= \xymatrix@L=0.5pc{(A+B)\times C \ar[r]^-{\psi^{-1}_{A+B, C}} & (A+B)+C \ar[r]^-{{\alpha_+}^{-1}_{A,B,C}} & A+(B+C) \ar[r]^-{1_A+\psi_{B,C}} & A+(B\times C)} \\
    \delta^L_{A, B, C} &=  \xymatrix@L=0.5pc{A\times(B+ C) \ar[r]^-{1_A\times \psi_{B,C}} & A\times (B\times C) \ar[r]^-{{\alpha_\times}_{A,B,C}^{-1}} & (A\times B)\times C \ar[r]^-{\psi^{-1}_{A\times B, C}} &(A\times B)+C} \\
    &= \xymatrix@L=0.5pc{A\times(B+ C) \ar[r]^-{\psi^{-1}_{A,B+C}}  & A+ (B+ C) \ar[r]^-{{\alpha_+}_{A,B,C}}& (A+ B)+ C \ar[r]^-{\psi_{A,B}+1_C} & (A\times B)+C}
\end{align*}
It is straightforward (though very tedious) to check that the linear distributor axioms hold. These essentially amount to the cartesian monoidal structure and cocartesian monoidal structure being associative, unital, and symmetric, and also isomorphic to each other. Moreover, a semi-additive category is a compact CLDC, where the nullary mix map is the zero morphism $m \c= 0_{\bot,\top} \c \bot \to \top$, and the induced mix map $\mix_{A,B} \c A \times B \to A+B$ is $\mix_{A,B} = \psi^{-1}_{A,B}$.
\end{example}

This implies that every bicartesian category is a degenerate CLDC. 

\begin{example} 
A bicartesian category is a degenerate CLDC, so in particular its linear distributors are given by the associators of the biproduct, $A \oplus (B \oplus C) \cong (A \oplus B) \oplus C$. 
\end{example}

In fact, semi-additive (resp. bicartesian) categories correspond precisely to the compact (resp. degenerate) CLDCs. 

\begin{lemma}\label{lem:compact_CLDC}
A CLDC is compact if and only if it is a semi-additive category.
\end{lemma}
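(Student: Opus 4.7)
The plan is to prove both implications by directly invoking results already established in the excerpt, so this is really a bookkeeping argument rather than a serious calculation.

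For the $\Leftarrow$ direction, Example \ref{ex:semi-additive-CLDC} has essentially done the work. A semi-additive category is equipped with linear distributors built from the semi-additive comparison $\psi$ and the associators of the product (equivalently of the coproduct, since these are biproducts). One then checks the two compactness requirements: the unital mix map $m\c \bot\rarr\top$ is an isomorphism because $\bot$ and $\top$ are both the zero object, and the induced binary mix map coincides with $\psi^{-1}_{A,B}$, which is a natural isomorphism by definition of semi-additivity. So the CLDC is isomix with invertible binary mix maps, which is precisely the definition of compact.

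For the $\Rightarrow$ direction, I would apply Proposition \ref{prop:semi_add}. Let $\bX$ be a compact CLDC. Then $\bX$ has finite products (the tensor $\times$) and finite coproducts (the par $+$) by definition of CLDC. Being isomix means $\bone \cong \Bot$, which under the cartesian identifications reads $\top \cong \bot$, so $\bX$ has a zero object. Being compact additionally ensures that the mix maps $\mix_{A,B}\c A\times B \rarr A+B$ are natural isomorphisms, so their inverses furnish the natural isomorphism $A+B \rarr A\times B$ demanded by Proposition \ref{prop:semi_add}. The proposition then directly yields that $\bX$ is semi-additive.

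The main obstacle is not conceptual but notational: one should check that the natural isomorphism produced by $\mix^{-1}_{A,B}$ in the second direction agrees with the semi-additive comparison $\psi_{A,B}$ of Definition \ref{def:semi-additive}, so that the data (zero object, biproduct structure, comparison map) match up across the CLDC and semi-additive descriptions. Fortunately Proposition \ref{prop:semi_add} requires only the \emph{existence} of some natural isomorphism $A+B \cong A\times B$ to conclude semi-additivity, so no explicit computation with $\psi$ is needed; the coherences are handled for us. As a pleasant byproduct, this also shows that the two translations — Example \ref{ex:semi-additive-CLDC} one way and the argument sketched above the other way — are mutually inverse on the underlying category.
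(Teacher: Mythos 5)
Your proposal is correct and follows essentially the same route as the paper: the $\Leftarrow$ direction is Example \ref{ex:semi-additive-CLDC}, and the $\Rightarrow$ direction invokes Proposition \ref{prop:semi_add} using the inverses of the mix maps as the required natural isomorphism $A+B \cong A\times B$. The paper likewise notes afterwards that $\psi_{A,B} = \mix^{-1}_{A,B}$ and that the linear distributors then take the form given in the example, matching your closing remark about the two translations being mutually inverse.
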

\begin{proof} The $\Leftarrow$ direction is Ex \ref{ex:semi-additive-CLDC}. For the $\Rightarrow$ direction, suppose that $\bX$ is a compact CLDC, so $\bot \cong \top$ and $A \times B \cong A + B$. However by Prop \ref{prop:semi_add}, since the mix maps give a natural isomorphism from the coproduct to the product, this implies that $\bX$ is a semi-additive category. It is easy to check that the linear distributors must be of the form as given in Ex \ref{ex:semi-additive-CLDC} and that $\psi_{A,B} = \mix^{-1}_{A,B}$.
\end{proof}

\begin{corollary} A CLDC is degenerate if and only if it is a bicartesian category.
\end{corollary}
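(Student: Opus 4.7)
The plan is to deduce this corollary directly from Lemma \ref{lem:compact_CLDC}, by observing that a \emph{degenerate} CLDC is precisely the extremal case of a \emph{compact} CLDC in which the tensor and par monoidal structures coincide on the nose.

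For the $(\Leftarrow)$ direction, I would start from a bicartesian category \bX, whose finite product and coproduct coincide as the biproduct: $\times = + = \op$ and $\top = \bot = \bzero$. Viewed as a semi-additive CLDC via Ex \ref{ex:semi-additive-CLDC}, the semi-additive comparison $\psi_{A,B}\c A+B \to A\times B$ is literally the identity on $A\op B$, since the single biproduct object serves as both the product and the coproduct with matching projections and injections. Substituting $\psi_{A,B}= 1_{A\op B}$ into either formula for $\delta^R$ and $\delta^L$ in Ex \ref{ex:semi-additive-CLDC} collapses them to the associator $\alpha_\op$ (respectively its inverse) of the common monoidal structure, which is exactly the definition of a degenerate LDC.

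For the $(\Rightarrow)$ direction, I would start from a degenerate CLDC \bX. By definition of degenerate, the tensor and par monoidal structures coincide, so $\times = +$ and $\top = \bot$. As already observed in the paper, every degenerate LDC is compact with identity mix maps; hence by Lemma \ref{lem:compact_CLDC} the category \bX\ is semi-additive with $\psi_{A,B} = \mix_{A,B}^{-1} = 1_{A\op B}$. The common unit $\top = \bot$ is thus both terminal and initial, i.e., a zero object, and unpacking the identity $\psi_{A,B} = [\langle 1_A, 0_{A,B}\rangle, \langle 0_{B,A}, 1_B\rangle] = 1_{A\op B}$ yields precisely the biproduct equations $\iota^i;\pi^i = 1$ and $\iota^i;\pi^j = 0$ for $i\ne j$. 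Therefore \bX\ is bicartesian.

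I do not anticipate any substantive obstacle, since essentially all of the content is packaged into Lemma \ref{lem:compact_CLDC} and the fact that the mix map of a degenerate LDC is the identity. The only delicate point is the verification in the forward direction that $\psi = 1$ really does recover the biproduct compatibility between projections and injections, but this is immediate from the explicit form of $\psi$ recalled in \eqref{eq:psi}.
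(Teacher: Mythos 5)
Your proof is correct and follows essentially the same route the paper intends: the corollary is the special case of Lemma \ref{lem:compact_CLDC} in which the mix map (equivalently the comparison $\psi$) is the identity, and your unpacking of $\psi_{A,B}=1_{A\oplus B}$ into the biproduct equations matches the paper's own discussion around Definition \ref{def:semi-additive} and Example \ref{ex:semi-additive-CLDC}. The only remark worth adding is that the forward direction admits an even shorter argument: a degenerate CLDC has a single symmetric monoidal structure that is simultaneously cartesian and cocartesian, which is precisely the paper's stated equivalent characterization of a bicartesian category.
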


Moreover, it is straightforward to see that from the point of view of CLDCs, semi-additive functors correspond precisely to Frobenius cartesian linear functors.

\begin{lemma}
The notion of semi-additive functors is equivalent to Frobenius cartesian linear functors between semi-additive categories (seen as a compact CLDC). 
\end{lemma}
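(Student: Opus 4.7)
The plan is to observe that the underlying data of the two notions coincide and then translate between the corresponding axioms. A semi-additive functor consists of a functor $F\c \bX \to \bY$ equipped with strong monoidal structures $(F, m_\bzero, m_\times)$ and $(F, n_\bzero, n_+)$ for product and coproduct, subject to the $\psi$-compatibility \eqref{cc:semi-add_functor}. By the alternative characterization of Frobenius linear functors given earlier in the excerpt, a Frobenius cartesian linear functor between compact CLDCs is given by the same data, except the axiom \eqref{cc:semi-add_functor} is replaced by the Frobenius coherence \eqref{cc:Frobenius_linear_functor}. Since in a compact CLDC the semi-additive comparison satisfies $\psi = \mix^{-1}$ by Lemma \ref{lem:compact_CLDC}, both axioms concern the same natural transformation, so it suffices to show they are equivalent given the strong monoidal structures.

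For the direction Frobenius cartesian linear $\Rightarrow$ semi-additive, I would first check that any such functor is automatically mix: in the compact setting $\bot = \top = \bzero$ so the nullary mix map is $m = 1_\bzero$, and the mix compatibility diagram \eqref{cc:mix_Frobenius_functor} reduces to the statement that $m_\bzero$ and $n_\bzero$ are mutual inverses, which holds automatically because the unit maps to and from $\bzero$ are unique and $F$ is strong on both monoidal structures. Then by the earlier proposition that a mix Frobenius linear functor preserves the mix maps, $F$ preserves $\mix$, hence also $\psi = \mix^{-1}$, which is exactly \eqref{cc:semi-add_functor}.

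For the converse, I would substitute the explicit formula
\[ \delta^L_{A,B,C} = (1_A \times \psi_{B,C}); {\alpha_\times}_{A,B,C}^{-1}; \psi^{-1}_{A\times B, C} \]
from Example \ref{ex:semi-additive-CLDC} (and the analogous formula for $\delta^R$) into the Frobenius coherence diagrams \eqref{cc:Frobenius_linear_functor}. Using the $\psi$-preservation \eqref{cc:semi-add_functor}, the associator coherence of the strong $\times$-monoidal structure $(F, m_\bzero, m_\times)$, and naturality, each Frobenius square decomposes into commuting pieces. The second Frobenius square, involving $\delta^R$, can be handled either by the symmetric argument using the alternative decomposition of $\delta^R$ given in Example \ref{ex:semi-additive-CLDC}, or deduced from the first via the compatibility of the linear distributors with the symmetries \eqref{cc:lin_dist_braiding}.

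The main obstacle is the diagram chase in the converse direction. Although no conceptual difficulty arises --- every piece of the decomposed diagram follows from either naturality, strong monoidal coherence with the associator, or $\psi$-preservation --- the Frobenius coherence diagrams involve four-fold composites, so unpacking them with the compact-CLDC form of the linear distributors produces a large diagram requiring careful bookkeeping to match both sides.
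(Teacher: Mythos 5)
Your proposal is correct and, for the substantive direction, follows the same route as the paper: the paper also proves that a semi-additive functor satisfies the Frobenius coherence \eqref{cc:Frobenius_linear_functor} by substituting the explicit form of $\delta^L$ from Example \ref{ex:semi-additive-CLDC} and decomposing the resulting diagram into exactly the pieces you list --- naturality, monoidal coherence with the associator, and the $\psi$-preservation axiom \eqref{cc:semi-add_functor} --- handling $\delta^R$ ``similarly.'' You only outline this chase rather than execute it, but the outline matches the paper's actual diagram, so there is no gap in the plan. Where you genuinely diverge is the forward direction: the paper dismisses it with ``it is clear,'' whereas you derive it from the already-established facts that a cartesian Frobenius linear functor is mix and that mix Frobenius linear functors preserve the mix maps, so that preservation of $\mix$ together with $\psi = \mix^{-1}$ yields \eqref{cc:semi-add_functor} by inverting the mix-preservation square. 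This is a nice conceptual justification of the step the paper leaves implicit, and it correctly uses that $m_\times$ and $n_+$ are isomorphisms for a (non-bilax) cartesian linear functor. One small caution: in Definition \ref{def:semi-additive_functor} the $+$-structure $n_+$ is written in the colax direction $F(A+B) \to F(A)+F(B)$ despite being called ``monoidal,'' so when you say the underlying data coincide you are implicitly using that both comparison maps are invertible; this is harmless here but worth stating.
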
 
\begin{proof} It is clear that a Frobenius cartesian linear functor is a semi-additive functor. Indeed, the second diagram of \eqref{cc:semi-add_functor} is diagram \eqref{cc:mix_Frobenius_functor} in this context, which holds as cartesian Frobenius linear functors are mix, as stated in Lem \ref{lem:mix_cartesian_Frobenius_linear_functor}. 

Conversely, consider a semi-additive functor $F\c \bB\rarr \bB'$ between semi-additive categories. To prove that $F$ is a Frobenius cartesian linear functor, it remains only to show that \eqref{cc:Frobenius_linear_functor} holds. The first condition holds by commutativity of the following diagram: 
\begin{equation*}
\begin{gathered}\xymatrixrowsep{2.75pc}\xymatrixcolsep{3pc}\xymatrix{
F(A)\times F(B+C)\ar[r]^-{{m_\times}_{A, B+C}}\ar[d]_-{1_{F(A)}\times {n_+}_{B,C}}\ar[rd]^-{1_{F(A)}\times F(\psi_{B,C})} & F(A\times (B+C))\ar[r]^-{F(1_A\times \psi_{B,C})}\ar@{}[d]|{({\rm nat})} & F(A\times (B\times C))\ar[d]^-{F({\alpha_\times^{-1}}_{A,B,C})}\\
F(A)\times (F(B)+F(C))\ar[d]_-{1_{F(A)}\times \psi_{F(B), F(C)}}\ar@{}[r]|{\eqref{cc:semi-add_functor}} & F(A)\times F(B\times C)\ar[ru]^-{{m_\times}_{A,B\times C}}\ar@{}[d]|{({\rm mon})} & F((A\times B)\times C)\ar[d]^-{F({\psi^{-1}_{A\times B, C}})} \\
F(A)\times (F(B)\times F(C))\ar[d]_-{{\alpha_\times^{-1}}_{F(A), F(B), F(C)}}\ar[ru]^-{1_{F(A)}\times {m_\times}_{B,C}} & F(A\times B)\times F(C)\ar[ru]^-{{m_\times}_{A\times B, C}}\ar[rd]^-{\psi^{-1}_{F(A\times B), F(C)}}\ar@{}[d]|{({\rm nat})} \ar@{}[r]|{\eqref{cc:semi-add_functor}} & F((A\times B)+C)\ar[d]^-{{n_+}_{A\times B, C}}\\
(F(A)\times F(B))\times F(C)\ar[r]_-{\psi^{-1}_{F(A)\times F(B), F(C)}}\ar[ru]^-{{m_\times}_{A,B}\times 1_{F(C)}} & (F(A)\times F(B))+ F(C)\ar[r]_-{{m_\times}_{A,B}+1_{F(C)}} & F(A\times B)+F(C)
}
\end{gathered}\end{equation*}
The second condition follows similarly. Thus $F\c \bB\rarr \bB'$ is a Frobenius cartesian linear functor. 
\end{proof}

As such, this gives us a forgetful functor $U\c \mathbf{SAdd}\rarr\CLDC$.

The objective for the remainder of this section is to show that a CLDC is isomix if and only if it is semi-additive. Equivalently, this amounts to saying that a CLDC has a zero object if and only if it its products are isomorphic to its coproducts. In particular, from a LDC point of view, this says that in a CLDC, if its tensor unit and par unit are isomorphic, then tensor and par are also isomorphic. This is quite a strong collapse since, in general, a LDC can be isomix $\top \cong \bot$ without the mix map $A \parr B \to A \otimes B$ being an isomorphism. 

To show this, we first show that if the linear distributors are isomorphisms, then a CLDC is a semi-additive category (and hence compact). 

\begin{proposition}\label{prop:CLDC_invertible_lindist}
A CLDC has invertible linear distributors if and only if it is a semi-additive category. 
\end{proposition}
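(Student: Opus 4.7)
The backward direction is immediate from the earlier Example \ref{ex:semi-additive-CLDC}, where the linear distributors of a semi-additive CLDC are explicitly written as composites of the associators of $\times$ (respectively $+$) with the semi-additive comparisons $\psi_{A,B}$ and their inverses, all of which are isomorphisms.

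For the forward direction, the plan is to chain together several results established earlier in the paper rather than doing any direct new calculation. Assuming the linear distributors are invertible, I would first invoke Proposition \ref{prop:shift_tensor} to conclude that the par unit $\bot$ has a $\times$-inverse, given by $\bot^{-1} := \bone \parr \bone = \top + \top$. The key observation is then that in any CLDC the terminal object $\top$ is preinitial by Proposition \ref{prop:bot_subterminal_top_preinitial}, and so by Lemma \ref{lem:preinitial/subterminal} the codiagonal $\nabla_\top : \top + \top \to \top$ is an isomorphism. This collapses the $\times$-inverse of $\bot$ down to $\top$ itself, yielding the chain of isomorphisms
\[ \top \;\cong\; \bot \times \bot^{-1} \;\cong\; \bot \times \top \;\cong\; \bot. \]

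With $\bot \cong \top$ in hand, Lemma \ref{lem:isomix} gives that the CLDC is isomix. Combined with the standing hypothesis that the linear distributors are invertible, this makes the CLDC compact in the sense of Definition \ref{def:compact_LDC}. Finally, Lemma \ref{lem:compact_CLDC} identifies compact CLDCs with semi-additive categories, completing the proof. The argument therefore presents essentially no technical obstacle: it is a short assembly of prior results, and the sole conceptual insight is recognizing that the abstract $\times$-inverse of $\bot$ produced by Proposition \ref{prop:shift_tensor} must collapse to the monoidal unit $\top$ in the cartesian setting, because $\top + \top \cong \top$ by preinitiality.
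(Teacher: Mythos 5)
Your proof is correct, and it follows the same overall skeleton as the paper's (invertible distributors $\Rightarrow$ $\bot$ has a $\times$-inverse $\Rightarrow$ $\bot \cong \top$ $\Rightarrow$ isomix with invertible mix maps $\Rightarrow$ compact $\Rightarrow$ semi-additive via Lemma \ref{lem:compact_CLDC}), but the one step that requires real work -- showing $\bot \cong \top$ -- is handled by a genuinely different argument. The paper proves the general monoidal fact that in \emph{any} cartesian category, an object admitting a $\times$-inverse is already terminal, via an explicit computation with the pairing $\langle 1_A, t^{-1}_{A\times A^{-1}};\pi^1_{A,A^{-1}}\rangle$; this uses nothing about the LDC structure beyond Proposition \ref{prop:shift_tensor}. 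You instead exploit the explicit form of the inverse supplied by Proposition \ref{prop:shift_tensor}, namely $\bot^{-1} = \top + \top$, together with the CLDC-specific fact that $\top$ is preinitial (Proposition \ref{prop:bot_subterminal_top_preinitial}), so that $\nabla_\top \c \top + \top \to \top$ is invertible by Lemma \ref{lem:preinitial/subterminal} and hence $\top \cong \bot \times \bot^{-1} \cong \bot \times \top \cong \bot$. Your route avoids any new diagram chase and is arguably slicker in context, at the cost of leaning on the earlier structural results about CLDCs; the paper's route isolates a reusable general lemma about invertible objects in cartesian monoidal categories. The remaining step -- that isomix plus invertible distributors gives compactness because the mix map is a composite of unitors, the nullary mix map, and a linear distributor, all now isomorphisms -- is exactly the observation the paper makes (both in its proof and in the remark following Definition \ref{def:compact_LDC}), so no gap there.
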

\begin{proof} The $\Leftarrow$ direction is immediate since the linear distributors as defined in Ex \ref{ex:semi-additive-CLDC} are the composites of isomorphisms. For the $\Rightarrow$ direction, let $\bX$ be a CLDC whose linear distributors are isomorphisms. By Prop \ref{prop:shift_tensor}, this implies that $\bot$ has $\times$-inverse. However, note that in a cartesian category, an object has a $\times$-inverse if and only if it is terminal, that is, isomorphic to $\top$. Indeed, suppose $A$ had a $\times$-inverse $A^{-1}$, which means that $t_{A \times A^{-1}}: A \times A^{-1} \to \top$ is an isomorphism. Then consider the composite $t^{-1}_{A \times A^{-1}}; \pi^0_{A,A^{-1}}: \top \to A$. By the universal property of the terminal object, we of course have that $t^{-1}_{A \times A^{-1}}; \pi^0_{A,A^{-1}}; t_A = t_\top = 1_\top$. On the other hand, note that by the universal property of the terminal object once again, we get that $t_A = \langle 1_A, t^{-1}_{A \times A^{-1}}; \pi^1_{A,A^{-1}}  \rangle; t_{A \times A^{-1}}$. So we can compute that: 
\begin{gather*}
 t_A; t^{-1}_{A \times A^{-1}}; \pi^0_{A,A^{-1}} = \langle 1_A, t^{-1}_{A \times A^{-1}}; \pi^1_{A,A^{-1}}  \rangle; t_{A \times A^{-1}};t^{-1}_{A \times A^{-1}}; \pi^0_{A,A^{-1}} \\
 =  \langle 1_A, t^{-1}_{A \times A^{-1}}; \pi^1_{A,A^{-1}}  \rangle; \pi^0_{A,A^{-1}} = 1_A
\end{gather*}
\[ \]
So we conclude that $t_A: A \to \top$ is an isomorphism. Therefore, since $\bot$ has a $\times$-inverse, it is terminal and so $\bot \cong \top$. As \bX\ is isomix with invertible linear distributors, \bX\ is a compact CLDC and, by Lem \ref{lem:compact_CLDC}, \bX\ is a semi-additive category. 
\end{proof}

We can now show that a CLDC being isomix implies that it is compact. 

\begin{proposition}\label{thm:CLDC_isomix}
A CLDC is isomix if and only if it is a semi-additive category.    
\end{proposition}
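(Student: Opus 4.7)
The $(\Leftarrow)$ direction is immediate: by Example \ref{ex:semi-additive-CLDC} every semi-additive category is a compact CLDC, and by Definition \ref{def:compact_LDC} every compact CLDC is isomix.

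For $(\Rightarrow)$, suppose $\bX$ is an isomix CLDC, so $m\c\bot\rarr\top$ is invertible. I would proceed via Lemma \ref{lem:compact_CLDC}: it suffices to show $\bX$ is compact, i.e. that each mix map $\mix_{A,B}\c A\times B\rarr A+B$ is an isomorphism. The first step is to promote $\bot\cong\top$ to a genuine zero object. Since $m$ is iso, every object $A$ admits a map $A\xrightarrow{t_A}\top\xrightarrow{m^{-1}}\bot$, so Lemma \ref{lem:A_iso_A_times_bot}(i) yields that $\pi^0_{A,\bot}\c A\times\bot\rarr A$ is an isomorphism; dually, $\iota^0_{A,\top}\c A\rarr A+\top$ is an isomorphism. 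In particular $\bot$ is also terminal and $\top$ is also initial, so we acquire canonical zero morphisms $0_{A,B}\c A\rarr \bot\cong\top\rarr B$, enabling the standard semi-additive comparison $\psi_{A,B}\c A+B\rarr A\times B$ from \eqref{eq:psi}.

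The heart of the proof is to verify that $\psi_{A,B}$ is a two-sided inverse to $\mix_{A,B}$. To check $\mix_{A,B};\psi_{A,B}=1_{A\times B}$, the cleanest route is to precompose with each projection $\pi^j_{A,B}$ and unfold $\mix$ through $\delta^L_{A,\top,B}$ (or equivalently $\delta^R_{A,\top,B}$): the two diagonal subcomputations collapse via the projection/injection identities \eqref{eqn:linear_dist_projection_injection} together with the unit coherences \eqref{cc:unit_lineardist}, while the two off-diagonal subcomputations factor through $\bot\cong\top$ and vanish by the preinitial/subterminal characterization in Proposition \ref{prop:CLDC_preintial_subterminal} applied to $\bot$. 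The identity $\psi_{A,B};\mix_{A,B}=1_{A+B}$ is handled symmetrically via coproduct injections. Once $\mix$ is established as an isomorphism, $\bX$ is compact and Lemma \ref{lem:compact_CLDC} delivers semi-additivity.

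The main obstacle is the coherence-style bookkeeping in the middle step, which is conceptually routine but notationally delicate. A more structural rendering would instead exploit the compatibility \eqref{diag:interchange_canonicalflip_mix} between $\mix$ and the duoidal interchange $\mu$ from Proposition \ref{prop:CLDC_is_duoidal}, identifying $\psi_{A,B}$ (up to unitors and $m$) with the specialization $\mu_{A,\top,\bot,B}$; this turns the mix/$\psi$ cancellations into direct consequences of the interchange axioms, bypassing the most tedious diagram chases.
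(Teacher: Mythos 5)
Your proposal is correct, but it takes a genuinely different route from the paper. The paper's proof of the $(\Rightarrow)$ direction does not touch the mix maps directly: it constructs an explicit inverse $\partial^R_{A,B,C} = \langle[\pi^0_{A,B},\, t_C;m^{-1};b_A],\, \pi^1_{A,B}+1_C\rangle$ for the left linear distributor $\delta^L$, deduces by symmetry that $\delta^R$ is also invertible, and then invokes Proposition \ref{prop:CLDC_invertible_lindist} (invertible distributors imply semi-additivity, via the shifted-tensor argument that $\bot$ acquires a $\times$-inverse and is therefore terminal). You instead upgrade $\bot$ to a zero object first, build the comparison $\psi_{A,B}$ from \eqref{eq:psi}, and verify directly that it is a two-sided inverse of $\mix_{A,B}$, landing on compactness and Lemma \ref{lem:compact_CLDC}. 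Both arguments run on the same raw fuel --- the identities \eqref{eqn:linear_dist_projection_injection}, the unit coherences \eqref{cc:unit_lineardist}, and composites of the form $t;m^{-1};b$ --- and I have checked that your two cancellations $\mix_{A,B};\psi_{A,B}=1_{A\times B}$ and $\psi_{A,B};\mix_{A,B}=1_{A+B}$ do go through by precomposing with projections/injections exactly as you describe (this is essentially the computation the paper carries out later inside the slice-construction proof, and your formula for $\mix_{A,B}^{-1}$ is precisely the $\psi_{A,B}$ the paper records at the very end of its own proof). What the paper's route buys is the explicit invertibility of the linear distributors themselves, which is one of the clauses of the four-way equivalence theorem that follows; what your route buys is directness, bypassing Proposition \ref{prop:CLDC_invertible_lindist} and the shifted-tensor machinery entirely. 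One small presentational caveat: the "off-diagonal" terms do not really need Proposition \ref{prop:CLDC_preintial_subterminal} --- once $\bot\cong\top$ is a zero object they collapse simply from initiality of $\bot$ and terminality of $\top$ (e.g.\ $b_B;{u^L_+}^{-1}_B=\iota^0_{\bot,B}$ and ${u^L_\times}^{-1}_B;t_B=\pi^0_{\top,B}$) combined with \eqref{eqn:linear_dist_projection_injection}; citing the preinitial/subterminal coincidence there is overkill rather than an error.
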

\begin{proof} The $\Leftarrow$ direction is immediate as a semi-additive category has a zero object. For the $\Rightarrow$ direction, suppose that $\bX$ is an isomix CLDC, so the nullary mix map $m\c \bot\rarr\top$ is invertible. We shall show that the left linear distributor $\delta^L$ is invertible. Define the natural transformation $\partial^R_{A,B,C} \c (A\times B)+C\rarr A\times (B+C)$ as follows: 
\begin{align*}
    \partial^R_{A,B,C} = \langle[\pi^0_{A,B}, t_C;m^{-1};b_A], \pi^1_{A,B}+ 1_C\rangle =  [1_A\times \iota^0_{B, C}, \langle t_C;m^{-1};b_A, \iota^1_{B,C}\rangle]
\end{align*}
First observe that the following diagram commutes: 
\begin{equation*}
\begin{gathered}\xymatrixrowsep{1.75pc}\xymatrixcolsep{2.75pc}\xymatrix{
A\times (B+C)\ar[dddd]_-{1_A \times t_{B+C}}\ar[rrr]^-{\delta^L_{A,B,C}}\ar[dr]_-{1_A\times (t_B+t_C)} &\ar@{}[rd]|{({\rm nat})}&& (A\times B)+C\ar[ld]^-{(1_A\times t_B)+t_C}\ar[dddd]^-{[\pi^0_{A,B}, t_C;m^{-1};b_A]} \\
& A\times (\top+\top)\ar[r]^-{\delta^L_{A,\top,\top}}\ar@{}[rd]|{({\rm nat})}\ar[d]_-{1_A\times (1_\top+m^{-1})} & (A\times\top)+\top \ar[d]^-{1_{A\times\top} + m^{-1}}& \\
\ar@{}[rd]|{({\rm term})}& A\times (\top+\bot)\ar[r]^-{\delta^L_{A,\top,\bot}} \ar[rd]_-{1_A\times {u^R_+}_\top}^-{\eqref{cc:unit_lineardist}}& (A\times\top)+\bot\ar[d]^-{{u^R_+}_{A\times\bot}}\ar@{}[r]|{(*)}& \\
& & A\times \top\ar[rd]^-{{u^R_\times}^{-1}_A}  & \\
A\times \top\ar[rrr]_-{{u^R_\times}^{-1}_A}\ar@{=}[rru] &&& A
}
\end{gathered}\end{equation*}
where $(*)$ commutes by
\begin{align*}
    &((1_A\times t_B)+t_C); (1_{A\times \top}+ m^{-1}); {u^R_+}_{A\times\top}; {u^R_\times}^{-1}_A \\
    &= [(1_A\times t_B);\iota^0_{A\times\top, \bot}, t_C; m^{-1};\iota^1_{A\times\top, \bot}]; [1_{A\times\top}, b_{A\times \top}]; {u^R_\times}^{-1}_A \\
    &= [(1_A\times t_B);{u^R_\times}^{-1}_A  , t_C; m^{-1}; b_{A\times\top}; {u^R_\times}^{-1}_A ] = [\pi^0_{A,B}, t_C;m^{-1};b_A]
\end{align*}
As such, we can compute that: 
\begin{align*}
    &\delta^L_{A,B,C}; \partial^R_{A,B,C} = \delta^L_{A,B,C}; \langle [\pi^0_{A,B}, t_C;m^{-1};b_A], \pi^1_{A,B}+ 1_C\rangle \\
    & = \langle \delta^L_{A,B,C};[\pi^0_{A,B}, t_C;m^{-1}; b_A], \delta^L_{A,B,C};(\pi^1_{A,B}+ 1_C)\rangle = \langle \pi^0_{A,B+C}, \pi^1_{A,B+C}\rangle  = 1_{A\times(B+C)}
\end{align*}
where the equality in the first component follows from the above diagram, and the equality in the second component is by Lem \ref{lem:delta_pi_iota}. Similarly, we can also compute that: 
\begin{align*}
    &\partial^R_{A,B,C}; \delta^L_{A,B,C} = [1_A\times \iota^0_{B,C}, \langle t_C;m^{-1};b_A, \iota^1_{B,C}\rangle]; \delta^L_{A,B,C}  \\
    & = [(1_A\times\iota^0_{B,C});\delta^L_{A,B,C} , \langle t_C;m^{-1};b_A, \iota^1_{B,C}\rangle; \delta^L_{A,B,C} ] = [\iota^0_{A\times B, C}, \iota^1_{A\times B,C}] = 1_{A\times (B+C)}
\end{align*}
Thus $\delta^L$ is an isomorphism with inverse $\partial^R$. Consequently, via the symmetries, the right linear distributor $\delta^R$ is invertible as well. Then, as \bX\ is isomix with invertible linear distributors, \bX\ is compact and, by Lem \ref{lem:compact_CLDC}, \bX\ is a semi-additive category. Note that the inverse of the mix map $\mix_{A,B}: A \times B \to A +B$ is:
\[ \mix_{A,B}^{-1} = \langle [1_A, t_B; m^{-1}; b_A], [t_A; m^{-1}; b_B, 1_B]\rangle = [ \langle 1_A, t_A; m^{-1}; b_B \rangle, \langle t_B; m^{-1}; b_A, 1_B\rangle ]  \]
which is precisely $\psi_{A,B}$. 
\end{proof}

Combining these characterizations together, we record that: 

\begin{theorem}
For a CLDC \bX, the following are equivalent:
\begin{enumerate}[label=(\roman*)]
    \item \bX\ is a semi-additive category;
    \item \bX\ is compact;
    \item \bX\ has invertible linear distributors;
    \item \bX\ is isomix.
\end{enumerate}
\end{theorem}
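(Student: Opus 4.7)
The plan is straightforward, since every one of the four claimed equivalences has already been established pairwise with the semi-additive condition $(i)$ in the preceding results of this section. So the proof amounts to assembling those three biconditionals into a single star-shaped web of implications around $(i)$.

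Specifically, I would first invoke Lemma \ref{lem:compact_CLDC}, which gives $(i)\Leftrightarrow (ii)$: a CLDC is compact precisely when it is semi-additive, with the semi-additive comparison $\psi_{A,B}$ recovered as the inverse of the mix map. Then I would appeal to Proposition \ref{prop:CLDC_invertible_lindist} for $(i)\Leftrightarrow (iii)$: a CLDC has invertible linear distributors iff it is semi-additive. Finally, Proposition \ref{thm:CLDC_isomix} supplies $(i)\Leftrightarrow(iv)$: a CLDC is isomix iff it is semi-additive. Concatenating these gives the full cycle $(i)\Leftrightarrow(ii)\Leftrightarrow(iii)\Leftrightarrow(iv)$.

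There is no main obstacle here, as the hard work has been distributed over the three preceding results. The only thing worth remarking on in the write-up is why it was sufficient to prove the three equivalences separately rather than arranging them in a single cyclic chain of implications: each equivalence uses the semi-additive characterization as a convenient hub because on one side it provides the explicit formula for $\psi$, $\mix^{-1}$, and $\delta^{L,R}$ in terms of one another (as in Example \ref{ex:semi-additive-CLDC}), and on the other side the collapse from the weaker hypothesis (isomix, invertible distributors, or compact) all converge on this same semi-additive structure. So the proof of the theorem is simply the sentence that $(i)$ is equivalent to each of $(ii)$, $(iii)$, $(iv)$ by the cited results, and therefore all four are equivalent.
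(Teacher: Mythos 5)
Your proposal is correct and matches the paper exactly: the paper states this theorem with no written proof beyond the phrase ``combining these characterizations together,'' relying precisely on Lemma \ref{lem:compact_CLDC} for $(i)\Leftrightarrow(ii)$, Proposition \ref{prop:CLDC_invertible_lindist} for $(i)\Leftrightarrow(iii)$, and Proposition \ref{thm:CLDC_isomix} for $(i)\Leftrightarrow(iv)$. Your assembly of these three biconditionals around the semi-additive hub is the intended argument.
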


It is worth mentioning that some of the constructions used in the previous proof connect back to duoidal categories. Indeed, a \emph{normal} duiodal category, that is, a duoidal category $(\bX, \diamond, I, \star, J)$ such that $\iota\c I\rarr J$ is an isomorphism, is an isomix LDC \cite{Spivak_Srinivasan_2024}, whose linear distributors are defined as follows: 
\begin{align*}
  &\resizebox{\hsize}{!}{$\partial^L_{A,B,C}= A\diamond (B\star C) \cong (A\star J)\diamond (B\star C) \xrightarrow{\mu_{A,J,B,C}} (A\diamond B)\star (J\diamond C) \cong (A\diamond B)\star (I \diamond C) \cong (A\diamond B)\star C$} \\
  &\resizebox{\hsize}{!}{$\partial^R_{A,B,C} = (A\star B)\diamond C \cong (A\star B)\diamond (J\star C) \xrightarrow{\mu_{A,B,J,C}} (A\diamond J) \star (B\diamond C) \cong (A\diamond I)\star (B\diamond C) \cong A\star (B\diamond C)$}
\end{align*}
Now, for any isomix CLDC $\bX$ (so a semi-additive category), $(\bX, +,\bot,\times,\top)$ is normal duoidal. The induced linear distributors from this normal duoidal category are precisely the inverses of $\delta^R$ and $\delta^L$ constructed in the proof of Proposition \ref{thm:CLDC_isomix}. As such this makes $\bX$ also an isomix ``cocartesian'' LDC\footnote{Note that the opposite category of a CLDC is not a cocartesian LDC, and vice-versa.}, where by this we mean a LDC where the coproduct is the tensor and product is par. 

Finally, it is interesting to notice that our two main classes of CLDCs discussed so far are ``orthogonal'' to one another. 

\begin{proposition}
A CLDC is a semi-additive category and a posetal distributive category if and only if it is trivial. 
\end{proposition}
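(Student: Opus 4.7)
The plan is to exploit the two collapse theorems already established in Sections \ref{sec:BDL} and \ref{sec:S-Add}, namely that a CLDC is semi-additive iff its mix map $m\c\bot\rarr\top$ is an isomorphism, and that a CLDC is a posetal distributive category iff it is posetal. The reverse implication (trivial CLDC is both) is immediate since the terminal category trivially satisfies every axiom, so the real content is the forward direction.

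Assume $\bX$ is a CLDC that is both semi-additive and a posetal distributive category. First I would invoke Proposition \ref{thm:CLDC_isomix} (or equivalently Lemma \ref{lem:compact_CLDC}) to conclude that $\bX$ is compact, so in particular $\bot\cong\top$. Then I would invoke Lemma \ref{lem:posetal_CLDC} to view $\bX$ as a posetal distributive category, so $\bX$ is thin with $\bot$ the minimum and $\top$ the maximum with respect to the order induced by the hom-sets.

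The key step is then the order-theoretic squeeze: for every object $A\in\bX$ we have $\bot\leq A\leq \top$, but since $\bot\cong\top$ in $\bX$, transitivity in the poset forces $A\cong \bot$. Thus every object of $\bX$ is isomorphic to $\bot$, and since $\bX$ is posetal there is exactly one morphism between any two objects. Hence $\bX$ is equivalent to the terminal category, i.e.\ trivial.

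There is no real obstacle here; the argument is purely formal once the two collapse theorems are in hand. The only point worth stating carefully is the interpretation of ``trivial'': I take it to mean equivalent to the terminal category (one object up to isomorphism and only identities), which is what the above argument delivers, and which is of course itself a (degenerate) semi-additive category and a (degenerate) posetal distributive category, giving the converse for free.
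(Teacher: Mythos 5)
Your proof is correct, and it takes a slightly different route from the paper's. The paper argues the forward direction by combining one fact from each hypothesis at the level of the initial object: since the category is a (posetal) distributive category, its initial object $\bot$ is \emph{strict} (every map into it is an isomorphism), and since it is semi-additive, $\bot$ is a zero object, so the zero map $0_{A,\bot}\c A\rarr\bot$ exists for every $A$; strictness then forces $0_{A,\bot}$ to be an isomorphism, so every object is a zero object. You instead invoke the two collapse theorems (semi-additive $\Leftrightarrow$ isomix, giving $\bot\cong\top$; posetal distributive $\Leftrightarrow$ posetal) and finish with the order-theoretic squeeze $\bot\leq A\leq\top\cong\bot$, using thinness to upgrade the resulting pair of maps $A\to\bot$ and $\bot\to A$ to an isomorphism. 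Both arguments manufacture a map $A\to\bot$ and then show it is invertible; the paper gets invertibility from strictness of the initial object, while you get it from thinness, avoiding any appeal to strictness. The paper's version is marginally more economical in that it never needs $\bot\cong\top$ explicitly, but yours is equally valid and arguably more transparent given the collapse theorems already proved, and your explicit remark on what ``trivial'' means (equivalent to the terminal category) matches the paper's conclusion that every object is a zero object.
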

\begin{proof}
The $\Leftarrow$ direction is trivially true. On the other hand, the $\Rightarrow$ direction follows from the fact that for a posetal distributive category its initial object $\bot$ is strict, while for a semi-additive category its initial object $\bot$ is a zero objet. Therefore, for any object $A$, the unique zero map $0_{A,\bot}: A \to \bot$ must be an isomorphism. So every object is a zero object, and so the CLDC is trivial.  
\end{proof}

Viewing CLDCs as models for some logic with two connectives, two constants and the cut rule, the posetal distributive categories correspond to all proofs between two formulas being identified, while the semi-additive categories correspond to the connectives being identified. 

\subsection{(Co)Slice Construction}

In this section, we show that for a CLDC, the (co)slice category over the initial (resp. terminal) object is an isomix CLDC, and so in particular a semi-additive category. In fact, this (co)slice category is equivalent to a certain subcategory. As such, there are at least two full subcategories of a CLDC which are semi-additive categories. However, these subcategories will not necessarily be sub-CLDCs of the original CLDC, since the subcategories may not have the same initial or terminal object as the CLDC. 

While we will focus on the slice category over the initial object, the same dual arguments of course hold for the coslice category over the terminal object. So recall that for a category $\bX$ and object $X$, the {\bf slice category} $\bX/X$ is the category whose objects are maps $f\c A\rarr X$ with codomain $X$, where a map $g\c (f\c A\rarr X)\rarr (f^\prime\c A^\prime\rarr X)$ is a map $g\c A\rarr A^\prime$ in \bX\ such that $g; f^\prime =f$. 

Now suppose that $\bX$ is a CLDC and consider the slice category over its initial object $\bX/\bot$. Recall that by Prop \ref{prop:bot_subterminal_top_preinitial}, $\bot$ is subterminal. Therefore, for every object $A$, there is at most one map into $\bot$. So having such a map is a property of an object $A$ rather than structure. Moreover, if $A$ and $A^\prime$ both have maps into $\bot$, then any map $g: A \to A^\prime$ will also preserve said maps. Thus $\bX/\bot$ is isomorphic to the full subcategory of objects for which there exists (a necessarily) unique map into $\bot$. Abusing notation, we will simply associate $\bX/\bot$ to this subcategory. Therefore, from now on, for a CLDC $\bX$, $\bX/\bot$ will be the full subcategory of object $A$ of $\bX$ for which there exists (a necessarily) unique map into $\bot$. 

\begin{proposition}
Let \bX\ be a CLDC. Then $\bX/\bot$ is a compact CLDC, and therefore a semi-additive category.   
\end{proposition}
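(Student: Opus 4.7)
The plan is to exhibit $\bX/\bot$ as a CLDC whose tensor and par units coincide, and then invoke the collapse theorem just compiled at the end of Section \ref{sec:S-Add}. First, I will verify that $\bX/\bot$, viewed as the full subcategory of objects of $\bX$ admitting a (necessarily unique) map to $\bot$, is closed under the cartesian and cocartesian structure of $\bX$, and that these restrictions are products and coproducts inside $\bX/\bot$. Since $\bot$ is subterminal by Proposition \ref{prop:bot_subterminal_top_preinitial}, this is essentially automatic: $\bot \in \bX/\bot$ via $1_\bot$; if $A, B \in \bX/\bot$ with chosen maps $a \c A \to \bot$ and $b \c B \to \bot$, then $\pi^0_{A,B}; a \c A \times B \to \bot$ and $[a, b] \c A + B \to \bot$ witness that $A \times B, A + B \in \bX/\bot$. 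The universal properties of $\times$ and $+$ in $\bX$ transfer to $\bX/\bot$ because any competing (co)cone is automatically compatible with the unique maps into $\bot$.

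Next, I observe that in $\bX/\bot$ the object $\bot$ is both the terminal object (as the base of the slice) and the initial object (by initiality in $\bX$, composed with the subterminal factorization), hence a zero object. In particular, the tensor unit and par unit of $\bX/\bot$ coincide. I will then verify that $\delta^L$ and $\delta^R$ of $\bX$ restrict to $\bX/\bot$: for $A, B, C \in \bX/\bot$, both $A \times (B + C)$ and $(A \times B) + C$ lie in $\bX/\bot$ by closure, and the distributor maps automatically intertwine with the projections to $\bot$ because $\bot$ is subterminal. The coherences \eqref{cc:unit_lineardist}, \eqref{cc:assoc_lineardist}, \eqref{cc:leftright_lineardist}, and the symmetry compatibility \eqref{cc:lin_dist_braiding} are inherited verbatim from $\bX$, so $\bX/\bot$ is indeed a CLDC.

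Since $\bone \cong \Bot$ in $\bX/\bot$, Lemma \ref{lem:isomix} shows $\bX/\bot$ is isomix, and Proposition \ref{thm:CLDC_isomix} (equivalently Lemma \ref{lem:compact_CLDC}) then forces it to be compact, hence a semi-additive category. The only step requiring genuine attention is the first — identifying (co)products in the slice with those in $\bX$ — but this is rendered transparent by the subterminality of $\bot$, so there is no real obstacle beyond routine bookkeeping.
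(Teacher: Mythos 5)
Your overall strategy --- exhibit $\bX/\bot$ as an LDC whose two units coincide, then invoke Lemma \ref{lem:isomix} and Theorem \ref{thm:CLDC_isomix} --- is a viable alternative to the paper's proof, which instead constructs the semi-additive comparison $\psi$ directly and checks it is inverse to the mix map. The closure of $\bX/\bot$ under $\times$ and $+$, the identification of $\bot$ as a zero object, and the inheritance of \eqref{cc:assoc_lineardist}, \eqref{cc:leftright_lineardist} and \eqref{cc:lin_dist_braiding} are all fine: those coherences mention only associators, symmetries and distributors, which are unchanged in the full subcategory.

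However, the claim that \eqref{cc:unit_lineardist} is ``inherited verbatim'' is where your argument has a genuine gap: the tensor unit of $\bX/\bot$ is $\bot$, not $\top$, so the $\times$-unitors are \emph{new} maps ${u^L_\times}_A = \langle a, 1_A\rangle \c A \to \bot\times A$ (with $a\c A\to\bot$ the unique map), and the first two equations of \eqref{cc:unit_lineardist} are different equations from those holding in $\bX$ --- they are not instances of any axiom of $\bX$ and must be proved. They do hold, but proving them is precisely the nontrivial content of the proposition: for $A\in\bX/\bot$ the projection $\pi^1_{\bot,A}$ is invertible with inverse the new unitor (the analogue of Lemma \ref{lem:A_iso_A_times_bot}, which uses subterminality of $\bot$), whereupon ${u^L_\times}_{A+B};\delta^L_{\bot,A,B} = {u^L_\times}_A + 1_B$ is equivalent to $\delta^L_{\bot,A,B};(\pi^1_{\bot,A}+1_B) = \pi^1_{\bot,A+B}$, which is Lemma \ref{lem:delta_pi_iota}; dually for the $\delta^R$ equation. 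This is essentially the same computation the paper performs when verifying $\iota^0_{A,A^\prime};\psi_{A,A^\prime};\psi^{-1}_{A,A^\prime} = \iota^0_{A,A^\prime}$. So you have mislocated the hard step: identifying the (co)products of the slice is the routine bookkeeping, while the unit coherences for the new unit are where the work lives, and as written your proof skips them.
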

\begin{proof} We will in fact explain why $\bX/\bot$ is semi-additive. First we will explain why it has the same products, coproducts, and initial object as $\bX$. We only need to show this is true for objects, since being a full subcategory will immediately imply their respective (co)universal property. Starting with the latter, obviously $\bot \in \bX/\bot$. Now if $A, A^\prime \in \bX/\bot$ with respective maps $a\c A\rarr\bot$ and $a^\prime \c A^\prime\rarr\bot$, then so are $A \times A^\prime$ and $A + A^\prime$ via maps $(a \times a^\prime); \pi^0_{\bot,\bot}: A \times A^\prime \to \bot \times \bot \to \bot$ and $[a,a^\prime]: A + A^\prime \to \bot$ respectively. 

Now note that it is easy to see that $\bot$ is in fact a zero object in $\bX/\bot$ (in fact this, along with the coproducts, follow from basic facts about slice categories). Moreover, if $A, A^\prime \in \bX/\bot$, then the zero morphism from $A$ to $A^\prime$ in this category is the composite $a; b_{A^\prime}$. As such since $\bX/\bot$ has a zero object, products, and coproducts, we get our canonical map $\psi_{A,A^\prime} \c A + A^\prime \to A \times A^\prime$, which is worked out to be:
\begin{align*}
\psi_{A,A^\prime} = \langle [1_A, a';b_{A}], [a;b_{A^\prime}, 1_{A^\prime}]\rangle 
\end{align*}
We can also show that this in fact an isomorphism, whose inverse is precisely the mix map, $\psi^{-1}_{A,B} = \mix_{A,B}$. To show this,  we compute the following: 
\begin{align*}
& \iota^0_{A,A^\prime}; \psi_{A,A^\prime}; \psi^{-1}_{A,A^\prime} =~ \langle 1_A, a; b_{A^\prime} \rangle; \mix_{A,A^\prime} \\
&=~ \Delta_A; (1_A\times a;b_{A^\prime}); (1_A\times {u^L_+}^{-1}_{A^\prime}); \delta^L_{A,\bot,A^\prime}; ((1_A\times m)+1_{A^\prime}); ({u^R_\times}^{-1}_A+1_{A^\prime}) \\
&=~ \Delta_A; (1_A\times a); (1_A\times \iota^0_{\bot, {A^\prime}}); \delta^L_{A,\bot, {A^\prime}}; ((1_A\times m)+1_{A^\prime}); ({u^R_\times}^{-1}_A+1_{A^\prime})  \\
&=~ \Delta_A; (1_A\times a); \iota^0_{A\times \bot, {A^\prime}}; ((1_A\times m)+1_{A^\prime}); ({u^R_\times}^{-1}_A+1_{A^\prime}) \tag{by Lem \ref{lem:delta_pi_iota}}  \\
&=~ \Delta_A; (1_A\times a); (1_A\times m); {u^R_\times}^{-1}_A; \iota^0_{A,{A^\prime}}\\
&=~ \Delta_A; (1_A\times t_A); {u^R_\times}^{-1}_A; \iota^0_{A,{A^\prime}}\\
&=~ 1_A;  \iota^0_{A,{A^\prime}}\\
&=~ \iota^0_{A,{A^\prime}} 
\end{align*}
and similarly $ \iota^1_{A,A^\prime}; \psi_{A,A^\prime}; \psi^{-1}_{A,A^\prime} =  \iota^1_{A,A^\prime}$. Therefore, by the universal property of the coproduct, we get that $\psi_{a,b}; \psi^{-1}_{a,b}= 1_{A+B}$. Similarly, we can show $\psi^{-1}_{a,b}; \psi_{a,b} = 1_{A\times B}$ (instead using the universal property of the product). As such, $\bX/\bot$ is a semi-additive category. Then $\bX/\bot$ is a compact CLDC. Moreover, it not difficult to check that the induced linear distributors from the semi-additive structure end up being precisely the same linear distributors of $\bX$ (this is because the linear distributors were used to define the mix map). 
\end{proof}

Now for any CLDC $\bX$, there is another equivalent way to describe objects in $\bX/\bot$. Indeed, recall from Lem \ref{lem:A_iso_A_times_bot} that saying that $A \in \bX/\bot$ is equivalent to saying $A \cong A \times \bot$. Moreover, for every $B \in \bX$, consider $B\times\bot$, then $(B \times \bot) \times \bot \cong B \times \bot$, meaning $B\times \bot\in \bX/\bot$. Therefore, every object in $\bX/\bot$ is isomorphic to one of the form $- \times \bot$. So let $\bX\times\bot$ be the full sub-category of objects of the form $A\times\bot$ for some $A$ in \bX, then:

\begin{lemma} Let $\bX$ be a CLDC. Then $\bX/\bot \simeq \bX\times\bot$, and therefore $\bX\times\bot$ is a semi-additive category. 
\end{lemma}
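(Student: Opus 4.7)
The plan is to establish the equivalence $\bX/\bot \simeq \bX\times\bot$ via the inclusion functor and then transfer the semi-additive structure along this equivalence. First I would observe that $\bX\times\bot$ is actually a full subcategory of $\bX/\bot$: for every object $B\times\bot$ in $\bX\times\bot$, the projection $\pi^1_{B,\bot} \c B\times\bot \to \bot$ witnesses $B\times\bot \in \bX/\bot$, so the inclusion functor $J\c \bX\times\bot \hookrightarrow \bX/\bot$ is well-defined and fully faithful by construction (both are full subcategories of $\bX$).

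It then remains to show $J$ is essentially surjective. Given any $A \in \bX/\bot$, Lemma \ref{lem:A_iso_A_times_bot} immediately gives that $\pi^0_{A,\bot} \c A\times\bot \xrightarrow{\cong} A$ is an isomorphism. Since $A\times\bot \in \bX\times\bot$ by definition, this isomorphism shows that $A$ is in the essential image of $J$. Hence $J$ is an equivalence of categories $\bX\times\bot \simeq \bX/\bot$.

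For the second part, recall the previous proposition established that $\bX/\bot$ is a semi-additive category (in fact a compact CLDC). Since semi-additive structure is determined by the existence of a zero object and isomorphisms between binary products and coproducts (see Prop \ref{prop:semi_add}), and these properties are preserved and reflected by equivalences of categories, $\bX\times\bot$ inherits the semi-additive structure along the equivalence. Concretely, the zero object of $\bX\times\bot$ is $\top\times\bot$ (which is isomorphic to $\bot$ by Lemma \ref{lem:A_iso_A_times_bot} applied to $\top$, using $m \c \bot \to \top$), products and coproducts are computed as in $\bX/\bot$ and then transported by the isomorphisms $A \cong A\times\bot$, and the semi-additive comparison maps are isomorphisms because they are in $\bX/\bot$.

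I do not anticipate any real obstacle here, since all the essential work has been done: Lemma \ref{lem:A_iso_A_times_bot} provides the essential surjectivity for free, and the preceding proposition provides the semi-additive structure on $\bX/\bot$. The argument is essentially a bookkeeping exercise transferring structure along an equivalence given by a fully faithful and essentially surjective inclusion.
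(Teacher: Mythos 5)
Your proof is correct and takes essentially the same route as the paper: the paper constructs the quasi-inverse $F\c\bX/\bot\rarr\bX\times\bot$, $F(A)=A\times\bot$, explicitly, whereas you run the inclusion in the opposite direction, which makes full faithfulness automatic (both are full subcategories of $\bX$) and reduces everything to essential surjectivity via Lemma \ref{lem:A_iso_A_times_bot} --- the same key fact $A\cong A\times\bot$ that underlies the paper's argument. One cosmetic slip in your aside on the zero object: $\top\times\bot\cong\bot$ follows from the product unitor, not from Lemma \ref{lem:A_iso_A_times_bot} applied to $\top$ (which would require a map $\top\rarr\bot$ that need not exist in a general CLDC); this does not affect the argument.
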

\begin{proof} Define the functor $F\c \bX/\bot\rarr\bX\times\bot$ on objects as $F(A) = A \times \bot$ and on maps as $F(f) = f \times 1_\bot$. It is clear that $F$ is full, faithful, and essentially surjective. Therefore, $F$ induces an equivalence $\bX/\bot \simeq \bX\times\bot$ as desired. Explicitly, the functor $G\c \bX\times\bot\rarr \bX/\bot$ is defined on objects as $G(A \times \bot) = A \times \bot$ and $G(f) = f$. The natural isomorphism $\alpha_A: A \to GF(A)$ is defined as $\alpha_{A} = \langle 1_A, a\rangle$, while the natural isomorphism $\beta_{A \times \bot}: FG(A \times \bot) \to A \times \bot$ is defined as $\beta_{A \times \bot} = \pi^0_{A \times \bot,\bot}$. 
\end{proof}

So from a CLDC, we obtain a subcategory that is semi-additive. Naturally, one can ask what happens if we apply the slice construction to a semi-additive category, which we now know is a compact CLDC. Well in this case, note that the initial object is a zero object and hence a terminal object. It is well-known that taking the slice category over the terminal is isomorphic to the original base category. Therefore, we have that:

\begin{corollary}
Let $\bB$ be a semi-additive category. Then $\bB$ is isomorphic to $\bB/\bot$. 
\end{corollary}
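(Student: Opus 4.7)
The plan is to chase the definitions and exploit the fact that in a semi-additive category $\bB$, the initial object $\bot$ is simultaneously a zero object and hence terminal. First I would recall the identification established earlier in this subsection: for a CLDC $\bX$, the slice category $\bX/\bot$ was identified (up to isomorphism of categories) with the full subcategory of $\bX$ on those objects $A$ admitting a map $A \to \bot$, such a map being automatically unique because $\bot$ is subterminal. Under this identification, the corollary reduces to showing that for a semi-additive category $\bB$, every object admits a map into $\bot$.

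But this is immediate: in a semi-additive category, $\bot$ is by definition a zero object, in particular terminal, so every object $A$ admits the canonical map $t_A \c A \to \bot$ (which here coincides with the zero morphism $0_{A,\bot}$). Hence the full subcategory described above is all of $\bB$, and the inclusion $\bB/\bot \hookrightarrow \bB$ is an isomorphism of categories, being the identity on objects and full and faithful by construction of a full subcategory.

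There is no real obstacle here. The only subtlety worth flagging explicitly is that we are using the \emph{subcategory} reformulation of $\bB/\bot$ rather than the literal slice whose objects are arrows with codomain $\bot$; if one prefers the literal slice, the same conclusion follows from the standard fact that the slice over a terminal object is isomorphic to the base category, applied to $\bot$ viewed as terminal.
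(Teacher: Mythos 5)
Your proof is correct and matches the paper's argument: the paper likewise observes that in a semi-additive category $\bot$ is a zero object, hence terminal, and invokes the standard fact that the slice over a terminal object is isomorphic to the base category. Your primary framing via the full-subcategory reformulation of $\bB/\bot$ is just an equivalent packaging of the same observation, and you correctly flag the literal-slice version as well.
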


Let us now explain why Frobenius cartesian linear functors preserve this construction. So let $F \c \bX \to \bY$ be a Frobenius cartesian linear functor. Recall in particular this implies that we have an isomorphism $n_\bot: F(\bot) \to \bot$. Therefore, for any $A \in  \bX/\bot$, with associated map $a: A \to \bot$, we get the composite $F(a); n_\bot: F(A) \to F(\bot) \to \bot$, which implies that $F(A) \in \bY/\bot$. As such, this allows us to restrict $F$ to this subcategory, so we get a functor $F/\bot\c \bX/\bot \rarr \bY/\bot$ which is simply defined as $F/\bot(-) = F(-)$. We now explain how this is in fact a semi-additive functor. 

\begin{proposition}\label{prop:FCLF-SAF}
Let $F \c \bX \to \bY$ be a Frobenius cartesian linear functor. Then $F/\bot\c \bX/\bot \rarr \bY/\bot$ is a semi-additive functor. 
\end{proposition}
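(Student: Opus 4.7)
The plan is to verify the three requirements of Definition \ref{def:semi-additive_functor} for $F/\bot \c \bX/\bot \to \bY/\bot$, taking as its structure maps the restrictions of the linear functor data of $F$. Since $\bX/\bot$ and $\bY/\bot$ are compact CLDCs whose semi-additive structure was constructed in the proof that $\bX/\bot$ is semi-additive, I will recall that their zero object is $\bot$, that products and coproducts are inherited from the ambient CLDCs, and crucially that the semi-additive comparison in each slice satisfies $\psi_{A,B} = \mix^{-1}_{A,B}$. The candidate structure maps for $F/\bot$ are then: for the $\times$-monoidal structure (with unit $\bot$), the isomorphisms $n_\bot^{-1} \c \bot \to F(\bot)$ and ${m_\times}_{A,B} \c F(A) \times F(B) \to F(A \times B)$; for the $+$-monoidal structure (with unit $\bot$), the isomorphisms $n_\bot \c F(\bot) \to \bot$ and ${n_+}_{A,B} \c F(A+B) \to F(A) + F(B)$.

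Next, I will confirm the two monoidal-functor axioms of Definition \ref{def:semi-additive_functor}. Associator coherence for both $\times$ and $+$ is inherited directly from $F$ being a $\times$-monoidal and $+$-monoidal functor on the ambient categories, since associators in the slice are literally the ones in $\bX$ and $\bY$. Unitor coherence requires a short check, because in $\bX/\bot$ the unit of $\times$ is $\bot$ rather than $\top$, so the right unitor on an object $A \in \bX/\bot$ with chosen map $a \c A \to \bot$ is $\langle 1_A, a\rangle$. The coherence we need is then
\[
\langle 1_{F(A)}, F(a); n_\bot\rangle ;(1_{F(A)} \times n_\bot^{-1}); {m_\times}_{A,\bot} = F(\langle 1_A, a\rangle),
\]
which simplifies to $\langle 1_{F(A)}, F(a)\rangle; {m_\times}_{A,\bot} = F(\langle 1_A, a\rangle)$. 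This holds because a $\times$-monoidal functor with invertible comparison $m_\times$ preserves pairings. The unitor coherence for $+$ is dual.

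The heart of the argument is the compatibility with the semi-additive comparison, namely the two diagrams in \eqref{cc:semi-add_functor}. Substituting $\psi = \mix^{-1}$ in both slices, the first of these diagrams unwinds to
\[
F(\mix^{-1}_{A,B}); {m_\times}_{A,B} = {n_+}_{A,B}; \mix^{-1}_{F(A), F(B)},
\]
equivalently (by inverting and rearranging) to $\mix_{F(A), F(B)} = {m_\times}_{A,B}; F(\mix_{A,B}); {n_+}_{A,B}$. This is precisely the mix-preservation identity of the proposition at the end of Section \ref{sec:preliminaries}. It applies here because every cartesian Frobenius linear functor is mix (by the lemma following the proof that a CLDC is mix), so $F$ is a mix Frobenius linear functor and therefore preserves mix maps. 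The second diagram reduces in the same way to the same mix-preservation identity, and hence also holds.

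The main subtlety is really just the bookkeeping: identifying the correct monoidal units in the slice categories, observing that $\psi$ in the slice is the inverse of the ambient mix map, and invoking mix-preservation. Once these identifications are in place, the proof is almost entirely by referring back to the monoidal-functor and mix-preservation properties that $F$ already enjoys.
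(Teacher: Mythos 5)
Your proof is correct, and for the key step it takes a genuinely more economical route than the paper. The paper verifies the two diagrams of \eqref{cc:semi-add_functor} separately: the first by writing out $\psi$ explicitly as a pairing of copairings of identities and zero morphisms and checking the equality against all injections and projections, and the second by the large commutative diagram of Figure \ref{fig}, whose crucial square $(*)$ uses $n_\bot^{-1};F(m);m_\top^{-1}=m$. You instead observe that, since the slice comparison satisfies $\psi=\mix^{-1}$ and the structure maps ${m_\times}$ and ${n_+}$ of a cartesian linear functor are isomorphisms, \emph{both} diagrams are obtained from the single identity ${m_\times}_{A,B};F(\mix_{A,B});{n_+}_{A,B}=\mix_{F(A),F(B)}$ by inverting; that identity is exactly the mix-preservation statement of \cite[Lem 3.5]{Cockett_Comfort_Srinivasan_2021}, which applies because every cartesian Frobenius linear functor is mix. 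This collapses the paper's two computations (including the sideways figure, which essentially re-derives the cited mix-preservation result in this special case) into one citation, at the cost of having to justify the identification $\psi=\mix^{-1}$ in both slices --- which you do, correctly, by appealing to the slice proposition. Your extra care with the unitor coherence for the shifted unit $\bot$ is also welcome and is glossed over in the paper. One cosmetic slip: your intermediate display for the first diagram, $F(\mix^{-1}_{A,B});{m_\times}_{A,B}={n_+}_{A,B};\mix^{-1}_{F(A),F(B)}$, does not typecheck as written (it should read $F(\mix^{-1}_{A,B})={n_+}_{A,B};\mix^{-1}_{F(A),F(B)};{m_\times}_{A,B}$, or equivalently move ${m_\times}^{-1}_{A,B}$ to the left side); the final rearranged identity you state is the correct one, so this does not affect the argument.
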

\begin{proof} By the explanation given above, $F/\bot$ is well-defined. Moreover, since $F$ preserves (up to isomorphism) products, coproducts, and the initial object, $F/\bot$ will as well. Since the initial object in $\bX/\bot$ and $\bY/\bot$ is also the terminal object, it follows that $F/\bot$ preserves the terminal object as well. It remains now to show that $F/\bot$ satisfies \eqref{cc:semi-add_functor}. This is equivalent to checking one of the following equalities: 
\begin{align*}
& {n_+}_{A,B};\langle [1_{F(A)}, F(b);n_\bot;b_{F(A)}], [F(a);n_\bot;b_{F(B)}, 1_{F(B)}]\rangle; {m_\times}_{A,B}\\
&= F(\langle [1_A, a;b_A], [f;b_{B}, 1_{B}]\rangle) \\ \\
& {m_\times}_{A,B}; F(\mix_{A,B}); {n_+}_{A,B} = \mix_{F(A), F(B)}
\end{align*}
The former is equivalent to 
\begin{align*}
& \langle [1_{F(A)}, F(b);n_\bot;b_{F(A)}], [F(a);n_\bot;b_{F(B)}, 1_{F(B)}]\rangle \\
& = {n^{-1}_+}_{A,B};F(\langle [1_A, b;b_A], [a;b_{B}, 1_{B}]\rangle); {m_\times}^{-1}_{A,B}
\end{align*}
which holds since 
\begin{align*}
&\iota^i_{F(A), F(B)}; {n^{-1}_+}_{B};F(\langle [1_A, b;b_A], [a;b_{B}, 1_{B}]\rangle); {m_\times}^{-1}_{A,B}; \pi^j_{F(A), F(B)} \\
& = F(\iota^i_{A,B}; \langle [1_A, b;b_A], [a;b_{B}, 1_{B}]\rangle; \pi^j_{A, B})
\end{align*}
and $F(c); b_B = F(c); n_\bot; b_{F(B)}$ for any $c\c C\rarr\bot$.

\end{proof}

As such, we obtain a functor $[-]/\bot\c \CLDC\rarr \mathbf{SAdd}$ which maps a CLDC to its slice category over its initial object, and restricts a Frobenius cartesian linear functor to this semi-additive subcategory. It is important to stress that $[-]/\bot\c \CLDC\rarr \mathbf{SAdd}$ does not provide either a right or left adjoint to the forgetful functor $U\c\mathbf{SAdd}\rarr \CLDC$. The problem is that neither the functor $F\c \bX\rarr\bX/\bot$, mapping $F(A) = A \times \bot$, or $G\c \bX/\bot\rarr \bX$, mapping $G(A) = A$, is a Frobenius cartesian linear functor. In the first case, $F\c \bX\rarr\bX/\bot$ is not $+$-monoidal as that would imply that $F(A + B)= (A + B) \times \bot$ is isomorphic to $F(A) + F(B) \cong (A \times \bot) + (B \times \bot)$, which is not necessarily the case in an arbitrary CLDC. In the second case, $G\c \bX/\bot\rarr \bX$ is not $\times$-monoidal as that would imply that $G(\bot)=\bot$ is isomorphic to $\top$, which again is not necessarily true in an arbitrary CLDC. Instead, if we were to restrict our attention to $\times$-monoidal functors or $+$-comonoidal functors between CLDCs, then this slice construction would provide a left or right adjoint respectively. 


There is likely still a construction which provides an adjunction to the inclusion functor  $U\c\mathbf{SAdd}\rarr \CLDC$, although it is not equivalent to a sub-category. A possibility would be to consider the localization of a CLDC with respect to isomorphisms and its mix maps. Although this construction would be difficult to describe, so we leave it for future work. 

Of course, we can alternatively redo this section by taking the coslice category under the terminal object instead. Indeed, in this case we are restricting to the full subcategory of objects with a (necessarily unique) map from the terminal object to them. As such, the terminal object becomes initial.

\begin{proposition}
Let \bX\ be a CLDC. Then $\top/\bX$, the coslice category of \bX\ under the terminal object $\top$, is a compact CLDC, so in particular it is a semi-additive category. Moreover, every Frobenius cartesian linear functor $F\c \bX\rarr\bY$ induces a semi-additive functor $\top/F\c \top/\bX\rarr\top/\bY$ defined as $\top/F(-) = F(-)$. Furthermore, there is an equivalence of categories $\top/\bX \simeq \bX+\top$, where the $\bX+\top$ is the full sub-category of objects of the form $A + \top$ for some $A$ in \bX. So in particular, $\bX + \top$ is a semi-additive category. 
\end{proposition}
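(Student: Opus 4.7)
The plan is to prove the statement by duality with the preceding proposition about $\bX/\bot$. First, since $\top$ is preinitial in any CLDC by Proposition \ref{prop:bot_subterminal_top_preinitial}, a map $\top\rarr A$ is necessarily unique, so admitting such a map is a \emph{property} of the object $A$ rather than extra structure, and any morphism between two such objects automatically commutes with the distinguished maps. Hence $\top/\bX$ is equivalent to the full subcategory of $\bX$ consisting of objects that receive a map from $\top$, which I will identify with $\top/\bX$ throughout.

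Next I would verify that this subcategory is closed under finite products, finite coproducts, and contains $\top$: given $s\c\top\rarr A$ and $s'\c\top\rarr B$, we have $\langle s, s'\rangle\c\top\rarr A\times B$ and $s;\iota^0_{A,B}\c\top\rarr A+B$, and trivially $1_\top\c\top\rarr\top$. By the coslice construction, $\top$ is initial in $\top/\bX$, while it remains terminal by inheritance from $\bX$, so it is a zero object. This yields a canonical semi-additive comparison $\psi_{A,B}\c A+B\rarr A\times B$. The key computation, dual to the proof of the preceding proposition, is showing that $\mix_{A,B}\c A\times B\rarr A+B$ is a two-sided inverse to $\psi_{A,B}$; this is verified by checking both composites against the (co)universal property of the (co)product and using the compatibility of the mix map with the unitors and the right linear distributor $\delta^R_{A,\top,B}$ (in place of $\delta^L_{A,\bot,B}$ from the previous proof), together with the fact that $m\c\bot\rarr\top$ is the unique such map and $t_B;m^{-1} = b_B$ in context. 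Applying Proposition \ref{prop:semi_add} and Lemma \ref{lem:compact_CLDC}, we conclude $\top/\bX$ is a compact CLDC.

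For functoriality, given a Frobenius cartesian linear functor $F\c\bX\rarr\bY$, the isomorphism $m_\top\c\top\rarr F(\top)$ and any structure map $s\c\top\rarr A$ in $\top/\bX$ yield $m_\top; F(s)\c\top\rarr F(A)$, witnessing $F(A)\in\top/\bY$. Restricting $F$ gives $\top/F\c\top/\bX\rarr\top/\bY$; that it is a semi-additive functor, i.e.\ satisfies \eqref{cc:semi-add_functor}, follows by the evident dualization of the diagram chase in Proposition \ref{prop:FCLF-SAF}, with the roles of $n_\bot$ and $m_\top$ interchanged.

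Finally, for the equivalence $\top/\bX\simeq\bX+\top$, Lemma \ref{lem:A_iso_A_times_bot}(ii) states that $A\in\top/\bX$ iff $\iota^0_{A,\top}\c A\rarr A+\top$ is an isomorphism, and for any $B\in\bX$ the object $B+\top$ admits the map $\iota^1_{B,\top}\c\top\rarr B+\top$ and so lies in $\top/\bX$. I would then define $F\c\top/\bX\rarr\bX+\top$ by $F(A) = A+\top$ on objects and $F(f) = f+1_\top$ on maps, with essentially inverse $G\c\bX+\top\rarr\top/\bX$ given by the identity on objects and maps (equipped with the structure $\iota^1_{B,\top}$); the required natural isomorphisms are $\iota^0_{A,\top}\c A\xrightarrow{\cong} A+\top$ and $1_{B+\top}$. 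The main obstacle is only bookkeeping: carefully tracking the direction reversals in the dualization so that the explicit formula for $\psi_{A,B}^{-1} = \mix_{A,B}$ and the semi-additive functor axioms are verified with the correct instances of the linear distributors and unitors.
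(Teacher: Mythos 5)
Your proposal is correct and follows essentially the same route as the paper, which states this proposition without a separate argument precisely because it is the dualization of the $\bX/\bot$ development that you spell out: identifying $\top/\bX$ with the full subcategory of objects receiving a (necessarily unique) map from the preinitial $\top$, checking closure under $\times$ and $+$, showing $\top$ is a zero object and $\mix_{A,B}$ inverts $\psi_{A,B}$ via $\delta^R_{A,\top,B}$, dualizing the Frobenius-functor diagram chase, and exhibiting the equivalence with $\bX+\top$ through $\iota^0_{A,\top}$ using Lemma \ref{lem:A_iso_A_times_bot}(ii). One small slip: the expression $t_B;m^{-1}=b_B$ does not type-check and invokes an inverse of $m\c\bot\rarr\top$ that need not exist in a general CLDC; the fact the dual computation actually requires is only that any composite $\bot\rarr\top\rarr B$ equals $b_B$ by initiality of $\bot$, so the argument goes through unchanged.
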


\begin{corollary}
Let $\bB$ be a semi-additive category. Then $\bB$ is isomorphic to $\top/\bB$. 
\end{corollary}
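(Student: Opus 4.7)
The plan is to apply the argument dual to the one used for the preceding corollary $\bB \cong \bB/\bot$. In a semi-additive category $\bB$, the terminal object $\top$ coincides with the zero object $\bzero$, and is therefore simultaneously initial. The preceding proposition identifies $\top/\bB$ (up to isomorphism) with the full subcategory of $\bB$ consisting of those objects $A$ admitting a map $\top \rarr A$; this identification is the straightforward dual of the discussion immediately before the first slice proposition, which identified $\bX/\bot$ with the full subcategory of objects admitting a map to $\bot$, using the fact that $\top$ is preinitial in a CLDC (Prop \ref{prop:bot_subterminal_top_preinitial}) so that such a map, when it exists, is necessarily unique and automatically preserved by any morphism of $\bB$.

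Given this identification, the remaining step is essentially a free observation: since $\top$ is initial in the semi-additive category $\bB$, every object $A$ of $\bB$ admits the unique map $b_A \c \top \rarr A$. Hence the full subcategory in question is all of $\bB$, yielding $\bB \cong \top/\bB$ as desired.

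There is no substantive obstacle here; the whole content of the corollary is the observation that semi-additivity forces $\top$ to be initial, at which point the coslice under a (now) initial object collapses to the original category. If one preferred an entirely formal route, one could instead invoke the dual of the previous corollary directly, applied to $\bB^{\mathrm{op}}$ (which is again semi-additive), since $(\top/\bB)^{\mathrm{op}} \cong \bB^{\mathrm{op}}/\bot$, and then transport along the self-duality of semi-additive categories. Either approach produces the isomorphism without further calculation.
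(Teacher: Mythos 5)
Your proposal is correct and matches the paper's argument: both hinge on the observation that in a semi-additive category $\top$ is a zero object and hence initial, so the coslice under it (equivalently, the full subcategory of objects receiving a map from $\top$) is all of $\bB$. The paper phrases this as the dual of the well-known fact that the slice over a terminal object is isomorphic to the base category, but the content is the same.
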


\section{Revisiting Kleisli Category of Exception Monad}\label{sec:Kleisli}

As previously discussed, it was initially thought that the notions of CLDCs and distributive categories would coincide. It was then demonstrated not to be the case. However, in an effort to relate distributive categories to CLDCs, Cockett and Seely looked towards the \emph{exception} monad (sometimes known as the \emph{maybe} monad) to provide a possible source of examples of CLDCs. 

The {\bf exception monad} on a distributive category \bD\ is the monad $(\cdot+\top, \eta, \mu)$ whose unit is $\eta_{A} = \iota^0_{A, \top}\c A\rarr A+\top$ and whose multiplication is $\mu_{A} = [1_{A+\top}, \iota^1_{A, \top}]\c (A+\top)+\top\rarr A+\top$. The exception monad is named as such because, from a computer science perspective, the monad models a programming exception, in other words a computational failure handled in a controlled way. Given computations from one data type $A$ to another $B$, represented by maps $A\rarr B$, the effect of possibly throwing an exception is handled by applying the exception monad to $B$ and considering $B+\top$, where the terminal object $\top$ represents the controlled failure.

While a distributive category may not be (C)LDC, it was hoped that the \emph{Kleisli} category of the exception monad would be a (C)LDC. For clarity, when discussing the Kleisli category of the exception monad $\bD_{\cdot+\top}$, we shall use the interpretation brackets $\llbracket - \rrbracket$. So map $f \in \bD_{\cdot+\top}(A,B)$ will be denoted by its underlying map $\llbracket f \rrbracket\c A\rarr B+\top$. For example, the identity map in the Kleisli category is $\llbracket 1_A \rrbracket = \eta_A$, and composition is given as follows $\llbracket f;g \rrbracket = \llbracket f \rrbracket ; \left( \llbracket g \rrbracket + 1_\top \right) ; \mu_C$. Now it is well-known that $\bD_{\cdot+\top}$ has both products and coproducts. That it has coproducts follows from basic theory about Kleisli categories. As such, the initial object in $\bD_{\cdot+\top}$ is still $\bot$, and:
\begin{align}\llbracket b_A\rrbracket = b_{A+\top}\c \bot \rarr A+\top
\end{align}
The coproduct on objects is the same in \bD, so $A+B$, while the injections are given by: 
\begin{equation}\begin{gathered} \llbracket \iota^0_{A,B}\rrbracket = \iota^0_{A, B};\iota^0_{A+B, \top}\c A\rarr (A+B)+\top \\ 
\llbracket \iota^1_{A,B}\rrbracket = \iota^1_{A, B};\iota^0_{A+B, \top}\c B\rarr (A+B)+\top
\end{gathered}
\end{equation}
The copairing of Kleisli maps is the same as in $\bD$, that is: 
\begin{align}\llbracket [f,g] \rrbracket = \left[ \llbracket f \rrbracket, \llbracket g \rrbracket \right]\c A+B\rarr C+\top
\end{align}
The Kleisli category $\bD_{\cdot+\top}$ also has finite products, although they are rather unique. The terminal object in $\bD_{\cdot+\top}$ is $\bot$, the initial object of \bD, and:
\begin{align} \llbracket t_A \rrbracket= t_{A}; \iota^1_{\bot,\top}\c A\rarr \bot+\top \end{align}
The product in $\bD_{\cdot+\top}$ is given by $A\,\&\,B = (A+B)+(A\times B)$, with projections defined as follows:
\begin{equation}\begin{gathered}  \llbracket \pi^0_{A,B} \rrbracket =  [[\iota^0_{A,\top}, t_B; \iota^1_{A,\top}], \pi^0_{A,B}; \iota^0_{A,\top}]\c (A+B)+(A\times B)\rarr A+\top \\
\llbracket \pi^1_{A,B} \rrbracket = [[t_A; \iota^1_{B,\top}, \iota^0_{B,\top}], \pi^1_{A,B}; \iota^0_{B,\top}]\c (A+B)+(A\times B)\rarr B+\top 
\end{gathered}
\end{equation}
The pairing of Kleisli maps is defined as follows: 	
\begin{align} \llbracket \langle f, g\rangle \rrbracket = C \xrightarrow{\left \langle \llbracket f \rrbracket, \llbracket g \rrbracket \right \rangle} (A+\top)\times(B+\top) \cong ((A+B)+(A\times B))+\top \end{align}

Now in \cite{Cockett_Seely_1997_LDC}, it is claimed that $\bD_{\cdot+\top}$ is a CLDC. However, this cannot be the case. Indeed, note that $\bot$ is both the initial object and the terminal object in $\bD_{\cdot+\top}$, and thus $\bot$ is a zero object in $\bD_{\cdot+\top}$. If $\bD_{\cdot+\top}$ was a CLDC, then it would be isomix. As such, by Proposition \ref{thm:CLDC_isomix}, this would imply that $\bD_{\cdot+\top}$ was semi-additive. Now this is an issue since clearly the coproduct $A+B$ and the product $A \& B$ in $\bD_{\cdot+\top}$ are definitely not isomorphic, unless $\bD$ is trivial (i.e. every object in $\bD$ is a terminal/initial/zero object). So we have that: 

\begin{lemma}  
The Kleisli category of the exception monad of a distributive category $\bD$ is a CLDC if and only if $\bD$ is trivial. 
\end{lemma}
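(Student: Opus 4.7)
The $\Leftarrow$ direction is immediate: if every object of $\bD$ is a zero object, then $\bD$ is equivalent to the terminal category, and so is $\bD_{\cdot+\top}$, which is then trivially a CLDC.

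For the $\Rightarrow$ direction, the approach is already foreshadowed in the paragraph preceding the lemma. Assume $\bD_{\cdot+\top}$ is a CLDC. Since $\bot$ serves simultaneously as initial and terminal in $\bD_{\cdot+\top}$, Lemma~\ref{lem:isomix} tells us that $\bD_{\cdot+\top}$ is isomix, and Theorem~\ref{thm:CLDC_isomix} then promotes this to the statement that $\bD_{\cdot+\top}$ is semi-additive. The plan is to exploit this semi-additive structure to extract, for each pair $A, B$, a concrete constraint in $\bD$ that ultimately forces triviality.

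Concretely, semi-additivity yields that the canonical semi-additive comparison $\psi_{A,B}\c A+B \to A\,\&\,B = (A+B)+(A\times B)$ is a Kleisli isomorphism. Unpacking this in $\bD$ via the Kleisli pairing/copairing and the distributivity iso $(A+\top)\times(B+\top) \cong ((A+B)+(A\times B))+\top$, one computes $\llbracket\psi_{A,B}\rrbracket$ to be the double injection $\iota^0_{A+B, A\times B};\iota^0_{(A+B)+(A\times B), \top}$ into the ``$A+B$'' summand of $A\,\&\,B$. Writing its Kleisli inverse as $\llbracket\phi\rrbracket = [\phi_1,\phi_2]$, the equation $\psi;\phi = 1_{A+B}$ pins $\phi_1$ down to $\eta_{A+B}$, while $\phi;\psi = 1_{A\,\&\,B}$ yields a constraint on $\phi_2\c A\times B \to (A+B)+\top$: after post-composition with the Kleisli extension of $\llbracket\psi\rrbracket$, the result must equal the ``other'' double injection $\iota^1_{A+B, A\times B};\iota^0_{(A+B)+(A\times B), \top}$. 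However, the image of this extension lies in summands of $((A+B)+(A\times B))+\top$ that are disjoint from that target.

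Decomposing $A\times B$ along $\phi_2$ into its $(A+B)$- and $\top$-preimages and invoking disjointness of coproducts together with strict initiality (both available in a distributive category), each summand of the decomposition is forced to be initial, so $A\times B \cong \bot$ for every $A, B$. Taking $B = \top$ collapses this to $A \cong A\times\top \cong \bot$ for every object $A$, hence every object of $\bD$ is initial and (since $\bD$ also has a terminal) in fact a zero object, i.e.,\ $\bD$ is trivial. The main obstacle is the careful computation of $\llbracket\psi_{A,B}\rrbracket$ together with the identification of precisely which constraint the Kleisli-inverse equations place on $\phi_2$; once this is isolated, the final step via coproduct disjointness and strict initiality is a standard argument for distributive categories.
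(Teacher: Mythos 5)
Your proposal is correct and follows essentially the same route as the paper: note that $\bot$ is simultaneously initial and terminal in $\bD_{\cdot+\top}$, so a CLDC structure would force it to be isomix by Lemma~\ref{lem:isomix} and hence semi-additive by Thm~\ref{thm:CLDC_isomix}, and then derive triviality of $\bD$ from the invertibility of $\psi_{A,B}\c A+B\rarr (A+B)+(A\times B)$. The paper simply asserts that this last isomorphism cannot hold unless $\bD$ is trivial, whereas you actually work it out; your identification of $\llbracket\psi_{A,B}\rrbracket$ as the double injection and the resulting constraint on $\phi_2$ are correct. One small caution on the final step: a bare distributive category need not support the coproduct decomposition of $A\times B$ into the $\phi_2$-preimages (that is an extensivity-type property requiring pullbacks along injections), but this is harmless, since the disjointness of coproduct summands and the strictness of $\bot$ that you also invoke already show directly that a map factoring through two disjoint summands factors through $\bot$, giving $A\times B\cong\bot$ and hence, with $B=\top$, the triviality of $\bD$.
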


That said, Cockett and Seely were indeed correct on some level: the Kleisli category of the exception monad is an isomix SLDC, but with another tensor product. In fact, this other tensor product is the same on objects as the product, but is different on maps! First, note that coproduct of Kleisli maps is given as follows: 
\begin{align}
\llbracket f+g \rrbracket  =\left( \llbracket f \rrbracket + \llbracket g \rrbracket \right); \tau^+_{A^\prime, \top, B', \top}; \left( 1_{A^\prime+B'}+t_{\top+\top} \right) 
\end{align} 
While $\times$ is not a product in $\bD_{\cdot+\top}$, it is still a monoidal product (in fact a \emph{restriction} product \cite{Cockett_Lemay_2024}), which is defined as follows on Kleisli maps: 
\begin{equation}\begin{gathered}
\llbracket f\times g \rrbracket = (\llbracket f\rrbracket \times \llbracket g\rrbracket);{d^R}^{-1}_{A^\prime,\top,B'+\top}; {(d^L}^{-1}_{A^\prime,B',\top}+1_{\top\times(B'+\top)}); \\
{\alpha_+^{-1}}_{A^\prime\times B', A^\prime\times\top,\top\times(B'+\top)}; (1_{A^\prime\times B'} + t_{(A^\prime\times \top)+(\top\times (B'+\top))})
\end{gathered}
\end{equation}
At first glance, one might hope that the product of Kleisli maps $\llbracket f\,\&\,g\rrbracket$ should be equal to $ \llbracket (f+g)+(f\times g)\rrbracket$. However, this is not the case unfortunately (see \cite[Ex 3.11]{Cockett_Lemay_2024} for a detailed discussion of this fact). That said, $ \llbracket (f+g)+(f\times g)\rrbracket$ still gives us a monoidal product. 

So define a new monoidal product $\cv$ on $\bD_{\cdot+\top}$ on objects as $A \cv B = (A+B)+(A\times B)$, and on Kleisli maps as follows:
\begin{align}
\llbracket f \cv g \rrbracket = \llbracket (f+g)+(f\times g)\rrbracket 
\end{align}
Moreover, $\bot$ is also a monoidal unit for this monoidal product. We will explain in Appendix \ref{app:proof_Kleisli_LDC} why this indeed a monoidal structure on $\bD_{\cdot+\top}$. Now with this monoidal product playing the role of the tensor, we get that $\bD_{\cdot+\top}$ is a LDC with the linear distributors described by Cockett and Seely in \cite{Cockett_Seely_1997_LDC}. 

\begin{proposition}\label{thm:Kleisli_LDC}
For a distributive category \bD, the Kleisli category of its exception monad $\bD_{\cdot+\top}$ is an isomix  SLDC, with tensor structure $(\bD_{\cdot+\top}, \cv, \bot)$ and par structure $(\bD_{\cdot+\top}, +, \bot)$, where the left linear distributor $  \llbracket \delta^L_{A,B,C}\rrbracket \c A\cv (B+C)\rarr ((A\cv B)+C)+\top$, which is explicitly of type:
\begin{align*}
(A+(B+C))+(A\times(B+C)) \rarr (((A+B)+(A\times B))+C)+\top\
\end{align*}
is defined as the unique map induced by the following maps:
\[ \xymatrix@L=0.5pc{A \ar[r]^-{\iota^0_{A, B}} & A+B \ar[r]^-{\iota^0_{A+B, A\times B}} & A\cv B \ar[r]^-{\iota^0_{A\cv B, C}} & (A\cv B)+C \ar[r]^-{\iota^0_{(A\cv B)+C, \top}} & ((A\cv B)+C)+\top} \]
\[ \xymatrix@L=0.5pc{B \ar[r]^-{\iota^1_{A, B}} & A+B \ar[r]^-{\iota^0_{A+B, A\times B}} & A \cv B \ar[r]^-{\iota^0_{A\cv B, C}} & (A\cv B)+C \ar[r]^-{\iota^0_{(A\cv B)+C, \top}} & ((A\cv B)+C)+\top}\]
\[\xymatrix@L=0.5pc{C \ar[r]^-{\iota^1_{A\cv B,C}} & (A\cv B)+C \ar[r]^-{\iota^0_{(A\cv B)+C, \top}} & ((A\cv B)+C)+\top}\]
\[ \xymatrixcolsep{1pc}\xymatrix@L=0.5pc{A\times (B+C) \ar[r]^-{{d^L}^{-1}_{A, B, C}} & (A\times B)+(A\times C) \ar[r]^-{ 1_{A\times B} + t_{ A\times C}} & (A\times B)+\top \ar[r]^-{\iota^1_{A+B, A\times B}+1_{\top}} & (A\cv B)+\top \ar[r]^-{\iota^0_{A\cv B, C} +1_{\top}} & ((A\cv B)+C)+\top}\]
\end{proposition}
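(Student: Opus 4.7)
The plan is to assemble the SLDC structure piece by piece, using universal properties of coproducts to discharge the coherence axioms, and then invoke Lemma~\ref{lem:isomix} to get the isomix conclusion for free.

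First I would address the two monoidal structures. The par structure $(\bD_{\cdot+\top}, +, \bot)$ is inherited from $\bD$: the free functor $\bD\rarr\bD_{\cdot+\top}$ is a left adjoint, so it preserves coproducts, and hence $(+,\bot)$ is the (symmetric) cocartesian monoidal structure on the Kleisli category. The tensor structure $(\bD_{\cdot+\top}, \cv, \bot)$ is subtler since it is neither cartesian nor cocartesian; I would follow the strategy indicated by the paper's reference to Appendix~\ref{app:proof_Kleisli_LDC}, where the associator, unitors, and symmetry of $\cv$ are constructed by hand from those of $+$ and $\times$ in $\bD$ together with the distributivity isomorphism $d^L$.

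Next I would verify that the proposed $\delta^L$ is well-defined and natural. The key observation is that coproducts in $\bD_{\cdot+\top}$ agree with those in $\bD$ (via the unit of the monad), and $A\cv(B+C) = (A+(B+C))+(A\times(B+C))$ is an iterated coproduct in $\bD$. Hence a Kleisli map out of $A\cv(B+C)$ is uniquely determined by its four restrictions on the summands $A$, $B$, $C$, and $A\times(B+C)$; these four restrictions are exactly the maps listed in the statement. The right linear distributor $\delta^R$ I would then define via the symmetries so that axiom \eqref{cc:lin_dist_braiding} holds by construction, as permitted by the remark following Definition~\ref{def:SLDC}.

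The main obstacle will be verifying the coherence axioms \eqref{cc:unit_lineardist}, \eqref{cc:assoc_lineardist}, and \eqref{cc:leftright_lineardist}. My approach is to precompose each axiom with the injections into the outer coproduct of the domain, reducing each equation to a family of pointwise verifications. On the ``pure coproduct'' summands ($A$, $B$, $C$ alone), the verifications collapse to routine identities involving the monad unit $\eta$, multiplication $\mu$, and the injections and associators of $+$. The delicate part lies on the product summands such as $A\times(B+C)$, where one must unfold the definitions of $\llbracket f\cv g \rrbracket$ and $\llbracket f\times g \rrbracket$ and repeatedly apply the distributivity isomorphism $d^L$ of $\bD$ together with its naturality and coherence. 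Finally, since the tensor unit and par unit both equal $\bot$, Lemma~\ref{lem:isomix} furnishes the isomix structure with nullary mix map $m = 1_\bot$ as soon as the LDC axioms are in place.
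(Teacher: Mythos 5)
Your proposal is correct in outline, and the core technique --- reducing every coherence axiom to injection-wise checks via the universal property of iterated coproducts, defining $\delta^R$ from $\delta^L$ through the symmetries via \eqref{cc:lin_dist_braiding}, and obtaining isomix from Lemma \ref{lem:isomix} since both units are $\bot$ --- is exactly what the paper's proof does. The organizational route differs, however. You propose to work directly inside $\bD_{\cdot+\top}$, unfolding $\llbracket f\cv g\rrbracket$, $\llbracket f\times g\rrbracket$, and Kleisli composition in terms of $\eta$ and $\mu$ at each step. The paper instead factors the entire argument through a more general theorem: it axiomatizes the relevant situation as a \emph{distributive symmetric monoidal category with a zero object} (a symmetric monoidal $(\bX,\os,I)$ with coproducts, invertible canonical distributivity maps, and $\bot$ a zero object), proves that any such category is an isomix SLDC with tensor $\cv = (\cdot+\cdot)+(\cdot\os\cdot)$, and then observes that $\bD_{\cdot+\top}$ is an instance with $\os$ the (non-cartesian) monoidal product $\times$. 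This abstraction is what lets all the computations in the appendix be carried out with ordinary composition and ordinary copairings, with no interpretation brackets or monad structure in sight; it also immediately yields the corollary that classical distributive restriction categories are isomix SLDCs. (Note that your description of the appendix as building $\cv$ ``from those of $+$ and $\times$ in $\bD$'' slightly misreads it: the appendix works with the abstract $\os$ on the Kleisli side, not with the product of $\bD$.) Your direct approach would go through --- the injection-wise verifications are in bijection with the paper's --- but every step on the $A\times(B+C)$ summands would carry the extra layer of Kleisli bookkeeping that the paper's generalization was designed to strip away, so you should expect those computations to be substantially longer than you may anticipate.
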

\begin{proof} Since the linear distributor axioms were not proven in \cite{Cockett_Seely_1997_LDC}, we give a full detailed proof in Appendix \ref{app:proof_Kleisli_LDC}.  
\end{proof}

In particular, another way of describing Kleisli categories of exception monads is as \emph{classical distributive restriction categories} \cite{Cockett_Lemay_2024}. As such, we may also state that: 

\begin{corollary} A classical distributive restriction category is an isomix SLDC. 
\end{corollary}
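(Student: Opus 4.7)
The plan is to derive this Corollary directly from Proposition \ref{thm:Kleisli_LDC}, via the correspondence established in \cite{Cockett_Lemay_2024} between classical distributive restriction categories and Kleisli categories of exception monads on distributive categories. As noted in the paragraph just preceding the Corollary, these two notions coincide: every classical distributive restriction category can be presented, up to equivalence, as a Kleisli category $\bD_{\cdot+\top}$ for an appropriate distributive category $\bD$.

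Given such an identification, the isomix SLDC structure of Proposition \ref{thm:Kleisli_LDC} transfers along the equivalence. Concretely, a classical distributive restriction category $\bX$ inherits the cocartesian par structure $(\bX, +, \bot)$ from its classical coproducts and the tensor structure $(\bX, \cv, \bot)$, where $A \cv B := (A+B)+(A\times B)$ and $\times$ is the restriction product, with linear distributors defined by the same universal characterizations as in the Proposition. The isomix property is automatic: both monoidal units coincide with $\bot$, so Lemma \ref{lem:isomix} applies.

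An alternative, self-contained approach would be to construct this LDC structure directly in the restriction-theoretic language, defining $\cv$ and the linear distributors in terms of classical coproducts and the restriction product, then verifying the axioms by computations mirroring those of Appendix \ref{app:proof_Kleisli_LDC}. The main obstacle here is translating between the Kleisli formalism, in which partiality is tracked by an external object $\top$ and maps of the form $A \to B+\top$, and the intrinsic restriction-operator formalism. I expect this translation to be routine but tedious, since the axioms of classical distributive restriction categories are designed precisely to capture the structural features used in the Kleisli proof, so no conceptually new difficulties should arise.
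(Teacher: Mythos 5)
Your proposal is correct and matches the paper's intended argument exactly: the corollary is obtained by identifying classical distributive restriction categories with Kleisli categories of exception monads of distributive categories (via \cite{Cockett_Lemay_2024}) and then invoking Proposition \ref{thm:Kleisli_LDC}. The additional remarks on transporting the structure along the equivalence and the observation that both units are $\bot$ (so Lemma \ref{lem:isomix} applies) are consistent with what the paper already establishes.
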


The $\cv$ product for a classical distributive restriction category can be described in a more streamlined fashion in terms of combining the restriction product and the restriction coproduct. To help understand the subtleties, let us look at potentially the most well-known example of a Kleisli category of an exception monad, the category of sets and partial functions. 

\begin{example} Let ${\rm Set}$ be the category of sets and functions. Then ${\rm Set}$ is a distributive category, where recall that the product $\times$ is the cartesian product, the coproduct $\sqcup$ is disjoint union, the terminal object is a chosen single $\top = \lbrace \ast \rbrace$, and the initial object is the empty set $\emptyset$. Then ${\rm Set}_{\cdot\sqcup \top}$ is isomorphic to ${\rm Par}$, the category of sets and partial functions. Now let us compare $\&$ and $\cv$ in ${\rm Par}$. On objects, recall that $X \& Y = X \cv Y = X \sqcup Y \sqcup (X \times Y)$. Now for partial functions $f: X \to Z$ and $g: Y \to W$, we have $f \& g$ and $f \cv g$, both of which are of type $X \sqcup Y \sqcup (X \times Y) \to Z \sqcup W \sqcup (Z \times W)$. However on the one hand, for $u \in X \sqcup Y \sqcup (X \times Y)$, we have that: 
\[ \left( f \cv g \right) (u) = \begin{cases} f(u) & \text{if } u\in X  \text{ and } f(u) \downarrow \\
g(u) & \text{if } u\in Y \text{ and } g(u) \downarrow \\
(f(u_0), g(u_1)) & \text{if } u = (u_0, u_1) \in X \times Y \text{ and } f(u_0) \downarrow \text{ and } g(u_1) \downarrow \\
\uparrow & \text{o.w.}  \end{cases} \]
(where $\downarrow$ means defined and $\uparrow$ means undefined), while on the other hand, we have that: 
\[ \left(f \& g \right) (u) = \begin{cases} f(u) & \text{if } u\in X  \text{ and } f(u) \downarrow \\
g(u) & \text{if } u\in Y \text{ and } g(u) \downarrow \\
(f(u_0), g(u_1)) & \text{if } u = (u_0, u_1) \in X \times Y \text{ and } f(x) \downarrow \text{ and } g(x) \downarrow \\
f(u_0) & \text{if } u = (u_0, u_1) \in X \times Y \text{ and } f(u_0) \downarrow \text{ and } g(u_1) \uparrow \\
g(u_1) & \text{if } u = (u_0, u_1) \in X \times Y \text{ and } f(u_0) \uparrow \text{ and } g(u_1) \downarrow \\
\uparrow & \text{o.w. } \end{cases} \]
As such, we clearly see that $f \& g$ and $f \sqcup g \sqcup (f \times g)$ are indeed different. Now ${\rm Par}$ is an LDC with respect to $\cv$ and $\sqcup$, where the linear distributor $\delta^L_{X,Y,Z}\c X\cv (Y\sqcup Z) \rarr (X\cv Y)\sqcup Z$, which is explicitly of type $X\sqcup Y\sqcup Z\sqcup (X\times Y)\sqcup (X\times Z)\rarr X\sqcup Y\sqcup Z \sqcup (X\times Y)$, is the partial function defined as follows:  
\[ \delta^L_{X,Y,Z}(u) = \begin{cases} u & \text{if $u \in X$, or $u \in Y$, or $u \in Z$, or $u \in X \times Y$} \\
 \uparrow & \text{if $u \in X \times Z$} \end{cases}\]
 So the left linear distributor kills off the $X \times Z$ part, since it does not appear in the codomain. 
\end{example}

\section{Further Examples}\label{sec:Examples}

Given all the results of the previous sections, it is reasonable to question whether the only possible CLDCs are posetal distributive categories or semi-additive categories. We can quickly see that is not the case as CLDCs are closed under products, that is, given CLDCs \bX\ and \bY, their product category $\bX\times\bY$ is a CLDC \cite[Sec 3]{Cockett_Seely_1997_LDC}. Therefore, by taking the product of a posetal distributive category ${\mathcal L}$ with a semi-additive category $\bB$, we obtain a CLDC which is neither posetal nor compact. 

We will expand upon this idea, using bounded distributive lattices and semi-additive categories as building blocks to construct more examples of CLDCs. In particular, we will consider the Grothendieck construction of a contravariant functor $F\c \bB^{op}\rarr \mathsf{BDL}$ from a semi-additive category $\bB$ to the category of bounded distributive lattices $\mathsf{BDL}$\footnote{This example is due to valuable discussions between the authors and Richard Garner whose idea it was to look at fibrations for additional examples.}. Recall that the Grothendieck construction for $F$ is the category $\int F$ whose objects are pairs $(A, a)$, where $A\in\bB$ and $a\in F(A)$, and where a map $f\c (A,a)\rarr (B,b)$ is a map $f\c A\rarr B$ in \bB\ such that $a\leq F(f)(b)$. Composition is inherited directly from $\bB$, that is, given $f\c (A,a)\rarr (B,b)$ and $g\c (B,b)\rarr (C,c)$ in $\int F$, their composition is simply $f;g \c (A,a)\rarr (C,c)$. This is well-defined since $a \leq F(f)(b)\leq F(f)(F(g)(c)) = F(f;g)(c)$, which follows from the fact that $F(f)\c F(B)\rarr F(A)$ is a bounded distributive lattice homomorphism.

Now, as we will see below, while $\int F$ obtains products, a terminal object, and an initial object, $\int F$ does not inherit coproducts automatically. As such our functor $F\c \bB^{op}\rarr \mathsf{BDL}$ must satisfy an additional condition: given $A, B\in\bB$, $a\in F(A)$, $b\in F(B)$ and $c\in F(A+B)$,
\begin{align}
\begin{split}\label{eqn:adjoint_condition}
& a\leq  F(\iota^0_{A,B})(c) \Rarr F(\psi_{A,B};\pi^0_{A,B})(a) \leq c \\
& b\leq  F(\iota^1_{A,B})(c)\Rarr F(\psi_{A,B};\pi^1_{A,B})(b) \leq c 
\end{split}
\end{align}
Observe that the converse of (\ref{eqn:adjoint_condition}) is always true: since $\iota^0_{A,B}; \psi_{A,B}; \pi^0_{A,B} = 1_A$, we compute that
\begin{align*}
&F(\psi_{A,B};\pi^0_{A,B})(a) \leq c \Rarr F(\iota^0_{A,B})(F(\psi_{A,B};\pi^0_{A,B})(a)) \leq F(\iota^0_{A,B})(c) \Rarr\\
&F(\iota^0_{A,B}; \psi_{A,B}; \pi^j_{A,B})(a) \leq  F(\iota^0_{A,B})(c)\Rarr F(1_A)(a) \leq F(\iota^0_{A,B})(c) \Rarr a\leq F(\iota^0_{A,B})(c)
\end{align*}
In particular, this means that the image of the projections and injections are adjoints of each other, $F(\psi_{A,B}; \pi^j_{A,B})\dashv F(\iota^j_{A,B})$, which recall means that: 
\begin{align}
\begin{split}\label{eqn:adjoint_condition2}
& F(\psi_{A,B};\pi^0_{A,B})(a) \leq c  \Leftrightarrow a\leq  F(\iota^0_{A,B})(c) \\
& F(\psi_{A,B};\pi^1_{A,B})(b) \leq c \Leftrightarrow  b\leq  F(\iota^1_{A,B})(c)
\end{split}
\end{align}
This condition guarantees the category $\int F$ inherits coproducts. 

\begin{theorem} $\int F$ is a CLDC where: 
\begin{enumerate}[label=(\roman*)]
    \item Terminal object: $(\bzero, \top_{F(\bzero)})$, where $\top_{F(\bzero)}$ is the top element of lattice $F(\bzero)$; 
    \item Products: $(A,a)\times (B,b) \c = \left(A\times B, F(\pi^0_{A,B})(a)\wedge F(\pi^1_{A,B})(b) \right)$; 
    \item Initial object: $(\bzero, \bot_{F(\bzero)})$, where $\bot_{F(\bzero)}$ is the bottom element of lattice $F(\bzero)$; 
    \item Coproducts: $(A,a)+ (B,b) \c = \left(A+ B, F(\psi_{A,B}; \pi^0_{A,B})(a)\vee F(\psi_{A,B}; \pi^1_{A,B})(b) \right)$; 
    \item The linear distributors:
    \begin{align*}
     \delta^L_{(A,a), (B,b), (C,c)}\c (A,a)\times ((B,b)+(C,c))\rarr ((A,a)\times (B,b))+(C,c) \\
      \delta^R_{(A,a), (B,b), (C,c)}\c ((A,a) + ((B,b))  \times (C,c) \rarr (A,a) + ((B,b) \times (C,c))
    \end{align*}
are defined respectively as the linear distributors in $\bB$, that is
\begin{align*}
  \delta^L_{(A,a), (B,b), (C,c)} \c= \delta^L_{A,B,C} &&  \delta^R_{(A,a), (B,b), (C,c)} \c= \delta^R_{A,B,C} 
\end{align*}
\end{enumerate}
\end{theorem}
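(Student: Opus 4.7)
The plan is to check that the proposed structure makes $\int F$ a CLDC. The key structural observation is that the evident forgetful functor $\int F \to \bB$, $(A,a) \mapsto A$, is faithful: the fiber condition $a \leq F(f)(b)$ is a \emph{property} of an arrow in $\bB$, not extra data. Composition in $\int F$ is therefore inherited from $\bB$, and every equation of morphisms in $\int F$ reduces to the corresponding equation in $\bB$. Since $\bB$ is itself a compact CLDC by Ex \ref{ex:semi-additive-CLDC}, the associativity, unitality, and CLDC coherence axioms in $\int F$ follow automatically once the structure maps are known to lift. The real content of the proof is then to verify that each structure arrow in $\bB$ satisfies the fiber inequality, and that the induced universal properties hold.

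I would verify the fiber conditions in the order (i)--(v). For the terminal object, $a \leq F(t_A)(\top_{F(\bzero)}) = \top_{F(A)}$, using that $F(t_A)$ preserves $\top$; dually, $\bot_{F(\bzero)} \leq F(b_A)(a)$ is immediate for the initial object. For products, $F(\pi^0)(a) \wedge F(\pi^1)(b) \leq F(\pi^0)(a)$ is trivial, and for the pairing, contravariant functoriality yields
\[
F(\langle f,g\rangle)\bigl(F(\pi^0)(a) \wedge F(\pi^1)(b)\bigr) = F(f)(a) \wedge F(g)(b) \geq x,
\]
from $x \leq F(f)(a)$ and $x \leq F(g)(b)$. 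Coproduct injections lift because the biproduct identities ($\iota^i;\psi;\pi^j$ is the identity if $i = j$ and the zero morphism otherwise) yield
\[
F(\iota^0)\bigl(F(\psi;\pi^0)(a) \vee F(\psi;\pi^1)(b)\bigr) = a \vee F(0_{A,B})(b) \geq a,
\]
and symmetrically for $\iota^1$. The linear distributor fiber inequalities are then handled by expanding source and target fiber elements using distributivity of $\wedge$ over $\vee$ in each BDL and matching terms via Lemma \ref{lem:delta_pi_iota} together with the biproduct identities.

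The universal properties fall out: uniqueness of (co)pairings and unique maps is immediate from faithfulness of $\int F \to \bB$, while existence is supplied by the lifted structure maps. Similarly, the CLDC axioms reduce via faithfulness to the corresponding equalities in $\bB$, which hold by Ex \ref{ex:semi-additive-CLDC}.

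The hardest step will be the copairing fiber condition for coproducts: given $f: (A,a) \to (C,c)$ and $g: (B,b) \to (C,c)$, one must show $F(\psi;\pi^0)(a) \vee F(\psi;\pi^1)(b) \leq F([f,g])(c)$. This does not follow pointwise from $a \leq F(f)(c)$ and $b \leq F(g)(c)$ alone; its verification exploits the semi-additive factorization of $[f,g]$ through $\psi$ and the interplay between the biproduct structure on $\bB$ and the contravariant functor $F$ valued in $\mathsf{BDL}$. I expect an analogous subtlety to appear when lifting $\delta^L$ and $\delta^R$, which interleave both product and coproduct fiber data.
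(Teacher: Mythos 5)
Your overall strategy coincides with the paper's: since the fibre condition $a \leq F(f)(b)$ is a property rather than extra data, the forgetful functor $\int F \to \bB$ is faithful, all coherence equations reduce to equations in $\bB$ (which hold since $\bB$ is a compact CLDC), and the proof becomes a sequence of fibre-inequality checks. Your verifications for the terminal and initial objects, the product projections and pairings, and the coproduct injections agree with the paper's. However, the two steps you yourself single out as the hard ones are left unproved, and for the copairing this is a genuine gap rather than routine bookkeeping. The missing idea is that $F(\psi_{A,B};\pi^j_{A,B})$ and $F(\iota^j_{A,B})$ form an adjunction of posets, $F(\psi_{A,B};\pi^j_{A,B}) \dashv F(\iota^j_{A,B})$, i.e.\ $F(\psi_{A,B};\pi^0_{A,B})(a)\leq c$ iff $a \leq F(\iota^0_{A,B})(c)$. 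With this in hand the copairing condition \emph{does} follow pointwise from $a\leq F(h)(c)$ and $b\leq F(k)(c)$ --- contrary to your assertion that it cannot --- because $a\leq F(h)(c)=F(\iota^0_{A,B})\bigl(F([h,k])(c)\bigr)$ transposes to $F(\psi_{A,B};\pi^0_{A,B})(a)\leq F([h,k])(c)$, and similarly for $b$, so the join of the two is below $F([h,k])(c)$. Without identifying this adjunction (or an equivalent device), your appeal to ``the semi-additive factorization of $[f,g]$ through $\psi$'' names the ingredients but does not constitute an argument.

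The linear distributor step is likewise only gestured at, and the paper's actual argument is not a term-matching exercise via Lemma \ref{lem:delta_pi_iota}. It first computes, using the explicit form of the distributors in a semi-additive category (Ex \ref{ex:semi-additive-CLDC}), identities such as $F(\delta^L_{A,B,C};\psi_{A\times B,C};\pi^0_{A\times B,C};\pi^0_{A,B}) = F(\pi^0_{A,B+C})$, which rewrite the target's fibre element pulled back along $\delta^L_{A,B,C}$ in terms of the same data that define the source's fibre element; it then concludes with the linear distributivity inequality $x\wedge(y\vee z)\leq (x\wedge y)\vee z$, valid in every bounded distributive lattice. You should carry out these computations (and the dual ones for $\delta^R$) explicitly. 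Once the copairing and distributor inequalities are actually established, your reduction of the remaining axioms to $\bB$ via faithfulness is correct and completes the proof.
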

\begin{proof} Since $\bzero$ is an initial object in $\bB$, it is clear that $(\bzero, \bot_{F(\bzero)})$ is an initial object in $\int F$. Explicitly, for an object $(A,a)\in \int F$, the unique map $b_{(A,a)}\c (\bzero, \bot_{F(\bzero)})\rarr (A,a)$ is given by $b_A\c \bzero\rarr A$ in \bB, which is well-defined in $\int F$ as $\bot_{F(\bzero)}\leq F(b_A)(a) $. Similarly, since $\bzero$ is terminal in $\bB$, then $(\bzero, \top_{F(\bzero)})$ is a terminal object. The unique map $t_{(A,a)}\c (A,a)\rarr (\bzero, \top_{F(\bzero)})$ is given by $t_A\c A\rarr\bzero$ in \bB, which is well-defined as $a\leq \top_{F(A)} = F(t_A)(\top_{F(\bzero)})$ recalling that $F(t_A)$ is a bounded lattice homomorphism. 

Now let's explain why $\int F$ has products. Given objects $(A,a), (B,b)\in \int F$, their product $(A,a)\times(B,b)$ is equipped with projections $\pi^0_{(A,a), (B,b)}\c (A,a)\times (B,b) \rarr (A,a)$ and $\pi^1_{(A,a), (B,b)}\c (A,a)\times (B,b) \rarr (B,b)$ given by the projections $\pi^0_{A,B}\c A\times B\rarr A$ and $\pi^1_{A,B}\c A\times B\rarr B$ in $\bB$. These are well-defined since $F(\pi^0_{A,B})(a)\wedge F(\pi^1_{A,B})(b) \leq F(\pi^0_{A,B})(a)$ and similarly for the other projection. Given maps $f\c (C,c)\rarr (A,a)$ and $g\c (C,c)\rarr (B,b)$, there is a unique map $\langle f,g\rangle\c(C,c)\rarr (A,a)\times (B,b)$ given by their pairing $\langle f,g\rangle\c C\rarr A\times B$ in $\bB$. This is well defined since:
\begin{align*}
c&\leq F(f)(a) \wedge F(g)(b) = F(\langle f, g\rangle; \pi^0_{A,B})(a) \wedge  F(\langle f, g\rangle; \pi^1_{A,B})(b) \\
&= F(\langle f, g\rangle)(F(\pi^0_{A,B})(a)\wedge F(\pi^1_{A,B})(b)))  
\end{align*}

Next we explain why $\int F$ has coproducts. For object $(A,a), (B,b)\in \int F$, their coproduct $(A,a)+(B,b)$ is equipped with injections $\iota^0_{(A,a), (B,b)}\c (A,a)\rarr (A,a)+(B,b)$ and $\iota^1_{(A,a), (B,b)}\c (B,b)\rarr (A,a)+(B,b)$ given by the injections $\iota^0_{A,B}\c A\rarr A+B$ and $\iota^1_{A,B}\c B\rarr A+B$ in $\bB$. These injections are well-defined by (\ref{eqn:adjoint_condition2}):
\begin{align*}
& F(\psi_{A,B}; \pi^0_{A,B})(a)\leq F(\psi_{A,B}; \pi^0_{A,B})(a)\vee F(\psi_{A,B}; \pi^1_{A,B})(b) \\
&\Rarr a \leq  F(\iota^0_{A,B})(F(\psi_{A,B}; \pi^0_{A,B})(a)\vee F(\psi_{A,B}; \pi^1_{A,B})(b))
\end{align*}
and similarly for the other injection. Now given maps $h\c (A,a)\rarr (C,c)$ and $k\c (B,b)\rarr (C,c)$, there is a unique map $[h,k]\c(A,a)+(B,b)\rarr (C,c)$ given by their copairing $[h,k]\c (A,a)+(B,b)\rarr (C,c)$ in $\bB$. This is well-defined by (\ref{eqn:adjoint_condition}):
\[ a\leq F(h)(c) = F(\iota^0_{A,B})(F([h,k])(c)) \Rarr F(\psi_{A,B}; \pi^0_{A,B})(a) \leq F([h,k])(c)\]
\[ b\leq F(k)(c) = F(\iota^1_{A,B})(F([h,k])(c)) \Rarr F(\psi_{A,B}; \pi^1_{A,B})(b) \leq F([h,k])(c)\]
and therefore, we get that $F(\psi_{A,B}; \pi^0_{A,B})(a)\vee F(\psi_{A,B}; \pi^1_{A,B})(b) \leq F([h,k])(c)$. 


So $\int F$ has both finite products and finite coproducts. It remains to show that it is also a LDC. Let us explain why the linear distributors are well-defined. To do so, we observe that in a bounded distributive lattice, we always have the  linear distributivity inequality $x\wedge (y\vee z)\leq (x\wedge y)\vee z$. Moreover, we also have the following equalities: 
\begin{align*}
&F(\delta^L_{A,B,C}; \psi_{A\times B, C};\pi^0_{A\times B, C}; \pi^0_{A,B})(a) \\
&= F((1_A\times\psi_{B,C}); \alpha_{A,B,C}^{-1}; \pi^0_{A\times B, C}; \pi^0_{A,B})(a) = F(\pi^0_{A,B+C})(a) 
\end{align*}
\begin{align*}
& F(\delta^L_{A,B,C}; \psi_{A\times B, C};\pi^0_{A\times B, C}; \pi^1_{A,B})(b) \\
&= F((1_A\times\psi_{B,C}); \alpha_{A,B,C}^{-1}; \pi^0_{A\times B, C}; \pi^1_{A,B})(b) = F(\pi^1_{A,B+C};\psi_{B,C}; \pi^0_{B,C})(b) \end{align*}
\begin{align*}
& F(\delta^L_{A,B,C}; \psi_{A\times B, C};\pi^1_{A\times B, C})(c) \\
&= F((1_A\times\psi_{B,C}); \alpha_{A,B,C}^{-1}; \pi^1_{A\times B, C})(a) = F(\pi^1_{A,B+C};\psi_{B,C}; \pi^1_{B,C})(c) 
\end{align*}
As such, for the left linear distributor, we get that: 
\begin{align*}
&F(\pi^0_{A,B+C})(a) \wedge (F(\pi^1_{A,B+C}; \psi_{B,C}; \pi^0_{B,C})(b)\vee F(\pi^1_{A,B+C};\psi_{B,C};\pi^1_{B,C})(c)) \\
&\leq F(\delta^L_{A,B,C})((F(\psi_{A\times B, C};\pi^0_{A\times B, C}; \pi^0_{A,B})(a) \wedge F(\psi_{A\times B, C};\pi^0_{A\times B, C}; \pi^1_{A,B})(b))\\
&\qquad\vee F(\psi_{A\times B, C}; \pi^1_{A\times B, C})(c))
\end{align*}
So $\delta^L$ in $\int F$ is well-defined, and similarly for the right linear distributor $\delta^R$. Since composition in $\int F$ is the same as in $\bB$, it follows that all the linear distributor axioms are satisfied. So we conclude that $\int F$ is a CLDC, as desired. 
\end{proof}

\begin{remark}
The additional ``adjoint'' condition \eqref{eqn:adjoint_condition} follows from more general results about the Grothendieck construction. In fact, given a functor $F\c {\mathbb I}^{op}\rarr \mathsf{Cat}$, the conditions $F$ should satisfy to ensure $\int F$ inherits particular limits and colimits are studied in \cite[Sec 3.1 \& 3.2]{Tarlecki_Brustall_Goguen_1991}. In our context, ${\mathbb I}$ is a semi-additive category and, for $A$, $B$ and $f\c A\rarr B \in {\mathbb I}$, $F(A)$ and  $F(B)$ are bounded distributive lattices and $F(f)$ is a bounded lattice homomorphism. This implies all the necessary conditions such that $\int F$ has a terminal object, binary products and an initial object are satisfied. The existence of coproducts requires slightly more however. We need $F(\iota^j_{A,B})$ to have left adjoints. These adjoints are further required in our construction to be precisely given by $F(\psi_{A,B}; \pi^j_{A,B})$ in order for the linear distributors to be inherited by $\int F$ as well.
\end{remark}

We conclude this paper by presenting two examples of the above construction, both of which result in CLDCs which are neither semi-additive or posetal. The first recovers the product of a semi-additive category and a bounded distributive lattice. While the second is constructed using ideals of commutative monoids.  

\begin{example} 
Let $\bB$ be a semi-additive category and ${\mathcal L}$ be a bounded distributive lattice. Then, define $F\c \bB^{op}\rarr \mathsf{BDL}$ to be the constant functor mapping every object in $\bB$ to the lattice ${\mathcal L}$ and every map in $\bB$ to the identity lattice homomorphism $1_{\mathcal L}$. This functor trivially satisfies condition  \eqref{eqn:adjoint_condition} and it is immediate that the CLDC $\int F$ is precisely $\bB\times{\mathcal L}$. If neither $\bB$ and $\mathcal{L}$ are trivial, then $\bB\times{\mathcal L}$ is a CLDC which is neither a bounded distributive lattice or a semi-additive category. 
\end{example}

\begin{example} 
Recall that the category of commutative monoids and monoid homomorphisms $\mathsf{CMON}$ has finite biproducts, with the zero object given by the trivial monoid $\bzero = \lbrace 0 \rbrace$ with one element and the binary biproduct of commutative monoids $M$ and $N$ given by the cartesian product $M\times N$ with the pointwise monoid structure. Now, given a monoid $M$, a subset $I\subseteq M$ is an {\bf ideal} if it is closed under the monoid operation, in the sense that:
\[ x\in I \quad\text{and}\quad m\in M\Rarr x+m\in I\]
For a commutative monoid $M$, let $\mathsf{Idl}(M)$ be the set of ideals of $M$. Now $\mathsf{Idl}(M)$ is a bounded distributive lattice, where the order is given by inclusion $\subseteq$, the meet is the intersection $\cap$, the join is the union $\cup$, the top element is $M$ itself, and the bottom element is the empty subset $\emptyset$. Then we define our functor $F\c \mathsf{CMON}^{op}\rarr \mathsf{BDL}$ to send a commutative monoid to its lattice of ideals, $F(M) = \mathsf{Idl}(M)$, and to send a monoid homomorphism $f: M \to N$ to its inverse image $F(f) = f^{-1}: \mathsf{Idl}(N) \to \mathsf{Idl}(M)$, which maps an ideal $J \in \mathsf{Idl}(N)$ to the ideal $f^{-1}(J) = \{m\in M~|~f(m)\in J\}$. Now since it is well-known that the inverse image is functorial and a lattice homomorphism, our functor $F\c \mathsf{CMON}^{op}\rarr \mathsf{BDL}$ is indeed well-defined. It additionally satisfies \eqref{eqn:adjoint_condition} as follows. Consider commutative monoids $M$ and $N$, with ideals $I\in \mathsf{Idl}(M)$ and $K\in\mathsf{Idl}(M\times N)$. Then:
\begin{align*}
    {{\iota^{0}}^{-1}_{M,N}}(K) = \lbrace x\in M ~\vert~ (x,0) \in K \rbrace && {{\pi^{0}}^{-1}_{M,N}}(I) = \lbrace (x,y)\in M\times N ~\vert~ x \in I \text{ and } y \in N \rbrace
\end{align*}
Now suppose that $I\subseteq  {{\iota^{0}}^{-1}_{M,N}}(K)$. We need to show that ${{\pi^{0}}^{-1}_{M,N}}(I)\subseteq K$. Let $(x,y) \in {{\pi^{0}}^{-1}_{M,N}}(I)$. This means that $x \in I$. By assumption, this means that $x \in  {{\iota^{0}}^{-1}_{M,N}}(K)$, which tells us that $(x,0) \in K$. Since $K$ is an ideal, we have that $(x,y) = (x,0) + (0,y) \in K$. As such, we have ${{\pi^{0}}^{-1}_{M,N}}(I)\subseteq K$ as desired. Similarly for the other injection and projection and we conclude $F$ satisfies \eqref{eqn:adjoint_condition}. Therefore, $\int F$ is a CLDC. The objects $(M,I)$ are pairs of a commutative monoid $M$ and an ideal $I\in\mathsf{Idl}(M)$, while a map $f\c (M,I)\rarr (N,J)$ is a monoid homomorphisms $f\c M\rarr N$ such that $I\subseteq f^{-1}(J)$, in other words $f$ restricts to a function $f|_{I}\c I \rarr J$. The terminal object is $(\bzero, \bzero)$, the initial object is $(\bzero, \emptyset)$, and the binary products and coproducts are given as follows:
\[ (M, I) \times (N,J) = (M\times N, {{\pi^0}^{-1}_{M,N}}(I)\cap {{\pi^1}^{-1}_{M,N}}(J)) = (M\times N, I\times J)\]
\[ (M, I) + (N,J) = (M\times N, {{\pi^0}^{-1}_{M,N}}(I)\cup {{\pi^1}^{-1}_{M,N}}(J)) = (M\times N, (I\times N) \cup (M\times J))\]
$\int F$ is a new example of a CLDC which is neither a bounded distributive lattice or a semi-additive category. 
\end{example}

\noindent \small \textbf{Funding.} Kudzman-Blais acknowledges the financial support of the Natural Sciences and Engineering Research Council of Canada (NSERC), under the grant
awarded to Richard Blute, the Australian Category Theory group at Macquarie University, with the support of a Scott Russell Johnson Postgraduate Scholarship, and the Japan Society for the Promotion of Science (JSPS), under a JSPS Postdoctoral Fellowship for Research in Japan (ID P25730). Lemay acknowledges funding by an ARC DECRA award (\# DE230100303) and this material is based upon work supported by the AFOSR under award number FA9550-24-1-0008.

\bibliographystyle{plain} 
\bibliography{bibliography}

@book {Aguiar_Mahajan_2010,
    AUTHOR = {Aguiar, M. and Mahajan, S.},
     TITLE = {Monoidal functors, species and {H}opf algebras},
    SERIES = {CRM Monograph Series},
    VOLUME = {29},
      NOTE = {With forewords by Kenneth Brown and Stephen Chase and
              Andr\'e{} Joyal},
 PUBLISHER = {American Mathematical Society},
   ADDRESS = {Providence, RI},
      YEAR = {2010},
     PAGES = {lii+784}
}

@article {Blute_Cockett_Seely_Trimble_1996,
    AUTHOR = {Blute, R. F. and Cockett, J. R. B. and Seely, R. A. G. and
              Trimble, T. H.},
     TITLE = {Natural deduction and coherence for weakly distributive
              categories},
   JOURNAL = {J. Pure Appl. Algebra},
  FJOURNAL = {Journal of Pure and Applied Algebra},
    VOLUME = {113},
      YEAR = {1996},
    NUMBER = {3},
     PAGES = {229--296}
}

@article {Cockett_1993,
    AUTHOR = {Cockett, J. R. B.},
     TITLE = {Introduction to distributive categories},
   JOURNAL = {Math. Structures Comput. Sci.},
  FJOURNAL = {Mathematical Structures in Computer Science. A Journal in the
              Applications of Categorical, Algebraic and Geometric Methods
              in Computer Science},
    VOLUME = {3},
      YEAR = {1993},
    NUMBER = {3},
     PAGES = {277--307}
}

@incollection {Cockett_Seely_1992,
    AUTHOR = {Cockett, J. R. B. and Seely, R. A. G.},
     TITLE = {Weakly distributive categories},
 BOOKTITLE = {Applications of categories in computer science ({D}urham,
              1991)},
    SERIES = {London Math. Soc. Lecture Note Ser.},
    VOLUME = {177},
     PAGES = {45--65},
 PUBLISHER = {Cambridge Univ. Press},
   ADDRESS = {Cambridge},
      YEAR = {1992}
}

@article {Cockett_Seely_1997_LDC,
    AUTHOR = {Cockett, J. R. B. and Seely, R. A. G.},
     TITLE = {Weakly distributive categories},
   JOURNAL = {J. Pure Appl. Algebra},
  FJOURNAL = {Journal of Pure and Applied Algebra},
    VOLUME = {114},
      YEAR = {1997},
    NUMBER = {2},
     PAGES = {133--173},
}

@article {Cockett_Seely_1997,
    AUTHOR = {Cockett, J. R. B. and Seely, R. A. G.},
     TITLE = {Proof theory for full intuitionistic linear logic, bilinear
              logic, and {MIX} categories},
   JOURNAL = {Theory Appl. Categ.},
  FJOURNAL = {Theory and Applications of Categories},
    VOLUME = {3},
      YEAR = {1997},
    NUMBER = {5},
     PAGES = {85--131}
}

@article {Cockett_Seely_1999,
    AUTHOR = {Cockett, J. R. B. and Seely, R. A. G.},
     TITLE = {Linearly distributive functors},
      NOTE = {Special volume on the occasion of the 60th birthday of
              Professor Michael Barr (Montreal, QC, 1997)},
   JOURNAL = {J. Pure Appl. Algebra},
  FJOURNAL = {Journal of Pure and Applied Algebra},
    VOLUME = {143},
      YEAR = {1999},
    NUMBER = {1-3},
     PAGES = {155--203}
}

@article {Girard_1987,
    AUTHOR = {Girard, J.-Y.},
     TITLE = {Linear logic},
   JOURNAL = {Theoret. Comput. Sci.},
  FJOURNAL = {Theoretical Computer Science},
    VOLUME = {50},
      YEAR = {1987},
    NUMBER = {1},
     PAGES = {101}
}

@incollection {Seely_1989,
    AUTHOR = {Seely, R. A. G.},
     TITLE = {Linear logic, {$*$}-autonomous categories and cofree
              coalgebras},
 BOOKTITLE = {Categories in computer science and logic ({B}oulder, {CO},
              1987)},
    SERIES = {Contemp. Math.},
    VOLUME = {92},
     PAGES = {371--382},
 PUBLISHER = {Amer. Math. Soc.},
   ADDRESS = {Providence, RI},
      YEAR = {1989}
}

@book {Barr_1979,
    AUTHOR = {Barr, M.},
     TITLE = {{$\ast$}-autonomous categories},
    SERIES = {Lecture Notes in Mathematics},
    VOLUME = {752},
      NOTE = {With an appendix by Po Hsiang Chu},
 PUBLISHER = {Springer},
   ADDRESS = {Berlin},
      YEAR = {1979},
     PAGES = {iii+140}
}

@incollection {Laplaza_1972,
    AUTHOR = {Laplaza, M. L.},
     TITLE = {Coherence for distributivity},
 BOOKTITLE = {Coherence in categories},
    SERIES = {Lecture Notes in Math.},
    VOLUME = {281},
     PAGES = {29--65},
 PUBLISHER = {Springer},
   ADDRESS = {Berlin-New York},
      YEAR = {1972},
MRREVIEWER = {M.\ Barr}
}

@book {Johnson_Yau_2024,
    AUTHOR = {Johnson, N. and Yau, D.},
     TITLE = {Bimonoidal categories, {$E_n$}-monoidal categories, and
              algebraic {$K$}-theory},
    SERIES = {Mathematical Surveys and Monographs},
    VOLUME = {285},
 PUBLISHER = {American Mathematical Society},
   ADDRESS = {Providence, RI},
      YEAR = {2024},
     PAGES = {xxxii+598}
}

@article {Joyal_Street_1993,
    AUTHOR = {Joyal, A. and Street, R.},
     TITLE = {Braided tensor categories},
   JOURNAL = {Adv. Math.},
  FJOURNAL = {Advances in Mathematics},
    VOLUME = {102},
      YEAR = {1993},
    NUMBER = {1},
     PAGES = {20--78},
MRREVIEWER = {Daniel\ Conduch\'e}

}

@article {Lack_2012,
    AUTHOR = {Lack, S.},
     TITLE = {Non-canonical isomorphisms},
   JOURNAL = {J. Pure Appl. Algebra},
  FJOURNAL = {Journal of Pure and Applied Algebra},
    VOLUME = {216},
      YEAR = {2012},
    NUMBER = {3},
     PAGES = {593--597},
MRREVIEWER = {R.\ H.\ Street}
}

@article {Cockett_Lemay_2024,
    AUTHOR = {Cockett, J.R.B. and Pacaud Lemay, J.-S.},
     TITLE = {Classical distributive restriction categories},
   JOURNAL = {Theory Appl. Categ.},
  FJOURNAL = {Theory and Applications of Categories},
    VOLUME = {42},
      YEAR = {2024},
     PAGES = {Paper No. 6, 102--144}
}

@article {Girard_1991,
    AUTHOR = {Girard, J.-Y.},
     TITLE = {A new constructive logic: classical logic},
   JOURNAL = {Math. Structures Comput. Sci.},
  FJOURNAL = {Mathematical Structures in Computer Science. A Journal in the
              Applications of Categorical, Algebraic and Geometric Methods
              in Computer Science},
    VOLUME = {1},
      YEAR = {1991},
    NUMBER = {3},
     PAGES = {255--296}
}

@misc{Spivak_Srinivasan_2024,
    AUTHOR = {Spivak, D.I. and Srinivasan, P.V.},
    TITLE = {What kind of linearly distributive category do polynomial functors form?},
    NOTE = {(preprint)},
    YEAR = {2024}
}

@misc{Kudzman-Blais_2025,
    AUTHOR = {Kudzman-Blais, R.},
    TITLE = {Linearly Distributive Fox Theorem},
    NOTE = {(preprint)},
    YEAR = {2025}
}

@article {Cockett_Comfort_Srinivasan_2021,
    AUTHOR = {Cockett, J.R.B. and Comfort, C. and Srinivasan, P. V.},
     TITLE = {Dagger linear logic for categorical quantum mechanics},
   JOURNAL = {Log. Methods Comput. Sci.},
  FJOURNAL = {Logical Methods in Computer Science},
    VOLUME = {17},
      YEAR = {2021},
    NUMBER = {4},
     PAGES = {Paper No. 8, 73}
}

@article {Blute_Panangaden_Slanov_2012,
    AUTHOR = {Blute, R.F. and Panangaden, P. and Slavnov, S.},
     TITLE = {Deep inference and probabilistic coherence spaces},
   JOURNAL = {Appl. Categ. Structures},
  FJOURNAL = {Applied Categorical Structures. A Journal Devoted to
              Applications of Categorical Methods in Algebra, Analysis,
              Computer Science, Logic, Order and Topology},
    VOLUME = {20},
      YEAR = {2012},
    NUMBER = {3},
     PAGES = {209--228}
}

@book {Johnstone_1982,
    AUTHOR = {Johnstone, P. T.},
     TITLE = {Stone spaces},
    SERIES = {Cambridge Studies in Advanced Mathematics},
    VOLUME = {3},
   EDITION = {1982},
 PUBLISHER = {Cambridge University Press, Cambridge},
      YEAR = {1986},
     PAGES = {370},
     ADDRESS = {UK}
}

@article {Karvonen_2020,
    AUTHOR = {Karvonen, M.},
     TITLE = {Biproducts without pointedness},
   JOURNAL = {Cah. Topol. G\'{e}om. Diff\'{e}r. Cat\'{e}g.},
  FJOURNAL = {Cahiers de Topologie et G\'{e}om\'{e}trie Diff\'{e}rentielle
              Cat\'{e}goriques},
    VOLUME = {61},
      YEAR = {2020},
    NUMBER = {3},
     PAGES = {229--238}
}

@article {Tarlecki_Brustall_Goguen_1991,
    AUTHOR = {Tarlecki, A. and Burstall, R.M. and Goguen, J.A.},
     TITLE = {Some fundamental algebraic tools for the semantics of
              computation. {III}. {I}ndexed categories},
   JOURNAL = {Theoret. Comput. Sci.},
  FJOURNAL = {Theoretical Computer Science},
    VOLUME = {91},
      YEAR = {1991},
    NUMBER = {2},
     PAGES = {239--264},
}

\begin{appendices}

\section{Distributive Symmetric Monoidal Categories with Zero Objects}\label{app:proof_Kleisli_LDC}

In this appendix we give full details of why the Kleisli category of the exception monad for a distributive category is an isomix SLDC. However, it turns out that this follows from a more general result about \emph{distributive} symmetric monoidal categories with zero objects. Distributive symmetric monoidal categories are a generalization of distributive categories to a non-cartesian monoidal product. They describe the behavior of the Kleisli category of the exception monad for a distributive category, since $\times$ is a well-defined monoidal product on the Kleisli category, but not the binary product. 

So recall that a symmetric monoidal category $(\bX, \os, I)$ is {\bf distributive} \cite[Def 2.3.1]{Johnson_Yau_2024} if \bX\ has finite coproducts and the following canonical transformations 
    \[d^L_{A,B,C} = [1_{A}\os \iota^0_{B,C}, 1_{A}\os \iota^1_{B,C}]\c (A\os B)+(A\os C)\rarr A\os(B+C) \]
    \[d^R_{A, B, C} = [\iota^0_{A,B}\os 1_{C}, \iota^1_{A, B}\os 1_{C}]\c (A\os C)+(B\os C)\rarr(A+B)\os C \]
    \[ u^L_A = b_{\bot\os A}\c \bot\rarr\bot\os A \qquad\qquad u^R_A= b_{A\os \bot}\c \bot\rarr A\os \bot\]
are isomorphisms. There is a well-known folklore result that given such categories, we can build another symmetric monoidal product, which behaves as an ``either-or-both'' product. Indeed, given a distributive symmetric monoidal category $\bX$, we get a functor  $\cv = (\cdot + \cdot) + (\cdot\os\cdot)\c \bX\times\bX\rarr \bX$ equipped with the following natural isomorphisms 
\begin{align*}
    &{u^R_\cv}_A = {u^R_+}^{-1}_A; {u^R_+}^{-1}_{A+\bot}; (1_{A+\bot}+u^R_A) = \iota^0_{A,\bot}; \iota^0_{A+\bot, A\os \bot}\c A\rarr A\cv \bot \\
    & {u^L_\cv}_A ={u^L_+}^{-1}_A; {u^R_+}^{-1}_{\bot+A}; (1_{\bot+A}+u^L_A) = \iota^1_{\bot,A}; \iota^0_{\bot+A, \bot\os A}\c A\rarr \bot\cv A \\
    & {\sigma_\cv}_{A,B} = {\sigma_+}_{A,B} + {\sigma_\os}_{A,B}\c A\cv B \rarr B\cv A \\
    & {\alpha_\cv}_{A,B,C}\c (A\cv B)\cv C\rarr A\cv (B\cv C)
\end{align*}
where ${\alpha_\cv}_{A,B,C}$ is defined as the unique map induced by the following maps: 
\begin{align*}
    & A \xrightarrow{\iota^0_{A, B\cv C}} A+(B\cv C) \xrightarrow{\iota^0_{A+(B\cv C), A\os (B\cv C)}} A\cv (B\cv C) \\
    &B \xrightarrow{\iota^0_{B, C}} B+C \xrightarrow{\iota^0_{B+C, B\os C}} B\cv C \xrightarrow{\iota^1_{A, B\cv C}}  A+ (B\cv C) \xrightarrow{\iota^0_{A+(B\cv C), A\os (B\cv C)}} A\cv (B\cv C) \\
    & A\os B \xrightarrow{1_A\os \iota^0_{B,C}} A\os (B+C) \xrightarrow{1_A\os \iota^0_{B+C, B\os C}} A\os (B\cv C) \xrightarrow{\iota^1_{A+(B\cv C), A\os (B\cv C)}} A\cv (B\cv C) \\
    & C \xrightarrow{\iota^1_{B,C}} B+C \xrightarrow{\iota^0_{B+C, B\os C}} B\cv C \xrightarrow{\iota^1_{A, B\cv C}}A+(B\cv C)\xrightarrow{\iota^0_{A+(B\cv C), A\os (B\cv C)}} A\cv (B\cv C) \\
    &((A+B)+(A\os B))\os C \xrightarrow{{d^R_{A+B, A\os B, C}}^{-1}} ((A+B)\os C)+((A\os B)\os C) \\
    &\qquad  \xrightarrow{{d^R_{A,B,C}}^{-1}+1_{(A\os B)\os C}} ((A\os C)+(B\os C))+((A\os B)\os C) \xrightarrow{\chi_{A,B,C}}  A\cv (B\cv C)
\end{align*}
where $\chi_{A,B,C}$ is given as the unique morphisms induced by the following maps
\begin{align*}
    & A\os C \xrightarrow{1_A\os \iota^1_{B,C}} A\os (B+C) \xrightarrow{1_A\os \iota^0_{B+C, B\os C}} A\os (B\cv C) \xrightarrow{\iota^1_{A+(B\cv C), A\os (B\cv C)}}  A\cv (B\cv C) \\
    & B\os C \xrightarrow{\iota^1_{B+C, B\os C}} B\cv C \xrightarrow{\iota^1_{A, B\cv C}} A+(B\cv C) \xrightarrow{\iota^0_{A+(B\cv C), A\os (B\cv C)}}  A\cv (B\cv C) \\   
    &(A\os B)\os C \xrightarrow{{\alpha_\os}_{A,B,C}} A\os (B\os C) \xrightarrow{1_A\os \iota^1_{B+C, B\os C}} A\os (B\cv C)\\
    &\qquad\xrightarrow{\iota^1_{A+(B\cv C), A\os (B\cv C)}}  A\cv (B\cv C)
\end{align*}

\begin{lemma}
For a distributive symmetric monoidal category $\bX$, $(\bX, \cv, \bot)$ is a symmetric monoidal category.
\end{lemma}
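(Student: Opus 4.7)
The approach is to exploit the distributivity of $\os$ over $+$ in order to rewrite iterated $\cv$-products as explicit coproducts with simple summands, thereby reducing the coherence axioms for $\cv$ to those of $\os$ combined with the universal property of finite coproducts. In particular, repeated application of $d^R$, $d^L$, and the initial-object isomorphisms $u^L$ and $u^R$ shows that both $(A\cv B)\cv C$ and $A\cv(B\cv C)$ are canonically isomorphic to a single seven-fold coproduct whose summands are $A$, $B$, $C$, $A\os B$, $A\os C$, $B\os C$, together with one last summand which is $(A\os B)\os C$ on the left and $A\os(B\os C)$ on the right. The definition of ${\alpha_\cv}_{A,B,C}$ given above is precisely the canonical isomorphism between these two decompositions which uses ${\alpha_\os}_{A,B,C}$ on the last summand, so its invertibility is immediate from invertibility of ${\alpha_\os}$ together with the distributivity isomorphisms.

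For the unitors, the key fact is that $\bot\os A$ and $A\os \bot$ are initial (this is essentially the content of the axioms $u^L_A$ and $u^R_A$ being isomorphisms), so that two of the three summands of $A\cv\bot = A + \bot + (A\os \bot)$ are initial. This makes the canonical map $\iota^0_{A,\bot};\iota^0_{A+\bot, A\os\bot}\c A\rarr A\cv\bot$ an isomorphism, with inverse obtained by copairing $1_A$ with the unique maps out of the two initial summands; the case of ${u^L_\cv}_A$ is symmetric. Naturality of the unitors, symmetry, and associator in every variable follows directly from naturality of $+$, $\os$, and the distributivity isomorphisms. The symmetry ${\sigma_\cv}_{A,B}$ is an isomorphism squaring to the identity simply because it is a coproduct of two isomorphisms each of which squares to the identity.

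Finally, the coherence axioms are verified by chasing maps into each summand of the source coproduct and invoking the universal property of $+$. The triangle axiom is checked on each of the three summands of $A\cv B$ separately, where it reduces either to a unit axiom of $\os$ or to the fact that $\bot\os X$ is initial. The hexagon for ${\sigma_\cv}$ is handled analogously using the hexagon for ${\sigma_\os}$ together with symmetry of $+$. The pentagon axiom is the main obstacle: both sides canonically factor through a fifteen-summand decomposition of $(A\cv B)\cv(C\cv D)$ indexed by the non-empty tensor products of subsets of $\{A,B,C,D\}$, and on each individual summand the two paths agree either trivially, by the $\os$-pentagon for $(A\os B)\os(C\os D)$, or by coherence of the distributivity isomorphisms $d^L$ and $d^R$. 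Assembling these per-summand equalities via the universal property of coproducts completes the verification that $(\bX, \cv, \bot)$ is a symmetric monoidal category.
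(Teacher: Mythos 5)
Your argument is correct. Note that the paper does not actually prove this lemma---it is stated as a folklore result immediately after the structure maps are defined---so there is no in-text proof to compare against; your summand-wise strategy is, however, exactly the style of argument the paper deploys for the theorem that follows in Appendix A, where every coherence equation is verified by precomposing with the injections of the relevant coproduct decomposition. Your identification of the common seven-fold decomposition of $(A\cv B)\cv C$ and $A\cv(B\cv C)$ (with $\alpha_{\os}$ mediating only the last summand), the observation that $A\os\bot$ and $\bot\os A$ are initial so that two of the three summands of $A\cv\bot$ collapse, and the reduction of the pentagon to the fifteen summands indexed by nonempty subsets of $\{A,B,C,D\}$ are all accurate. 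The one point to handle carefully in a full write-up is that the inverses of $d^L$ and $d^R$ occurring in $\alpha_{\cv}$ cannot themselves be precomposed with injections; one must first conjugate the whole equation by the canonical isomorphism onto the fully distributed form, which is what your phrase ``both sides canonically factor through'' implicitly does, and the per-summand identities one is left with are instances of naturality, the $\os$-pentagon, or uniqueness of maps defined by the universal property of $+$.
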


We now show that for a distributive symmetric monoidal category whose initial object is in fact a zero object, then it is an isomix SLDC. This in particular captures the Kleisli category of an exception monad.

\begin{theorem}
Let $\bX$ be a distributive symmetric monoidal category with a  zero object $\bzero$. Then $\bX$ is an isomix SLDC, with tensor structure $(\bX, \cv, \bzero)$ and par structure $(\bX, +, \bzero)$, where the left linear distributor $\delta^L_{A,B,C}\c A\cv (B+C)\rarr ((A\cv B)+C)+\top$, which is explicitly of type:
\begin{align*}
(A+(B+C))+(A\os(B+C)) \rarr ((A+B)+(A\os B))+C
\end{align*}
is defined as the unique map induced by the following maps: 
\[A \xrightarrow{\iota^0_{A, B}} A+B \xrightarrow{\iota^0_{A+B, A\os B}} A\cv B \xrightarrow{\iota^0_{A\cv B, C}} (A\cv B)+C\]
\[B \xrightarrow{\iota^1_{A, B}} A+B \xrightarrow{\iota^0_{A+B, A\os B}} A\cv B \xrightarrow{\iota^0_{A\cv B, C}} (A\cv B)+C\]
\[C \xrightarrow{\iota^1_{A\cv B,C}} (A\cv B)+C\]
\[ A\os (B+C) \xrightarrow{1_A\os (1_B+t_C)} A\os (B+\bzero) \xrightarrow{1_A\os {u^R_+}_B} A\os B \xrightarrow{\iota^1_{A+B, A\os B}} A\cv B \xrightarrow{\iota^0_{A\cv B, C}} (A\cv B)+C\] 
and the right linear distributor $\delta^R_{A,B,C}\c(A+B)\cv C\rarr A+(B\cv C)$ , which is explicitly of type
\begin{align*}
((A+B) +C) + ((A+B) \os C)  \rarr A + ((B+C) + (B \os C))
\end{align*}
is defined as the unique map induced by the following maps
\[A \xrightarrow{\iota^0_{A, B\cv C}} A+(B\cv C)\]
\[B \xrightarrow{\iota^0_{B, C}} B+C \xrightarrow{\iota^0_{B+C, B\os C}} B\cv C \xrightarrow{\iota^1_{A, B\cv C}} A+(B\cv C)\]
\[C \xrightarrow{\iota^1_{B,C}} B+C\xrightarrow{\iota^0_{B+C, B\os C}} B\cv C \xrightarrow{\iota^1_{A, B\cv C}}  A+(B\cv C)\]
\[ (A+B)\os C \xrightarrow{(t_A+1_B)\os 1_C} (\bzero+B)\os C  \xrightarrow{{{u^L_+}_{B}}\os 1_C} B\os C \xrightarrow{\iota^1_{B+C, B\os C}} B\cv C\xrightarrow{\iota^1_{A, B\cv C}} A+(B\cv C) \] 
\end{theorem}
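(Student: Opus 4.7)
The overarching plan is to reduce the verification of the linear distributor axioms to a finite collection of component checks, via the universal property of the coproducts defining $\cv$, and to obtain the isomix property essentially for free. First I would set the groundwork: $(\bX, \cv, \bzero)$ is symmetric monoidal by the preceding lemma (using only that $\bX$ is distributive symmetric monoidal), and $(\bX, +, \bzero)$ is symmetric monoidal because $\bzero$ is in particular initial, so the coproduct structure gives a monoidal category in the usual way. Because the two monoidal units coincide with the zero object, Lemma~\ref{lem:isomix} immediately gives that, once we have shown $\bX$ is an LDC, it is automatically isomix with nullary mix map $m = 1_{\bzero}$.

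Next I would establish naturality of $\delta^L$ and $\delta^R$. Since both are defined by the universal property of a coproduct of the shape $(A+B)+(A\os B)$ (for $\delta^L$) or $(A+B)+ ((A+B)\os C)$ (for $\delta^R$), naturality in each argument follows by pre-composing with the three injections: the two $+$-components reduce to naturality of coproduct injections, and the $\os$-component reduces to naturality of the inverse distributivity maps $d^L$, $d^R$ together with naturality of $\os$ and of the unitors (here the maps $t_A$ and ${u^R_+}$ absorb the extra summand $\bzero$, using that $\bzero$ is terminal). I would carry this out once in detail for $\delta^L$ and note the dual argument for $\delta^R$.

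The main technical bulk is the verification of the axioms (\ref{cc:unit_lineardist}), (\ref{cc:assoc_lineardist}), (\ref{cc:leftright_lineardist}), and the symmetry coherence (\ref{cc:lin_dist_braiding}). The method in every case is the same: both sides are maps whose domain is a coproduct built from $A + B + A\os B$ summands, so it suffices to verify equality after pre-composing with each injection of the outer coproducts. Under such a pre-composition, the associator ${\alpha_\cv}$ simplifies (by construction it is built from three or four injection-labelled cases plus one $\os$-case), and the only non-trivial summand is the one involving $\os$, where the equation reduces to a coherence already available in the distributive symmetric monoidal structure of $\bX$ (namely, the naturality of ${\alpha_\os}$, the compatibility between ${\alpha_\os}$ and the inverses of $d^L$, $d^R$, and the unitor equations ${u^L_+}$, ${u^R_+}$). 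The unit axioms are the easiest as they reduce essentially to the zero object absorbing the relevant summand; the two left-right compatibilities of (\ref{cc:leftright_lineardist}) and the symmetry coherence are again tackled by pre-composition with the three or four injections.

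The hardest step will be the two associator coherences of (\ref{cc:assoc_lineardist}), because ${\alpha_\cv}$ is itself defined by a universal property with several ``mixed'' cases (involving both $+$ and $\os$), so pre-composition with an injection of the form $\iota^1_{A+B, A\os B}$ (the tensor summand) triggers a further application of $d^L_{A,B,C}^{-1}$ or $d^R_{A,B,C}^{-1}$ and yields a chain of five or six factors that must be shown to coincide with the other side. I expect to handle these by tracking each summand of the outer coproduct systematically, and using that in any distributive symmetric monoidal category the maps $d^L$, $d^R$ are coherently compatible with $\alpha_\os$ (which is where distributivity is essentially used). Once all axioms are verified, the isomix property follows from Lemma~\ref{lem:isomix}, finishing the proof.
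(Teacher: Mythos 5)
Your proposal is correct and follows essentially the same strategy as the paper's proof: componentwise verification of naturality and of all coherence axioms by pre-composing with the coproduct injections defining $\cv$, with the distributivity isomorphisms $d^L$, $d^R$ and their compatibility with $\alpha_\os$ carrying the load on the tensor summands, and the isomix property coming for free from the coincidence of the two units with $\bzero$. The only organizational difference is that the paper obtains the naturality of $\delta^R$ and the symmetry coherence \eqref{cc:lin_dist_braiding} simultaneously by exhibiting $\delta^R$ as the braided conjugate of $\delta^L$, whereas you treat them separately; both routes work.
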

\begin{proof}
The proofs of the relevant equalities will mostly proceed by the universal properties of coproducts, in particular pre-composing both sides of the equalities by the relevant injections and comparing the results. 

First, we show the left linear distributor is a natural transformation. Consider a triple of maps $f\c A\rarr A^\prime, g\c B\rarr B'$ and $h\c C\rarr C'$ in \bX. Then 
\[ \delta^L_{A,B,C}; ((f\cv g)+h) = (f\cv (g+h)); \delta^L_{A^\prime,B',C'}\]
holds by the following computations.
\begin{align*}
    &\iota^0_{A,B+C}; \iota^0_{A+(B+C), A\os (B+C)}; \delta^L_{A,B,C}; ((f\cv g)+h) \\
    &=\iota^0_{A,B}; \iota^0_{A+B, A\os B}; \iota^0_{A\cv B, C}; ((f\cv g)+h) \\
    &= f; \iota^0_{A^\prime,B'}; \iota^0_{A^\prime+B', A^\prime\os B'}; \iota^0_{A^\prime\cv B', C'} \\ \\
    &\iota^0_{B,C}; \iota^1_{A,B+C}; \iota^0_{A+(B+C), A\os (B+C)}; \delta^L_{A,B,C}; ((f\cv g)+h) \\
    &=\iota^1_{A,B}; \iota^0_{A+B, A\os B}; \iota^0_{A\cv B, C}; ((f\cv g)+h) \\
    &= g; \iota^1_{A^\prime,B'}; \iota^0_{A^\prime+B', A^\prime\os B'}; \iota^0_{A^\prime\cv B', C'} \\ \\
    &\iota^1_{B,C}; \iota^1_{A,B+C}; \iota^0_{A+(B+C), A\os (B+C)}; \delta^L_{A,B,C}; ((f\cv g)+h) \\
    &=\iota^1_{A\cv B, C}; ((f\cv g)+h) \\
    &= h; \iota^1_{A^\prime\cv B', C'} \\ \\
    & \iota^1_{A+(B+C), A\os (B+C)}; \delta^L_{A,B,C}; ((f\cv g)+h) \\
    &=(1_A\os (1_B+t_C);{u^R_+}_B);\iota^1_{A+B, A\os B}; \iota^0_{A\cv B,C}; ((f\cv g)+h)\\
    &=(1_A\os (1_B+t_C);{u^R_+}_B);(f\os g); \iota^1_{A^\prime+B', A^\prime\os B'}; \iota^0_{A^\prime\cv B',C'}\\
    &=(f\os (g+t_C);{u^R_+}_{B'});\iota^1_{A^\prime+B', A^\prime\os B'}; \iota^0_{A^\prime\cv B',C'}\\ \\
    &\iota^0_{A,B+C}; \iota^0_{A+(B+C), A\os (B+C)}; (f\cv (g+h)); \delta^L_{A^\prime,B',C'} \\
    &= f; \iota^0_{A^\prime,B'+C'}; \iota^0_{A^\prime+(B'+C'), A^\prime\os (B'+C')}; \delta^L_{A^\prime,B',C'} \\
    &= f; \iota^0_{A^\prime,B'}; \iota^0_{A^\prime+B', A^\prime\os B'}; \iota^0_{A^\prime\cv B', C'} \\ \\
    &\iota^0_{B,C}; \iota^1_{A,B+C}; \iota^0_{A+(B+C), A\os (B+C)}; (f\cv (g+h)); \delta^L_{A^\prime,B',C'} \\
    &=g; \iota^0_{B',C'};\iota^1_{A^\prime,B'+C'}; \iota^0_{A^\prime+(B'+C'), A^\prime\os (B'+C')}; \delta^L_{A^\prime,B',C'} \\
    &= g; \iota^1_{A^\prime,B'}; \iota^0_{A^\prime+B', A^\prime\os B'}; \iota^0_{A^\prime\cv B', C'} \\ \\
    &\iota^1_{B,C}; \iota^1_{A,B+C}; \iota^0_{A+(B+C), A\os (B+C)}; (f\cv (g+h)); \delta^L_{A^\prime,B',C'} \\
    &= h; \iota^1_{B',C'};\iota^1_{A^\prime,B'+C'}; \iota^0_{A^\prime+(B'+C'), A^\prime\os (B'+C')}; \delta^L_{A^\prime,B',C'} \\
    &= h; \iota^1_{A^\prime\cv B', C'} \\ \\
    & \iota^1_{A+(B+C), A\os (B+C)}; (f\cv (g+h)); \delta^L_{A^\prime,B',C'} \\
    &= (f\os (g+h)); \iota^1_{A^\prime+(B'+C'), A^\prime\os (B'+C')};\delta^L_{A^\prime,B',C'} \\
    &=(f\os (g+h)); (1_{A^\prime}\os (1_{B'}+t_{C'});{u^R_+}_{B'});\iota^1_{A^\prime+B', A^\prime\os B'}; \iota^0_{A^\prime\cv B',C'} \\
    &=(f\os (g+t_C);{u^R_+}_{B'});\iota^1_{A^\prime+B', A^\prime\os B'}; \iota^0_{A^\prime\cv B',C'}    
\end{align*}

Second, we show that the right linear distributor is natural and that this construction will be a symmetric LDC at the same time, by proving \eqref{cc:lin_dist_braiding} holds:
\[ \delta^R_{A,B,C} = {\sigma_\cv}_{A+B, C}; (1_C \cv {\sigma_+}_{A,B}); \delta^L_{C,B,A}; ({\sigma_\cv}_{C,B}+1_A); {\sigma_+}_{B\cv C, A}\]
We consider the right-hand side pre-composed with the appropriate injections.
\begin{align*}
    &\iota^0_{A,B}; \iota^0_{A+B,C}; \iota^0_{(A+B)+C, (A+B)\os C}; {\sigma_\cv}_{A+B, C}; (1_C \cv {\sigma_+}_{A,B}); \delta^L_{C,B,A}; ({\sigma_\cv}_{C,B}+1_A); {\sigma_+}_{B\cv C, A} \\
    &= \iota^0_{A,B}; \iota^1_{C, A+B}; \iota^0_{C+(A+B), C\os (A+B)}; (1_C \cv {\sigma_+}_{A,B}); \delta^L_{C,B,A}; ({\sigma_\cv}_{C,B}+1_A); {\sigma_+}_{B\cv C, A}  \\
    &= \iota^0_{A,B}; {\sigma_+}_{A,B}; \iota^1_{C, B+A}; \iota^0_{C+(B+A), C\os(B+A)}; \delta^L_{C,B,A}; ({\sigma_\cv}_{C,B}+1_A); {\sigma_+}_{B\cv C, A}  \\
    &= \iota^1_{B,A}; \iota^1_{C, B+A}; \iota^0_{C+(B+A), C\os(B+A)}; \delta^L_{C,B,A}; ({\sigma_\cv}_{C,B}+1_A); {\sigma_+}_{B\cv C, A} \\
    &= \iota^1_{C\cv B, A}; ({\sigma_\cv}_{C,B}+1_A); {\sigma_+}_{B\cv C, A} \\
    &= \iota^1_{B\cv C,A};  {\sigma_+}_{B\cv C, A} \\
    &= \iota^0_{A, B\cv C} \\ \\
    &\iota^1_{A,B}; \iota^0_{A+B,C}; \iota^0_{(A+B)+C, (A+B)\os C}; {\sigma_\cv}_{A+B, C}; (1_C \cv {\sigma_+}_{A,B}); \delta^L_{C,B,A}; ({\sigma_\cv}_{C,B}+1_A); {\sigma_+}_{B\cv C, A} \\  
    &= \iota^1_{A,B}; \iota^1_{C, A+B}; \iota^0_{C+(A+B), C\os (A+B)}; (1_C \cv {\sigma_+}_{A,B}); \delta^L_{C,B,A}; ({\sigma_\cv}_{C,B}+1_A); {\sigma_+}_{B\cv C, A}  \\ 
    &= \iota^1_{A,B}; {\sigma_+}_{A,B}; \iota^1_{C, B+A}; \iota^0_{C+(B+A), C\os(B+A)}; \delta^L_{C,B,A}; ({\sigma_\cv}_{C,B}+1_A); {\sigma_+}_{B\cv C, A}  \\
    &= \iota^1_{C,B}; \iota^1_{C, B+A}; \iota^0_{C+(B+A), C\os(B+A)}; \delta^L_{C,B,A}; ({\sigma_\cv}_{C,B}+1_A); {\sigma_+}_{B\cv C, A}  \\
    &= \iota^1_{C,B}; \iota^0_{C+B, C\os B}; \iota^0_{C\cv B,A}; ({\sigma_\cv}_{C,B}+1_A); {\sigma_+}_{B\cv C, A} \\
    &= \iota^1_{C,B}; \iota^0_{C+B, C\os B}; {\sigma_\cv}_{C,B}; \iota^0_{B\cv C,A}; {\sigma_+}_{B\cv C, A}\\
    &= \iota^0_{B,C}; \iota^0_{B+C, B\os C}; \iota^1_{A,B\cv C} \\ \\
    &\iota^1_{A+B,C}; \iota^0_{(A+B)+C, (A+B)\os C}; {\sigma_\cv}_{A+B, C}; (1_C \cv {\sigma_+}_{A,B}); \delta^L_{C,B,A}; ({\sigma_\cv}_{C,B}+1_A); {\sigma_+}_{B\cv C, A} \\  
    &= \iota^0_{C,A+B}; \iota^0_{C+(A+B), C\os (A+B)}; (1_C \cv {\sigma_+}_{A,B}); \delta^L_{C,B,A}; ({\sigma_\cv}_{C,B}+1_A); {\sigma_+}_{B\cv C, A} \\  
    &= \iota^0_{C,B}; \iota^0_{C+B, C\os B}; \iota^0_{C\cv B, A}; ({\sigma_\cv}_{C,B}+1_A); {\sigma_+}_{B\cv C, A} \\  
    &= \iota^0_{C,B}; \iota^0_{C+B, C\os B}; {\sigma_\cv}_{C,B}; \iota^0_{B\cv C, A}; {\sigma_+}_{B\cv C, A} \\  
    &= \iota^1_{B,C}; \iota^0_{B+C, B\os C}; \iota^1_{A, B\cv C} \\ \\
    & \iota^1_{(A+B)+C, (A+B)\os C}; {\sigma_\cv}_{A+B, C}; (1_C \cv {\sigma_+}_{A,B}); \delta^L_{C,B,A}; ({\sigma_\cv}_{C,B}+1_A); {\sigma_+}_{B\cv C, A} \\ 
    &= {\sigma_\os}_{A+B, C}; \iota^1_{C+(A+B), C\os (A+B)}; (1_C \cv {\sigma_+}_{A,B}); \delta^L_{C,B,A}; ({\sigma_\cv}_{C,B}+1_A); {\sigma_+}_{B\cv C, A} \\ 
    &= {\sigma_\os}_{A+B, C}; (1_C\os {\sigma_+}_{A,B}); \iota^1_{C+(B+A), C\os (B+A)}; \delta^L_{C,B,A}; ({\sigma_\cv}_{C,B}+1_A); {\sigma_+}_{B\cv C, A} \\ 
    &= {\sigma_\os}_{A+B, C}; (1_C\os {\sigma_+}_{A,B}); (1_C\os (1_B+t_A); {u^R_+}_B); \iota^1_{C+B, C\os B}; \iota^0_{C\cv B, A}; ({\sigma_\cv}_{C,B}+1_A); {\sigma_+}_{B\cv C, A} \\
    &= {\sigma_\os}_{A+B, C}; (1_C\os (t_A+1_B); {\sigma_+}_{\top,B}; {u^R_+}_B); \iota^1_{C+B, C\os B}; {\sigma_\cv}_{C,B}; \iota^0_{B\cv C, A}; {\sigma_+}_{B\cv C, A}  \\
    &= {\sigma_\os}_{A+B, C}; (1_C\os (t_A+1_B); {u^L_+}_B); {\sigma_\os}_{C,B}; \iota^1_{B+C, B\os C}; \iota^1_{A, B\cv C} \\
    &=((t_A+1_B); {u^L_+}_B \os 1_C); {\sigma_\os}_{B, C};  {\sigma_\os}_{C,B}; \iota^1_{B+C, B\os C}; \iota^1_{A, B\cv C} \\
    &=((t_A+1_B); {u^L_+}_B \os 1_C); \iota^1_{B+C, B\os C}; \iota^1_{A, B\cv C} \\
\end{align*}
Comparing the above computations with the definition of $\delta^R$, we see \eqref{cc:lin_dist_braiding} holds. 

It remains now to show the coherence conditions between units and linear distributivities \eqref{cc:unit_lineardist}, associativities and linear distributivities \eqref{cc:assoc_lineardist}, and left and right linear distributivities \eqref{cc:leftright_lineardist}. The first condition from \eqref{cc:unit_lineardist} is ${u^L_\cv}_{A+B}; \delta^L_{\bzero, A,B} = {u^L_\cv}_A + 1_B$ and holds as follows.
\begin{align*}
&{u^L_\cv}_{A+B}; \delta^L_{\bzero, A,B} \\
&= \iota^1_{\bzero, A+B}; \iota^0_{\bzero+(A+B), \bzero\os(A+B)}; \delta^L_{\bzero, A,B} \\
&= [ \iota^1_{\bzero, A}; \iota^0_{\bzero+A, \bzero\os A}; \iota^0_{\bzero\cv A, B}, \iota^1_{\bzero\cv, B}] \\
&= \iota^1_{\bzero, A}; \iota^0_{\bzero+A, \bzero\os A} + 1_B \\
&= {u^L_\cv}_A + 1_B
\end{align*}

The second condition from \eqref{cc:unit_lineardist} is $\delta^L_{A,B,\bzero}; {u^R_+}_{A\cv B} = 1_A\cv {u^R_+}_A$. Consider pre-composing the left hand side by the relevant injections. 
\begin{align*}
    &\iota^0_{A, B+\bzero}; \iota^0_{A+(B+\bzero); A\cv (B+\bzero)}; \delta^L_{A,B,\bzero}; {u^R_+}_{A\cv B} \\
    &= \iota^0_{A,B}; \iota^0_{A+B, A\os B}; \iota^0_{A\cv B, \bzero}; {u^R_+}_{A\cv B} \\
    &= \iota^0_{A,B}; \iota^0_{A+B, A\os B} \\ \\
    &\iota^0_{B,\bzero}; \iota^1_{A, B+\bzero}; \iota^0_{A+(B+\bzero); A\cv (B+\bzero)}; \delta^L_{A,B,\bzero}; {u^R_+}_{A\cv B} \\
    &= \iota^1_{A,B}; \iota^0_{A+B, A\os B}; \iota^0_{A\cv B, \bzero}; {u^R_+}_{A\cv B} \\
    &= \iota^1_{A,B}; \iota^0_{A+B, A\os B} \\ \\    
    &\iota^1_{B,\bzero}; \iota^1_{A, B+\bzero}; \iota^0_{A+(B+\bzero); A\cv (B+\bzero)}; \delta^L_{A,B,\bzero}; {u^R_+}_{A\cv B} \\
    &= \iota^1_{A\cv B, \bzero}; {u^R_+}_{A\cv B} \\
    &= b_{A\cv C} \\ \\  
    &\iota^1_{A+(B+\bzero); A\cv (B+\bzero)}; \delta^L_{A,B,\bzero}; {u^R_+}_{A\cv B} \\
    &=(1_A\os (1_B+t_\bzero)); (1_A\os {u^R_+}_B); \iota^1_{A+B, A\os B}; \iota^0_{A\cv B, \bzero}; {u^R_+}_{A\cv B} \\
    &=(1_A\os {u^R_+}_B); \iota^1_{A+B, A\os B}   
\end{align*}

Then, consider the right-hand side 
\begin{align*}
    &1_A\cv {u^R_+}_B \\
    &= (1_A+{u^R_+}_B)+(1_A\os {u^R_+}_B) \\
    &= [[\iota^0_{A,B}, {u^R_+}_B; \iota^1_{A,B}]; \iota^0_{A+B, A\os B}, (1_A\os {u^R_+}_B); \iota^1_{A+B, A\os B}]\\
    &= [[\iota^0_{A,B}, [1_B, b_B]; \iota^1_{A,B}]; \iota^0_{A+B, A\os B}, (1_A\os {u^R_+}_B); \iota^1_{A+B, A\os B}] \\
    &= [[\iota^0_{A,B};\iota^0_{A+B, A\os B}, [\iota^1_{A,B};\iota^0_{A+B, A\os B}, b_{A\cv C}]];, (1_A\os {u^R_+}_B); \iota^1_{A+B, A\os B}] \\
    &= \delta^L_{A,B,\bzero}; {u^R_+}_{A\cv B}
\end{align*}

The third and fourth conditions from \eqref{cc:unit_lineardist} then follow by the definition of $\delta^R$ as a composite of braidings and $\delta^L$.

The first coherence condition from \eqref{cc:assoc_lineardist} is \[ \delta^L_{A\cv B, C, D}; ({\alpha_\cv}_{A,B,C}+ 1_D) = {\alpha_\cv}_{A,B,C+D}; (1_A\cv \delta^L_{B,C,D}); \delta^L_{A, B\cv C, D} \] Once more, we pre-compose with the relevant injections. Firstly, the left-hand side:
\begin{align*}
    &\iota^0_{A,B}; \iota^0_{A+B, A\os B}; \iota^0_{A\cv B, C+D}; \iota^0_{(A\cv B)+(C+D), (A\cv B)\os (C+D)}; \delta^L_{A\cv B, C, D}; ({\alpha_\cv}_{A,B,C}+ 1_D) \\
    &= \iota^0_{A,B}; \iota^0_{A+B, A\os B}; \iota^0_{A\cv B, C}; \iota^0_{(A\cv B)+C, (A\cv B)\os C}; \iota^0_{(A\cv B)\cv C, D}; ({\alpha_\cv}_{A,B,C}+ 1_D)\\
    &= \iota^0_{A,B}; \iota^0_{A+B, A\os B}; \iota^0_{A\cv B, C}; \iota^0_{(A\cv B)+C, (A\cv B)\os C}; {\alpha_\cv}_{A,B,C}; \iota^0_{A\cv (B\cv C), D} \\
    &= \iota^0_{A, B\cv C}; \iota^0_{A+(B\cv C), A\os (B\cv C)}; \iota^0_{A\cv (B\cv C), D} \\ \\
    &\iota^1_{A,B}; \iota^0_{A+B, A\os B}; \iota^0_{A\cv B, C+D}; \iota^0_{(A\cv B)+(C+D), (A\cv B)\os (C+D)}; \delta^L_{A\cv B, C, D}; ({\alpha_\cv}_{A,B,C}+ 1_D) \\
    &= \iota^1_{A,B}; \iota^0_{A+B, A\os B}; \iota^0_{A\cv B, C}; \iota^0_{(A\cv B)+C, (A\cv B)\os C}; {\alpha_\cv}_{A,B,C}; \iota^0_{A\cv (B\cv C), D} \\
    &= \iota^0_{B,C}; \iota^0_{B+C, B\os C}; \iota^1_{A, B\cv C}; \iota^0_{A+(B\cv C), A\os (B\cv C)};\iota^0_{A\cv (B\cv C), D} \\ \\
    & \iota^1_{A+B, A\os B}; \iota^0_{A\cv B, C+D}; \iota^0_{(A\cv B)+(C+D), (A\cv B)\os (C+D)}; \delta^L_{A\cv B, C, D}; ({\alpha_\cv}_{A,B,C}+ 1_D) \\
    &= \iota^1_{A+B, A\os B}; \iota^0_{A\cv B, C}; \iota^0_{(A\cv B)+C, (A\cv B)\os C}; \iota^0_{(A\cv B)\cv C, D};({\alpha_\cv}_{A,B,C}+ 1_D) \\
    &= \iota^1_{A+B, A\os B}; \iota^0_{A\cv B, C}; \iota^0_{(A\cv B)+C, (A\cv B)\os C};{\alpha_\cv}_{A,B,C}; \iota^0_{A\cv (B\cv C), D} \\
    &= (1_A\os \iota^0_{B,C}); (1_A\os \iota^0_{B+C, B\os C}); \iota^1_{A+(B\cv C), A\os (B\cv C)}; \iota^0_{A\cv (B\cv C), D} \\ \\
    & \iota^0_{C,D}; \iota^1_{A\cv B, C+D}; \iota^0_{(A\cv B)+(C+D), (A\cv B)\os (C+D)}; \delta^L_{A\cv B, C, D}; ({\alpha_\cv}_{A,B,C}+ 1_D) \\
    &= \iota^1_{A\cv B, C}; \iota^0_{(A\cv B)+C, (A\cv B)\os C}; \iota^0_{(A\cv B)\cv C, D}; ({\alpha_\cv}_{A,B,C}+ 1_D) \\
    &= \iota^1_{A\cv B, C}; \iota^0_{(A\cv B)+C, (A\cv B)\os C};{\alpha_\cv}_{A,B,C};\iota^0_{A\cv (B\cv C), D} \\
    &= \iota^1_{B,C}; \iota^0_{B+C, B\os C}; \iota^1_{A, B\cv C}; \iota^0_{A+(B\cv C), A\os (B\cv C)}; \iota^0_{A\cv (B\cv C), D} \\ \\
    & \iota^1_{C,D}; \iota^1_{A\cv B, C+D}; \iota^0_{(A\cv B)+(C+D), (A\cv B)\os (C+D)}; \delta^L_{A\cv B, C, D}; ({\alpha_\cv}_{A,B,C}+ 1_D) \\
    &= \iota^1_{(A\cv B)\cv C, D}; ({\alpha_\cv}_{A,B,C}+ 1_D) \\
    &= \iota^1_{A\cv (B\cv C), D} \\ \\
    & \iota^1_{(A\cv B)+C, (A\cv B)\os C}; \delta^L_{A\cv B, C, D}; ({\alpha_\cv}_{A,B,C}+ 1_D) \\
    &= (1_{A\cv B}\os (1_C+ t_D)); (1_{A\cv B}\os {u^R_+}_C); \iota^1_{(A\cv B)+C, (A\cv B)\os C}; \iota^0_{(A\cv B)\cv C, D}; ({\alpha_\cv}_{A,B,C}+ 1_D) \\
    &= (1_{A\cv B}\os (1_C+ t_D)); (1_{A\cv B}\os {u^R_+}_C); \iota^1_{(A\cv B)+C, (A\cv B)\os C}; {\alpha_\cv}_{A,B,C}; \iota^0_{A\cv (B\cv C), D} \\ 
    &= (1_{A\cv B}\os (1_C+ t_D)); (1_{A\cv B}\os {u^R_+}_C); {d^R}^{-1}_{A+B, A\os B, C}; ({d^R}^{-1}_{A,B,D}+ 1_{(A\os B)\os C}); \chi_{A,B,C}; \iota^0_{A\cv (B\cv C), D} \\ 
    &= {d^R}^{-1}_{A+B, A\os B, C+D}; ({d^R}^{-1}_{A,B,C+D}+1_{(A\os B)\os (C+D)}); \\
    &\quad (((1_A \os (1_C+ t_D); {u^R_+}_C) + (1_B \os (1_C+ t_D); {u^R_+}_C))+ (1_{A\os B}\os (1_C+ t_D); {u^R_+}_C)); \chi_{A,B,C}; \iota^0_{A\cv (B\cv C), D} \\
\end{align*}
Secondly, the right-hand side:
\begin{align*}
    &\iota^0_{A,B}; \iota^0_{A+B, A\os B}; \iota^0_{A\cv B, C+D}; \iota^0_{(A\cv B)+(C+D), (A\cv B)\os (C+D)};  {\alpha_\cv}_{A,B,C+D}; (1_A\cv \delta^L_{B,C,D}); \delta^L_{A, B\cv C, D} \\
    &= \iota^0_{A, B\cv (C+D)}; \iota^0_{A+(B\cv (C+D)), A\os(B\cv (C+D))};  (1_A\cv \delta^L_{B,C,D}); \delta^L_{A, B\cv C, D} \\
    &= \iota^0_{A, (B\cv C)+D}; \iota^0_{A+((B\cv C)+D), A\os((B\cv C)+D)}; \delta^L_{A, B\cv C, D} \\
    &= \iota^0_{A, B\cv C}; \iota^0_{A+(B\cv C), A\os (B\cv C)}; \iota^0_{A\cv (B\cv C), D} \\ \\
    &\iota^1_{A,B}; \iota^0_{A+B, A\os B}; \iota^0_{A\cv B, C+D}; \iota^0_{(A\cv B)+(C+D), (A\cv B)\os (C+D)};  {\alpha_\cv}_{A,B,C+D}; (1_A\cv \delta^L_{B,C,D}); \delta^L_{A, B\cv C, D} \\
    &= \iota^0_{B, C+D}; \iota^0_{B+(C+D), B\os (C+D)}; \iota^1_{A, B\cv (C+D)}; \iota^0_{A+(B\cv (C+D)), A\os (B\cv (C+D))}; (1_A\cv \delta^L_{B,C,D}); \delta^L_{A, B\cv C, D} \\
    &=\iota^0_{B, C+D}; \iota^0_{B+(C+D), B\os (C+D)}; \delta^L_{B, C, D}; \iota^1_{A, (B\cv C)+D};\iota^0_{A+((B\cv C)+D), A\os ((B\cv C)+D)}; \delta^L_{A, B\cv C, D} \\
    &= \iota^0_{B,C}; \iota^0_{B+C, B\os C}; \iota^0_{B\cv C, D}; \iota^1_{A, (B\cv C)+D};\iota^0_{A+((B\cv C)+D), A\os ((B\cv C)+D)}; \delta^L_{A, B\cv C, D} \\
    &= \iota^0_{B,C}; \iota^0_{B+C, B\os C};\iota^1_{A, B\cv C}; \iota^1_{A, B\cv C}; \iota^0_{A+(B\cv C), A\os (B\cv C)}; \iota^0_{A\cv (B\cv C), D} \\ \\
    & \iota^1_{A+B, A\os B}; \iota^0_{A\cv B, C+D}; \iota^0_{(A\cv B)+(C+D), (A\cv B)\os (C+D)}; {\alpha_\cv}_{A,B,C+D}; (1_A\cv \delta^L_{B,C,D}); \delta^L_{A, B\cv C, D}\\
    &= (1_A\os \iota^0_{B, C+D}); (1_A\os \iota^0_{B+(C+D), B\os (C+D)}); \iota^1_{A+(B\cv (C+D)), A\os (B\cv (C+D))}; (1_A \cv\delta^L_{B,C,D}); \delta^L_{A, B\cv C, D} \\
    &= (1_A\os \iota^0_{B, C+D}; \iota^0_{B+(C+D), B\os (C+D)}; \delta^L_{B,C D});\iota^1_{A+((B\cv C)+D), A\os ((B\cv C)+D)}; \delta^L_{A, B\cv C, D} \\
    &= (1_A\os \iota^0_{B,C}; \iota^0_{B+C, B\os C}; \iota^0_{B\cv C, D}); (1_A\os (1_{B\cv C}+t_D); {u^R_+}_{B\cv C}); \iota^1_{A+(B\cv C), A\os (B\cv C)}; \iota^0_{A\cv (B\cv C), D} \\
    &= (1_A\os \iota^0_{B,C}; \iota^0_{B+C, B\os C}); (1_A\os \iota^0_{B\cv C, \bzero}); (1_A\os {u^R_+}_{B\cv C}); \iota^1_{A+(B\cv C), A\os (B\cv C)}; \iota^0_{A\cv (B\cv C), D} \\
    &= (1_A\os \iota^0_{B,C});(1_A\os \iota^0_{B+C, B\os C}); \iota^1_{A+(B\cv C), A\os (B\cv C)}; \iota^0_{A\cv (B\cv C), D} \\ \\
    &\iota^0_{C, D}; \iota^1_{A\cv B, C+D}; \iota^0_{(A\cv B)+(C+D), (A\cv B)\os (C+D)}; {\alpha_\cv}_{A,B,C+D}; (1_A\cv \delta^L_{B,C,D}); \delta^L_{A, B\cv C, D}\\
    &= \iota^0_{C,D}; \iota^1_{B, C+D}; \iota^0_{B+(C+D), B\os(C+D)}; \iota^1_{A, B\cv (C+D)}; \iota^0_{A+(B\cv (C+D)), A\os (B\cv (C+D))}; (1_A\cv \delta^L_{B,C,D}); \delta^L_{A, B\cv C, D}\\
    &= \iota^0_{C,D}; \iota^1_{B, C+D}; \iota^0_{B+(C+D), B\os(C+D)}; \delta^L_{B,C,D}; \iota^1_{A, (B\cv C)+D};\iota^0_{A+((B\cv C)+D), A\os ((B\cv C)+D)}; \delta^L_{A, B\cv C, D}\\
    &= \iota^1_{B,C}; \iota^0_{B+C, B\os C}; \iota^0_{B\cv C, D}; \iota^1_{A, (B\cv C)+D}; \iota^0_{A+((B\cv C)+D), A\os ((B\cv C)+D)}; \delta^L_{A, B\cv C, D}\\
    &= \iota^1_{B,C}; \iota^0_{B+C, B\os C}; \iota^1_{A, B\cv C}; \iota^0_{A+(B\cv C), A\os (B\cv C)}; \iota^0_{A\cv (B\cv C), D} \\ \\
    &\iota^1_{C, D}; \iota^1_{A\cv B, C+D}; \iota^0_{(A\cv B)+(C+D), (A\cv B)\os (C+D)}; {\alpha_\cv}_{A,B,C+D}; (1_A\cv \delta^L_{B,C,D}); \delta^L_{A, B\cv C, D}\\
    &= \iota^1_{C,D}; \iota^1_{B, C+D}; \iota^0_{B+(C+D), B\os (C+D)}; \iota^1_{A, B\cv (C+D)}; \iota^0_{A+(B\cv (C+D)), A\os (B\cv (C+D))}; (1_A\cv \delta^L_{B,C,D}); \delta^L_{A, B\cv C, D}\\
    &= \iota^1_{C,D}; \iota^1_{B, C+D}; \iota^0_{B+(C+D), B\os (C+D)}; \delta^L_{B, C, D}; \iota^1_{A, (B\cv C)+D}; \iota^0_{A+((B\cv C)+D), A\os ((B\cv C)+D)}; \delta^L_{A, B\cv C, D}\\
    &= \iota^1_{B\cv C, D}; \iota^1_{A, (B\cv C)+D}; \iota^0_{A+((B\cv C)+D), A\os ((B\cv C)+D)}; \delta^L_{A, B\cv C, D}\\
    &= \iota^1_{A\cv(B\cv C), D} \\ \\
    & \iota^1_{(A\cv B)+(C+D), (A\cv B)\os (C+D)}; {\alpha_\cv}_{A,B,C+D}; (1_A\cv \delta^L_{B,C,D}); \delta^L_{A, B\cv C, D}\\
    &= {d^R}^{-1}_{A+B, A\os B, C+D}; ({d^R}^{-1}_{A,B,C+D}+1_{(A\os B)\os (C+D)}); \chi_{A, B, C+D}; (1_A\cv \delta^L_{B,C,D}); \delta^L_{A, B\cv C, D} \\
\end{align*}
It remains to show that 
\begin{align*}
&(((1_A \os (1_C+ t_D); {u^R_+}_C) + (1_B \os (1_C+ t_D); {u^R_+}_C))+ (1_{A\os B}\os (1_C+ t_D); {u^R_+}_C)); \chi_{A,B,C}; \iota^0_{A\cv (B\cv C), D} \\
&=\chi_{A, B, C+D}; (1_A\cv \delta^L_{B,C,D}); \delta^L_{A, B\cv C, D} 
\end{align*}
which holds by the following computations.

\begin{align*}
    & \iota^0_{A,\os(C+D), B\os (C+D)}; \iota^0_{(A\os (C+D))+(B\os (C+D)), (A\os B)+(C+D)}; {\small LHS} \\
    &= (1_A \os (1_C+ t_D); {u^R_+}_C); \iota^0_{A\os C, B\os C}; \iota^0_{(A\os C)+(B\os C), (A\os B)+C}; \chi_{A,B,C}; \iota^0_{A\cv (B\cv C), D}\\
    &= (1_A \os (1_C+ t_D); {u^R_+}_C); (1_A\os \iota^1_{B,C}); (1_A\os \iota^0_{B+C, B\os C}); \iota^1_{A+(B\cv C), A\os (B\cv C)}; \iota^0_{A\cv (B\cv C), D}\\
    &= (1_A \os [\iota^0_{C, \bzero}, t_D; \iota^1_{C, \bzero}]; {u^R_+}_C); (1_A\os \iota^1_{B,C}); (1_A\os \iota^0_{B+C, B\os C}); \iota^1_{A+(B\cv C), A\os (B\cv C)}; \iota^0_{A\cv (B\cv C), D}\\
    &= (1_A\os [\iota^1_{B,C}; \iota^0_{B+C, B\os C}, t_D]); \iota^1_{A+(B\cv C), A\os (B\cv C)}; \iota^0_{A\cv (B\cv C), D} \\ \\
    & \iota^1_{A,\os(C+D), B\os (C+D)}; \iota^0_{(A\os (C+D))+(B\os (C+D)), (A\os B)+(C+D)}; {\small LHS} \\
    &= (1_B \os (1_C+ t_D); {u^R_+}_C); \iota^1_{A\os C, B\os C}; \iota^0_{(A\os C)+(B\os C), (A\os B)+C}; \chi_{A,B,C}; \iota^0_{A\cv (B\cv C), D} \\
    &= (1_B \os (1_C+ t_D); {u^R_+}_C); \iota^1_{B+C, B\os C}; \iota^1_{A, B\cv C}; \iota^0_{A+(B\cv C), A\os (B\cv C)}; \iota^0_{A\cv (B\cv C), D} \\ \\
    &\iota^1_{(A\os (C+D))+(B\os (C+D)), (A\os B)+(C+D)}; {\small LHS} \\
    &= (1_{A\os B}\os (1_C+ t_D); {u^R_+}_C); \iota^1_{(A\os C)+(B\os C), (A\os B)+C}; \chi_{A,B,C}; \iota^0_{A\cv (B\cv C), D} \\
    &= (1_{A\os B}\os (1_C+ t_D); {u^R_+}_C); {\alpha_\os}_{A,B,C}; (1_A\os \iota^1_{B+C, B\os C}); \iota^1_{A+(B\cv C), A\os (B\cv C)};\iota^0_{A\cv (B\cv C), D} \\ \\
    & \iota^0_{A,\os(C+D), B\os (C+D)}; \iota^0_{(A\os (C+D))+(B\os (C+D)), (A\os B)+(C+D)}; {\small RHS} \\
    &= (1_A\os \iota^1_{B, C+D}; \iota^0_{B+(C+D), B\os (C+D)});\iota^1_{A+(B\cv (C+D)), A\os (B\cv (C+D))}; (1_A\cv \delta^L_{B,C,D}); \delta^L_{A, B\cv C, D} \\
    &= (1_A\os \iota^1_{B, C+D}; \iota^0_{B+(C+D), B\os (C+D)}; \delta^L_{B, C, D}); \iota^1_{A+((B\cv C)+D), A\os ((B\cv C)+D)}; \delta^L_{A, B\cv C, D} \\
    &= (1_A\os [\iota^1_{B,C}; \iota^0_{B+C, B\os C}; \iota^0_{B\cv C, D}, \iota^1_{B\cv C, D}]); (1_A\os (1_{B\cv C}+t_D); {u^R_+}_{B\cv C}); \iota^1_{A+(B\cv C), A\os (B\cv C)}; \iota^0_{A\cv (B\cv C), D} \\ 
    &= (1_A\os [\iota^1_{B,C}; \iota^0_{B+C, B\os C}, t_D]); \iota^1_{A+(B\cv C), A\os (B\cv C)}; \iota^0_{A\cv (B\cv C), D} \\ \\
    & \iota^1_{A,\os(C+D), B\os (C+D)}; \iota^0_{(A\os (C+D))+(B\os (C+D)), (A\os B)+(C+D)}; {\small RHS} \\
    &= \iota^1_{B+(C+D), B\os (C+D)}; \iota^1_{A, B\cv (C+D)}; \iota^0_{A+(B\cv (C+D)), A\os (B\cv (C+D))}; (1_A\cv \delta^L_{B,C,D}); \delta^L_{A, B\cv C, D} \\
    &= \iota^1_{B+(C+D), B\os (C+D)};\delta^L_{B,C,D}; \iota^1_{A, (B\cv C)+D}; \iota^0_{A+((B\cv C)+D), A\os ((B\cv C)+D)}; \delta^L_{A, B\cv C, D} \\
    &= (1_B\os (1_C+t_D); {u^R_+}_C); \iota^1_{B+C, B\os C}; \iota^0_{B\cv C, D}; \iota^1_{A, (B\cv C)+D}; \iota^0_{A+((B\cv C)+D), A\os ((B\cv C)+D)}; \delta^L_{A, B\cv C, D} \\
    &= (1_B\os (1_C+t_D); {u^R_+}_C); \iota^1_{B+C, B\os C}; \iota^1_{A, B\cv C}; \iota^0_{A+(B\cv C), A\os (B\cv C)}; \iota^0_{A\cv(B\cv C), D} \\ \\
    &\iota^1_{(A\os (C+D))+(B\os (C+D)), (A\os B)+(C+D)}; {\small RHS} \\
    &= {\alpha_\os}_{A,B,C+D}; (1_A\os \iota^1_{B+(C+D), B\os (C+D)}); \iota^1_{A+(B\cv (C+D)), A\os (B\cv (C+D))}; (1_A\cv \delta^L_{B,C,D}); \delta^L_{A, B\cv C, D}\\
    &= {\alpha_\os}_{A,B,C+D}; (1_A\os \iota^1_{B+(C+D), B\os (C+D)}; \delta^L_{B,C,D}); \iota^1_{A+((B\cv C)+D), A\os ((B\cv C)+D)}; \delta^L_{A, B\cv C, D}\\
    &= {\alpha_\os}_{A,B,C+D}; (1_A\os (1_B\os (1_C+t_D); {u^R_+}_C)); (1_A\os \iota^1_{B+C, B\os C}; \iota^0_{B\cv C, D}); \\
    &\quad (1_A\os (1_{B\cv C}+t_D);{u^R_+}_{B\cv C});\iota^1_{A+(B\cv C), A\os (B\cv C)}; \iota^0_{A\cv (B\cv C), D}\\
    &= (1_{A\os B}\os (1_C+t_D); {u^R_+}_C); {\alpha_\os}_{A,B,C}; (1_A\os \iota^1_{B+C, B\os C}; \iota^0_{B\cv C, D}); \\
    &\quad (1_A\os (1_{B\cv C}+t_D);{u^R_+}_{B\cv C}); \iota^1_{A+(B\cv C), A\os (B\cv C)}; \iota^0_{A\cv (B\cv C), D}\\
    &= (1_{A\os B}\os (1_C+t_D); {u^R_+}_C); {\alpha_\os}_{A,B,C}; (1_A\os \iota^1_{B+C, B\os C});\iota^1_{A+(B\cv C), A\os (B\cv C)}; \iota^0_{A\cv (B\cv C), D}
\end{align*}
This completes the proof of the first coherence condition from  \eqref{cc:assoc_lineardist}.

The remaining coherence conditions of \eqref{cc:assoc_lineardist} and \eqref{cc:leftright_lineardist} hold by similar calculations (which we exclude simply because of their length and repetitive nature). 
\end{proof}

\end{appendices}

\end{document}